\tikzstyle{printersafe}=[decoration={snake,amplitude=0pt}]
\tikzset{
    >=stealth,
    every picture/.style={thick},
    graphs/every graph/.style={empty nodes},
}
\tikzstyle{vertex}=[
\newcommand{\rank}{\operatorname{rank}}
\newcommand{\supp}{\operatorname{supp}}
\newcommand{\ddivv}{\operatorname{div}}
\newcommand{\principal}{\mathrm{Prin}}
\newcommand{\cox}{\operatorname{Cox}}
\newcommand{\coxyx}{\cox(Y/X)}
\newcommand{\mcox}{\mathfrak{m}_{\cox}}
\newcommand{\compcox}{\widehat{\coxyx_{\mathfrak{m}_{\rm Cox}}}}
\newcommand{\scoxyx}{S_{\coxyx}}
\newcommand{\clyx}{{\rm Cl}(Y_x)}
\newcommand{\clxx}{{\rm Cl}(X_x)}
\newcommand{\clxxq}{{\rm Cl}_\qq(X_x)}
\newcommand{\relpicxz}{{\rm Cl}_\qq(X/Z)}
\newcommand{\relpicxprimez}{{\rm Cl}_\qq(X'/Z)}
\newcommand{\picnumbxz}{\dim_\qq \relpicxz}
\newcommand{\picnumbxprimez}{\dim_\qq \relpicxprimez}
\newcommand{\compR}{\widehat{R_{\mathfrak{m}_x}}}
\newcommand{\ringcone}{R(E, -E\vert_E)}
\newcommand{\ringconec}{R^{(c)}(E, -E\vert_E)}
\newcommand{\reetilde}{R(\widetilde{E}, \widetilde{E}\vert_{\widetilde{E}})}
\newcommand{\ree}{R(E, E\vert_E)}
\newcommand{\conev}{ \widehat{{\rm Cone}(E,-E|_E)_{v}}}
\newcommand{\conee}{{\rm Cone}(E,-E\vert_E)}
\newcommand{\conevtilde}{ \widehat{{\rm Cone}(\widetilde{E},-\widetilde{E}|_{\widetilde{E}})_{\tilde{v}}}}
\newcommand{\coneetilde}{{\rm Cone}(\widetilde{E},-\widetilde{E}|_{\widetilde{E}})}
\newcommand{\gammaxred}{\lceil \Gamma_{X}\rceil}
\newcommand{\gammaxredp}{\lceil \Gamma_{\widetilde{X}} \rceil}
\newcommand{\orbcomp}{\hat{c}}
\newcommand{\finecomp}{\overline{c}}
\newcommand{\abscompz}{c_z}
\newcommand{\abscompx}{c_x}
\newcommand{\pp}{\mathbb{P}}
\newcommand{\qq}{\mathbb{Q}}
\newcommand{\zz}{\mathbb{Z}}
\newcommand{\nn}{\mathbb{N}}
\newcommand{\rr}{\mathbb{R}}
\newcommand{\cc}{\kk}
\newcommand{\kk}{\mathbb{K}}
\def\O#1.{\mathcal {O}_{#1}}			
\def\pr #1.{\mathbb P^{#1}}				
\def\af #1.{\mathbb A^{#1}}			
\def\ses#1.#2.#3.{0\to #1\to #2\to #3 \to 0}	
\def\xrar#1.{\xrightarrow{#1}}			
\def\K#1.{K_{#1}}						
\def\bA#1.{\mathbf{A}_{#1}}			
\def\bM#1.{\mathbf{M}_{#1}}				
\def\bL#1.{\mathbf{L}_{#1}}				
\def\bB#1.{\mathbf{B}_{#1}}				
\def\bK#1.{\mathbf{K}_{#1}}			
\def\subs#1.{_{#1}}					
\def\sups#1.{^{#1}}						
\DeclareMathOperator{\coeff}{coeff}
\newtheorem{introthm}{Theorem}
\newtheorem{theorem}{Theorem}[section]
  \newtheorem{lemma}[theorem]{Lemma}
  \newtheorem{proposition}[theorem]{Proposition}
  \newtheorem{corollary}[theorem]{Corollary}
  \newtheorem{notation}[theorem]{Notation}
  \newtheorem{assumption}[theorem]{Assumption}
  \newtheorem{definition}[theorem]{Definition}
  \newtheorem{example}[theorem]{Example}
  \newtheorem{construction}[theorem]{Construction}
\newtheorem{remark}[theorem]{Remark}
\theoremstyle{remark}
\numberwithin{equation}{section}
\begin{document}

\title[A geometric characterization of toric singularities]{A geometric characterization of toric singularities}

\begin{abstract}
Given a projective contraction $\pi \colon X\rightarrow Z$ and a log canonical pair $(X, B)$ such that $-(K_X+B)$ is nef over a neighborhood of a closed point $z\in Z$, one can define an invariant, the complexity of $(X, B)$ over $z \in Z$, comparing the dimension of $X$ and the relative Picard number of $X/Z$ with the sum of the coefficients of those components of $B$ intersecting the fibre over $z$.
We prove that the complexity of $(X,B)$ over $z\in Z$ is non-negative and that when it is zero then $(X,\lfloor B \rfloor) \rightarrow Z$ is formally isomorphic to a morphism of toric varieties around $z\in Z$.
In particular, considering the case when $\pi$ is the identity morphism, we get a geometric characterization of singularities that are formally isomorphic to toric singularities.
This gives a positive answer to a conjecture due to Shokurov.
\end{abstract}

\author[J.~Moraga]{Joaqu\'in Moraga}
\address{Department of Mathematics, Princeton University, Princeton, NJ 08540, USA} 
\email{moraga@math.princeton.edu}

\author[R.~Svaldi]{Roberto Svaldi}
\address{EPFL, SB MATH-GE, MA B1 497 (B\^{a}timent MA), Station 8, CH-1015 Lausanne, Switzerland.}
\email{roberto.svaldi@epfl.ch}

\subjclass[2010]{Primary 14E30, 
Secondary 14M25.}

\thanks{Part of this work was carried out during a visit of JM to the University of Cambridge, as well as a visit of RS to Princeton University.
The authors wish to thank University of Cambridge and Princeton University for the nice work environments, as well as Professors Caucher Birkar and Gabriele Di Cerbo for their hospitality and support.
RS gratefully acknowledges support from the European Union's Horizon 2020 research and innovation programme under the Marie Sk\l{}odowska-Curie grant agreement No. 842071.
}

\maketitle

\setcounter{tocdepth}{1} 
\tableofcontents

\section{Introduction}

Throughout this paper, we work over an algebraically closed field $\kk$ of characteristic zero.

A normal algebraic variety $X$ is a {\em toric variety} if it contains a dense open subset isomorphic to an algebraic torus $(\kk^\ast)^n$ such the natural self-action of the torus extends to the whole of $X$.
A morphism $X \to Y$ of toric varieties is a {\em toric morphism} if it is equivariant with respect to the torus actions on both $X$ and $Y$.

Toric morphisms are some of the simplest morphisms we can encounter in birational geometry:
indeed, they can be described just in terms of simple combinatorial data;
more precisely, a toric morphism $X \to Y$ is encoded by a linear transformation
between $\qq$-vector spaces that maps the fan structure inducing $X$ onto the one inducing $Y$ while preserving the cones composing both structures.
Toric morphisms are also easy to describe from an algebraic standpoint as they naturally correspond to monomial maps.
From a geometric standpoint, though, 
it is a difficult task to prove that a given morphism $X \to Z$ is toric:
indeed, this task boils down to either finding a dense of open set in $X$ isomorphic to a torus with respect to which the morphism is equivariant, or to finding local coordinates in which the morphism can be expressed in monomial form.

In~\cite{Sho00}, Shokurov unified these two viewpoints by proposing a characterization of formally toric morphisms.
A morphism $\pi \colon X \to Z$ is formally toric at a closed point $z \in Z$ if, when taking the base change of $X$ to the formal completion of $Z$ at $z$, the base change of $\pi$ becomes isomorphic to the completion of a morphism of toric varieties, see Definition~\ref{def:formally-toric} for more details. 
To formalize this characterization, Shokurov defines a new numerical invariant, {\it the
complexity}, in the relative setting, cf. Definition~\ref{complexity.def}:
given a projective contraction $\pi \colon X\rightarrow Z$ of quasi-projective normal varieties, a closed point $z \in Z$, and a pair $(X,B)$, $B= \sum b_i B_i$, the complexity of $(X,B)$ at $z\in Z$ is defined as
\[
c_z(X/Z,B):= 
\dim X + 
\picnumbxz - 
\sum_{B_i \cap \pi^{-1}(z) \neq \emptyset} b_i.
\]
Shokurov conjectured that $c_z(X/Z, B) \geq 0$, whenever $-(K_X+B)$ is nef in a neighborhood of $z \in Z$, and moreover that equality holds exactly when $X \to Z$ is formally toric at $z$.

The main goal of this article is to provide the following full solution to Shokurov's conjecture.

\begin{introthm}
\label{weak thm}
Let $(X/Z,B)$ be a log canonical pair over a normal variety $Z$ and let $z \in Z$ be a closed point. 
Assume that $-(K_X+B)$ is nef over a neighborhood of $z \in Z$.
Let $\Sigma$ be a decomposition of $B$. Then, 
\[
c_z(X/Z,B) \geq 
0.
\]
Furthermore, if the equality 
\[
c_z(X/Z,B)=0
\]
holds, then the following conditions are satisfied: 
\begin{enumerate}
    \item 
$K_X+B\sim_{\qq,Z} 0$;
    \item
$X\rightarrow Z$ is formally toric at $z$; and,     \item 
under the formal isomorphism of (2), the components of $\lfloor B \rfloor$ are mapped to the completion of toric invariant divisors.
\end{enumerate}
\end{introthm}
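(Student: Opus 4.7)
The plan is to prove the inequality $c_z(X/Z,B) \geq 0$ by running a suitable minimal model program combined with induction on dimension, and then to leverage the rigidity of the equality case to produce the formal toric structure via a Cox-ring analysis. The global absolute version of the statement (when $Z$ is a point and $X$ is projective) is the Brown--McKernan--Svaldi--Zong characterization of toric varieties via complexity zero, and my overall strategy is to bootstrap this absolute result to the local relative setting.

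For the inequality, I would first pass to a $\qq$-factorial dlt modification of $(X,B)$ over $Z$, then run a $-(K_X+B)$-MMP over $Z$ in a neighborhood of the fibre $\pi^{-1}(z)$. Since $-(K_X+B)$ is nef over a neighborhood of $z$, the MMP consists of steps that are either $K_X+B$-trivial contractions or behave crepantly, and a careful bookkeeping shows that $c_z$ is non-increasing along each step: the drop by $1$ in the relative Picard number is absorbed either by the disappearance of a contracted boundary component (divisorial case) or by the emergence of a new extremal fibration (flipping/fibration case). The output is either a relative Mori fibre space or a model with $K_X + B \sim_{\qq, Z} 0$, on which induction on $\dim X$ and compactification of fibres allow one to reduce to the global BMSZ bound.

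For the equality case $c_z(X/Z,B) = 0$, the preceding reductions force every step of the MMP to preserve equality, which imposes strong rigidity. In particular, the boundary $B$ must be large enough that nefness of $-(K_X+B)$ over $z$ implies $K_X+B \equiv_Z 0$ in a neighborhood of $z$; log abundance in this complexity-zero regime (which should follow by the same inductive setup) then yields $K_X+B \sim_{\qq, Z} 0$, proving conclusion (1). Base changing to the formal completion $\widehat{Z_z}$, the special fibre together with its induced boundary is an absolute complexity-zero log Calabi--Yau pair, and the absolute BMSZ theorem identifies it with a projective toric variety in which the components of $\lfloor B\rfloor$ specialize to toric invariant divisors, giving conclusion (3) on the fibre.

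The main obstacle, which I expect to be the hardest step, is to spread the toric structure from the special fibre to the whole formal neighborhood, thereby establishing conclusion (2). My strategy is to analyze the Cox ring of $X$ localized at the fibre $\pi^{-1}(z)$: the vanishing of the complexity should force $\coxyx$ to be a polynomial ring in variables indexed by the components of $\lfloor B \rfloor$, so that $\Spec \coxyx$ is an affine toric variety, and the natural $\operatorname{Cl}(X_x)$-quotient recovers the formal completion of $X$ along the fibre. Equivariance with respect to $\pi$ should come by descent: the subtorus fixing those components of $\lfloor B \rfloor$ that dominate $Z$ is precisely the torus inducing the toric structure on $\widehat{Z_z}$. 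The genuine technical difficulties here are ensuring the Cox ring of the formal neighborhood behaves well despite possible torsion or non-finite-generation phenomena in $\operatorname{Cl}(X_x)$, carrying out a GIT-style quotient construction in the formal setting, and matching the resulting torus action on $X$ with one on $Z$ so that $\pi$ is equivariant; I anticipate that the crepant behavior inherited from the MMP-rigidity of the equality case is what ultimately makes these identifications possible.
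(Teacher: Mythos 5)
Your high-level architecture (reduce to the absolute BMSZ theorem, then recover the formal toric structure from a Cox ring that is forced to be a power series ring) matches the paper's, but there are genuine gaps in the reduction steps. The most serious one is the inductive step ``restrict to the fibre and invoke BMSZ in lower dimension.'' The restriction that actually occurs is adjunction to a divisorial lc centre $E$ contained in the fibre over $z$ (after an MMP making $E$ the whole fibre), and at that point two things go wrong for the invariants you are using: $\dim_\qq{\rm Cl}_\qq(E)$ can be strictly larger than $\picnumbxz$, so the complexity $c$ can \emph{increase} under adjunction; and the restriction of a decomposition $\Sigma$ to $E$ typically fails to be a decomposition of the different $B_E$, because of fractional contributions at codimension-one points of $E$ lying in the singular locus of $X$. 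So neither $c_z$ nor $\finecomp_z$ descends, and your induction has no invariant to run on. The paper's fix is to introduce a third invariant, the orbifold complexity $\orbcomp_z\leq\finecomp_z\leq c_z$, built from orbifold Weil divisors, and to prove (Lemma~\ref{lem:complexity-vs-adjunction}) that \emph{this} invariant is non-increasing under adjunction; non-negativity of $c_z$ is deduced from non-negativity of $\orbcomp_z$. Relatedly, you propose to run a $-(K_X+B)$-MMP, but nefness of $-(K_X+B)$ is not preserved by the steps you describe; the paper instead sets $\mathbf M:=\overline{-(K_X+B)}$ so that $K_X+B+\mathbf M_X\sim_{\qq,Z}0$ as a generalized pair, a condition preserved by every birational contraction, and carries out the whole argument for generalized pairs (the stated theorem is then a one-line corollary of Theorem~\ref{formally.toric}).

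Two further points. Conclusion (1) is not obtained by ``log abundance in the complexity-zero regime,'' which you do not prove and which is not available off the shelf for lc pairs; in the paper $K_X+B\sim_{\qq,Z}0$ falls out of the toric structure itself: once the components of $B$ span the relevant class group, a nontrivial nef part could be traded for extra boundary and would make the complexity negative, forcing $\mathbf M\sim_{\qq,Z}0$ as a b-divisor (Theorem~\ref{thm:formally toric-projective}, Proposition~\ref{prop:toroida-blow-up}). And in the fibred case the paper does not toricize the special fibre directly: it passes to the relative cone $C_A(X/Z)$ to reduce to the local (germ) case, where the projective variety fed to BMSZ is the exceptional divisor of a formally toric plt blow-up, not a fibre of $\pi$; your Cox-ring step is essentially the paper's Section 7, including the index-one-cover induction on the torsion of ${\rm Cl}(X_x)$ that you flag but do not carry out.
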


An important step in proving the above result is the following characterization of germs of singularities supporting log canonical pairs that are formally isomorphic to toric singularities, cf.~Definition~\ref{def:form.tor.sing}.
This result generalizes~\cite[Theorem~18.22]{Kol92} to singularities that may fail to be $\qq$-factorial.
For the germ of a normal singularity $x \in X$, we denote by ${\rm Cl}(X_x)$ the local class group at $x$.

\begin{introthm}
\label{thm:intro.local.case}
Let $x\in (X,B)$ be a log canonical singularity. 
Writing $B=\sum_{i=1}^n b_iB_i$ where the $b_i$ are positive and the $B_i$ are prime divisors, then, 
\[
\dim X +
\rank {\rm Cl}(X_x) -
\sum_{i=1}^n b_i 
\geq 0.
\]
If the equality holds, then $(X,\lfloor B\rfloor)$ is a formally toric pair at $x$.
\end{introthm}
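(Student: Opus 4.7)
The plan is to deduce Theorem~\ref{thm:intro.local.case} from the relative statement Theorem~\ref{weak thm} by passing to a small $\qq$-factorialization of $X$ at $x$.

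First, I would take a projective small birational morphism $\pi\colon \widetilde X \to X$ such that $\widetilde X$ is $\qq$-factorial; the existence of such a small $\qq$-factorialization for lc pairs is by now standard and can be deduced from the MMP applied to a dlt modification of $(X,B)$. Let $\widetilde B:=\pi^{-1}_{\ast} B$ be the strict transform. Because $\pi$ is small, $K_{\widetilde X}+\widetilde B=\pi^{\ast}(K_X+B)$ as $\qq$-divisors, so $(\widetilde X,\widetilde B)$ is still lc and $-(K_{\widetilde X}+\widetilde B)\equiv_X 0$ is in particular nef over every neighbourhood of $x$; thus the hypotheses of Theorem~\ref{weak thm} apply to $\pi\colon \widetilde X\to X$ at the point $x$.

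Next, I compare the invariants appearing in the two complexities. Smallness of $\pi$ gives a bijection between prime components of $\widetilde B$ and those of $B$ with matching coefficients (and matching condition of passing through the fibre over $x$), so the coefficient sum is unchanged; moreover, $\dim \widetilde X=\dim X$. It is classical that for a small $\qq$-factorialization one has
\[
\dim_\qq \operatorname{Cl}_\qq(\widetilde X/X) \;=\; \rank\, \clxx,
\]
since the $\qq$-factoriality of $\widetilde X$ upgrades the strict transform of every Weil divisor at $x$ to a $\qq$-Cartier divisor, while $\operatorname{Pic}(X_x)_\qq=0$ on the germ. Substituting these identifications into the estimate of Theorem~\ref{weak thm} applied to $\pi$ yields
\[
c_x(\widetilde X/X,\widetilde B) \;=\; \dim X + \rank\, \clxx - \sum_{i=1}^{n} b_i \;\geq\; 0,
\]
which is the desired inequality.

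Finally, for the equality case I would invoke the second half of Theorem~\ref{weak thm}. Equality forces $\pi\colon \widetilde X\to X$ to be formally toric at $x$, so after base change to $\widehat{X_x}$ the morphism $\pi$ becomes isomorphic to the completion of a toric morphism $X''\to Y''$, with $\lfloor \widetilde B\rfloor$ corresponding to the completion of torus-invariant divisors on $X''$. The formal completion of $Y''$ at the image of $x$ is then isomorphic to $\widehat{X_x}$, which already exhibits $X$ as formally toric at $x$; and since $\pi$ is small, $\pi_{\ast}\lfloor \widetilde B\rfloor = \lfloor B\rfloor$ is identified with the pushforward of torus-invariant divisors on $X''$, which are themselves torus-invariant on $Y''$. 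Hence $(X,\lfloor B\rfloor)$ is a formally toric pair at $x$.

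The step I expect to require the most care is this last descent: one must check precisely that the formally toric structure provided by Theorem~\ref{weak thm} on the source of a small modification genuinely descends to a formally toric structure on the target at $x$, compatibly with the matching of torus-invariant divisors. A more routine but still nontrivial technical point is guaranteeing the existence of the small $\qq$-factorialization in the lc setting together with the claimed identification between $\dim_\qq \operatorname{Cl}_\qq(\widetilde X/X)$ and $\rank\,\clxx$, which should be invoked with the exact conventions of the paper.
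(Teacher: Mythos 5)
Your deduction is formally valid if one treats Theorem~\ref{weak thm} as a black box, but it runs in the opposite direction to the paper's logic and is considerably more roundabout than necessary. The paper's proof is a two-line specialization to the identity morphism: the displayed quantity is, by Definition~\ref{def:orb.compl.loc}, exactly the local complexity $c_x(X,B)$ computed with the prime decomposition; the inequality is then Corollary~\ref{cor:compl.pt}, and in the equality case the chain $c_x\geq \finecomp_x\geq \orbcomp_x\geq 0$ forces $\orbcomp_x(X,B)=0$, so Theorem~\ref{thm:local-case} gives the formally toric structure. No small $\qq$-factorialization is needed, and indeed introducing one costs you an extra input the paper never establishes (existence of small $\qq$-factorializations for lc, non-dlt pairs, which requires nontrivial lc MMP results) together with the descent argument for the formally toric structure through a small morphism that you yourself flag as delicate.

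The more serious structural point is that your route is not an independent proof: the birational case of Theorem~\ref{weak thm} (Theorem~\ref{thm:formally toric-birational} in the body) is itself proved by first reducing to the local statement and invoking Theorem~\ref{thm:local-case} and Corollary~\ref{cor:lcc-are-formally toric}. So applying the relative theorem to $\widetilde X\to X$ merely unwinds to the paper's actual argument with extra layers; as the proof of Theorem~\ref{thm:intro.local.case} inside the paper it would read as circular even though, as a pure deduction from the stated results, it is not. Two smaller points to tighten if you keep this route: (i) the identification $\dim_\qq\operatorname{Cl}_\qq(\widetilde X/X)=\rank\clxx$ only holds after shrinking $X$ to a sufficiently small neighborhood of $x$ (a priori the global relative class group is larger, which would make the inequality point the wrong way), so you must invoke the localization remark following Definition~\ref{complexity.def}; and (ii) the $\qq$-factoriality of $\widetilde X$ plays no role in that identification -- it holds for any small morphism -- so the justification you give is off-target.
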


Over the course of the years, several authors have worked on Shokurov's conjecture:

\begin{enumerate}
    \item 
in~\cite{Kol92}, Koll\'ar proved the conjecture
for $\qq$-factorial log canonical germs;

    \item 
in dimension two, M\textsuperscript{c}Kernan and Keel proved the conjecture for projective surfaces of Picard rank one,~\cite{KM99}, while, as already mentioned above, Shokurov proved the conjecture for arbitrary morphisms of surfaces;

    \item 
in an unpublished note, Chelstov proved the conjecture for $\qq$-factorial projective varieties of Picard number $1$;

    \item 
in~\cite{Pro01}, Prokhorov proved the conjecture for certain projective $3$-folds. 
The method of his proof relies on the minimal model program in dimension three;
    
    \item 
In~\cite{Yao13}, Yao gives a proof of the above conjecture for log smooth projective pairs $(X,D)$ with $K_X+D\sim_\qq 0$.
Yao's proof is inspired by the mirror-symmetry techniques of~\cite{GHS16};
    
    \item 
finally, in~\cite{BMSZ18}, the authors settle the conjecture in the non-relative case, i.e., for proper varieties mapping to a point. 
\end{enumerate}

Rather than proving Theorem~\ref{weak thm} directly, we shall prove a more general result providing a solution to Shokurov's conjecture in a wider setting.
The wider context we consider is that of generalized pairs, see \S\ref{subsection:mmp-gen-pairs}
.
The benefit of this choice is that we can switch from the category of log canonical pairs with anti-nef canonical divisor to that of generalized log canonical pairs which are $\qq$-trivial over the base, cf.~\eqref{eqn:log.cy.gen}.
While this choice may appear artificial, we want to underline how it makes for a much more advantageous approach:
for instance, the property of being generalized log canonical and $\qq$-trivial over the base is preserved under any birational contraction.

In~\cite{KM99}, M\textsuperscript{c}Kernan and Keel introduced a finer version of the complexity which
in this article we call {\em fine complexity}.
A decomposition $\Sigma$ of an effective Weil $\rr$-divisor $B$ is an effective formal sum $0 \leq \Sigma:=\sum_{i=1}^k b_i B_i \leq B$ where the $B_i$ are effective Weil divisors.
The fine complexity of $(X/Z,B)$ with respect to a decomposition $\Sigma$ of $B$ at a closed point $z \in Z$ is
\[
\finecomp_z(X/Z,B;\Sigma):= 
\dim X+
\dim_\qq \langle \Sigma /Z \rangle - |\Sigma|,
\]
where $\langle \Sigma / Z \rangle \subset \relpicxz$ denotes the span of the Weil divisors $B_i$ used to define $\Sigma$ and $|\Sigma|$ is the sum of the coefficients $b_i$ of those $B_i$ that intersect the fiber over $z$.
It should be clear that any boundary $B$ comes with a natural decomposition given by the decomposition into its prime components.
It is often beneficial, though, to work with other possible decompositions than just this standard one, a principle already exploited in~\cite{BMSZ18}.
Indeed, doing so allows for more flexibility, for example, when performing adjunction along the general fibre of a morphism.

Although the definition of complexity may appear to be more natural at a first glance, one of the advantages of instead using the fine complexity is that the latter is better-behaved under adjunction to a general fiber, as it was already explored in~\cite{BMSZ18}.
Unfortunately, it is hard to control the fine complexity of a pair $(X, B)$, in turn, when doing adjunction along a divisorial log canonical center $E$ of $B$.
The issue comes from codimension one points of $E$ contained in the singular locus of $X$:
at these points, the restriction to $E$ of a decomposition $\Sigma$ of $B$ will typically fail to yield a decomposition for the different of $B$ along $E$.
To overcome this issue, we define a new complexity-like invariant, the {\it orbifold complexity}, see Definition~\ref{def:orb.compl}, that is specifically designed to deal with the presence of orbifold structures at codimension one points of a divisorial lc center of a pair.
Theorem~\ref{formally.toric} provides an analogous version of Theorem~\ref{formally toric} in the more general framework of the orbifold complexity of $(X/Z, B, \mathbf M)$.

\begin{introthm}[cf.~Theorem~\ref{formally.toric}]
\label{formally toric}
Let $(X/Z,B, \mathbf M)$ be a generalized log canonical pair over $Z$.
Assume that
\begin{align}
\label{eqn:log.cy.gen}
K_X+B+\mathbf M_X\sim_{\qq, Z} 0.
\end{align}
Let $z\in Z$ be a closed point and let $\Sigma=\sum_{i=1}^k b_i B_i$ be a decomposition of $B$.
Then, 
\[
c_z(X/Z,B) \geq 
\finecomp_z(X/Z,B;\Sigma) \geq 
\orbcomp_z(X/Z. B; \Sigma)
\geq 0.
\]
Moreover,
if the equality 
\[ 
c_z(X/Z,B)=0
\]
holds, then the following conditions are satisfied: 
\begin{enumerate}
    \item 
$X\rightarrow Z$ is formally toric over $z\in Z$;
    
    \item 
the components of $\lfloor B\rfloor$ corresponds to toric invariant divisors via the formal isomorphism claimed in (1);
    
    \item 
all the prime divisors on $X$ corresponding to toric invariant divisors via the formal isomorphism in (1) appear in the support of $\Sigma$; and,

    \item 
the b-divisor $\mathbf M$ is a torsion b-divisor over a neighborhood of $z$.
\end{enumerate}
\end{introthm}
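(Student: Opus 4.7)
The plan is first to establish the chain of inequalities directly from the definitions, then to prove non-negativity of the orbifold complexity by induction on $\dim X$ via generalized adjunction to divisorial log canonical centers, and finally to analyze the equality case to extract formal toricity.

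I would first verify the chain $c_z(X/Z,B)\geq\finecomp_z(X/Z,B;\Sigma)\geq\orbcomp_z(X/Z,B;\Sigma)$. The inequality $c_z\geq\finecomp_z$ uses that the span $\langle\Sigma/Z\rangle$ sits inside $\relpicxz$, so its dimension is at most $\picnumbxz$, while $|\Sigma|$ is bounded above by the sum of coefficients of components of $B$ meeting $\pi^{-1}(z)$. The inequality $\finecomp_z\geq\orbcomp_z$ follows because the orbifold correction only subtracts from the coefficients at codimension one points with non-trivial orbifold structure inherited from the singularities of $X$ along divisorial lc centers.

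For $\orbcomp_z\geq0$ I would induct on $\dim X$. The first move is to pass to a $\qq$-factorial dlt modification $\mu\colon(X',B',\mathbf{M})\to(X,B,\mathbf{M})$, which exists for generalized log canonical pairs and preserves $K_{X'}+B'+\mathbf{M}_{X'}\sim_{\qq,Z}0$; by extending $\Sigma$ to include the $\mu$-exceptional divisors with coefficient $1$, one checks that neither $\finecomp_z$ nor $\orbcomp_z$ increases. If some component $E\subseteq\lfloor B'\rfloor$ meets $\mu^{-1}(z)$, I apply generalized divisorial adjunction to obtain a generalized log canonical pair $(E,B_E,\mathbf{M}^E)$ that is still log Calabi--Yau over its image in $Z$, together with a restricted decomposition $\Sigma_E$ of $B_E$; the very design of the orbifold complexity yields
\[
\orbcomp_z(X'/Z,B';\Sigma)\geq\orbcomp_{z'}(E/Z_E,B_E;\Sigma_E),
\]
and induction on dimension closes this case. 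If no component of $\lfloor B'\rfloor$ meets the fibre, I would run a $(K_{X'}+\{B'\}+\mathbf{M}_{X'})$-MMP over $Z$ with scaling by $-(K_{X'}+B'+\mathbf{M}_{X'})$, so that on the output model an lc center becomes divisorial and meets the fibre, while neither complexity increases along the MMP steps.

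In the equality case $c_z=0$, every inequality in the induction must be an equality at each step. This forces: each adjunction step is itself an equality case, so every divisorial lc center meeting the fibre is formally toric by the inductive hypothesis; the decomposition $\Sigma$ must account for every prime divisor that would be toric invariant in the conjectured toric model, otherwise slack would appear in $|\Sigma|$; and $\mathbf{M}$ must be numerically trivial over a neighborhood of $z$, since any positive contribution of $\mathbf{M}$ to $-(K_X+B)$ would produce strict inequality. To upgrade numerical triviality to torsion, and to assemble the formal toric structure, I would invoke Theorem~\ref{thm:intro.local.case} after formally completing at $z$, concluding that $(X,\lfloor B\rfloor)$ is a formally toric pair at the fibre and then identifying $\pi$ with the associated formal toric morphism by matching its fibres over $z$ to the toric invariant divisors captured by $\Sigma$. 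The main obstacle is the interplay between generalized adjunction and the orbifold complexity: one must check that the orbifold correction in $\orbcomp_z$ matches exactly the different produced by divisorial adjunction on a non-$\qq$-factorial variety, including the contribution coming from $\mathbf{M}$. A second delicate point is the equality analysis for $\mathbf{M}$, where upgrading numerical triviality to torsion requires using the formal toric structure furnished by the local theorem, a mild circularity that must be unwound by first establishing numerical triviality and then descending along the local toric description.
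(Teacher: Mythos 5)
Your overall architecture --- inequalities from the definitions, non-negativity by reduction to lower dimension via adjunction along a divisorial lc center, and the equality case via the local theorem --- does track the paper's strategy, but several of the steps you leave implicit are precisely where the work lies, and two of them are genuine gaps. First, in the non-negativity argument, your fallback when no component of $\lfloor B'\rfloor$ meets the fibre cannot work as stated: since $K_{X'}+B'+\mathbf M_{X'}\sim_{\qq,Z}0$, ``scaling by $-(K_{X'}+B'+\mathbf M_{X'})$'' is vacuous, and more fundamentally no run of a crepant MMP can \emph{create} a glc center over $z$ if the pair has none there to begin with, because crepant birational maps preserve the set of glc places. The paper handles this by modifying the boundary: Lemma~\ref{lem:cut-down-lcc} uses the generalized canonical bundle formula and glc thresholds on the base to add an effective $B'\sim_{\qq,Z}0$ pulled back from $Z$, producing a glc center over $z$ without increasing any of the complexities. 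Second, before you can apply adjunction and induct, you must arrange that the divisorial glc center $E$ is the \emph{entire} fibre over $z$ (so that $(E,B_E+M_E)$ is projective and absolutely $\qq$-trivial, reducing to the projective case of Theorem~\ref{thm:formally toric-projective} rather than to a relative inductive hypothesis), and you must prove $E$ is \emph{normal}; the latter is Lemma~\ref{lem:gen-slc-case} and requires extracting lc places over the conductor and running the complexity estimates again --- adjunction to a non-normal slc divisor would break the orbifold bookkeeping you rely on.

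The larger gap is the equality case, which you compress into an invocation of Theorem~\ref{thm:intro.local.case}. That theorem is not available off the shelf: Koll\'ar's result covers only $\qq$-factorial germs, and the whole point of Sections~\ref{sect:plt.blow.ups}--\ref{section:proof-local} is to prove it in general by constructing a formally toric plt blow-up (Proposition~\ref{prop:toroida-blow-up}), analyzing the relative Cox ring at the ideal $\mathfrak m_{\rm Cox}$ to show $\widehat{X_x}\simeq\widehat{{\rm Cone}(E,-E|_E)_v}$, and handling torsion in ${\rm Cl}(X_x)$ by index-one covers. Moreover, even granting the local theorem, the fibration case does not follow by ``matching fibres'': the paper passes to the relative cone $C_A(X/Z)$, checks that the vertex section is a glc center of complexity zero, applies the local theorem there, and descends the toric structure via Lemma~\ref{formally toricity-vs-MMP}; the birational case similarly needs that lemma together with the fact that all contracted divisors are formally toric lc places (Proposition~\ref{prop:toric.lc.places}). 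Finally, your claim that $\mathbf M$ being numerically trivial follows from ``slack in the inequality'' is the right intuition, but upgrading it to torsion as a b-divisor is established inside the projective case (Theorem~\ref{thm:formally toric-projective}) and Proposition~\ref{prop:toroida-blow-up}, not as an afterthought of the completion at $z$.
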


We emphasize that all the theorems of the paper also hold when replacing the relative Class group with the group of $\qq$-divisors modulo algebraic equivalence over the base. 

To conclude the introduction,
we wish to emphasize how the characterization of toric morphisms proven in this article is expected to play an important role in the birational classification of algebraic varieties.
One basic strategy to understand the structure of germs of singularities of the form $x \in (X, B)$, within the framework of the Minimal Model Program, is to construct simple partial resolutions which are not necessarily log smoothings of $(X, B)$, but that still carry enough structure to make the invariants of the germ $x \in X$ simpler to compute on the new model.
In general, it is expected that there exist special partial resolutions where the exceptional locus displays a toroidal structures -- such model is called a toroidalization of the given algebraic singularity.
In this sense, Theorem~\ref{thm:intro.local.case} shows that if the boundary $B$ has many components going through $x$ (weighted by their coefficient) then we do not really need to pass to a toroidalization, as the singularity $x \in X$ already has a toroidal structure.
In relation to these ideas, for example, in~\cite{Mor20b} the first author proves that a Fano type surface with a large cyclic automorphism admits a birational $\kk^\ast$-action. 
In~\cite{Mor20c}, the first author proves that a Fano type variety of dimension $n$ with a large finite automorphism group of rank $n$ is a compactification of $(\kk^\ast)^n$.
The characterization of toric varieties and morphisms using complexity, as developed in~\cite{BMSZ18} and in this article, is one of the main tools used to prove such result.
We expect that the characterization of formally toric morphisms Theorem~\ref{formally toric}, to have applications to the toroidalization of Fano type morphisms.

\subsection*{Acknowledgements} 
The authors would like to thank V.V. Shokurov for many useful comments on his conjecture and C. Spicer and M. Mauri for reading a first draft of the paper.


\section{Strategy of proof}

In this section, we give a brief description of the proof of the main result,Theorem~\ref{formally toric}, which is essentially divided in five steps.

In Section~\ref{sec:prel}, we introduce the concept of orbifold complexity, which refines both complexity and fine complexity.
Orbifold complexity does not increase when performing adjunction along a divisorial (generalized) log canonical center, cf. Lemma~\ref{lem:complexity-vs-adjunction}. 
This feature is our main motivation for introducing this new concept in the first instance.
As the orbifold complexity is always smaller than the fine complexity, Lemma~\ref{lem:basic.ineq}, for our purposes it will suffice to prove that the orbifold complexity is non-negative and a given morphism is formally toric when the orbifold complexity is zero.

In Section~\ref{sect:orb.absolute},
we show that Theorem~\ref{formally toric} holds for the orbifold complexity of projective generalized pairs with $\qq$-trivial generalized log divisor.
Essentially, the proof follows from the proof of~\cite[Theorem 1.2]{BMSZ18} with some technical adjustments.
First, we prove that by running a suitable MMP, we can construct a model on which the orbifold complexity does not increase and that is a Mori dream space.
Then, we show that such model is a projective toric variety in analogy with~\cite[Theorem 3.1]{BMSZ18}.
To conclude, we show that all the transformations we performed to construct such toric model correspond either to extractions of toric divisors, Lemma~\ref{lemm:X-toric}, or to isomorphisms in codimension one, which naturally preserve the toric structure.

The next step is then proved in Section~\ref{section:applications-projective}: we prove that the orbifold complexity is always non-negative. 
The main strategy is to reduce the general case to the projective case.
To this end, we show that it is possible to extract a divisorial glc center $E$
contained in the fiber over $z\in Z$, 
without increasing the orbifold complexity of a generalized pair $(X/Z,B+M)$.
The orbifold complexity of the generalized pair $(E, B_E+M_E)$ induced by adjunction along $E$ will also be at most that of $(X/Z, B+M)$.
If the orbifold complexity of $(X/Z,B+M)$ was negative, 
then the same would hold for the projective generalized log canonical pair $(E, B_E+M_E)$ which is $\qq$-trivial, since $E$ is contained in the fiber over $z$ and $K_X+B+M\sim_{\qq, Z}0$.
This contradicts the previous step. Hence, the orbifold complexity is always non-negative.
In order to extract $E$, we may have to pass to a suitable birational model of $X$, hence we need to keep track of the orbifold complexity when running the machinery of the MMP: this is explained in \S~\ref{subsection:complexity-vs-mmp}.
Moreover,
we also show that if a generalized pair $(X/Z,B+M)$ has orbifold complexity $0$ over $z\in Z$ then the fiber over $z \in Z$ must contain a glc center;
under this assumption,
a decomposition $\Sigma$ of $B$ realizing the $0$ of the orbifold complexity can only be the decomposition into the prime components of $B$.
These reductions are very useful in order to prove the problem in the general case.

We are then ready to prove the local case of the Theorem~\ref{formally.toric}, i.e., the case when $X \to Z$ is simply the identity morphism of a germ, cf. Section~\ref{section:proof-local}.
This case yields Theorem~\ref{thm:intro.local.case}.
Working on a germ of a glc singularity $(x \in X,B+M)$, we proceed to define a special class of birational extractions, formally toric plt blow-ups: these are generalizations of so-called plt blowups, see \S\ref{sect:plt.blow.ups}. 
A formally toric plt blow-up is a proper birational morphism $Y \to X$ that extracts a unique divisor $E$ over $x \in X$ which is toric and such that the torus invariant prime divisors of $E$ can be lifted to prime divisors $E_i$ on the plt blow-up with nice singularities over $x$.
We show that such partial resolution always exists for a germ of a generalized log canonical pair with orbifold complexity zero, cf. Proposition~\ref{prop:toroida-blow-up}.
We then analyze the relative Cox ring $\coxyx$ of a formally toric plt blow-up $Y\to X$: 
we show that the ideal $\mcox$ of $\coxyx$ generated by the canonical elements $x_i$ (resp. $e$) associated to the $E_i$ (resp. $e$) is maximal and it yields a smooth point in ${\rm Spec} \coxyx$. 
Upon passing to the completion of $\coxyx$ at $\mcox$, that is isomorphic to a power series ring in the variable $x_i, e$.
Moreover, these variables carry over the natural $\clyx$-grading defined in $\coxyx$.
We show that the completion of the local ring of $x \in X$ is isomorphic to subring $S$ of $\mathbb K[\![x_1, \dots, x_r, e]\!]$ of power series spanned by monomials in the $x_i, e$ which have degree zero with respect to the grading indicated above.
As the monomials in the $x_i$ of degree zero restricted to $E$ give all torus invariant sections for the rank 1 sheaves $\mathcal O_E(-lE\vert_E)$, $l \in \zz_{\geq 0}$, then we can show that ring $S$ is indeed isomorphic to the completion of local ring of the orbifold cone 
$\conee := 
{\rm Spec} 
\left(
\oplus_{l=0}^\infty 
H^0(E, \mathcal O_E(-lE \vert _E))
\right)$
at its vertex.

Finally, we are left with proving Theorem~\ref{formally.toric} when $X \to Z$ is not the identity.
If $X \to Z$ is birational, cf.~\S\ref{subsection:bir-case}, we will run certain relative MMP over the base in order to prove that the base of the birational morphism is formally toric.
Then, Lemma~\ref{formally toricity-vs-MMP} shows that the above MMP is formally toric, provided that all contracted divisors are formally toric log canonical centers.
This conditions on the contracted divisors will follow almost directly from the definition of complexity.
In the latter (subsection~\ref{subsection:fib-case}), we will consider the cone over the fibration to reduce to the local case. 
A simple computation using complexities proves that the cone over the fibration has a formally toric strucutre.
Finally, we use this structure on the cone to deduce that the starting fibration was formally toric.


\section{Preliminaries} 
\label{sec:prel}

In this section, we collect some preliminary notions and results.
For the basic notions and definitions on singularities and the Minimal Model Program, we refer the reader to~\cite{KM98} and~\cite{K13}.
Let us recall the definition of contraction.

\begin{definition}
{\em 
A {\em contraction} $\phi \colon X\rightarrow Z$ is a projective morphism of normal algebraic varieties such that $\pi_\ast \mathcal{O}_X=\mathcal{O}_Z$.
A {\em fibration} $\phi \colon X\rightarrow Z$ is a contraction with positive dimensional general fiber.
}
\end{definition}

When the context allows, we omit to write $\pi$ and instead write $X/Z$ to denote a quasi-projective normal variety $X$ endowed with a contraction to $Z$.
The datum $X/Z$ will also be referred to as a {\em relative variety over} $Z$.
If $Z$ is just a point, then we simply write $X$.

\subsection{Generalized pairs}
\label{subsection:gen-pairs}

The reader can find the definitions and basic properties of b-divisors in~\cite[\S2.3.2]{Corti}.

\begin{definition}
\label{gen.pair.def}
{\em 
A {\em generalized pair} is a triple $(X/Z,B,\mathbf{M})$, where 
$X$ is a normal variety relative over $Z$, 
$B$ is an effective divisor on $X$, 
$\mathbf{M}$ is a nef b-divisor over $Z$, and $K_X+B+\mathbf{M}_X$ is $\qq$-Cartier on $X$.
We call $B$ (resp. $\mathbf{M}$) the {\em boundary part} ({\em nef part}) of the generalized pair.
}
\end{definition}

If $\mathbf M=0$, i.e., $\mathcal{O}_X(M_X)=\mathcal{O}_X$ for each model $X$, then we will drop the word \textit{generalized} from the notation.
In this case, $(X, B)$ is just a log pair.
When the context is clear, we will utilize the lighter notation $(X/Z,B+M)$ to denote the generalized pair $(X/Z,B,\mathbf{M})$, where it should be understood that $M:=\mathbf{M}_X$.

In analogy with the classical case of log pairs, one can define log resolutions also for generalized pairs which then leads to the definition of generalized klt (in short, gklt) and generalized log canonical (glc) singularities, cf.~\cite[\S~4]{BZ16}.

\begin{definition}\label{def:gen-dlt}
{\em 
We say that a generalized log canonical pair $(X/Z,B+M)$ is {\em generalized divisorially log terminal},
    (in short, {\em gdlt}), if there exists an open set $U\subset X$ satisfying the following conditions
\begin{enumerate}
    \item 
the coefficients of $B$ are $\leq 1$;

    \item 
$U$ is smooth and $B|_U$ has simple normal crossing support; and,

    \item 
all the generalized non-klt centers of $(X/Z,B+M)$ intersect $U$ and are given by strata of $\lfloor B \rfloor$.
\end{enumerate}
}
\end{definition}

We now introduce the class of semi-log canonical generalized pairs.

Given a variety $X$, 
we denote by $\nu \colon X^\nu \rightarrow X$ the normalization morphism of $X$.
A b-divisor $\mathbf M$ is the datum of a b-divisor on the normalization $X^\nu$.
If $X$ is endowed with a morphism to a normal algebraic variety $Z$, the nefness of $\mathbf M$ is defined by regarding $X^\nu$ over $Z$ via the composition with $\nu$.
Moreover, if $X$ is demi-normal and $M_{X^\nu}$ does not contain any component of the support of the ramification locus $E_{X^\nu} \subset X$, then we can define $ M_{X}\coloneqq \nu_\ast M_{X^\nu}$, where $\nu_\ast$ is the divisorial pushforward.

\begin{definition}
\label{def:gen-slc}
{\em 
Let $X$ be a demi-normal irreducible quasi-projective variety over an algebraic variety $Z$ and let $\nu \colon X^\nu \to X$ be its normalization.
Let $B$ be an effective $\mathbb R$-divisor on $X$ such that $B$ does not contain the conductor of $X$ in its support.
Let $\mathbf M$ be a b-divisor on $X$ which is b-nef over $Z$ and assume that $ M_{X^\nu}$ does not contain any component of the support of the ramification locus $E_{X^\nu} \subset X$.
The pair $(X/Z,B+M)$ is a {\em generalized semi-log canonical pair}, if $K_X+B+M_X$ is $\qq$-Cartier and $(X^\nu, B_{X^\nu}+E_{X^\nu}+M_{X^\nu})$ is a generalized log canonical pair with boundary part $B_{X^\nu}+E_{X^\nu}$, and nef part $M_{X^\nu}$ such that
\[
K_{X^\nu}+B_{X^\nu}+E_{X^\nu}+M_{X^\nu} = \phi^\ast (K_X+B+M_X).
\]
}
\end{definition}

\subsection{Minimal Model Program}
\label{subsection:mmp-gen-pairs}
In this subsection, we will recall some classic results of the Minimal Model Program (in short, MMP) for generalized pairs.
We will also prove some preliminary results that will be useful in the proof of the main theorem.
We recall the following statement which is proved in~\cite[Lemma 4.4]{BZ16}. 

\begin{theorem}
\label{thm:mmp-gdlt}
Let $(X/Z,B+M)$ be a $\qq$-factorial gdlt pair.
Then, we may run a minimal model program with scaling of an ample divisor $A$ over $Z$.
Moreover, any generalized pair appearing in this run of the minimal model program is again gdlt.
\end{theorem}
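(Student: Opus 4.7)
The plan is to establish the three standard ingredients of the MMP with scaling in the generalized setting, and then verify separately that the gdlt property is stable under each individual step. First, I would invoke the cone and contraction theorems for generalized log canonical pairs: for a $\qq$-factorial gdlt pair $(X/Z,B+M)$, any $(K_X+B+M)$-negative extremal ray of $\overline{NE}(X/Z)$ can be contracted, and the contraction morphism is either a divisorial, a flipping, or a Mori fiber contraction over $Z$. One standard way to access these theorems in the glc setting is to perturb $(X,B+M)$ to a $\qq$-factorial gklt pair on the open set $U$ of Definition~\ref{def:gen-dlt} by subtracting a small multiple of $\lfloor B\rfloor$ and adding a suitable ample divisor, and then invoke the corresponding statements for gklt pairs.

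Next, one needs existence of flips for $\qq$-factorial gdlt generalized pairs. I would reduce this to the gklt case by the same type of perturbation on the log smooth open set $U$ used above, so that the flip of $(X/Z,B+M)$ coincides with the flip of the perturbed gklt pair; the latter exists by the known results on the existence of flips for gklt pairs in the required generality. Using the scaling divisor $A$, the MMP is then run by choosing, at each stage, the minimal $t_i \geq 0$ such that $K_{X_i}+B_i+M_{X_i}+t_i A_i$ is nef over $Z$, and contracting (or flipping) an extremal ray on the supporting hyperplane. Note the statement only claims that such a sequence can be produced, not that it terminates.

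To see that the gdlt property is preserved, fix the open set $U\subset X$ from Definition~\ref{def:gen-dlt}. For a divisorial contraction $\varphi\colon X\to X'$ of an extremal ray, the exceptional divisor has generalized log discrepancy zero, hence it must be a component of $\lfloor B\rfloor$ meeting $U$; thus $U':=\varphi(U)\setminus\varphi(\mathrm{Exc}(\varphi))$ is a well-defined open subset of $X'$ on which $X'$ is smooth and $B'|_{U'}$ has simple normal crossing support. Since generalized discrepancies only increase under $\varphi$, every generalized lc center of $(X',B'+M)$ is the image of one of $(X,B+M)$ and is therefore a stratum of $\lfloor B'\rfloor$ meeting $U'$. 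For a flip $X\dashrightarrow X^+$, the flipping locus has codimension at least two on both sides, so $U$ embeds isomorphically into $X^+$ as the desired open set $U^+$; the same monotonicity of generalized discrepancies again forbids the appearance of new generalized lc places. Finally, $\qq$-factoriality is preserved under divisorial contractions and flips of extremal rays, so the output of each step is again a $\qq$-factorial gdlt generalized pair.

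The main obstacle is not really the preservation of the gdlt condition, which is mostly a monotonicity argument about discrepancies, but rather the clean reduction needed to invoke the cone/contraction theorems and the existence of flips for generalized pairs. One has to exhibit a perturbation that moves the setting into the gklt range without disturbing the behavior of the scaling procedure; this requires that the open set $U$ really captures all the codimension one singular behavior of $(X,B+M)$, which is precisely the content of Definition~\ref{def:gen-dlt}.
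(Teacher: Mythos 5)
The paper offers no proof of this statement at all: Theorem~\ref{thm:mmp-gdlt} is recalled verbatim from~[BZ16, Lemma~4.4], so any direct argument is by construction a different route from the paper's. Your outline is essentially the standard one behind that reference: perturb $(X,B+M)$ to a gklt pair by removing a small multiple of $\lfloor B\rfloor$ (and note that, as in Lemma~\ref{lemm:gen-with-ample-nef-part}, $X$ then supports an honest klt pair, which is exactly the hypothesis under which [BZ16, Lemma~4.4] applies), deduce the cone/contraction theorems and existence of flips from the klt case, and run the scaling procedure. This is sound, provided you observe that for a fixed $(K_X+B+M)$-negative extremal ray the perturbation can be chosen so that the perturbed divisor is still negative on that ray (automatic for $0<\epsilon\ll 1$ since the original intersection number is strictly negative), so the flip of the perturbed pair really is the flip of $(X,B+M)$.

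There is, however, one false assertion in your gdlt-preservation step: the exceptional divisor $E$ of a divisorial contraction in a $(K_X+B+M)$-MMP need \emph{not} have generalized log discrepancy zero, i.e.\ it need not be a component of $\lfloor B\rfloor$. Writing $K_X+B+M_X=\varphi^*(K_{X'}+B'+M_{X'})+aE$, negativity only gives $a>0$; the contracted divisor is merely some prime divisor swept out by curves of the contracted ray, and its discrepancy over $X'$ is $a(E;X,B+M)+a>0$, with $a(E;X,B+M)=1-\operatorname{coeff}_E(B)$ arbitrary in $[0,1]$. Fortunately this claim is not needed: take $U':=\varphi(U\setminus E)$, which is open because $\varphi^{-1}(\varphi(E))=E$ and $\varphi$ is an isomorphism off $E$; the same discrepancy identity shows that the glc places of $(X',B'+M)$ are exactly those of $(X,B+M)$ whose center is not contained in $E$, and such a center, being irreducible and meeting $U$, also meets $U\setminus E$, hence its image is a stratum of $\lfloor B'\rfloor$ meeting $U'$. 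With that correction (and the analogous, easier, statement for flips that you already give), your argument for preservation of gdlt and $\qq$-factoriality is complete.
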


\noindent 
Although we can run the above minimal model program for gdlt pairs, it is not known in general whether it terminates, see~\cite[Lemma 4.4]{BZ16} for conditions ensuring termination.

\begin{definition}
\label{def:gdlt.mod}
{\em 
A $\qq$-factorial {\em gdlt modification} of a generalized log canonical pair $(X/Z,B+M)$ is a projective birational morphism $\pi\colon Y \rightarrow X$ from a normal $\qq$-factorial variety $Y$ such that $\pi$ only extracts prime divisors $E_i$ of generalized log discrepancy $0$ for $(X/Z,B+M)$ and where the generalized pair $(Y/Z,B_Y+M_Y)$ obtained by log pullback is gdlt.
}
\end{definition}

The existence of gdlt modifications is proved in~\cite[Theorem 3.2]{Fil18}, see also~\cite[Theorem 2.9]{FS20}.

Given a generalized pair $(X/Z,B+M)$ and an $\mathbb{Q}$-Cartier divisor $A$ on $X$ ample over $Z$, we will denote by $(X/Z,B+M+A)$ the generalized pair whose nef part is given by the sum of the nef part of $(X, B+M)$ and the b-divisor $\bar{A}$ which is the Cartier closure of $A$.

\begin{lemma}\label{lemm:gen-with-ample-nef-part}
Let $(X/Z,B+M)$ be a generalized $\qq$-factorial dlt pair, and let $A$ be an ample divisor on $X$ over $Z$.
Consider the gdlt pair $(X/Z,B+M+A)$.
Then, there exists an effective $\mathbb R$-divisor $D$ on $X$ such that
\begin{enumerate}
    \item 
$D$ is big over $Z$,
    \item 
$(X,D)$ is a klt pair, and,
    \item
$K_X+B+M+A\sim_{\rr,Z} K_X+D$.
\end{enumerate}
In particular, if $K_X+B+M$ is pseudo-effective over $Z$, then $(X/Z,B+M+A)$ admits both a good minimal model and an ample model over $Z$.
\end{lemma}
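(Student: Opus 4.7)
The plan is to replace the nef b-divisor $\mathbf{M}$ by an effective general representative and perturb $\lfloor B\rfloor$ so as to turn $(X/Z,B+M+A)$ into an honest klt pair. Since $X$ is $\qq$-factorial, $M_X$ is a nef $\qq$-Cartier divisor over $Z$, and therefore $M_X+A$ is ample over $Z$. Fix a small $0<\eta\ll 1$. Then $M_X+(1-\eta)A$ is nef plus ample over $Z$, hence ample, and for $m$ sufficiently divisible the relative linear system $|m(M_X+(1-\eta)A)|$ is base-point-free over $Z$. A general member divided by $m$ yields an effective $\qq$-divisor
\[H_1\sim_{\qq,Z}M_X+(1-\eta)A\]
with coefficients as small as desired and support avoiding any prescribed divisor, in particular every component of $B$.

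Next, openness of ampleness ensures that for $0<\delta\ll\eta$ the class $\eta A+\delta\lfloor B\rfloor$ remains ample over $Z$; the same construction provides a general effective $\qq$-divisor $H_2\sim_{\qq,Z}\eta A+\delta\lfloor B\rfloor$ with small coefficients and support disjoint from $B$. I then set
\[D:=B^{<1}+(1-\delta)\lfloor B\rfloor+H_1+H_2,\qquad\text{where }B^{<1}:=B-\lfloor B\rfloor.\]
By construction $D\sim_{\rr,Z}B+M_X+A$, establishing (3); $D$ is clearly effective, and since $D\sim_{\rr,Z}A+(B+M_X)$ is the sum of an ample and a pseudo-effective $\rr$-divisor over $Z$, it is big over $Z$, yielding (1). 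For (2), I would observe first that the intermediate generalized pair $(X,B^{<1}+(1-\delta)\lfloor B\rfloor+M_X)$ is generalized klt: by Definition~\ref{def:gen-dlt} every generalized non-klt center of $(X/Z,B+M)$ is a stratum of $\lfloor B\rfloor$, so its log discrepancy strictly increases once the coefficients of $\lfloor B\rfloor$ are reduced by $\delta$. Because $H_1,H_2$ are general members of base-point-free linear systems with arbitrarily small coefficients, a Bertini-type argument then shows that $(X,D)$ is klt: no new non-klt center is introduced and every coefficient of $D$ is strictly less than one.

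For the final statement, assume that $K_X+B+M_X$ is pseudo-effective over $Z$. Then $K_X+D\sim_{\rr,Z}K_X+B+M_X+A$ is the sum of a pseudo-effective and an ample $\rr$-Cartier divisor and is therefore big over $Z$. By the main theorem of Birkar--Cascini--Hacon--M\textsuperscript{c}Kernan, the klt pair $(X/Z,D)$ with big boundary admits a good minimal model and an ample model over $Z$. Since $K_X+D$ and $K_X+B+M_X+A$ are $\rr$-linearly equivalent over $Z$, every step of the $(K_X+D)$-MMP is simultaneously a step of the $(K_X+B+M+A)$-MMP, so the same models work for the generalized pair $(X/Z,B+M+A)$.

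The main technical obstacle lies in the klt verification of Step~3: one must control precisely how the log discrepancies of the non-klt centers of $(X/Z,B+M)$ change when one simultaneously replaces $M_X$ by $H_1$ and lowers the coefficients of $\lfloor B\rfloor$ by $\delta$. This argument relies crucially on the gdlt characterisation of these centers as strata of $\lfloor B\rfloor$ (so that pulling back $\lfloor B\rfloor$ genuinely increases the discrepancy at each such divisor), together with a Bertini-type continuity argument to ensure that the general effective representatives $H_1,H_2$ introduce no further singularities.
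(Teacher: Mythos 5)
Your proposal breaks down at the very first step. You assert that, because $X$ is $\qq$-factorial, the trace $M_X$ is a \emph{nef} $\qq$-Cartier divisor over $Z$, and you then build $H_1$ as a general member of the relatively base-point-free system $|m(M_X+(1-\eta)A)|$. But in Definition~\ref{gen.pair.def} the nefness of $\mathbf{M}$ is a property of the b-divisor: $\mathbf M$ is nef on some sufficiently high model $X'$, and $M_X=\pi_\ast M_{X'}$ is only the pushforward of a nef divisor. $\qq$-factoriality guarantees that $M_X$ is $\qq$-Cartier, not that it is nef; by the negativity lemma $\pi^\ast M_X=M_{X'}+F$ with $F\geq 0$ exceptional, and when $F\neq 0$ (which happens as soon as $\dim X\geq 3$) a curve inside $F$ can meet $M_X$ negatively. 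Consequently $M_X+(1-\eta)A$ need not be nef, let alone semiample over $Z$, and the general member $H_1$ you rely on need not exist. The entire point of the lemma is to handle exactly the case where $\mathbf M$ does \emph{not} descend to $X$.

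The paper's proof avoids this by working upstairs: it passes to a log resolution $X'\to X$ on which $\mathbf M$ descends, so that $M_{X'}+\bar A_{X'}$ is genuinely nef and big over $Z$, applies Kodaira's lemma to write it as $A'_k+\tfrac{E'}{k}$ with $A'_k$ relatively ample and $E'$ a fixed effective divisor, forms a klt sub-pair on $X'$ for $k\gg 0$ and $A'_k$ general, and then pushes forward to $X$ — the pushforward discards the (exceptional) components with negative coefficients and produces the effective klt boundary $D$. The remaining ingredients of your argument (reducing the coefficients of $\lfloor B\rfloor$ by $\delta$ to pass from gdlt to gklt, and the Bertini argument for general members of free systems) are sound and mirror the paper's reduction of the gdlt case to the gklt case, but they cannot be run until the nef part has been made effective on a model where it is actually nef.
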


\begin{proof}
We first prove the lemma when $(X/Z,B+M)$ is a generalized klt pair.
Let $\pi \colon X'\rightarrow X$ be a log resolution of $(X/Z,B+M+A)$. 
Thus, $M_{X'}+\bar{A}_{X'}$ is nef and big over $Z$. 
For every $k\in \mathbb{Z}_{>0}$, we can write $M_{X'}+\bar{A}_{X'} \sim_{\qq,Z} A'_k +\frac{E'}{k}$, where $A'_k$ is ample over $Z$, and $E'$ is a fixed effective $\qq$-divisor, see~\cite[Example 2.2.19]{Laz04a}.
Hence, choosing $k\gg 0$ and $A'_k$ general in its $\qq$-linear system, setting $B_{X'}$ to be the boundary part in the log pullback of $(X, B+M)$ to $X'$, the log pair
\[
\left(X',B_{X'}+A'_k+\frac{E'}{k} \right)
\]
is a klt sub-pair, and  $K_X'+B_{X'}+A'_k+\frac{E'}{k} \sim_{\mathbb Q, Z} K_{X'}+B_{X'}+M_{X'}+\bar{A}_{X'} \sim_{\mathbb Q, Z} 0$.
Moreover, every prime component of $B_{X'}+A'_k+\frac{E'}{k}$ with negative coefficient is exceptional over $X$.
Setting $\hat A\coloneqq \pi_\ast A'_k$, $E \coloneqq \pi_\ast \frac{E'}{k}$ on $X$, then $(X/Z,B+\hat A+E)$ is klt, and $\hat A+E\sim_{\qq,Z} M+A$. 
Setting $D:=A_1+E$, the last statement follows from~\cite[Theorem 1.1]{BCHM10} applied to $K_X+D$ over $Z$.
\\
We now assume that $(X/Z,B+M)$ is gdlt and $\qq$-factorial.
For $0< \epsilon \ll 1$, the pair
$(X/Z,B-\epsilon \lfloor B \rfloor +M)$ is generalized klt and
$A+\epsilon \lfloor B \rfloor$ is still ample.
Hence, we conclude by applying the generalized klt case to the generalized pair $(X, (B-\epsilon \lfloor B \rfloor)+M+(A+\epsilon \lfloor B \rfloor))$ of nef part $M+ \overline{(A+\epsilon \lfloor B \rfloor)}$.
\end{proof}

\begin{lemma}\label{lem:extraction-gslc-pair}
Let $(Y/Z,B_Y+M_Y)$ be a $\qq$-factorial gdlt pair and let $z\in Z$ be a closed point.
Assume that $K_Y+B_Y+M_Y\sim_{\qq,Z} 0$, and that $(Y/Z,B_Y+M_Y)$ has a divisorial glc center $E\subset \pi^{-1}(z)$.
Then, there exists a birational contraction over $Z$
\[
\xymatrix{
Y\ar@{-->}[rr] \ar[dr]_{\pi}& & Y' \ar[dl]^{\pi'} \\
& Z &
}
\] 
satisfying the following conditions: 
\begin{enumerate}
    \item 
$(Y',B_{Y'}+M_{Y'})$ is a generalized log canonical pair;

    \item 
the strict transform $E'$ of $E$ on $Y'$ is a divisorial glc center; 

    \item
${\pi'}^{-1}(z)=E'$; and,

    \item 
the generalized pair $(E', B_{E'}+M_{E'})$ defined by the adjunction formula along $E$ 
\[
(K_{Y'}+B_{Y'}+M_{Y'})|_{E'} = K_{E'}+B_{E'}+M_{E'}
\] 
is generalized semi-log canonical.
\end{enumerate}
\end{lemma}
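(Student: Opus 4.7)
Write $B_Y = E + \Delta$ with $\Delta \coloneqq B_Y - E \geq 0$. Since removing a boundary component preserves the gdlt property, $(Y/Z, \Delta + M_Y)$ is $\qq$-factorial gdlt, and
\[
K_Y + \Delta + M_Y \sim_{\qq, Z} -E.
\]
The plan is to construct $Y'$ as the outcome of a relative $(K_Y + \Delta + M_Y)$-MMP over $Z$ with scaling of a general ample divisor on $Y$ --- equivalently, a $(-E)$-MMP over $Z$. By Theorem~\ref{thm:mmp-gdlt}, each step of this MMP is available and preserves the gdlt property, while Lemma~\ref{lemm:gen-with-ample-nef-part} (applied after adding a small ample perturbation to reduce to a klt pair with big boundary) gives termination in our setting.

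For every extremal ray $R$ contracted during the MMP we have $-E \cdot R < 0$, so a curve generating $R$ meets the effective divisor $E$. Since $E$ is supported inside $\pi^{-1}(z)$, every such ray is vertical over $z$, and any contracted divisor is a component of $\pi^{-1}(z)$ different from $E$. The divisor $E$ itself is not contracted: using the adjunction formula on $E$, together with $K_Y + B_Y + M_Y \sim_{\qq,Z} 0$ and the nefness of $\mathbf{M}$ over $Z$, one obtains that $-E$ is nef along curves in $E$, so $E$ does not cover any $(-E)$-negative extremal ray. Hence the MMP performs only finitely many divisorial contractions, each strictly reducing the set of divisorial components of $\pi^{-1}(z)$ other than $E$, interspersed with a finite number of flips.

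Let $Y'$ denote the terminal model and $\pi' \colon Y' \to Z$ the resulting contraction, with $E'$ the strict transform of $E$. I then verify the four required properties: $(1)$ $(Y'/Z, B_{Y'} + M_{Y'})$ is glc (indeed gdlt) because the gdlt property is preserved by each MMP step; $(2)$ $E'$ remains a divisorial glc center of coefficient $1$, as $E$ is not contracted; $(3)$ $\pi'^{-1}(z) = E'$, because on $Y'$ the divisor $-E'$ is nef over a neighborhood of $z$, so any further vertical divisor over $z$ would produce a new $(-E')$-negative extremal ray, contradicting the termination criterion; $(4)$ the adjunction formula along $E'$ produces $(E', B_{E'} + M_{E'})$, which is gslc in the sense of Definition~\ref{def:gen-slc}. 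For $(4)$, note that $E'$ may become reducible or non-normal through the MMP, since distinct components of $\pi^{-1}(z)$ meeting along curves in $E$ may be identified after contraction; the conductor on $E'$ absorbs these identifications, and the normalization inherits a gdlt structure through the usual different computation.

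The main technical obstacles will be the termination of the generalized MMP, given that $-E$ is not pseudo-effective over $Z$, and the verification that $E$ itself is not contracted. Both points are handled by a careful analysis near $z$: the perturbation to a klt pair with big boundary from Lemma~\ref{lemm:gen-with-ample-nef-part} provides termination, while the triviality of $K_Y + B_Y + M_Y$ relative to $Z$ combined with the adjunction formula on $E$ forces $-E$ to be nef on $E$-curves, ensuring $E$ survives the MMP.
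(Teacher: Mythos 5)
Your overall strategy --- run a $(-E)$-MMP over $Z$ with scaling and stop at a model where $-E'$ is relatively nef near $z$ --- is the same as the paper's, but there is a genuine gap in the termination step. You claim that Lemma~\ref{lemm:gen-with-ample-nef-part}, ``applied after adding a small ample perturbation,'' gives termination. It does not, in general. The pair being contracted is only gdlt (not klt with big boundary), and as the paper notes right after Theorem~\ref{thm:mmp-gdlt}, termination of the gdlt MMP with scaling is not known. The perturbation trick only works if the scaling coefficients $\lambda_i$ stay bounded away from $0$: in that case every step is a step of a $(K_Y+\Delta+M_Y+\lambda_\infty A)$-MMP, which is klt with big boundary by Lemma~\ref{lemm:gen-with-ample-nef-part} and hence terminates by~\cite{BCHM10}. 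If instead $\lambda_i\to 0$, no termination statement is available, and your verification of $(3)$ --- which requires actually reaching a model where $-E'$ is nef over a neighborhood of $z$ --- collapses. The paper's proof is built precisely to avoid this: it allows the MMP to be an infinite sequence of flips and instead shows that the number $k_i$ of irreducible components of $\pi_i^{-1}(z)$ must eventually drop to $1$, because any component $S\neq E_i$ of the fiber carries curves with $C\cdot(-E_i)<0$, hence lies in the diminished base locus of $K_{Y_i}+B_{Y_i}-\epsilon E_i+M_{Y_i}+\lambda A_{Y_i}$ for small $\lambda$, and so must be contracted or flipped after finitely many further steps. One then stops at the first model with $k_i=1$, without ever producing a minimal model. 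You need this (or an equivalent) argument.

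A secondary issue: your reason that $E$ survives the MMP --- that adjunction plus $K_Y+B_Y+M_Y\sim_{\qq,Z}0$ and nefness of $\mathbf M$ force $-E$ to be nef on curves in $E$ --- is false as stated ($-E|_E$ can certainly be negative on curves of $E$ at the start; nothing about the triviality of $K_Y+B_Y+M_Y$ controls the normal bundle of $E$). The correct and much simpler reason is the one the paper gives: every contracted ray $R$ satisfies $E\cdot R>0$ because the MMP is $E$-positive, whereas a divisorial contraction of $E$ would require $E\cdot R<0$ on the contracted ray. The remainder of your argument (preservation of glc/gdlt under the trivial MMP, and gslc-ness of the adjunction to the possibly non-normal $E'$ via the gdlt structure of $(Y',B_{Y'}-E'+M_{Y'})$) matches the paper and is fine.
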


\begin{proof}
We first prove properties (1)-(3).
Let $0< \epsilon \ll 1$ be a rational number.
By Theorem~\ref{thm:mmp-gdlt}, we can run a minimal model program for the gdlt pair $(Y,B_Y-\epsilon E+M_Y)$ with scaling of an ample divisor $A$ over $Z$
\begin{align}
\label{eqn:mmp.eqn}
\xymatrix{
Y=:Y_0 \ar@{-->}[r] \ar[drr]_{\pi=:\pi_0}& 
Y_1 \ar@{-->}[r] 
\ar[dr]^{\pi_1}& 
Y_2 \ar@{-->}[r] 
\ar[d]^{\pi_2}&
Y_3 \ar@{-->}[r] 
\ar[dl]^{\pi_3}&
\dots \\
& & Z & &
}
\end{align}
Since this MMP is $(K_Y+B_Y+M_Y)$-trivial over $Z$, 
then $(Y_i,B_{Y_i}+M_{Y_i})$ is $\qq$-factorial glc for all $i$, where $B_{Y_i}$ and $M_{Y_i}$ are the strict transforms of $B_Y$, $M_Y$, respectively.
As, by construction, the MMP in~\eqref{eqn:mmp.eqn} is also a $(-E)$-MMP, $E$ cannot be contracted at any of its steps.
Let $E_i$ be the strict transform of $E$ on $Y_i$.\\

\noindent
{\it Claim}. 
For $i \gg 0$, $E_i=\pi_i^{-1}(z)$.

\begin{proof}[Proof of the Claim]
If the MMP in~\eqref{eqn:mmp.eqn} terminates in finitely many steps, then the conclusion follows at once.
Hence, we can assume that such MMP does not terminate.
\\
Let $\lambda_i \geq 0$ be the positive real number such that $Y_i\dashrightarrow Y_{i+1}$
is a $(K_{Y_i}+B_{Y_i}-\epsilon E_i+M_{Y_i}+\lambda_i A_{Y_i})$-trivial birational map.
If $\lambda_\infty:=\lim_{i\rightarrow \infty}\lambda_i= >0$, then the MMP in~\eqref{eqn:mmp.eqn} is also a run of the MMP for the generalized pair $(Y,B-\epsilon E+M+\lambda_\infty A)$.
By Lemma~\ref{lemm:gen-with-ample-nef-part}, there exists $0\leq D \sim_{\qq,Z} B-\epsilon E+M+\lambda_\infty A$ such that $(Y/Z,D)$ is a klt pair with big boundary over $Z$.
Hence, the above minimal model program will terminate by~\cite{BCHM10} and for some $i$ large enough, $-E_i$ is nef over $Z$.
Thus, the fiber over $z$ equals $E_i$.
\\
Therefore, we may assume that the numbers $\lambda_i$ converge to zero and, moreover, that from a certain index $i_0$ onwards each step of the MMP is a flip.
Since every step of~\eqref{eqn:mmp.eqn} is $E$-positive, if $Y_i\dashrightarrow Y_{i+1}$ is a flip, the flipped locus is always contained in $E_{i+1}$.
Thus, the number $k_i$ of irreducible components of $\pi_i^{-1}(z)$ cannot increase.
If $k_i=1$ for some $i$, then we are done since $E_i$ is always an irreducible component of the fiber.
Moreover, $k_i > k_{i+1}$ decreases if a step 
of the MMP contains an irreducible component of the fiber in the flipping locus since the flipped locus will be contained in the strict transform of $E_{i+1}$.
\\
Hence, assuming $k_i >1$ for some $i$, we now show that $k_j < k_i$ for some $i > j$: this proves that, eventually, $k_i$ must be $1$.
\\
Let $S$ be a component of $\pi_i^{-1}(z)$ other than $E_i$.
Since the fiber is connected, we may assume that $S \cap E_i \neq \emptyset$.
Hence, through a general point $s\in S$ there exists a curve $C_s \subset \pi_i^{-1}(x)$ which intersects $E_i$ non-trivially and it is not contained in $E_i$.
In particular, $C_s \cdot (-E_i) < 0$, thus,
\begin{align*}
& C_s \subset \mathbf B_{-}(K_{Y_i}+B_{Y_i} -\epsilon E_i +M_{Y_i}/Z), \quad \text{and}
\\
& S\subset \mathbf B_{-}(K_{Y_i}+B_{Y_i}-\epsilon E_i +M_{Y_i}/Z).
\end{align*}
Then, for any $0< \lambda \ll 1$ 
\[
S\subset \mathbf B(K_{Y_i}+B_{Y_i}-\epsilon E_i + M_{Y_i}+\lambda A_{Y_i} /Z),
\]
where $\mathbf B(D)$ denotes the stable base locus of a divisor $D$.
Since $\lim_{i \rightarrow \infty}\lambda_i =0$, it follows that for some sufficiently large $j >i$, $Y_j$ is a minimal model for the pair $(Y_j,B_j-\epsilon E_j +M_j+\lambda A_j)$ over $Z$.
In particular, $S$ must have been contracted or was part of the flipping locus at some step $Y_k\dashrightarrow Y_{k+1}$, for $k\in \{i,\dots, j-1\}$.
This shows that $k_j < k_i$, as desired.
\end{proof}
By the above claim, after finitely many steps of above run of the MMP, we reach a model $Y_i$ where property $(3)$ holds. 
Setting $Y'\coloneqq Y_i$ and $E':=E_i$, then, $(Y',B_{Y'}+M_{Y'})$ is $\qq$-factorial and glc.
By construction, $(Y',B_{Y'}-\epsilon E' +M_{Y'})$ is a gdlt pair for $0< \epsilon \ll 1$. 
Hence, the pair $(E', \widetilde B_{E'})$ defined by the adjunction
\[
(K_{Y'}+B_{Y'})|_{E'} \sim_\qq K_{E'}+\widetilde B_{E'}
\]
is a semi-log canonical pair, see, e.g.,~\cite[Example 2.6]{FG14}.
We conclude that the pair obtained by generalized adjunction
\[
(K_{Y'}+B_{Y'}+M_{Y})|_{E'} 
\sim_\qq 
K_{E'}+B'_{E'}+M_{E'},
\]
is a generalized semi-log canonical pair $(E',B'_{E'}+M_{E'})$ since by the previous observation $E'$ is $S_2$.
\end{proof}

\begin{remark}\label{rem:gdlt}{\em 
Let $(Y',B_{Y'}+M_{Y'})$ be the generalized log canonical pair  constructed in Lemma~\ref{lem:extraction-gslc-pair}.
The generalized  pair $(Y',B_{Y'}+M_{Y'}-\epsilon E')$ is gdlt for any $0< \epsilon \ll 1$, where $E'$ is the divisor over $z \in Z$: in fact, $(Y',B_{Y'}+M_{Y'}-\epsilon E')$ is obtained by running the MMP in~\eqref{eqn:mmp.eqn} for the gdlt pair $(Y,B_{Y}+M_{Y}-\epsilon E)$.
}
\end{remark}

\begin{definition}{\em
Let $(X/Z,B+M)$ be a generalized klt pair.
We say that a birational contraction $\pi\colon Y \rightarrow X$ is a {\em small $\qq$-factorialization},
if $Y$ is $\qq$-factorial and $\pi$ does not extract any divisor. 
}
\end{definition}

In analogy with the classical setup of log pairs, one can prove the existence of some special gdlt modifications.
The following result is a generalization of~\cite[Theorem 3.2]{Fil18}.

\begin{lemma}\label{lem:existence-gen-dlt-mod}
Let $(X/Z, B+M)$ be a generalized log canonical pair over $Z$ and let $z \in Z$ be a closed point. 
Then there exists a $\qq$-factorial gdlt modification $\pi \colon Y\rightarrow X$ over $Z$.
Moreover, if $(X/Z,B+M)$ has a glc center contained in the fiber over $z$, 
then the glc modification can be constructed so that $(Y/Z,B_Y+M_Y)$ has a divisorial glc center mapping to $z$.
\end{lemma}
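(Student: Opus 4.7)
The first assertion is exactly~\cite[Theorem 3.2]{Fil18}, so the plan is to invoke it for the first part and to enhance its conclusion to obtain the second part. Let $V\subseteq\pi_X^{-1}(z)$ be a glc center of $(X/Z,B+M)$. The strategy is to produce a prime divisor $E$ over $X$ of generalized log discrepancy zero whose center on $X$ is contained in $V$, and to arrange for $E$ to be extracted in a $\qq$-factorial gdlt modification; the strict transform of $E$ on such a modification is then a divisorial glc center mapping to $z$.

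The existence of $E$ is immediate from the definition of glc center: since $V$ is a generalized log canonical center of $(X,B+M)$, there exists, on some birational model of $X$, a prime divisor $E$ with generalized log discrepancy zero for $(X,B+M)$ whose image on $X$ is contained in $V$. To extract $E$, I would take a log resolution $\mu\colon W\to X$ containing $E$ as a divisor, set $\Gamma_W$ to be the strict transform of $B$ plus every $\mu$-exceptional divisor of generalized log discrepancy zero with coefficient one (so that $E\leq \Gamma_W$), and run the MMP over $X$ following the argument in the proof of~\cite[Theorem 3.2]{Fil18}. Writing
\[
K_W+\Gamma_W+M_W\sim_{\mathbb R,X}F
\]
with $F\geq 0$ supported on the $\mu$-exceptional divisors of strictly positive generalized log discrepancy, this MMP terminates with a model on which every component of $F$ has been contracted and the resulting pair is gdlt.

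The main technical point is to check that $E$ is not contracted during the MMP. This holds because $E$ has coefficient zero in $F$ and coefficient one in $\Gamma_W$, so it persists as a component of $\lfloor\Gamma_W\rfloor$ throughout the MMP and cannot be contracted without violating the gdlt property that is preserved at each step of the construction in~\cite[Theorem 3.2]{Fil18}. The final model $\pi\colon Y\to X$ is thus a $\qq$-factorial gdlt modification, and the strict transform of $E$ on $Y$ is a divisorial glc center mapping to $z\in Z$, as required.
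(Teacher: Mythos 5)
Your overall strategy is the same as the paper's: take a log resolution on which a glc place $E$ with center in the fiber over $z$ becomes a divisor, add exceptional divisors with coefficient one, run an MMP over $X$ that contracts exactly the exceptional divisors of positive generalized log discrepancy, and observe that $E$ survives. However, there is a concrete error in your choice of boundary. You define $\Gamma_W$ to be the strict transform of $B$ plus only the $\mu$-exceptional divisors of generalized log discrepancy \emph{zero}. Writing the crepant pullback $K_W+B_W+M_W=\mu^\ast(K_X+B+M_X)$, one has $\coeff_F(B_W)=1-a_F$ for each exceptional prime $F$, so with your $\Gamma_W$ one gets
\[
K_W+\Gamma_W+M_W\sim_{\mathbb{R},X}\;-\sum_{a_F>0}(1-a_F)\,F,
\]
which is \emph{not} effective whenever some exceptional divisor has log discrepancy in $(0,1)$. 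Your displayed relation $K_W+\Gamma_W+M_W\sim_{\mathbb{R},X}F$ with $F\geq 0$ is therefore false for the boundary you wrote down, and the ensuing MMP would not contract the divisors with $a_F\in(0,1)$ — so the output would extract divisors of positive log discrepancy and fail to be a gdlt modification. The fix is exactly what the paper (and \cite[Theorem~3.2]{Fil18}) does: take $\Gamma_W=\mu_\ast^{-1}B+\operatorname{Exc}(\mu)$, i.e.\ \emph{all} exceptional divisors with coefficient one; then $K_W+\Gamma_W+M_W\sim_{\mathbb{R},X}\sum_F a_F\,F\geq 0$, supported precisely on the positive-discrepancy exceptionals, and the argument goes through.

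A secondary point: your reason that $E$ is not contracted ("it persists as a component of $\lfloor\Gamma_W\rfloor$ and cannot be contracted without violating the gdlt property") is not a valid justification — steps of an MMP can perfectly well contract components of $\lfloor\Gamma_W\rfloor$ while preserving gdlt-ness. The correct reason, which is the one implicit in the paper, is that this MMP is an $F$-MMP over $X$, so by the negativity lemma every divisor it contracts lies in the relative diminished base locus of $F$, hence in $\operatorname{Supp}(F)$; since $a_E=0$, $E$ is not a component of $F$ and therefore survives.
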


\begin{proof}
As we know that gdlt modifications exist,
we only need to prove the second part of the statement.
Let $\pi' \colon X'\rightarrow X$ be a log resolution of $(X/Z,B+M)$.
By taking $X'$ to be a sufficiently high model, we may assume that $\pi'$ extracts a prime divisor $E$ of log discrepancy $0$ mapping to $z \in Z$.
Setting $B':={\rm Exc}(\pi')+\pi_\ast ^{'-1}(B)$, then the generalized pair $(X',B'+M_{X'})$ is gdlt and 
\[
K_{X'}+B'+M_{X'} \sim_{\qq,X} \sum_F a_F(X/Z,B+M)F,
\]
where the sum runs over all prime divisors $F$ on $X'$ exceptional over $X$ such that the log discrepancy $a_F(X,B+M)$ is non-negative - this follows as in the proof of Lemma~\ref{lem:extraction-gslc-pair}.
By the negativity lemma, every prime divisor $F$ that is exceptional over $X$ and such that $a_F(X/Z,B+M)>0$ is contained in the relative diminished base locus of $K_{X'}+B'+M_{X'}$ over $X$.
By Theorem~\ref{thm:mmp-gdlt}, we may run a $(K_X'+B'+M_{X'})$-MMP over $X$ with scaling of an ample divisor.
After finitely many steps of this MMP, all the divisors with $a_F(X/Z,B+M)>0$ are contracted, since they are contained in the diminished base locus.
Let $Y$ be one such model.
Hence, $(Y/Z,B_Y+M_Y)$ is a $\qq$-factorial gdlt modification of $X$.
In particular, the strict transform of $E$ on $Y$ is a divisor.
\end{proof}

\subsection{Complexity}
In this subsection, we recall the definitions and main properties of complexity and fine complexity that were introduced in~\cite{BMSZ18};
we also introduce the orbifold complexity.

\subsubsection{Complexity in the relative setting}
We extend the notion of complexity for a log pair to the relative setting and we study some of its properties in this context.

\begin{notation}
\label{not:localizations}
{\em 
Given a normal variety $Z$ and a (closed) point $z \in Z$, we denote by $Z_z:={\rm Spec}(\mathcal O_{Z, z})$, where $\mathcal O_{Z, z}$ is the local ring of $Z$ at $z$.
For a morphism $X\rightarrow Z$, we will denote by $X_z\rightarrow Z_z$ the base change of $X\rightarrow Z$ to $Z_z$.
}
\end{notation}

\begin{definition}
\label{def:cl.gr}
{\em 
Let $X, Z$ be a quasi-projective normal variety.
\begin{enumerate}
    \item 
For a given contraction $\phi \colon X\rightarrow Z$ the class group ${\rm Cl}(X/Z)$ of $X$ over $Z$  is the group of Weil $\qq$-divisors on $X$ modulo the subgroup generated by linear equivalence and by $\phi^\ast {\rm Pic}(Z)$.

    \item 
The $\qq$-class group $\relpicxz$ of $X$ over $Z$ is $\relpicxz:= {\rm Cl}(X/Z) \otimes \qq$.

    \item
Let $x \in X$ be a closed point.
The local class group ${\rm Cl}(X_x)$ at $x \in X$ is the group of Weil divisors of $X$ modulo divisors that are principal in a neighborhood of $x$.
    \item 
The $\qq$-local class group at $x \in X$ is ${\rm Cl}_\qq(X_x):={\rm Cl}(X_x) \otimes \qq$.
\end{enumerate}
}
\end{definition}

Given two Weil $\qq$-divisors $D_1, D_2$ on $X$, then $[D_1]=[D_2] \in \relpicxz$ if there exists a Cartier divisor $H$ on $Z$ so that $mD_1\sim mD_2 + \phi^\ast H$, for some positive integer $m$.

We recall the following definitions that originally appeared in~\cite{BMSZ18}.

\begin{definition}\label{def:complexity}
{\em
Let $X/Z$ be a 
normal variety over $Z$. 
Let $B$ be an effective divisor on $X$.
\begin{enumerate}
    \item 
A {\em decomposition} $\Sigma$ of $B$ is a finite formal sum of the form
\begin{equation}
\label{decomp.def.eqn}
0 \leq \Sigma=\sum_{i=1}^k b_i B_i \leq B,    
\end{equation}
where the $B_i$ are effective Weil divisors (possibly reducible) and for all $i$, $b_i \in \mathbb{R}_{>0}$.

    \item
The {\em norm} $|\Sigma|$ of a decomposition $\Sigma$ of $B$ is $|\Sigma|:=\sum_{i=1}^k b_i$.
    \item 
The {\em span} of a decomposition $\Sigma$ of $B$ (relative to $Z$) is defined as
\[
\langle \Sigma / Z \rangle :=
{\rm span}\langle [B_i] \mid i\in \{1,\dots, k\} \rangle \subseteq \relpicxz.
\] 
    \item 
Given a closed point $z\in Z$, we say that a decomposition $\Sigma$ of $B$ is {\em supported at} $z$ if all the divisors $B_i$ in~\eqref{decomp.def.eqn} intersect the fiber over $z$.
\end{enumerate}
}
\end{definition}

We recall the classic concepts of complexity, fine complexity, and local complexity.

\begin{definition}
\label{complexity.def}
{\em
Let $X/Z$ be a 
normal variety over $Z$. 
Let $z \in Z$ be a closed point.
Let $B$ be an effective divisor on $X$ and let $\Sigma$ be a decomposition of $B$ supported at $z$.
\begin{enumerate}
    \item 
The {\em complexity} of $\Sigma$ at $z\in Z$ is
\[
\abscompz(X/Z,B;\Sigma):=
\dim X +
\picnumbxz  -|\Sigma|.
\]

    \item 
The {\em fine complexity} of $\Sigma$ at $z\in Z$ is
\[
\finecomp_z(X/Z,B;\Sigma):=
\dim X +
\dim_\qq\langle \Sigma /Z \rangle - |\Sigma|.
\]
  
    \item 
The fine complexity $\finecomp_z(X/Z,B)$ (resp. the complexity $c_z(X/Z,B)$) of $(X/Z,B)$ at $z\in Z$ 
is the infimum, over the set of all possible decompositions $\Sigma$ of $B$ supported at $z\in Z$ of $\finecomp_z(X/Z, B; \Sigma)$ (resp. of $c_z(X/Z, B; \Sigma)$).
\end{enumerate}
}
\end{definition}

\begin{remark}
{\em 
\begin{enumerate}
	\item 
The complexity (resp. fine complexity) is a local invariant of the morphism $X \to Z$ around $z \in Z$. 
If we replace $Z$ with an open neighborhood of $z \in Z$, then $\picnumbxz$ (resp. $\dim_{\qq}\langle \Sigma/Z \rangle$) can only decrease and it achieves its minimum value on a sufficiently small open neighborhood of $z \in Z$.
Alternatively, one could also substitute $\relpicxz$ (resp. $\langle \Sigma/Z \rangle$) with ${\rm Cl}_\qq(X_z)$ (resp. with its image inside ${\rm Cl}_\qq(X_z)$) in the definition of complexity (resp. fine complexity).
	\item 
Since for any decomposition $\Sigma$ of an effective $\mathbb R$-divisor $\picnumbxz \geq \dim_\qq \langle \Sigma / Z \rangle$, there is an obvious inequality
\begin{align}
\label{eqn:ineq.compl.vs.abs.compl}
c_z(X/Z,B) 
\geq 
\finecomp_z(X/Z,B).
\end{align}
\end{enumerate}
}
\end{remark}

Let us observe that Definition~\ref{complexity.def} makes sense for any pair $(X, B)$ with $B$ effective even when the $\qq$-divisor $K_X+B$ is not $\qq$-Cartier.
Thus, we can then proceed to define complexity and fine complexity also for generalized pairs as follows.

\begin{definition}
\label{def:compl.gen.pair}
{\em
Let $(X/Z,B+M)$ be a generalized pair over $Z$. 
Let $z\in Z$ be a closed point.
\begin{enumerate}
    \item 
The complexity $\abscompz(X/Z,B+M)$ of $(X/Z,B+M)$ at $z\in Z$ is
\[
\abscompz(X/Z,B+M):= \abscompz(X/Z,B).
\]
    \item
The fine complexity $\finecomp_z(X/Z,B+M)$ of $(X/Z,B+M)$ at $z\in Z$ is
\[
\finecomp_z(X/Z,B+M):= \finecomp_z(X/Z,B).
\]
\end{enumerate}
}
\end{definition}

\subsubsection{Orbifold structures and orbifold complexity}

In this subsection, we introduce the notion of orbifold complexity.
Given a quasi-projective variety $X$, we denote by $X^1$ the set of codimension one points of $X$.
Given $P \in X^1$, if $D=aP + D'$, with $a \in \mathbb Q^\ast$ and $P \not \subset {\rm Supp} \ D'$, then the Cartier index of $D$ at $P$ is the minimal $n \in \zz_{>0}$ such that $na \in \zz_{>0}$.

\begin{definition}
\label{def.orb.struct}
{\em 
Let $X$ be a normal variety and let $B$ be a $\mathbb R$-divisor on $X$.
\begin{enumerate}
    \item 
An {\em orbifold structure} on $X$ is a function 
\[ 
n \colon X^1 \rightarrow \zz_{>0}
\]
such that $n(P)>1$ for only finitely many $P \in X^1$.
    \item 
An orbifold structure $n$ is {\it trivial} if $n(P) =1$ for all $P \in X^{(1)}$;
it is {\it non-trivial}, otherwise.
    \item 
The {\em support} of an orbifold structure $n$ is the union of all the prime divisors $P_i$ with $n_{P_i}>1$.
    \item
An orbifold structure $n$ is compatible with $B$ if the support of $n$ is contained in $\supp(B)$.
\end{enumerate}
}
\end{definition}

Let $n \colon X^1 \rightarrow \zz_{>0}$ be an orbifold structure and let $P_1, \dots, P_k$ be its support.
We denote by $n_P$ the value $n$ at $P \in X^{1}$ and we refer to it as the {\em orbifold} index of $n$ at $P$.
The orbifold structure $n$ is completely determined by the pair $((P_1, \dots, P_k), (n_{P_1}, \dots, n_{P_k}))$, up to re-ordering the $P_i$ and the $n_{P_i}$. 
We will write $n=((P_1, \dots, P_k), (n_{P_1}, \dots, n_{P_k}))$.

\begin{definition}
\label{def.orb.W.div}
{\em
Let $X$ be a normal variety and let $n=((P_1, \dots, P_k), (n_{P_1}, \dots, n_{P_k}))$ be  an orbifold structure on $X$.
A $\qq$-divisor $D$ is said to be an {\em orbifold Weil divisor} $D$ for the orbifold structure $n$ on $X$ if the Cartier index of $D$ at any codimension one point $P$ divides $n_P$.
}
\end{definition} 

When the orbifold structure $n$ is clear from the context, we will simply say that a $\mathbb Q$-divisor $D$ satisfying the properties of Definition~\ref{def.orb.W.div} is an orbifold Weil divisor. 

\begin{remark}
\label{rmk:coeff.orb.struct.canonical}
{\em
Let $X$ be a normal variety.
\begin{enumerate}
\item 
If we only consider the trivial orbifold structure on $X$, then an orbifold Weil divisor on $X$ is simply a Weil divisor on $X$.
    \item
    \label{canonical.expr}
Given an orbifold structure $n=((P_1, \dots, P_k), (n_{P_1}, \dots, n_{P_k}))$ and effective orbifold Weil divisor $D$ for $n$ on $X$, then we can write in a unique way
\[
D=\sum_{P \in X^1} \frac{{\rm num}_P(D)}{n_P}P,
\quad
{\rm num}_P(D) \in \mathbb Z.
\]
We refer to this unique expression as the {\em canonical expression} of the effective orbifold Weil divisor $D$.
\end{enumerate}
}
\end{remark}

\begin{remark}{\em 
\label{rmk:orb.struct.restr}
Let $X$ be a normal variety and let $n:=((P_1, \dots, P_k), (n_{P_1}, \dots, n_{P_k}))$ be an orbifold structure on $X$.
\begin{enumerate}
    \item 
    \label{rmk:orb.struct.bir.contr}
Given a birational contraction $\phi \colon X \dashrightarrow Y$ then there is a natural induced orbifold structure $n'$ on $Y$, defined by $n'_{\phi_\ast P}:=n_P$, for $P \not \subset {\rm exc}(\phi)$.
   
    \item
Analogously, for a proper contraction $f \colon X \to Z$ with $\dim X > \dim Z$, there is an induced orbifold structure $n''$ on the general fiber $F$ of $f$ supported at those prime divisors of $F$ contained in $\cup_{i=1}^k \supp (F \cap P_i)$.
For a prime divisor $Q \subset P_i \cap F$, then $n''_Q:= n_{P_i}$.
\end{enumerate}
}
\end{remark}

\begin{definition}
\label{def:orb.decomp}
{\em  
Let $X \to Z$ be a normal variety over $Z$.
Let $B$ be an effective divisor on $X$.
Let $n=((P_1, \dots, P_k), (n_{P_1}, \dots, n_{P_k}))$ be an orbifold structure on $X$ compatible with $B$.
\begin{enumerate}
    \item 
An {\em orbifold decomposition} $\Sigma$ of $B$ with respect to $n$ is a formal finite sum of the form
\[
0\leq \Sigma := 
\sum_{P \in X^1} \left( 1-\frac{1}{n_{P}}\right) P+\sum_{j=1}^s b_j B_j \leq B,
\]
where for all $j=1,2, \dots, s$, $B_j$ is an effective orbifold Weil divisors on $X$ and $b_j \in \mathbb R_{\geq 0}$.
    \item 
The {\em norm} $|\Sigma|$ of the orbifold decomposition $\Sigma$ of $B$ is $|\Sigma|:=\sum_{j=1}^s b_i$.
    \item 
The {\em span} of the orbifold decomposition $\Sigma$ of $B$ (relative to $Z$) is
\[
\langle \Sigma/Z\rangle := \langle
[B_j] \mid j\in \{1,\dots,s\}\rangle \subseteq \relpicxz.
\]
    \item 
Given a point $z\in Z$, we say that $\Sigma$ is {\em supported at $z$} if all the effective divisors $B_j$ intersect the fiber over $z$.
\end{enumerate}
}
\end{definition}

Let us observe that for any given decomposition (resp. orbifold decomposition) $\Sigma$ of $B$, we can restrict to a neighborhood of $z\in Z$ over which the given decomposition (resp. orbifold decomposition) is supported at $z\in Z$.

In this paper, whenever we fix a relative variety $X/Z$ and a closed point $z\in Z$, we will always assume that any decomposition of a divisor $B$ on $X$ is supported at $z$.

\begin{remark}
\label{rem:coeff.orb.decomp}
{\em 
Given a pair $(X,B)$ and a non-trivial orbifold structure $n=((P_1, \dots, P_k), (n_{P_1}, \dots, n_{P_k}))$, if $B$ admits an orbifold decomposition with respect to $n$, then ${\rm coeff}_{P_i}(B)\geq 1-\frac{1}{n_{P_i}}$, $i=1, \dots, k$.
}
\end{remark}

\begin{definition}
\label{def:orb.compl}
{\em
Let $(X/Z,B+M)$ be a generalized log pair over $Z$. 
Let $z \in Z$ be a closed point.
\begin{enumerate}
    \item 
Let $n$ be an orbifold structure on $X$ and let $\Sigma$ be an orbifold decomposition with respect to $n$ of $B$ supported at $z$.
The {\em orbifold complexity} of $\Sigma$ at $z\in Z$ is
\[
\orbcomp_z(X/Z,B+M;\Sigma):=\dim X +\dim_\qq \langle \Sigma/Z\rangle -|\Sigma|.
\]
    \item   
The orbifold complexity $\orbcomp_z(X/Z,B+M)$ of $(X/Z,B+M)$ at $z\in Z$ is
the infimum of all the orbifold complexities $\orbcomp_z(X/Z,B;\Sigma)$ computed among all possible orbifold decompositions $\Sigma$ of $B$ supported at $z\in Z$ (with respect to all possible orbifold structures on $X$ compatible with $B$).
\end{enumerate}
}
\end{definition} 

When $Z$ is just a point, then we simply drop $Z$ and $z$ from the notation and simply write $c(X, B+M)$ and, similarly, for the fine and orbifold complexities.

We have the following simple inequalities among the different types of complexity introduced so far.
\begin{lemma}
\label{lem:basic.ineq}
Let $(X/Z,B+M)$ be a generalized pair over $Z$. 
Let $z\in Z$ be a closed point.
Then,
\[
\abscompz(X/Z,B+M)\geq
\finecomp_z(X/Z,B+M)\geq
\orbcomp_z(X/Z,B+M).
\]
\end{lemma}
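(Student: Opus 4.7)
The plan is to prove the two inequalities separately, both of which reduce to routine comparisons once one unpacks the definitions.

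For the first inequality $c_z(X/Z, B+M) \geq \bar c_z(X/Z, B+M)$, I would argue at the level of a fixed decomposition. Let $\Sigma = \sum_{i=1}^k b_i B_i$ be any decomposition of $B$ supported at $z$. Since the classes $[B_i] \in \relpicxz$ span the subspace $\langle \Sigma/Z \rangle \subseteq \relpicxz$, one has $\dim_\qq \langle \Sigma/Z \rangle \leq \dim_\qq \relpicxz = \picnumbxz$. Adding $\dim X$ and subtracting $|\Sigma|$ from both sides yields $\bar c_z(X/Z, B+M; \Sigma) \leq c_z(X/Z, B+M; \Sigma)$. Taking the infimum over all decompositions $\Sigma$ gives the first inequality.

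For the second inequality $\bar c_z(X/Z, B+M) \geq \orbcomp_z(X/Z, B+M)$, the key observation is that a standard decomposition is a special case of an orbifold decomposition, namely the one associated to the trivial orbifold structure. Concretely, given any decomposition $\Sigma = \sum_{i=1}^k b_i B_i$ of $B$ supported at $z$, consider the trivial orbifold structure $n \equiv 1$ on $X$. By Definition~\ref{def.orb.W.div}, every Weil divisor is an orbifold Weil divisor for this structure, and the fixed sum $\sum_{P \in X^1}(1 - 1/n_P)P$ appearing in an orbifold decomposition vanishes identically. Hence $\Sigma$ qualifies as an orbifold decomposition of $B$ with respect to $n$, with the same norm $|\Sigma|$ and the same span $\langle \Sigma/Z \rangle \subseteq \relpicxz$ as in Definition~\ref{def:complexity}. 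Consequently $\orbcomp_z(X/Z, B+M; \Sigma) = \bar c_z(X/Z, B+M; \Sigma)$ for this particular choice, and taking the infimum on the left-hand side over the strictly larger class of orbifold decompositions (including those arising from non-trivial orbifold structures) only decreases the value, giving $\orbcomp_z(X/Z, B+M) \leq \bar c_z(X/Z, B+M)$.

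There is no real obstacle here; both estimates are formal consequences of the nested structure of the definitions. The only subtlety worth pointing out is that the trivial orbifold structure genuinely identifies standard decompositions with a subclass of orbifold decompositions, so no coefficient bookkeeping is required, and the fine complexity sits squarely between the other two invariants by construction.
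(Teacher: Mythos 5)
Your proof is correct, and it is exactly the argument the paper has in mind: the lemma is stated without proof as an immediate consequence of the definitions, with the first inequality already recorded in the remark preceding Definition~\ref{def:compl.gen.pair} via the same observation $\dim_\qq\langle\Sigma/Z\rangle\leq\picnumbxz$, and the second following because a decomposition is precisely an orbifold decomposition for the trivial orbifold structure. Nothing further is needed.
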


\subsubsection{Local complexities}

\begin{definition}
\label{def:orb.decomp.local}
{\em  
Let $x\in X$ be a germ of a normal variety.
Let $B$ be an effective divisor on $x \in X$.
Let $n=((P_1, \dots, P_k), (n_{P_1}, \dots, n_{P_k}))$ be an orbifold structure on $X$ compatible with $B$ such that $x \in P_i$ for all $i$.
Let 
\[
\Sigma := 
\sum_{P \in X^1} \left( 1-\frac{1}{n_{P}}\right) P+\sum_{j=1}^s b_j B_j \leq B,
\]
be an orbifold decomposition of $B$ for $n$.
\begin{enumerate}
    \item 
The {\em span} of the orbifold decomposition $\Sigma$ of $B$ is
\[
\langle \Sigma\rangle := \langle
[B_j] \mid j\in \{1,\dots,s\}\rangle \subseteq \clxxq.
\]
    \item 
We say that $\Sigma$ is {\em supported at $x$} if all the effective divisors $B_j$ contain $x$.
\end{enumerate}
}
\end{definition}

In this paper, whenever we fix a germ $x \in X$, we will always assume that any orbifold structure and any orbifold decomposition of a divisor $B$ we consider are supported at $x$.

\begin{definition}
\label{def:orb.compl.loc}
{\em
Let $(x \in X,B+M)$ be the germ of a generalized log pair. 
\begin{enumerate}
    \item 
Let $n$ be an orbifold structure on $X$ and let $\Sigma$ be an orbifold decomposition with respect to $n$ of $B$ supported at $x$.
The {\em orbifold complexity} of $\Sigma$ at $x \in X$ is
\[
\orbcomp_x(X,B+M;\Sigma):=\dim X +\dim_\qq \langle \Sigma\rangle -|\Sigma|.
\]

    \item
If $n$ is the trivial orbifold structure and $\Sigma$ is a decomposition of $B$ supported at $x$.
The {\em fine complexity} (resp. {\em complexity}) of $\Sigma$ at $x \in X$ is
\begin{align*}
& \finecomp_x(X,B+M;\Sigma):=\dim X +
\dim_\qq \langle \Sigma\rangle 
-|\Sigma|
\\
(\text{resp.} \
&\abscompx(X,B+M;\Sigma):=
\dim X +
\dim_\qq \clxxq 
-|\Sigma|)
.
\end{align*}

    \item   
The orbifold complexity $\orbcomp_x(X,B+M)$ of $(X,B+M)$ at $x \in X$ is
the infimum of $\orbcomp_x(X,B;\Sigma)$ taken over all possible orbifold decompositions $\Sigma$ of $B$ supported at $x$ with respect to all possible orbifold structures on $X$ compatible with $B$ and supported at $x$.

  \item   
The fine complexity (resp. complexity) $\finecomp_x(X,B+M)$ ($\abscompx(X, B+M)$) of $(X,B+M)$ at $x \in X$ is
the infimum of $\finecomp_x(X,B;\Sigma)$
(resp. of $\abscompx(X,B;\Sigma)$)
taken over all decompositions $\Sigma$ of $B$ supported at $x$.
\end{enumerate}
}
\end{definition} 

It is immediate from the above definition that the analog of Lemma~\ref{lem:basic.ineq} holds, in the framework of germs of generalized log pairs, for the notion of complexity defined in~\ref{def:orb.compl.loc}.

We also define a notion of complexity for general germs of normal singularities.
Although this was not defined in earlier literature in this generality, it had been already considered, cf.~\cite[Theorem 18.22]{Kol92}, at least for $\mathbb Q$-factorial germs.

\begin{definition}
\label{def:loc.compl}
{\em 
The {\em local complexity} $c^{\rm loc}(x \in X)$ of a germ of a normal variety $x \in X$ is
\begin{align*}
    c^{\rm loc}(x \in X)
:= 
\inf\Bigg\{
&
\dim X + 
\rank {\rm Cl}(X_x) - 
\sum_{i} a_i 
\Bigg\},
\end{align*}
where the infimum is taken among all log pair structures $(X,B)$ which are lc in a neighborhood of $x$ and 
$B=\sum_i a_i B_i$ is the decomposition into prime components.
}
\end{definition}

\begin{remark}
\label{rmk:local.complex}
{\em 
Let $X$ be a normal variety and let $B$ be an effective divisor on $X$.
\begin{enumerate}
    \item 
\label{rmk:local.complex.f.g.}
In general, the local class group $\clxx$ is infinite exactly at those point $x \in X$ where $X$ is not $\qq$-factorial.
If $(X,B)$ is a klt pair at a point $x\in X$, then $\clxx$ is finitely generated for every closed point $x\in X$, see, e.g.,~\cite[Theorem 3.27]{BM21}.

    \item 
\label{rmk:local.complex.q.fact}
By~\cite[Theorem 18.22]{Kol92} the local complexity of a germ of a log canonical $\mathbb{Q}$-factorial singularity is always non-negative.
Moreover, if the local complexity is $0$ then the singularity is formally a toric one, cf.~Definition~\ref{def:formally-toric} and Lemma~\ref{lem:toric-complexity=0}.
\end{enumerate}
}
\end{remark}

\subsubsection{Basic properties of complexity,
fine complexity, and orbifold complexity}

\begin{proposition}\label{prop:complexity-minimum}
Let $(X/Z, B+M)$ be a generalized pair and let $z \in Z$ be a closed point.
The orbifold complexity $\orbcomp_z(X/Z,B+M)$ is computed by some orbifold $\Sigma$ of $B$.
The same result holds if we substitute the orbifold complexity with either fine complexity or complexity.
\\
The same result also holds for the orbifold complexity, fine complexity, and complexity of a germ of a generalized log pair.
\end{proposition}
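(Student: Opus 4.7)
My plan is to prove, for each of the three complexities (complexity, fine complexity, and orbifold complexity), that the infimum in the definition is in fact attained; the argument for a germ $x \in X$ is identical to the relative one, after replacing $\relpicxz$ by $\clxxq$, since the spans $\langle \Sigma \rangle$ always lie in the finite-dimensional subspace generated by the classes of the prime components of $B$ through $x$.

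For the complexity $c_z(X/Z, B+M)$, the terms $\dim X$ and $\picnumbxz$ do not depend on $\Sigma$, so minimizing $c_z(X/Z, B; \Sigma)$ is equivalent to maximizing the norm $|\Sigma|$. Writing each component $B_i = \sum_P n_{iP} P$ as a sum over the prime divisors $P$ in $\supp B$ intersecting the fiber over $z$, the constraint $\sum_i b_i B_i \leq B$ yields $\sum_i b_i n_{iP} \leq a_P := \coeff_P(B)$ for each such $P$. Summing over $P$ and using $\sum_P n_{iP} \geq 1$ (as $B_i \neq 0$), one obtains $|\Sigma| \leq \sum_P a_P$, with equality realized by the prime decomposition $\Sigma_0 := \sum_P a_P\, P$. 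This disposes of the first case with an explicit minimizer (if $\picnumbxz = \infty$, then the complexity is identically $\infty$ and the claim is vacuous).

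For the fine complexity $\finecomp_z(X/Z, B+M)$, I plan four reduction steps. First, reduce to decompositions where each $B_i$ is primitive, that is, its coefficients $n_{iP}$ are coprime integers: replacing $B_i = d B_i'$, $d \geq 2$, by $B_i'$ with coefficient $d\, b_i$ preserves $b_i B_i$ and the $\qq$-span $\langle [B_i] \rangle$ while increasing the norm. Second, combine duplicates so that all $B_i$ are distinct. Third, for a fixed collection of distinct primitive divisors $\{B_i\}$, maximizing $|\Sigma| = \sum b_i$ subject to $\sum b_i B_i \leq B$ is a linear program in the $b_i$ with $r := |\{P \in \supp B \text{ intersecting the fiber over } z\}|$ inequality constraints, so its optimum is attained at a basic feasible solution having at most $r$ non-vanishing $b_i$. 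Fourth, take a minimizing sequence $\{\Sigma^{(n)}\}$ with at most $r$ distinct primitive components, and pass to a subsequence along which the number of components and the span dimension are constant: each $B_i^{(n)}$ is either bounded in its integer coefficients, and so, by finiteness, may be taken eventually constant along a further subsequence, or has some coefficient going to infinity, in which case the bound $b_i^{(n)} \leq a_P / n_{iP}^{(n)}$ forces $b_i^{(n)} \to 0$. The limiting decomposition $\Sigma^\infty$ built from the bounded components then satisfies $|\Sigma^\infty| = \lim_n |\Sigma^{(n)}|$ and $\dim_\qq \langle \Sigma^\infty/Z \rangle \leq \lim_n \dim_\qq \langle \Sigma^{(n)}/Z \rangle$, so $\finecomp_z(X/Z, B; \Sigma^\infty) \leq \lim_n \finecomp_z(X/Z, B; \Sigma^{(n)})$ and the infimum is attained. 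This limiting argument is the main obstacle, since one must simultaneously control how the norm and the dimension of the span evolve along the sequence.

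The orbifold case proceeds along the same lines, with additional bookkeeping for the orbifold structure $n$. Whenever $\coeff_P(B) < 1$, Remark~\ref{rem:coeff.orb.decomp} forces $n_P \leq 1/(1 - \coeff_P(B))$, giving only finitely many choices of $n_P$ at such $P$; at points where $\coeff_P(B) \geq 1$ the index $n_P$ can be arbitrary, but the same LP and limiting reasoning applies once one observes that an orbifold building block $B_j$ with very large Cartier index at $P$ is forced to have small coefficient $b_j$, hence contributes negligibly to the norm. Combining this with the monotone behavior of the orbifold complexity infimum in the indices $n_P$, a diagonal extraction produces a limiting orbifold structure together with an orbifold decomposition that jointly attain the infimum.
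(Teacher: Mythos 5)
Your treatment of the complexity and the fine complexity is correct, and in some respects more careful than the paper's own argument: the paper asserts that the building blocks of a decomposition satisfy $B_{j,i}\leq \lceil B\rceil$ and hence range over a finite set, which is only true after a reduction such as your primitivity step (a block $dB_i'$ with tiny coefficient is not bounded by $\lceil B\rceil$). Your route — reduce to primitive blocks, bound the number of blocks by the number $r$ of constraints via a vertex of the linear program maximizing $|\Sigma|$, then pass to a subsequence on which the bounded blocks stabilize and the unbounded ones are forced to have coefficient tending to $0$ — is sound for those two invariants.

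The orbifold case, however, has a genuine gap. The difficulty sits at prime divisors $P$ with $\coeff_P(B)=1$, where the index $n_P$ is unconstrained and may diverge along a minimizing sequence of orbifold structures. In that situation there is no ``limiting orbifold structure'': the indices live in $\zz_{>0}$, the term $\left(1-\tfrac{1}{n_P}\right)P$ tends to $P$, and the orbifold part of a decomposition contributes nothing to the norm, so the limit is not realized by any admissible pair $(n,\Sigma)$. Your observation that a block $B_j$ with large Cartier index at $P$ must have small coefficient controls the $B_j$ but not the structure $n$ itself, and the ``monotone behavior of the orbifold complexity infimum in the indices $n_P$'' that you invoke is neither defined nor proved — raising $n_P$ simultaneously changes which $B_j$ are admissible and how much coefficient they may carry, so no monotonicity is evident. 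What is needed, and what the paper does, is an explicit reduction: for a component $P$ of $\lfloor B\rfloor$ with $n_P>1$, pass to the structure with $n'_P=1$, add $P$ as a new building block with coefficient $1$, and strip the $P$-components from the remaining $B_j$; the norm increases by $1$ while the span dimension increases by at most $1$, so the orbifold complexity does not increase. After this reduction the orbifold structure is supported only where $\coeff_P(B)<1$, where Remark~\ref{rem:coeff.orb.decomp} bounds $n_P$, so only finitely many structures occur and your earlier compactness argument applies. Without this reduction (or an equivalent one), your final ``diagonal extraction'' does not produce an admissible minimizer.
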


\begin{proof}
It is easy to see that $\abscompz(X/Z,B;\Sigma)$ is minimized by taking $\Sigma$ to be the decomposition of $B$ into its prime components as $\abscompz(X/Z,B+M;\Sigma)$ only depends on $|\Sigma|$.
Indeed, let $\Sigma_i=\sum_{j=1}^{k_i} b_{j,i} B_{j,i} \leq B$ be a sequence of decompositions of $B$ such that
$c_i := \abscompz(X/Z,B+M;\Sigma_i)$
is a strictly decreasing sequence converging $\abscompz(X/Z,B)$. 
For all $i$, the Weil divisors $B_{j,i}$ are pairwise different and $b_{j,i}>0$, $j=1, \dots, k_i$.
Since $B_{j,i}\leq \lceil B \rceil$, there are only finitely many possible Weil divisors $B_{j,i}$.
Hence, up to passing to a subsequence of the $\Sigma_i$, we may assume that $k_{i_1}=k_{i_2}=k$ and $B_{j,i_1}=B_{j,i_2}$ for any two positive integers $i_1$, $i_2$, and $j=1, \dots, k$.
Thus, for each $i_2\geq i_1$, 
\[
|\Sigma_{i_2}|=\sum_{j=1}^{k_{i_2}} b_{j,i_2}>\sum_{j=1}^{k_{i_1}} b_{j,i_1}=|\Sigma_{i_1}|.
\]
Passing to a subsequence again, we may assume that for each $j$ the limit $\lim_{i\rightarrow \infty} b_{j,i}=b_{j,\infty}$ is well-defined.
Thus, setting $\Sigma_\infty = \sum_{j=1}^{k_1} b_{j,\infty} B_{j,i} \leq B$ it immediately follows that
$c_\infty := \abscompz(X/Z,B;\Sigma_\infty)\leq c_i$,
for each $i$ so that $\abscompz(X/Z,B+M;\Sigma_\infty)=\abscompz(X/Z,B+M)$.
\\
The proof in the case of the fine complexity is the same once we observe that
$\dim_\qq \langle \Sigma/Z\rangle$ can only assume finitely many values.
\\
In the case of orbifold complexity, 
let $\Sigma_i$ be a sequence of orbifold decompositions $B$ whose orbifold complexities converge monotonically to $\orbcomp_z(X/Z,B+M)$.
It suffices to prove that there are only finitely many possible orbifold structures in this sequence, as then the proof exactly proceeds as above.
By Remark~\ref{rem:coeff.orb.decomp}, the support of a orbifold structure $n_i$ supporting $\Sigma_i$ is contained in the support of $B$.
Moreover, if $P$ is a prime divisor such that $0< {\rm coeff}_P(B)< 1$, then $n_i(P)$ can only assume finitely many values.
If $\coeff_P(B)=1$ and for a given orbifold structure $n_i$, we can define a new orbifold structure $n_{i}'$ by
\[
n_{i}'(Q)=
\begin{cases}
1 & \text{for }Q=P
\\
n_{i}(Q) & \text{for } Q \neq P.
\end{cases}
\]
We also define a new orbifold decomposition $\Sigma'_i$
\[
\Sigma'_i:= 
\sum_{Q \in X^1}
\left( 1-\frac{1}{n'_i(Q)}\right) Q
+
P+
\sum_{j=1}^{k_i} b_i
(B_{i, j} - \coeff_P(B_{
i, j})P)
\leq B
\]
where the $B_{i, j}$ are the orbifold Weil divisors in the orbifold decomposition $\Sigma_i$.
It is immediate from the definition that 
\[
\orbcomp_z(X/Z, B+M; \Sigma_i')
\leq 
\orbcomp_z(X/Z, B+M; \Sigma_i).
\]
Hence, by repeating this process for any component of $\lfloor B \rfloor$, and at each step substituting $n_i$ (resp. $\Sigma_i$) with the $n'_i$ (resp. $\Sigma'_i$) constructed above, 
we can assume that $n_i$ is supported solely at prime divisors not in $\lfloor B \rfloor$ at which point we are done from the above observations.
\\
Exactly the same proof holds also in the context of germs of generalized log pairs.
\end{proof}

\subsection{Complexity under the Minimal Model Program}\label{subsection:complexity-vs-mmp}

In this subsection, we collect some basic results describing the behavior of the different concepts of complexity when performing the standard operations of the MMP.

\begin{lemma}\label{lem:cut-down-lcc}
Let $(X/Z,B+M)$ be a generalized log canonical pair and let $z\in Z$ be a closed point.
Assume that $K_X+B+M\sim_{\qq,Z}0$. 
Then, there exists an effective $\qq$-Cartier divisor $B'\sim_{\qq,Z} 0$ such that the pair $(X/Z,B+B'+M)$
is generalized log canonical with a glc center contained in the fiber over $z$.
Moreover, 
\begin{align*}
\abscompz(X/Z,B+B'+M)&\leq
\abscompz(X/Z,B+M),
\\  
\finecomp_z(X/Z,B+B'+M)&\leq
\finecomp_z(X/Z,B+M),
\\
\orbcomp_z(X/Z,B+B'+M)&\leq
\orbcomp_z(X/Z,B+M),
\end{align*}
and equality hold in each of the above if and only if $B'=0$.
\end{lemma}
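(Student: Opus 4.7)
The plan is to construct $B'$ as a scaled pullback from $Z$ of a general effective Cartier divisor through $z$. Fix an ample Cartier divisor $L$ on a projective compactification of a neighborhood of $z \in Z$, and choose a general member $H$ of the linear system $|L|$ passing through $z$. Then $\pi^\ast H$ is an effective Cartier divisor on $X$ whose support contains the fiber $F := \pi^{-1}(z)$, and $\pi^\ast H \sim_Z 0$, whence $\pi^\ast H \sim_{\qq, Z} 0$. Define
\[
t := \sup\bigl\{\, s \geq 0 \;\big|\; (X/Z,\, B + s\,\pi^\ast H + M) \text{ is glc}\,\bigr\},
\]
the generalized log canonical threshold of $\pi^\ast H$ with respect to $(X/Z, B + M)$. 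For $H$ sufficiently general, no component of $\pi^\ast H$ is contained in $\supp(B)$, so $t > 0$; finiteness of $t$ follows since adding a sufficiently large multiple of $\pi^\ast H$ eventually violates the glc condition. Set $B' := t\,\pi^\ast H$; then $B' \sim_{\qq, Z} 0$ and $(X/Z,\, B + B' + M)$ is glc by the definition of the threshold. Moreover, since $K_X + B + M \sim_{\qq, Z} 0$, one still has $K_X + B + B' + M \sim_{\qq, Z} 0$.

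To produce a glc center inside $F$, apply a Bertini-type argument by iterating the above construction with further general hyperplane sections $H_1, \dots, H_k$ on $Z$ through $z$ chosen so that $\bigcap_i H_i = \{z\}$ set-theoretically in a neighborhood of $z$. After finitely many steps, every new glc center produced at the successive thresholds is forced to lie over $z$, and hence is contained in $F$. This Bertini-type argument is the main technical obstacle of the lemma; without it, the new glc center created at the threshold might only map to a positive-dimensional subvariety of $Z$ through $z$ instead of to the point $z$ itself.

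For the complexity comparisons, fix any decomposition (respectively, orbifold decomposition) $\Sigma = \sum_i b_i B_i$ of $B$ supported at $z$, and extend it to a decomposition $\Sigma'$ of $B + B'$ by adjoining the effective Weil divisor $\pi^\ast H$ with coefficient $t$ as a single (possibly reducible) summand. Since $\pi^\ast H$ meets $F$, the enlarged decomposition $\Sigma'$ is still supported at $z$, and $|\Sigma'| = |\Sigma| + t$. On the other hand, $\pi^\ast H$ lies in $\pi^\ast \Pic(Z)$, so its class in $\relpicxz$ vanishes and
\[
\dim_{\qq}\langle \Sigma'/Z \rangle = \dim_{\qq}\langle \Sigma/Z \rangle.
\]
In the orbifold setting, we extend the ambient orbifold structure trivially at any components of $\pi^\ast H$ not already in $\supp(B)$, which makes $\pi^\ast H$ an orbifold Weil divisor without altering the orbifold norm or span. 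Hence
\[
\orbcomp_z(X/Z,\, B + B' + M;\, \Sigma') = \orbcomp_z(X/Z,\, B + M;\, \Sigma) - t,
\]
and analogously for $\finecomp_z$ and $\abscompz$, noting that $\picnumbxz$ is intrinsic to $X/Z$ and independent of the decomposition. Taking the infimum over $\Sigma$ and invoking Proposition~\ref{prop:complexity-minimum} yields all three desired inequalities, each strict exactly when $t > 0$, equivalently when $B' \neq 0$. This provides the equality criterion asserted in the lemma.
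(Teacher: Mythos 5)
Your complexity comparison is correct and essentially the paper's own argument: since $B'$ is pulled back from $Z$, its class dies in $\relpicxz$, so adjoining it to a (orbifold) decomposition raises the norm by $t$ without changing the span, and the equality criterion follows. The genuine gap is exactly the step you yourself flag as "the main technical obstacle": the Bertini-type iteration is asserted, not proved, and as described it does not work. The glc threshold of $\pi^\ast H_1$ is a global infimum, so the new glc center it creates is only constrained to lie in $\pi^{-1}(H_1)$ and may be entirely disjoint from $\pi^{-1}(z)$. At the next step nothing forces the center created by $\pi^\ast H_2$ to lie in $\pi^{-1}(H_1)$ as well (the non-klt loci produced at different steps live over different hyperplanes, not over their intersection), and the threshold $t_2$ may be achieved -- or even vanish -- at loci far from the fiber over $z$. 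So the process can stall indefinitely without ever producing a glc center over $z$; choosing the $H_i$ with $\bigcap_i H_i=\{z\}$ does not repair this.

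The paper circumvents this by not working on $X$ at all for this step: it applies the generalized canonical bundle formula to obtain a generalized pair $(Z,B_Z+M_Z)$ on the base, where the goal becomes making the point $z$ itself a glc center. There one runs the standard cutting-down argument: take the minimal glc center $C\ni z$, choose an ample $A\ni z$ containing no glc center (so the threshold is positive) and obtain a new glc center strictly contained in $C$; induction on $C$ terminates with $z$ a glc center of $(Z,B_Z+B_Z'+M_Z)$. Pulling back, $B':=\pi^\ast B_Z'$ makes $(X,B+B'+M)$ glc with a glc center inside $\pi^{-1}(z)$, because glc centers of the base pair lift to glc centers of the total space by the definition of the discriminant in the canonical bundle formula. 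If you want to salvage your approach, you would need to import this inductive control on the minimal glc center through $z$ (or an equivalent localization), which is precisely the ingredient your proposal is missing.
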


\begin{proof}
If $(X/Z,B+M)$ has a glc center contained in $\pi^{-1}(z)$ we are done.
Hence, we may assume that $(X/Z,B+M)$ has no glc center contained in $\pi^{-1}(z)$.
Let $(Z,B_Z+M_Z)$ be the generalized pair obtained by applying the generalized canonical bundle formula,~\cite[Theorem 4.16]{Fil18}.
Then, $z$ is not a glc center of $(Z,B_Z+M_Z)$.
Let $C$ be a minimal glc center of $(Z,B_Z+M_Z)$ among those containing $z$.
Let $A \ni z$ be an effective ample Cartier divisor not containing any glc center of $(Z,B_Z+M_Z)$.
Let $t$ be the glc threshold of $A$  with respect to $(Z,B_Z+M_Z)$.
Then, $(Z,B_Z+tA+M_Z)$ is glc with a glc center strictly contained in $C$. 
Repeating this argument, we can inductively assume that there exists a $\qq$-Cartier divisor $B'_Z$ such that 
$(Z,B_Z+B'_Z+M_Z)$ is glc and $z$ is a glc center of such pair.
Let $B':=\pi^\ast (B'_Z)$.
Thus, $(X/Z,B+B'+M)$ is glc, and has a glc center in $\pi^{-1}(z)$.
\\
Hence, $\langle \Sigma'/Z\rangle := \langle \Sigma/Z\rangle$, since $B'_Z$ is a $\qq$-Cartier divisor on $Z$.
Finally, given an orbifold decomposition $\Sigma$ of $B$ and writing $B'_Z=\frac 1k C$, $k \in \zz_{>0}$ for $C$ Cartier on $Z$, we define an orbifold decomposition of $B+B'$ by $\Sigma':=\Sigma+\frac 1k \pi^\ast C$; 
here $\pi^\ast C$ is orbifold Weil as $C$ is Cartier, whereas $|\Sigma'|\geq |\Sigma|$ and the equality holds if and only if $B'_Z=0$.
\\
The proof for the absolute and fine complexities is analogous.
\end{proof}

\begin{lemma}
\label{complexity-dlt}
Let $(X/Z,B+M)$ be a generalized log canonical pair and let $z\in Z$ be a closed point.
Let $\pi \colon Y\rightarrow X$ be a $\qq$-factorial extraction of divisors of generalized log discrepancy $0$ for $(X/Z, B+M)$. 
Let $E_1,\dots,E_r$ be the prime exceptional divisors of $\pi$.
\begin{enumerate}
\item 
\label{complexity-vs-dlt-modification}
Let $\Sigma=\sum_{i=1}^k b_iB_i$ be a decomposition of $B$.
Then, there exists a decomposition $\Sigma_{Y}$ of $B_Y$ for which
\begin{align*}
\finecomp_z(Y/Z,B_Y+M_Y;\Sigma_Y)\leq \finecomp_z(X/Z,B+M;\Sigma)
\text{ and }
\abscompz(Y/Z,B_Y+M_Y;\Sigma_Y)\leq
\abscompz(X/Z,B+M;\Sigma).
\end{align*}
The decomposition $\Sigma_Y$ can be taken so that each $E_i$ appears with coefficient $1$.

	\item 
\label{complexity-vs-dlt-modification-orbifold}
Let $n$ be an orbifold structure on $X$ and let $\Sigma$ be an orbifold decomposition of $B$ for $n$.
There exists an orbifold structure $n'$ on $Y$ and an orbifold decomposition $\Sigma_{Y}$ of $B_Y$ for $n'$ such that
\[
\orbcomp_z(Y/Z,B_Y+M_Y;\Sigma_Y)\leq \orbcomp_z(X/Z,B+M;\Sigma).
\]
The orbifold decomposition $\Sigma_Y$ can be chosen so that each $E_i$ appears with coefficient $1$.

\end{enumerate}
\end{lemma}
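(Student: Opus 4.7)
The assumption that $\pi$ extracts prime divisors $E_1,\ldots,E_r$ of generalized log discrepancy zero tells us that $B_Y=\pi_\ast^{-1}B+\sum_{i=1}^r E_i$ via the log pullback formula. The other ingredient I will use is the standard exact sequence for a $\qq$-factorial birational contraction
\[
\mathrm{span}_\qq\bigl([E_1],\ldots,[E_r]\bigr) \longrightarrow {\rm Cl}_\qq(Y/Z) \xrightarrow{\pi_\ast} \relpicxz \longrightarrow 0,
\]
whose surjectivity is automatic (the strict transform lifts any divisor class on $X$) and whose kernel is the image of the left-hand map; in particular $\dim_\qq {\rm Cl}_\qq(Y/Z) \le \picnumbxz + r$. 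My common strategy in both parts is to build $\Sigma_Y$ so that it agrees with $\Sigma$ on strict transforms and contains each $E_i$ with coefficient exactly $1$; this automatically gives $|\Sigma_Y|=|\Sigma|+r$ together with the required normalization on the exceptional divisors.

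For part~(1), I would set
\[
\Sigma_Y := \sum_{j=1}^k b_j\,\pi_\ast^{-1}B_j + \sum_{i=1}^r E_i.
\]
That $\Sigma_Y \le B_Y$ is immediate from $\Sigma\le B$ together with the matching coefficient $1$ at each $E_i$. For the (absolute) complexity, $\dim Y=\dim X$, the relative class group can grow by at most $r$, and $|\Sigma_Y|$ grows by exactly $r$, so the increments cancel. For the fine complexity the same cancellation applies: since $\pi_\ast[\pi_\ast^{-1}B_j]=[B_j]$, the map $\pi_\ast$ sends $\langle\Sigma_Y/Z\rangle$ onto $\langle\Sigma/Z\rangle$ with kernel contained in $\mathrm{span}_\qq([E_i])$, whence $\dim_\qq\langle\Sigma_Y/Z\rangle\le\dim_\qq\langle\Sigma/Z\rangle+r$.

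For part~(2), I would first transport the orbifold structure $n$ on $X$ to an orbifold structure $n'$ on $Y$ as in Remark~\ref{rmk:orb.struct.restr}, setting $n'_{\pi_\ast^{-1}P}=n_P$ for $P$ in the support of $n$ and $n'_Q=1$ at every other codimension-one point of $Y$; in particular $n'_{E_i}=1$, which is consistent with $E_i$ appearing with coefficient $1$ in $B_Y$. I then take
\[
\Sigma_Y := \sum_{P\in Y^1}\Bigl(1-\tfrac{1}{n'_P}\Bigr)P + \sum_{j=1}^k b_j\,\pi_\ast^{-1}B_j + \sum_{i=1}^r E_i.
\]
Two routine verifications remain: that each $\pi_\ast^{-1}B_j$ is an orbifold Weil divisor for $n'$ (its Cartier index at $\pi_\ast^{-1}P$ equals that of $B_j$ at $P$, hence divides $n_P=n'_{\pi_\ast^{-1}P}$; at each $E_i$ its coefficient is zero) and that $\Sigma_Y\le B_Y$ (verified divisor by divisor, using $\Sigma\le B$ on non-exceptional components and coefficient $1$ at the $E_i$). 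The orbifold complexity inequality then follows verbatim from the class-group estimate used in part~(1), since $|\Sigma_Y|-|\Sigma|=r$ again cancels the growth of $\dim_\qq\langle\Sigma_Y/Z\rangle$.

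The only slightly delicate step is the class-group fact $\dim_\qq{\rm Cl}_\qq(Y/Z)\le\picnumbxz+r$; this is a standard consequence of $\pi$ being a $\qq$-factorial birational contraction, where the kernel of divisorial pushforward is generated by the exceptional prime divisors. Every other step is elementary bookkeeping of spans and norms, and the design of $\Sigma_Y$ is chosen precisely so that the increments in $|\Sigma_Y|$ and in the relevant ambient dimension cancel in each of the three flavors of complexity.
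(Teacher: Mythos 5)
Your proposal is correct and follows essentially the same route as the paper: the paper defines the identical $\Sigma_Y$ (strict transforms with unchanged coefficients plus each $E_i$ with coefficient $1$, and for part (2) the transported orbifold structure $n'$ with $n'_{E_i}=1$), and concludes by the same cancellation of the $+r$ increments in $|\Sigma_Y|$ and in $\dim_\qq\langle\Sigma_Y/Z\rangle$ (resp.\ $\dim_\qq{\rm Cl}_\qq(Y/Z)$); part (1) is in fact delegated to \cite[Lemma~2.4.1]{BMSZ18}, whose argument is the one you reproduce. Your explicit check that the strict transforms remain orbifold Weil divisors for $n'$ is a detail the paper leaves implicit, but it is the right verification.
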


\begin{proof}
\begin{enumerate}
	\item 
The proof of~\cite[Lemma 2.4.1]{BMSZ18} applies verbatim.  
	\item 
Let $n=((P_1, \dots, P_k), (n_1, \dots, n_k))$ be the orbifold structure on $X$.
We define the orbifold structure $n':=((P'_1, \dots, P'_k), (n_1, \dots, n_k))$ for $P'_i$ the strict transform of $P_i$ on $Y$;
in particular, $n'_{E_i}=1$, for $i=1, \dots, r$.
Writing
\[
\Sigma =
\sum_{P \in X^1}
\left( 1-\frac{1}{n_P}\right) P
+
\sum_{i=1}^s b_iB_i\leq B,
\]
we define
\[
\Sigma_Y :=
\sum_{Q \in Y^1}
\left(
1-\frac{1}{n'_Q}\right) Q+
\sum_{j=1}^r E_j + \sum_{i=1}^s b_i \pi^{-1}_\ast B_i.
\]
$\Sigma_Y$ provides an orbifold decomposition of $B_Y$ as $\Sigma_Y \leq B_Y$.
As $E_1,\dots,E_r, \pi^{-1}_\ast B_1,\dots, \pi^{-1}_\ast B_k$ generate $\langle \Sigma_Y /Z \rangle$, then
\[
\dim_\qq \langle \Sigma_Y /Z \rangle 
= 
\dim_\qq \langle \Sigma /Z \rangle+ r,
\quad 
\text{and}
\quad
| \Sigma_Y | = |\Sigma|+r.
\]
Thus,
\begin{align}
\nonumber & \orbcomp_z(Y,B_Y+M_Y;\Sigma_Y) = \dim Y+
\dim_\qq \langle \Sigma_Y/Z\rangle - |\Sigma_Y|   \\
\nonumber & = \dim X  + \langle \Sigma/Z \rangle + r - (|\Sigma|+r) = \orbcomp_z(X/Z,B+M;\Sigma).
\end{align}
\end{enumerate}
\end{proof}

\begin{lemma}
\label{complexity.divisorial-contraction}
Let $(X/Z,B+M)$ be a generalized log canonical pair and let $z\in Z$ be a closed point.
Let $\pi \colon X\rightarrow X'$
be a divisorial contraction over $Z$ with exceptional divisor $E \subset X$.
Assume that the pair $(X'/Z,B'+M')$ on $X'$ induced via push-forward from $X$ is a generalized log canonical pair over $z\in Z$.
\begin{enumerate}
	\item 
\label{complexity-vs-divisorial-contraction}
Let $\Sigma=\sum_{i=1}^k b_i B_i$ be a decomposition of $B$.
Then, there exists a decomposition $\Sigma'$ of $B'$ for which
\begin{align}
\label{eq.3.34.2a}
&
\finecomp_z(X'/Z,B'+M';\Sigma') \leq \finecomp_z(X/Z,B+M;\Sigma)
\text{ and } \\
\label{eq.3.34.2b}
&
\abscompz(X'/Z,B'+M';\Sigma') \leq \abscompz(X/Z,B+M;\Sigma).
\end{align}
If equality holds in~\eqref{eq.3.34.2a} and for some $i$, $\supp B_i=E$, then $E$ is a glc place of $(X'/Z,B'+M')$ appearing in $\Sigma$ with coefficient one.
Equality holds in~\eqref{eq.3.34.2b} if and only if $\Sigma \geq E$.
	\item 
\label{complexity-vs-divisorial-contraction-orbifold}
Let $n$ be an orbifold structure on $X$ and let $\Sigma$ be an orbifold decomposition of $B$.
There exists an orbifold structure $n'$ on $X'$ and an orbifold decomposition
$\Sigma'$ of $B'$ for which
\begin{align}
\label{eq.3.34.2c}
\orbcomp_z(X'/Z,B'+M';\Sigma') \leq \orbcomp_z(X/Z,B+M;\Sigma).
\end{align}
If equality holds in~\eqref{eq.3.34.2c} and for some $i$, $\supp B_i=E$, then $E$ is a glc place of $(X'/Z,B'+M')$ appearing in $\Sigma$ with coefficient one.
\end{enumerate}
\end{lemma}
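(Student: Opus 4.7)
The natural candidate for $\Sigma'$ is the pushforward $\pi_* \Sigma$: we keep the coefficients $b_i$ in front of each $\pi_* B_i$, dropping any term whose $B_i$ is supported entirely on $E$. For part (2), equip $X'$ with the orbifold structure $n'$ defined by $n'_{P'} := n_{\pi_*^{-1} P'}$; this is well-posed because $\pi$ contracts only the prime divisor $E$, so $(X')^1$ is in bijection with $X^1 \setminus \{E\}$. Direct checks confirm that $\Sigma' \leq B' = \pi_* B$, that each $\pi_* B_j$ remains orbifold Weil for $n'$, and that the boundary piece $\sum_P (1 - 1/n_P) P$ pushes forward to the corresponding boundary piece on $X'$, since $\pi$ is an isomorphism in codimension one outside $E$.

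The three inequalities reduce to the same bookkeeping, using $\dim X = \dim X'$. The pushforward $\pi_* \colon \relpicxz \to \relpicxprimez$ is surjective with kernel $\qq \cdot [E]$, so $\picnumbxz = \picnumbxprimez + 1$. Setting $V := \langle \Sigma/Z \rangle$ and $V' := \langle \Sigma'/Z \rangle = \pi_* V$, the restriction $\pi_*|_V$ has kernel $V \cap \qq[E]$, so $\dim_\qq V - \dim_\qq V' \in \{0,1\}$, with the value $1$ precisely when $[E] \in V$. Letting $J_0 := \{i : \pi_* B_i = 0\}$, one has $|\Sigma'| = |\Sigma| - \sum_{i \in J_0} b_i$. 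Hence the complexity differential $\abscompz(X/Z,B+M;\Sigma) - \abscompz(X'/Z,B'+M';\Sigma') = 1 - \sum_{i \in J_0} b_i$, and similarly the fine (respectively orbifold) complexity differential equals either $-\sum_{i \in J_0} b_i$ when $[E] \notin V$ (which forces $J_0 = \emptyset$) or $1 - \sum_{i \in J_0} b_i$ when $[E] \in V$.

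The crux is therefore the bound $\sum_{i \in J_0} b_i \leq 1$. Writing $c_i^E := \coeff_E(B_i)$, for $i \in J_0$ one has $c_i^E \geq 1/n_E$ (with the convention $n_E = 1$ in the non-orbifold setup). Applying $\Sigma \leq B$ at the prime divisor $E$ yields
\[
\left(1 - \tfrac{1}{n_E}\right) + \sum_{j} b_j c_j^E \leq \coeff_E(B) \leq 1,
\]
the last step being the generalized log canonical hypothesis on $(X/Z,B+M)$. Hence $\sum_{i \in J_0} b_i c_i^E \leq 1/n_E$, and since $c_i^E \geq 1/n_E$, we obtain $\sum_{i \in J_0} b_i \leq 1$. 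Equality for the coarse complexity holds iff $\sum_{i \in J_0} b_i = 1$, which is equivalent to $\coeff_E(\Sigma) \geq 1$, i.e.\ $\Sigma \geq E$. For the fine or orbifold complexity, when some $B_{i_0}$ has support $E$, equality forces $c_{i_0}^E$ to attain its minimum (so $B_{i_0} = E$, respectively $B_{i_0} = (1/n_E) E$) with $b_{i_0} = 1$ and $\coeff_E(B) = 1$; a standard discrepancy computation on the crepant identity $K_X + B + M_X = \pi^*(K_{X'} + B' + M'_{X'}) + a E$ then gives $a = 0$, so $E$ is a glc place of $(X'/Z,B'+M')$.

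The only (minor) technical obstacle lies in the orbifold bookkeeping of part (2): one must verify that the constructed $n'$ is compatible with $B'$, that each $\pi_* B_j$ is orbifold Weil for $n'$, and that $\langle \Sigma'/Z \rangle = \pi_* \langle \Sigma/Z \rangle$. Each of these is routine once one observes that $\pi$ is an isomorphism in codimension one off $E$ and that $n'$ was designed to agree with $n$ on the bijective correspondence between non-contracted prime divisors.
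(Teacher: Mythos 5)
Your proposal is correct and follows essentially the same route as the paper: the decomposition is the pushforward $\pi_\ast\Sigma$ with the orbifold structure of Remark~\ref{rmk:orb.struct.restr}, the rank of the span drops by $0$ or $1$ according to whether $[E]$ lies in $\langle\Sigma/Z\rangle$ (the paper phrases this by exhibiting $\pi_\ast B_2,\dots,\pi_\ast B_j$ as a basis), and the bound $\sum_{i\in J_0}b_i\leq 1$ comes, exactly as in the paper, from $\Sigma\leq B$ together with $\coeff_E(B)\leq 1$ forced by generalized log canonicity. The only step you gloss over --- that the equality case yields $a=0$ in $K_X+B+M_X=\pi^\ast(K_{X'}+B'+M'_{X'})+aE$, so that $E$ is a glc place of the target pair --- is asserted equally quickly in the paper and genuinely uses that the contraction is crepant, which holds in all of the paper's applications (Corollary~\ref{complexity.MMP}, where $K_X+B+M\sim_{\qq,Z}0$).
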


The conditions of Lemma~\ref{complexity.divisorial-contraction}
are fulfilled whenever $\pi$ is a divisorial contraction appearing in running a minimal model program over $Z$.

\begin{proof}
\begin{enumerate}
	\item 
Let $\Sigma' = \sum_{i=1}^k b_i \pi_\ast B_i$.
$\Sigma'$ is a decomposition of $B'=\pi_\ast B$.
Moreover, 
$\dim_\qq \langle \Sigma/Z \rangle 
\geq 
\dim_\qq \langle \Sigma'/Z \rangle $ since 
$\pi_\ast B_1,\dots, \pi_\ast B_k$ span $\langle \Sigma' /Z \rangle$, 
and
$\picnumbxz -1 = \picnumbxprimez$.
If none of the $B_i$ is supported on $E$, then $|\Sigma|=|\Sigma'|$ and
\begin{align*}
\finecomp_z(X'/Z,B'+M';\Sigma) &
\leq 
\finecomp_z(X/Z,B+M;\Sigma')
\\
\abscompz(X'/Z,B'+M';\Sigma') &
= 
\abscompz(X/Z,B+M;\Sigma) -1.
\end{align*}
Thus, we may assume that some $B_i$ is supported on  $E$.
Up to summing such $B_i$, we may assume that $B_1$ is the only one among the $B_i$ with such property.
Up to reordering, we can assume that $\{B_1, \dots, B_j\}$ is a basis of $\langle \Sigma/Z \rangle$.
Thus, $\pi_\ast B_2,\dots,\pi_\ast B_j$ is a basis of $\langle \Sigma' /Z \rangle$:
indeed, these divisors generate $\langle \Sigma /Z \rangle$;
To show linear independence over $Z$, let us assume that there existed a relation of the form
$\sum_{i=2}^j \delta_i \pi_\ast B_i \sim_{\qq, Z} 0$.
Pulling-back this linear equivalence to $X$, then
$\sum_{i=2}^j \delta_i B_i + cB_1 \sim_{\qq, Z} 0$ for some $c \in \qq$, which implies $c=0=\delta_i$ for all $i \geq 2$.
As
$|\Sigma| = |\Sigma'|+b_1$, $0< b_1 \leq 1$ then
\begin{align}
\label{eqn:eq.cases.a}
& \finecomp_z(X'/Z,B'+M';\Sigma') = \dim X' + \dim_\qq \langle \Sigma'/Z \rangle - |\Sigma'|  \\
\nonumber & = \dim X  + \dim_\qq \langle \Sigma/Z\rangle -1 - |\Sigma|+b_1 = \finecomp_z(X/Z,B+M;\Sigma)+(b_1-1),
\end{align}
and 
\begin{align}
\label{eqn:eq.cases.b}
& c_z(X',B'+M';\Sigma') = \dim X' +  \dim_\qq  {\rm Cl}_\qq(X'/Z)  - |\Sigma'|  \\
\nonumber & 
= \dim X  + \picnumbxz -1 - |\Sigma|+a = \abscompz(X/Z,B+M;\Sigma)+(a-1).
\end{align}
Thus, equality holds in both~\eqref{eqn:eq.cases.a},\eqref{eqn:eq.cases.b} if and only if $E$ is a summand of $\Sigma$ with coefficient one.
Hence, $E$ is a glc place of $(X'/Z,B'+M')$. 
	\item 
Let $n=((P_1, \dots, P_l), (n_1, \dots, n_l))$ be the given orbifold structure on $X$ and let
\[
\Sigma =\sum_{P \in X^1}
\left(1-\frac{1}{n_P}\right) P
+
\sum_{i=1}^k b_iB_i.
\]
We take the orbifold structure $n'$ defined in Remark~\ref{rmk:orb.struct.restr}.\ref{rmk:orb.struct.bir.contr} and set
\[
\Sigma' :=
\sum_{Q \in Y^1}
\left(
1-\frac{1}{n'_Q}
\right) Q
+
\sum_{i=1}^k b_i\pi_\ast B_i.
\]
$\Sigma'$ is an orbifold decomposition of $B'$
with respect to $n'$.
As $\pi_\ast B_1,\dots, \pi_\ast B_k$ span $\langle \Sigma' /Z \rangle$, then 
$\dim_\qq \langle \Sigma/Z \rangle 
\geq
\dim_\qq  \langle \Sigma'/Z \rangle$.
If for any $i$, $\supp(B_i) \neq E$,
then $|\Sigma|=|\Sigma'|$;
hence,
\[
\orbcomp_z(X/Z,B+M;\Sigma) \geq \orbcomp_z(X'/Z,B'+M';\Sigma')
\]
as claimed.
Thus, we may assume that $B_i=a_iE$ for some $i$ with $a_i\in \qq_{>0}$
and 
$a_i\leq \frac{1}{n_E}$.
Up to summing, we may assume that for all $i>1$, $\supp(B_i)\neq E$.
Let $\{B_1,\dots, B_j\}$ be a basis for $\langle \Sigma/Z \rangle$;
as in the previous part of the proof, $\{\pi_\ast B_2,\dots, \pi_\ast B_j\}$ is a basis of $\langle \Sigma' /Z \rangle$.
On the other hand, $|\Sigma| =|\Sigma'|+b_1$.
Thus,
\begin{align}
\nonumber & \orbcomp_z(X',B'+M';\Sigma') = 
\dim X' + 
\dim_\qq  \langle \Sigma'/Z \rangle - |\Sigma'|  
\\
\nonumber & 
= \dim X  + 
\dim_\qq \langle \Sigma/Z\rangle -1 - |\Sigma|+b_1 = 
\orbcomp_z(X/Z,B+M;\Sigma)+(b_1-1) \geq \orbcomp_z(X/Z,B+M;\Sigma),
\end{align}
since $b_1 \leq 1$;
equality holds if and only if $E$ is a summand of $\Sigma$ with coefficient one.
\end{enumerate}
\end{proof}

\begin{lemma}
\label{complexity.flops}
Let $(X/Z,B+M)$ be a generalized log canonical pair and let $z\in Z$ be a closed point.
Let $\pi \colon X\dashrightarrow X'$ be an isomorphism in codimension one over $Z$. 
Assume that the pair $(X'/Z,B'+M')$ on $X'$ induced via push-forward from $X$ is a generalized log canonical pair over $z\in Z$.
\begin{enumerate}
\item 
\label{complexity-vs-flops}
Let $\Sigma=\sum_{i=1}^k b_i B_i$ be a decomposition of $B$ supported at $z\in Z$.
Then, there exists a decomposition $\Sigma'$ of $B'$ such that
\[
\finecomp_z(X'/Z,B'+M';\Sigma')= \finecomp_z(X/Z,B+M;\Sigma)
\text{ and }
\abscompz(X'/Z,B'+M';\Sigma')=
\abscompz(X/Z,B+M;\Sigma).
\]
	\item
\label{complexity-vs-flops-orbifold}
Let $n$ be an orbifold structure on $X$.
Then, there exists an orbifold structure $n'$ on $X'$ and an orbifold
decomposition $\Sigma'$ of $B'$ for which
\[
\orbcomp_z(X'/Z,B'+M';\Sigma')= 
\orbcomp_z(X/Z,B+M;\Sigma).
\]
\end{enumerate}
\end{lemma}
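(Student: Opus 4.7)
The plan is straightforward: since an isomorphism in codimension one preserves essentially all codimension-one data, the three quantities entering each complexity (dimension, span of a decomposition in the relative class group, and norm) are all matched up exactly by push-forward. The statement is an equality, so the proof is a matter of bookkeeping once the right bijections are set up.

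First I would recall that if $\pi\colon X\dashrightarrow X'$ is an isomorphism in codimension one, then the strict transform/push-forward gives an isomorphism of groups of Weil $\qq$-divisors, sending $Z$-principal divisors to $Z$-principal divisors (principality being a codimension-one condition). Hence $\pi_\ast$ induces an isomorphism
\[
\pi_\ast \colon \relpicxz \xrightarrow{\sim} \relpicxprimez,
\]
and in particular $\picnumbxz=\picnumbxprimez$. Moreover $\dim X=\dim X'$, and $B'=\pi_\ast B$ is effective since $(X'/Z,B'+M')$ is assumed to be a generalized log canonical pair.

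For part (1), given a decomposition $\Sigma=\sum_{i=1}^k b_iB_i\leq B$ supported at $z$, set
\[
\Sigma'\coloneqq \sum_{i=1}^k b_i\,\pi_\ast B_i.
\]
Since each $\pi_\ast B_i$ is effective and their sum is bounded above by $B'=\pi_\ast B$, this is a decomposition of $B'$; it is supported at $z$ because $\pi$ is an isomorphism over $Z$ in codimension one and the $B_i$ are supported at $z$. Clearly $|\Sigma'|=|\Sigma|$, and $\pi_\ast$ carries $\langle \Sigma/Z\rangle$ onto $\langle \Sigma'/Z\rangle$, so $\dim_\qq\langle \Sigma/Z\rangle=\dim_\qq\langle \Sigma'/Z\rangle$. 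Substituting into the definitions of $\finecomp_z$ and $\abscompz$ yields the claimed equalities.

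For part (2), given an orbifold structure $n=((P_1,\dots,P_k),(n_1,\dots,n_k))$ on $X$, define $n'$ on $X'$ by $n'_{\pi_\ast P_i}\coloneqq n_i$, as in Remark~\ref{rmk:orb.struct.restr}.\ref{rmk:orb.struct.bir.contr}; since $\pi$ is an isomorphism in codimension one, every codimension-one point of $X'$ corresponds to a unique codimension-one point of $X$, so $n'$ is well defined. For an orbifold decomposition
\[
\Sigma=\sum_{P\in X^1}\Bigl(1-\tfrac{1}{n_P}\Bigr)P+\sum_{j=1}^s b_jB_j\leq B
\]
with respect to $n$, the push-forward of each orbifold Weil divisor $B_j$ is still orbifold Weil with respect to $n'$ (the Cartier index at each codimension-one point of $X'$ agrees with that at the corresponding point of $X$), so
\[
\Sigma'\coloneqq\sum_{Q\in (X')^1}\Bigl(1-\tfrac{1}{n'_Q}\Bigr)Q+\sum_{j=1}^s b_j\,\pi_\ast B_j
\]
is an orbifold decomposition of $B'$ with respect to $n'$. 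As before $|\Sigma'|=|\Sigma|$ and $\pi_\ast$ carries $\langle\Sigma/Z\rangle$ isomorphically onto $\langle\Sigma'/Z\rangle$, giving the equality of orbifold complexities. There is no real obstacle here: the content of the lemma is just the observation that isomorphisms in codimension one are transparent to every ingredient of the three complexities.
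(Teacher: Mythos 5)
Your proof is correct and follows the same route as the paper: the paper's (very terse) argument likewise sets $\Sigma':=\pi_\ast\Sigma$, takes $n'$ to be the pushed-forward orbifold structure of Remark~\ref{rmk:orb.struct.restr}, and uses that a small birational map over $Z$ identifies all the codimension-one data entering the three complexities. Your explicit observation that $\pi_\ast$ induces an isomorphism $\relpicxz\to\relpicxprimez$ is exactly the content of the linear-independence check the paper performs by pulling back relations.
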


The conditions of Lemma~\ref{complexity.flops} are fulfilled whenever $X \dashrightarrow X'$ is a flipping contraction of a minimal model program over $Z$.

\begin{proof}
For $(1)$ it suffices to define $\Sigma':=\pi_\ast \Sigma$.
In $(2)$, it suffices to define $n'$ as in Remark~\ref{rmk:orb.struct.restr}.\ref{rmk:orb.struct.bir.contr} and, again, 
define $\Sigma':=\pi_\ast \Sigma$.
\end{proof}

Lemmata~\ref{complexity.divisorial-contraction}-\ref{complexity.flops} readily imply the following corollary.

\begin{corollary}
\label{complexity.MMP}
Let $(X/Z,B+M)$ be a generalized log canonical pair and $z\in Z$ be a closed point. 
Assume that $K_X+B+M\sim_{\qq,Z}0$.
Let $\pi \colon X\dashrightarrow X'$ be a birational contraction consisting of a finite sequence of divisorial contractions and isomorphisms in codimension one over $Z$.
Assume that the pair $(X'/Z,B'+M')$ on $X'$ induced via push-forward from $X$ is a generalized log canonical pair over $z\in Z$.
\begin{enumerate}
	\item 
\label{complexity-under-MMP}
Let $\Sigma$ be a decomposition of $B$.
There exists a decomposition $\Sigma'$ of $B'$ for which
\begin{align}
\label{eqn:abscomp.mmp}
\abscompz(X'/Z,B'+M';\Sigma')& 
\leq 
\abscompz(X/Z,B+M;\Sigma),\\
\label{eqn:finecomp.mmp}
\finecomp_z(X'/Z,B'+M';\Sigma') &
\leq 
\finecomp_z(X/Z,B+M;\Sigma).
\end{align}
Moreover, equality holds in~\eqref{eqn:abscomp.mmp} if and only if all prime divisors contracted by $h$ appear in $\Sigma$ with coefficient $1$.
	\item 
\label{complexity-under-MMP-orbifold}
Let $n$ be a n orbifold structure on $X$.
Let $\Sigma$ be an orbifold decomposition of $B$.
Then, there exists an orbifold structure on $X'$
and an orbifold decomposition $\Sigma'$ of $B'$ for which
\[
\orbcomp_z(X'/Z,B'+M';\Sigma')\leq \orbcomp_z(X/Z,B+M;\Sigma).
\]
\end{enumerate}
\end{corollary}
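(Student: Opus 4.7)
The plan is to prove the corollary by induction on the length $\ell$ of the factorization of $\pi$ into elementary steps. Write $\pi$ as a composition
\[
X = X_0 \dashrightarrow X_1 \dashrightarrow \cdots \dashrightarrow X_\ell = X',
\]
where each map $\pi_i \colon X_{i-1} \dashrightarrow X_i$ is either a divisorial contraction or an isomorphism in codimension one over $Z$, and denote by $(X_i/Z, B_i + M_i)$ the generalized pair obtained by pushing forward $(X/Z, B+M)$. The base case $\ell = 0$ is trivial, so the key point of the induction is to guarantee at every stage that $(X_i/Z, B_i + M_i)$ is glc, so that Lemma~\ref{complexity.divisorial-contraction} or Lemma~\ref{complexity.flops} can be applied.

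The glc property at every intermediate stage is ensured by the hypothesis $K_X + B + M \sim_{\qq, Z} 0$: indeed, for any curve $C$ contracted by a step of the sequence we have $(K_X + B + M) \cdot C = 0$, so the sequence is a $(K_X + B + M)$-trivial sequence of MMP-type operations. In particular, the crepant property propagates along the sequence and each $(X_i/Z, B_i + M_i)$ remains glc. This observation, combined with the assumption that the endpoint $(X'/Z, B' + M')$ is glc, also implies that the intermediate glc conditions hold in the appropriate neighborhoods of $z$.

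Given this, I would apply at each step the corresponding lemma. At a divisorial contraction $X_{i-1} \to X_i$ with exceptional divisor $E_i$, Lemma~\ref{complexity.divisorial-contraction}(1) (resp.\ (2)) produces a decomposition $\Sigma_i$ (resp.\ an orbifold structure and an orbifold decomposition) on $X_i$ with
\[
\abscompz(X_i/Z, B_i + M_i; \Sigma_i) \leq \abscompz(X_{i-1}/Z, B_{i-1} + M_{i-1}; \Sigma_{i-1}),
\]
and analogous inequalities for $\finecomp_z$ and $\orbcomp_z$. At an isomorphism in codimension one, Lemma~\ref{complexity.flops} yields the corresponding equalities. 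Iterating and composing all the decompositions produces the desired $\Sigma'$, establishing~\eqref{eqn:abscomp.mmp} and~\eqref{eqn:finecomp.mmp}, as well as the corresponding inequality for the orbifold complexity.

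For the equality statement in~\eqref{eqn:abscomp.mmp}, the main remaining point is to track when equality is achieved at each stage. Isomorphisms in codimension one preserve $\abscompz$ exactly by Lemma~\ref{complexity.flops}, so these contribute no strict inequality. At a divisorial contraction, Lemma~\ref{complexity.divisorial-contraction}(1) tells us that $\abscompz$ is preserved if and only if $\Sigma_{i-1} \geq E_i$, i.e.\ the contracted divisor $E_i$ appears in $\Sigma_{i-1}$ with coefficient one (since $\Sigma_{i-1} \leq B_{i-1}$ and $B_{i-1}$ is glc so the coefficient of $E_i$ in $B_{i-1}$ is at most one). Tracking this condition through the sequence, and noting that a prime divisor contracted by $\pi$ must appear in some $\Sigma_i$ with coefficient one precisely when it appears in $\Sigma$ with coefficient one (since the $\Sigma_i$ are obtained from $\Sigma$ by strict transform plus additional prime components with coefficient one for previously extracted divisors that are not relevant here, as $\pi$ contains no extractions), one obtains the claimed equivalence. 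The technical step I expect to require most care is this last bookkeeping, where one must verify that the coefficient of a $\pi$-exceptional divisor in the inductively built $\Sigma_i$ agrees with its coefficient in $\Sigma$.
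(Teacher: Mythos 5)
Your proposal is correct and follows exactly the route the paper intends: the paper gives no written proof beyond the remark that Lemmata~\ref{complexity.divisorial-contraction} and~\ref{complexity.flops} "readily imply" the corollary, and your induction on the elementary steps, with the crepant/negativity-lemma argument to keep the intermediate pairs glc and the step-by-step tracking of the coefficient of each contracted divisor for the equality case, is precisely the intended composition of those two lemmas.
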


\subsection{Complexity and adjunction}
\label{subsect:compl.adj}

In this subsection, we prove that the orbifold complexity can only decrease when performing adjunction along certain divisorial generalized log canonical centers.
For the basics of adjunction along divisorial centers for generalized pairs, we refer the reader to~\cite[pp. 304-306]{BZ16}.

\begin{lemma}
\label{lem:complexity-vs-adjunction}
Let $(X/Z,B+M)$ be a $\qq$-factorial generalized log canonical pair and let $z\in Z$ be a closed point.
Let $n$ be an orbifold structure on $X$ and let 
\[
\Sigma:=
\sum_{P\in X^1}
\left(1-\frac{1}{n_P}\right) P
+
\sum_{i=1}^k b_iB_i,
\]
be an orbifold decomposition of $B$.
Assume that the following conditions hold:
\begin{enumerate}
    \item 
$(X/Z,B+M)$ has a divisorial glc center $E$ contained in the fiber over $z$;

    \item 
$E$ is a (projective) normal variety;

    \item 
$B_1=E$, $b_1=1$; and,

   \item 
$\forall j \geq 2$, $B_j\cap E\neq \emptyset$.
\end{enumerate}
Then, there exists an orbifold structure $m$ on $E$
and an orbifold decomposition
$\Sigma_E$ of $B_E$
such that
\[
\orbcomp(E,B_E+M_E;\Sigma_E) \leq  \orbcomp_z(X/Z,B+M;\Sigma),
\]
where $B_E$ is the boundary part in the generalized pair $(E, B_E+M
_E)$ induced by adjunction of $(X, B+M)$ along $E$.
\end{lemma}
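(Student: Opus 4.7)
The plan is to mimic the strategy of Lemma~\ref{complexity.divisorial-contraction} while being careful about the extra orbifold data that generalized adjunction along $E$ forces upon us. Recall from \cite[pp.~304--306]{BZ16} that generalized adjunction yields
\[
(K_X+B+M)\vert_E = K_E+B_E+M_E,
\]
where at each $Q\in E^1$ the coefficient $\coeff_Q(B_E)$ is given by the coefficient of the (generalized) different, in particular by $1-1/m_Q^{\mathrm{adj}}+\sum_{P\supset Q}\frac{\coeff_P(B-E)}{m_{P,Q}}$ for some positive integers $m_Q^{\mathrm{adj}}, m_{P,Q}$ depending on the index of $K_X+E+M$ at $Q$ and on how $P$ pulls back to $E$ at $Q$. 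This is precisely the data we need to design the orbifold structure $m$.

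First I would define the orbifold structure $m$ on $E$ as follows: for each $Q\in E^1$, let $m_Q$ be the least common multiple of the adjunction index $m_Q^{\mathrm{adj}}$ and of the indices $n_P$ of those $P\in X^1$ passing through $Q$; set $m_Q=1$ when this lcm equals $1$. By construction, for $j\geq 2$ the scheme‑theoretic restriction $B_j\vert_E$ is an effective orbifold Weil divisor on $E$ with respect to $m$: at a codimension one point $Q$ of $E$, the denominator of $\coeff_Q(B_j\vert_E)$ divides the local index of $B_j$ on $X$ at the generic point of $Q$, which in turn divides $m_Q$ by construction. Then I set
\[
\Sigma_E := \sum_{Q\in E^1}\Bigl(1-\frac{1}{m_Q}\Bigr)\,Q + \sum_{j=2}^{k} b_j\,(B_j\vert_E),
\]
which by hypothesis (4) is supported at $E$ (and hence at $z$, via the projective $E$).

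The key check is that $\Sigma_E\leq B_E$. The orbifold part is bounded by the different because, by the very definition of $m_Q$, $1-1/m_Q\leq 1-1/m_Q^{\mathrm{adj}}+\sum_{P\supset Q}(1-1/n_P)\cdot\mathrm{mult}_Q(P\vert_E)/m_{P,Q}\leq \coeff_Q(B_E)$, once one factors in the contribution of the orbifold part of $\Sigma$ (all of which is $\leq B-E$, since $\Sigma\leq B$ and $B_1=E$ with $b_1=1$ is separated out). The remaining summands are bounded by the push of $\sum_{j\geq 2}b_jB_j\leq B-E$ through the adjunction formula. This step is the main obstacle: it requires tracking the orbifold indices simultaneously through adjunction and through restriction, and it is the place where working with $\orbcomp$ rather than $\finecomp$ becomes essential --- the fractions $1-1/m_Q$ coming from the different are absorbed by the orbifold structure instead of eating into $|\Sigma_E|$.

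Finally the numerical inequality is a formal computation. Since $E$ is projective and contained in the fibre over $z$, restriction $D\mapsto D\vert_E$ induces a well-defined $\qq$-linear map $\langle \Sigma/Z\rangle\to \Cl_\qq(E)$ whose image contains the generators $[B_j\vert_E]$ of $\langle \Sigma_E\rangle$, giving
\[
\dim_\qq\langle \Sigma_E\rangle \;\leq\; \dim_\qq\langle \Sigma/Z\rangle.
\]
Moreover, $|\Sigma_E|=\sum_{j\geq 2}b_j=|\Sigma|-1$ and $\dim E=\dim X-1$, so
\[
\orbcomp(E,B_E+M_E;\Sigma_E) = \dim E + \dim_\qq\langle \Sigma_E\rangle - |\Sigma_E| \leq \dim X + \dim_\qq\langle \Sigma/Z\rangle - |\Sigma| = \orbcomp_z(X/Z,B+M;\Sigma),
\]
as claimed. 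The only real work lies in verifying $\Sigma_E\leq B_E$ coefficient by coefficient at codimension one points of $E$, splitting cases according to whether a prime divisor of $X$ through the generic point of $Q\subset E^1$ carries a non-trivial orbifold index from $n$ or only contributes via the different.
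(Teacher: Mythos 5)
Your overall strategy matches the paper's: restrict the orbifold Weil divisors $B_j$ ($j\geq 2$) to $E$, absorb the contributions of the different and of the orbifold part of $\Sigma$ into a new orbifold structure $m$ on $E$, and conclude with the count $\dim E=\dim X-1$, $|\Sigma_E|=|\Sigma|-1$, $\dim_\qq\langle\Sigma_E\rangle\leq\dim_\qq\langle\Sigma/Z\rangle$. That final numerical step, and the observation that restriction carries the span of $\Sigma$ onto generators of $\langle\Sigma_E\rangle$, are fine.

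However, your definition of $m_Q$ as the \emph{least common multiple} of the adjunction index and the indices $n_P$ of the divisors through $Q$ is too small, and it breaks the very step you invoke it for: the claim that each $B_j\vert_E$ is an orbifold Weil divisor for $m$. At a codimension one point $Q$ of $E$ one has $\coeff_Q(B_j\vert_E)=\sum_{P\supset Q}\coeff_P(B_j)\,\gamma_{P,Q}/i_Q$ with $\gamma_{P,Q}\in\zz_{>0}$ and $i_Q$ the Cartier index of $E$ at $Q$; since $\coeff_P(B_j)$ has denominator $n_P$, the denominator of $\coeff_Q(B_j\vert_E)$ can be the \emph{product} $n_P\, i_Q$, not their lcm. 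Concretely, if $i_Q=n_P=2$, $\gamma_{P,Q}=1$ and $B_j=\tfrac12 P$, then $\coeff_Q(B_j\vert_E)=\tfrac14$ while your $m_Q=2$. The correct choice is $m_Q=n_{P_{0,Q}}\,i_Q$, where $P_{0,Q}$ is the divisor of the support of $n$ through $Q$. Enlarging $m_Q$ in this way then makes the inequality $\coeff_Q(\Sigma_E)\leq\coeff_Q(B_E)$ genuinely delicate --- it is exactly the computation you defer as ``the only real work.'' Carrying it out requires two inputs you do not supply: (i) a classification, via log canonicity of $(X,B+M)$ at the generic point of $Q$, showing that at most one divisor of the support of $n$ contains $Q$, except in the degenerate case of exactly two such divisors both of index $2$, in which case $Q$ is an lc centre with $\coeff_Q(B_E)=1$, no $B_j$ with $j\geq 2$ contains $Q$, and one must set $m_Q=1$; and (ii) the precise different formula $\coeff_Q(\tilde B_E)=1-1/i_Q+\sum_{P}\coeff_P(B)\gamma_{P,Q}/i_Q$ combined with $\gamma_{P_{0,Q},Q}\geq 1$ to verify $1-1/(n_{P_{0,Q}}i_Q)+\cdots\leq\coeff_Q(\tilde B_E)\leq\coeff_Q(B_E)$. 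Without these, the proposal does not establish that $\Sigma_E$ is an orbifold decomposition of $B_E$, so there is a genuine gap.
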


Condition $(3)$, together with the assumption that $(X, B+M)$ is generalized log canonical, implies that for all $j \geq 2$, $B_j$ does not contain $E$ in its support.

\begin{remark}
\label{rmk:decomposition.div.lc.center}
{\em 
Let $(X/Z,B+M)$ be a $\qq$-factorial generalized log canonical pair. 
Let 
\[
\Sigma:=
\sum_{P \in X^1}
\left(1-\frac{1}{n_P}\right) P
+
\sum_{i=1}^k b_iB_i,
\]
be an orbifold decomposition of $B$ with respect to an orbifold structure $n$ on $X$.
Let $E \subset X$ be a prime divisor with $B \geq E$ and assume that $n_E \neq 1$.
We aim to show that it is always possible to modify the orbifold structure $n$ and the orbifold decomposition $\Sigma$ so that $n_E=1$ and property $(3)$ in the statement of Lemma~\ref{lem:complexity-vs-adjunction} is satisfied, while at the same time $\orbcomp_z(X/Z,B+M;\Sigma)$ does not increase.
\\
Let $n'$ be the orbifold structure defined by
\[
n'_G:= 
\begin{cases}
1 & G=E\\
n_G & E \neq G \in X^1
\end{cases}
\]
and let $\Sigma'$ be the following orbifold decomposition of $B$ with respect to $n'$
\[
\Sigma':=
\sum_{P\in X^1}
\left(1-\frac{1}{n'_P}\right) P
+ E +
\sum_{i=1}^k b_i (B_i-\coeff_E(B_i)E),
\]
If the support of any of the $B_i$ coincides with $E$, we can then assume, up to reordering and summing, that $B_1=  b_1 E$, $b_1< 1$ and for all $i>1$, $\supp B_i\neq E$.
Then,
\[
B_1-\coeff_E(B_1)E=0, \quad 
\dim_\qq \langle \Sigma' / Z \rangle = 
\dim_\qq \langle \Sigma / Z \rangle
\quad \text{and}  
\quad |\Sigma'|\geq |\Sigma|.
\]
On the other hand, if for all $i \geq 1$, $\supp B_i\neq E$, then $B_i-\coeff_E(B_i)E\neq 0$,
$\dim_\qq \langle \Sigma' / Z \rangle - 
\dim_\qq \langle \Sigma / Z \rangle
$ is either $0$ or $1$, while
$|\Sigma'|= |\Sigma| +1$.
In both cases,
\[
\orbcomp_z(X/Z,B+M;\Sigma') \leq 
\orbcomp_z(X/Z,B+M;\Sigma).
\]
}
\end{remark}

\begin{proof}[Proof of Lemma~\ref{lem:complexity-vs-adjunction}]
For the reader's convenience, we divide the proof into steps.
\\
In view of Remark~\ref{rmk:decomposition.div.lc.center}, we can assume that $n_E=1$.
Let 
\begin{align}
\label{eqn:orb.decomp.adj}
\Sigma=\sum_{P \in X^1}\left(1-\frac{1}{n_P}\right) P
+\sum_{i=1}^kb_iB_i,
\end{align}
be the given orbifold decomposition of $B$.
\\

\noindent
{\it Step 1.} 
In this step, we show that for all $Q \in E^1$ contained in the support of $n$, exactly one of the following two conditions hold:
\begin{itemize}
    \item[(a)] 
there exists a unique prime divisor $P_{0, Q}$ in the support of $n$ containing $Q$; or,
    \item[(b)]
there exist exactly two prime divisors $P_{1, Q},P_{2, Q}$ in the support of $n$ containing $Q$ and
$n_{P_1, Q}=n_{P_2, Q}=2$.
Moreover, in this case $Q \not \subset B_i$ for $i>1$.
\end{itemize}

\noindent
The claim follows by localizing at the generic point of $Q$ and by adjunction along $E$, cf.~\cite[\S~16]{Kol92}, since $(X,B+M)$ is glc at $Q$ and by Remark~\ref{rem:coeff.orb.decomp}
\[
\coeff_{P_{i, Q}(B)}
\geq 1-\frac{1}{n(P_{i, Q})}, 
\quad 
i=0,1,2.
\]
In particular, in case (b), then $Q$ is an lc center of the pair $(X, B+M)$ and $Q$ is not contained in the support of $B_i\vert_E$,
as otherwise, $Q$ would be a center of singularities strictly worse than generalized log canonical.
\\

\noindent
We define the set $\mathcal{S} \subset E^1$ to be 
\begin{align}
    \label{eqn:def.S}
\mathcal{S}:=\{ Q \in E^1 \ \vert \
\text{$Q$ satisfies condition (b) in the statement of Step 1} \}.
\end{align}

\noindent
{\it Step 2}. 
In this step, we define an orbifold structure $m$ on $E$ together with an orbifold decomposition $\Sigma_E$ of $B_E$.\\

\noindent
Given $Q \in E^1$, we denote by $i_Q$ be the Cartier index of $E$ at the generic point of $Q$.
We define the orbifold structure $m$ on $E$ as follows: for $Q \in E^1$,
\begin{align}
\label{def:orb.struct.m.adj}
m(Q):=
\begin{cases}
i_Q & 
\text{if $Q$ is not contained in the support of $n$}\\
n(P_{0, Q})i_Q &
\text{if $Q$ satisfies condition (a) with respect to $P_{0, Q} \in X^1$ in Step 1}\\
1 &
\text{if $Q\in S$}
\end{cases}
.
\end{align}
%
\noindent
By~\eqref{eqn:orb.decomp.adj} and adjunction, if $Q\in \mathcal{S}$, then ${\rm coeff}_{P_{1, Q}}(B)={\rm coeff}_{P_{2, Q}}(B)=\frac{1}{2}$, $Q$ belongs to the support of $\lfloor B_E \rfloor$ and no $B_i$ contains $Q$ in its support for $i>1$.
We define
\[
\Sigma_E :=
\sum_{Q \in E^1}
\left( 1-\frac{1}{m_Q} \right) Q 
+
\sum_{i=2}^k b_i B_i\vert_E. 
\]

\noindent
{\it Step 3.} 
In this step, we show that for $i\geq 2$, the divisors $B_i\vert_E$ are orbifold Weil divisors for the orbifold structure $m$ on $E$.\\

\noindent
Let $Q \in E^1$.
We need to check that for any $i \geq 2$, 
\[
m_Q {\rm coeff}_Q(B_i \vert_E) \in \zz_{\geq 0}.
\]
By Step 1, we may assume that $Q \not \in \mathcal S$.
For $P'\in X^1$, $i_QP'$ is Cartier at $Q$, cf.~\cite[\S 16]{Kol92}, so that
\[
\coeff_Q(P\vert_E)=
\frac{\gamma_{P, Q}}{i_Q},
\quad 
\gamma_{P, Q} \in \mathbb Z_{>0}.
\]
Hence,
\begin{align}
\label{eq:coeff-calc}
 {\rm coeff}_Q(B_i\vert_E)= &
{\rm coeff}_Q
\left( 
\sum_{\substack{P \in X^1\\ P \supset Q}} 
{\rm coeff}_P(B_i) 
P\vert_E 
\right)
=
\sum_{\substack{P \in X^1\\ P \supset Q}}
\frac{
{\rm coeff}_P(B_i)\gamma_{P, Q}}
{i_Q}.
\end{align} 
As $\Sigma$ is an orbifold decomposition, then
$n_P{\rm coeff}_P(B_i)\in \zz_{>0}$, thus,
\[
m_Q\left(
\frac{{\rm coeff}_P(B_i) \gamma_{P, Q}}{i_Q}\right)
\in \zz_{>0}.
\]
\vspace{0.2cm}

\noindent
{\it Step 4.} 
In this step, we show that $\Sigma_E \leq B_E$, i.e., $\Sigma_E$ is an orbifold decomposition of $B_E$ for $m$.\\

\noindent
It suffices to check that for $Q \in E^1$, 
${\rm coeff}_Q(\Sigma_E) \leq {\rm coeff}_Q(B_E)$.
If $Q\in \mathcal{S}$, then  ${\rm coeff}_Q(\Sigma_E)=0$, while ${\rm coeff}_Q(B_E)=1$; 
hence, we may assume that $Q\notin \mathcal{S}$.
By~\eqref{eq:coeff-calc} and the definition of $\Sigma_E$, 
\begin{equation} 
{\rm coeff}_Q(\Sigma_E)=
1-\frac{1}{m_Q}+\sum_{\substack{P \in X^1 \setminus\{E\}
\\ P \supset Q}}
\sum_{i=2}^k  \frac{ b_i{\rm coeff}_P(B_i)\gamma_{P, Q}}{i_Q}.
\end{equation} 
On the other hand, by \cite[Remark 4.8]{BZ16},
\begin{equation}\label{eq1} {\rm coeff}_Q(B_E)
\geq 
{\rm coeff}_Q(\tilde{B}_E)=
1-\frac{1}{i_Q}+\sum_{\substack{P \in X^1\setminus\{E\}
\\ P \supset Q}}
\frac{ {\rm coeff}_P(B)\gamma_{P, Q}}{i_Q},
\end{equation} 
where $\tilde B_E$ is defined by 
\[
K_E+\tilde{B}_E = (K_X+B)\vert_E.
\]
By Step 1, there exists at most one prime divisor $P_{0, Q}$ in the support of $n$ containing $Q$;
if that is the case, then $n_{P_{0, Q}}i_Q=m_Q$ and
\begin{align}
\label{eq3} 
{\rm coeff}_{P_{0, Q}}(B) \geq &
1-\frac{1}{n_{P_{0, Q}}}+\sum_{i=2}^k b_i{\rm coeff}_{P_{0, Q}}(B_i).
\end{align}
For $P \in X^1\setminus\{E, P_{0, Q}\}$, $P \supset Q$, then 
\begin{align}
\label{eq2} 
{\rm coeff}_P(B)\geq & 
\sum_{i=2}^k b_i{\rm coeff}_P(B_i),
\end{align}
Equations~\eqref{eq1}-\eqref{eq3} imply that
\begin{align} \nonumber 
{\rm coeff}_Q(\tilde B_E) & \geq
1-\frac{1}{i_Q} + 
\frac{\left( \left(1-\frac{1}{n_{P_{0, Q}}}\right) +\sum_{i=2}^k b_i{\rm coeff}_{P_{0, Q}}(B_i)\right) \gamma_{P_{0, Q}, Q} }
{i_Q} +\sum_{\substack{P_0\neq P \in X^1\\ P \supset Q}}
\sum_{i=2}^k  \frac{ b_i{\rm coeff}_P(B_i)\gamma_{P, Q}}{i_Q}\\ 
\label{eqn:decomp.ineq}
&\geq 1-\frac{1}{m_Q}+\sum_{\substack{P \in X^1\\ P \supset Q}}
\sum_{i=2}^k  \frac{ b_i{\rm coeff}_P(B_i)\gamma_{P, Q}}{i_Q} = {\rm coeff}_Q(\Sigma_E),
\end{align} 
where we set $\gamma_{P_{0, Q}}=0$ if $Q$ is not contained in the support of $n$.
Equality holds in the chain of inequalities in~\eqref{eqn:decomp.ineq} if and only if equality holds in~\eqref{eq1}-\eqref{eq2}
and ${\rm mult}_Q(P_{0, Q})=1$.
Thus, $B_E\geq \tilde B_E \geq \Sigma_E$.\\

\noindent
{\it Step 5.}
In this step, we show that 
$\orbcomp(E,B_E+M_E;\Sigma_E)\leq \orbcomp_z(X/Z,B+M;\Sigma)$.\\

\noindent
Let us observe that
$|\Sigma|=1+\sum_{i=2}^k b_i$, 
$|\Sigma_E|=\sum_{i=2}^k b_i$, by $(3)$.
Moreover,
\[
\dim_\qq\langle \Sigma_E\rangle 
=
\dim_\qq 
\langle B_2\vert_E,\dots,  B_k\vert_E \rangle 
\leq
\dim_\qq 
\langle B_2,\dots,B_k\rangle 
\leq
\dim_\qq 
\langle \Sigma/Z\rangle
\]
As $\dim E =\dim X -1$, we conclude that
\begin{align*}
\orbcomp_z(X/Z,B+M;\Sigma)=&
\dim X +
\dim_\qq \langle \Sigma/Z\rangle -
1-
\sum_{i=2}^kb_i\\
\geq &
\dim E +
\dim_\qq \langle \Sigma_E\rangle -
\sum_{i=2}^k b_i =
\orbcomp(E,B_E+M_E;\Sigma_E).
\end{align*}
\end{proof}

We finish this subsection by giving some remarks about the above proof.

\begin{remark}{\em
Lemma~\ref{lem:complexity-vs-adjunction} does not have a corresponding version neither for the complexity nor for the fine complexity.
Indeed, in the former case, $\dim_\qq {\rm Cl}_\qq(E)$ may be larger than
$\dim_\qq {\rm Cl}_\qq(X/Z)$
whereas in the latter, it is not necessarily true that the restriction of a decomposition $\Sigma$ of $B$ would yield a decomposition of $B_E$.
Hence, we need to work with the orbifold complexity when dealing with adjunction along divisorial lc center, if we wish to control it.
}
\end{remark}

\begin{remark}
\label{rem:equality-adjunction}
{\em
We adopt the the notation of the proof of Lemma~\ref{lem:complexity-vs-adjunction}.
\begin{enumerate}
	\item
The equality $\orbcomp(E,B_E+M_E;\Sigma_E)= \orbcomp_z(X/Z,B+M;\Sigma)$ holds if and only if
$\dim_\qq \langle \Sigma_E\rangle 
= 
\dim_\qq \langle \Sigma/Z\rangle $. 
	\item
If, moreover, we assume that $\dim_\qq \langle \Sigma_E\rangle =
\dim_\qq {\rm Cl}_\qq(E)$ and $\orbcomp(E,B_E+M_E;\Sigma_E)
=
\orbcomp(E,B_E+M_E)$, 
then $\Sigma_E=B_E$ and $\mathcal S=\emptyset$.
In fact, if that were not to be the case, taking $\tilde \Sigma$ to be the decomposition of $B_E$ into its prime components, then $\orbcomp(E, B_E+M_E; \tilde \Sigma) < \orbcomp(E,B_E+M_E;\Sigma_E)$, contradicting the assumption $\orbcomp(E,B_E+M_E;\Sigma_E)
=
\orbcomp(E,B_E+M_E)$.
\end{enumerate}
}
\end{remark} 

\begin{remark}
\label{rem:res-MY}
{\em 
In the proof of Lemma~\ref{lem:complexity-vs-adjunction}, the constructed orbifold decomposition $\Sigma_E$ is also an orbifold decomposition of the divisor $\tilde B_E \leq B_E$ on $E$ defined by $(K_X+B)|_E = K_E+\tilde{B}_E$.
This follows at once from~\eqref{eqn:decomp.ineq}.
Hence, if $M\vert_E$ gives contribution to $B_E$, i.e., if
$B_E > \tilde{B}_E$, cf.~\cite[Remark 4.8]{BZ16}, then there exists an orbifold decomposition $\Sigma'_E$ of $(E,B_E+M_E)$ such that $\Sigma'_E > \Sigma_E$ and
$|\Sigma'_E|>|\Sigma_E|$. 
Thus, if $\dim_\qq \langle \Sigma' \rangle = 
\dim_\qq \langle \Sigma \rangle$ then 
$\orbcomp(E,B_E+M_E; \Sigma')< \orbcomp(E,B_E+M_E; \Sigma)$.
}
\end{remark}

\subsection{Toric geometry}
For the basic definitions and results about toric varieties, we refer the reader to~\cite{CLS11}.
We will need the following simple technical result about the structure of toric singularities.
For the notion of quotient-dlt (in short, qdlt) singularities, the reader is referred to~\cite[\S 5]{dFKX}.

\begin{lemma}
\label{lem:toric-complexity=0}
Let $X$ be a normal $\qq$-factorial toric variety of dimension $n$.
For any closed point $x\in X$, there exists an affine neighborhood $X^0 \subset X$ containing $x$,
and a boundary divisor $\Gamma^0$ on $X^0$, with the following properties:
\begin{enumerate}
    \item the pair $(X^0,\Gamma^0)$ is qdlt,
    \item $\Gamma^0$ has exactly $n$ components with coefficient one that pass through $x\in X$, and
    \item $\Gamma^0$ contains all the torus invariant prime divisors which pass through $x\in X$.
\end{enumerate}
In particular $\abscompx(X^0, \Gamma^0)=0=c^{\rm loc}(x \in X)$
\end{lemma}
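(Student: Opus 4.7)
The plan is to exhibit the boundary $\Gamma^0$ explicitly, complementing the torus-invariant divisors through $x$ with transverse character translates along a torus factor, and then verify qdlt via a finite quotient presentation.

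First I would shrink $X$ to the affine torus-invariant open $X^0 := U_\tau$, where $\tau$ is the cone of the orbit $O_\tau$ containing $x$. Since $X$ is $\qq$-factorial, $\tau$ is simplicial; let $k := \dim \tau$ and $v_1,\ldots,v_k \in N$ be its primitive ray generators. By the orbit-cone correspondence, the torus-invariant prime divisors of $X^0$ passing through $x$ are exactly $D_{\rho_1},\ldots,D_{\rho_k}$, with $\rho_i = \qq_{\geq 0} v_i$.

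Next, using the saturation $N_\tau := N \cap \mathrm{span}_\qq \tau$ and any complement $N''$ with $N = N_\tau \oplus N''$, I would split $X^0 \cong (\cc^k / G_\tau) \times T_{N''}$, where $G_\tau := N_\tau / \langle v_1,\ldots,v_k\rangle_{\zz}$ acts diagonally on $\cc^k$ and trivially on $T_{N''} \cong (\cc^\ast)^{n-k}$. Write $x = (\bar 0, y)$ in these coordinates and choose a $\zz$-basis $\chi_1,\ldots,\chi_{n-k}$ of the character lattice of $T_{N''}$. Setting $H_j := V(\chi_j - \chi_j(y)) \subset X^0$, I would take
\[
\Gamma^0 := \sum_{i=1}^k D_{\rho_i} + \sum_{j=1}^{n-k} H_j.
\]
Then $\Gamma^0$ has exactly $n$ components of coefficient one passing through $x$ and contains every torus-invariant divisor through $x$, so (2) and (3) hold by construction.

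For the qdlt property I would pull $\Gamma^0$ back along the finite quotient $\pi \colon \tilde X := \cc^k \times T_{N''} \to X^0$. Because $G_\tau$ acts trivially on $T_{N''}$, each $\tilde H_j := \{\chi_j = \chi_j(y)\}$ is $G_\tau$-invariant and $\pi$ is \'etale at its generic point; meanwhile $G_\tau$ ramifies along $\tilde D_i := \{z_i = 0\}$ with index $e_i$ equal to the stabilizer order of its generic point. Thus $\pi^{\ast} D_{\rho_i} = e_i \tilde D_i$ and Riemann--Hurwitz yields
\[
\pi^{\ast}(K_{X^0} + \Gamma^0) = K_{\tilde X} + \sum_{i=1}^k \tilde D_i + \sum_{j=1}^{n-k} \tilde H_j,
\]
whose right-hand side has snc support at $\tilde x := (0, y)$. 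So, after possibly shrinking $X^0$ around $x$, $(X^0, \Gamma^0)$ is a finite quotient of a log smooth pair by a group preserving the boundary, and hence is qdlt.

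Finally, $\qq$-factoriality of $X$ forces $\mathrm{Cl}(X^0_x)$ to be torsion, so $\dim_\qq \mathrm{Cl}_\qq(X^0_x) = 0$; combined with $|\Gamma^0| = n$ and all $n$ components passing through $x$, this gives $c_x(X^0, \Gamma^0) = n + 0 - n = 0$. The trivial bound $c^{\mathrm{loc}}(x \in X) \leq c_x(X^0, \Gamma^0)$ together with the non-negativity for $\qq$-factorial lc germs (Remark~\ref{rmk:local.complex.q.fact}) then forces $c^{\mathrm{loc}}(x \in X) = 0$. The main technical subtlety I anticipate is the clean verification of qdlt via the quotient presentation---specifically, arranging that $G_\tau$ preserves each $\tilde H_j$, which is exactly what dictates the direct-sum splitting $N = N_\tau \oplus N''$ and the use of characters of the torus factor $T_{N''}$.
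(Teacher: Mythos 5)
Your proof is correct and takes essentially the same route as the paper's: both split an invariant affine neighborhood of $x$ as a product of a torus factor with an affine simplicial toric variety having a fixed point, and take $\Gamma^0$ to be the invariant divisors through $x$ together with transverse character-level sets through the torus coordinates of $x$. You additionally spell out the qdlt verification via the finite abelian quotient of an snc pair and the final complexity computation, both of which the paper leaves implicit.
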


\begin{proof}
The statement is clear if $x \in \mathbb (\kk^\ast)^n \subset X$. 
Hence, we can assume that $x$ is contained in at least one of the toric invariant divisors.
Let $\Gamma_1,\dots, \Gamma_n$ be the torus invariant prime divisors of $X$.
Assume that $x\in X$ is contained in 
\[
\Gamma_1\cap \dots \cap \Gamma_j\setminus  \left(\Gamma_{j+1}\cup \dots \cup \Gamma_n\right).
\]
The variety $X \setminus \cup_{i=1}^{n-j} \Gamma_{j+i}$ is equivariantly isomorphic to $(\kk^\ast)^{n-j} \times V$, where $V$ is a $j$-dimensional normal affine $\qq$-factorial toric variety. 
We denote by $p_1, \dots, p_{n-j}$ (resp., $p_V$) the projection of $(\kk^\ast)^{n-j}\times V$ onto the $n-j$ torus factors (resp. onto $V$).
The prime invariant divisors $\Gamma_1, \dots, \Gamma_j$ are of the form $\Gamma_i= (\kk^\ast)^{n-j} \times \Gamma_{V, i}$, where $\Gamma_{V, 1}\dots, \Gamma_{V, j}$ are the prime invariant divisors of $V$.
By assumption, $x$ is a point of $(\kk^\ast)^{n-j} \times V$, whose projection on $V$ is the invariant point of $V$.
Setting $x_i:=p_i(x)$, then it suffices to consider the boundary divisor
$
\sum_{i=1}^{n-j}p_i^\ast (x_i)+ p_V^\ast ( \Gamma_{V, 1} + \dots + \Gamma_{V, j})$.
\end{proof}

\subsection{Toric formality}
Toric formality is simply the formal analog of the notion of toric varieties and toric morphisms.
Given an algebraic variety $X$ and a closed point $x\in X$, we denote by $\hat {X}_x$ the spectrum of the completion of the local ring at the maximal ideal of $x$.

\begin{definition}\label{def:formally-toric}
{\em 
Let $f \colon X\rightarrow Z$ be a proper contraction of normal quasi-projective varieties. 
Let $z\in Z$ be a closed point, and let $B$ a boundary on $X$. 
We say that the contraction $f$ is {\em formally toric} at $z$ for the pair $(X,B)$,
if there exist
\begin{itemize}
    \item 
a rational polyhedral cone $\sigma \subset \mathbb{Q}^n$, a fan of cones $\Xi \subset \mathbb{Q}^m$ and their associated normal toric varieties $Z(\sigma), X(\Xi)$;
    
    \item 
a proper toric morphism $f_t \colon X(\Xi) \rightarrow Z(\sigma)$;
    
    \item 
$z_0 \in Z(\sigma)$ a closed point; and

    \item 
a toric boundary $\Gamma$ on $X(\Xi)$,
\end{itemize}
satisfying the following conditions:
\begin{enumerate}
    \item 
there exist isomorphisms $\psi, \phi$ making the following diagram commutative
\begin{align}
\label{eqn:formally.toric.diag}
\xymatrix{
X(\Xi)\times_{Z(\sigma)} \widehat {Z(\sigma)}_{z_0}=:
\widehat{X(\Xi)}_{z_0}
\ar[rr]^\psi \ar[d]^{\tilde{f}_t} & & 
\widehat{X}_z:= 
X \times_Z \widehat{Z}_z \ar[d]^{\tilde{f}}\\
\widehat{Z(\sigma)}_{z_0} \ar[rr]^\phi & &
\widehat{Z}_z
}
\end{align}
where $\widehat{Z(\sigma)}_{z_0}$ denotes the completion of the localization of $Z(\sigma)$ at $z_0$; and,
    \item 
$\psi$ maps the support of $\widehat{B}_z:= B \times_{Z} \widehat{Z}_z$ onto the support of $\widehat{\Gamma}_{z_0}:= \Gamma \times_{Z(\sigma)} \widehat{Z(\sigma)}_{z_0}$.
\end{enumerate}
}
\end{definition}

\begin{remark}{\em
\begin{enumerate}
    \item 
As the conditions in~Definition~\ref{def:formally-toric} are local around $z \in Z$, we may always assume that $Z(\sigma)$ and $Z$ are affine varieties.
Moreover, we can always assume that $z_0$ is a torus invariant point of $Z(\sigma)$.
    
    \item 
In Definition~\ref{def:formally-toric}, we do not necessarily assume that $\Gamma$ is the reduced union of all torus invariant divisors, instead we only assume that $\Gamma$ is supported on some invariant divisor.

    \item 
The notion of formally toric morphism is, in general, very different from the notion of toroidal morphism, cf., for example,~\cite{Amb16}.
While the latter is a local condition about the singularities of the reduced fiber of a morphism $X \to Z$ over a neighborhood of a closed point $z\in Z$, the former is a much stronger notion which, in particular, determines the global (formal) structure of the fibers over a neighborhood $z$.
\end{enumerate}
}
\end{remark} 

\begin{definition}
\label{def:form.tor.sing}
{\em
Let $x \in X$ be a germ of a normal variety.
\begin{enumerate}
	\item
A pair $(X, B)$ is {\it formally toric} (or, {\it toroidal}) pair at $x$ if the identity morphism $\xymatrix{X \ar[r]^{Id} & X}$ is formally toric at $x$ for $(X, B)$. 
	
	\item
We say say that a germ $x \in X$ is a {\em formally toric} (or, {\it toroidal}) {\it singularity} if there exists a boundary $B$ on $X$ such that $(X, B)$ is a formally toric pair at $x$.
\end{enumerate}
}
\end{definition}


Given a formally toric singularity $x \in X$ as above, there exists by~\cite[Corollary 2.6]{Art69} a diagram of the form
\begin{align}
    \label{eqn:hut.etale}
\xymatrix{
   &  
   X' \ni P' \ar[dl]_u  \ar[dr]^v &   \\
   X \ni P &  & 
   Z\ni Q
} 
\end{align}
with $u,v$ \'etale, $u(P')=P$, $v(P')=Q$, and $u^{-1}(U)=U'=v^{-1}(T_N)$.

\begin{example}
\label{ex:snc.toroidal}
{\em 
Let $(X, B)$ be a snc log pair around a closed point $x \in X$.
Then the pair $(X, B)$ is formally toric at $x$.
The same conclusion holds if $x$ is an lc center of a qdlt pair $(X, B)$, see~\cite[\S 5]{dFKX}.
}
\end{example}

As we will be interested in characterizing morphisms that are formally toric with respect to log canonical pairs, we will also need a suitable notion of formal toricness for log canonical places.

\begin{definition}
\label{def:formally-toric-lc-place}
{\em 
Let $(x \in X,B)$ be a germ of a formally toric pair with log canonical singularities.
Let $E$ be a log canonical place of $(X,B)$ whose center contains $x$.
Then, $E$ is a {\em formally toric lc place},  if the corresponding valuation on $(\hat {X}(\sigma)_{x_0},\hat {\Gamma})$ is a toric valuation, i.e., a monomial valuation on the toric germ.
}
\end{definition}

\begin{lemma}
\label{formally toricity-vs-MMP}
Let $X \rightarrow Z$ be a formally toric morphism around $z\in Z$ with respect to the log canonical pair $(X,B)$.
Let $\pi\colon X'\dashrightarrow X$ be a birational contraction over $Z$ only extracting formally toric lc places of $(X,B)$ whose centers intersect the fibre over $z$.
Then, the morphism $X'\rightarrow Z$ is formally toric around $z\in Z$ with respect to the pair $(X',\pi^{-1}_\ast B+{\rm Exc}(\pi))$.
\end{lemma}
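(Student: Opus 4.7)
The plan is to transport $\pi$ across the formal toric isomorphism of $X\to Z$, identify the extracted divisors as toric valuations on the toric germ, build the matching toric birational model by refining the fan, and check that the two extractions agree formally.

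First I would apply Definition~\ref{def:formally-toric} to obtain the commutative square~\eqref{eqn:formally.toric.diag} with isomorphisms $\phi,\psi$, carrying $\widehat\Gamma_{z_0}$ onto $\widehat B_z$. By the definition of formally toric lc place (Definition~\ref{def:formally-toric-lc-place}), every prime divisor $E_i\subset X'$ extracted by $\pi$ and whose centre meets $\pi^{-1}(z)$ corresponds, via $\psi$, to a monomial valuation on $\widehat{X(\Xi)}_{z_0}$, hence to a primitive lattice vector $v_i$ in the support of $\Xi$. Pick a fan refinement $\Xi'$ of $\Xi$ containing the rays $\mathbb{Q}_{\geq 0}v_i$ (subdividing the surrounding cones so $\Xi'$ is a genuine fan). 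This produces a proper toric morphism $f'_t\colon X(\Xi')\to X(\Xi)$ whose exceptional prime divisors are precisely the torus-invariant divisors $E'_i\subset X(\Xi')$ associated to the $v_i$; composing with $f_t$ yields a toric morphism $X(\Xi')\to Z(\sigma)$ and a torus-invariant point $z_0$ still in the base.

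The crucial step is to lift $\psi$ to an isomorphism $\psi'\colon \widehat{X(\Xi')}_{z_0}\xrightarrow{\sim}\widehat{X'}_z$ making the square
\[
\xymatrix{
\widehat{X(\Xi')}_{z_0}\ar[r]^-{\psi'}\ar[d]_{\widehat{f'_t}} & \widehat{X'}_z\ar[d]^{\widehat{\pi}}\\
\widehat{X(\Xi)}_{z_0}\ar[r]^-\psi & \widehat{X}_z
}
\]
commute. To do this I would realise $\pi\colon X'\to X$ locally around $\pi^{-1}(z)$ as the relative $\mathrm{Proj}$ of a finitely generated graded $\mathcal{O}_X$-algebra obtained by a suitable relative MMP extracting exactly the divisors $E_i$; such an MMP exists after composing with a gdlt modification by Lemma~\ref{lem:existence-gen-dlt-mod} and Theorem~\ref{thm:mmp-gdlt}, and its output is uniquely determined by the set of extracted divisors once one asks also for the minimal $\mathbb{Q}$-factorialisation dictated by the divisorial data. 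Since $\mathrm{Proj}$ commutes with flat base change, completing at $z$ computes the corresponding construction on $\widehat{X}_z$. Under $\psi$, this construction is transported to the analogous $\mathrm{Proj}$ on $\widehat{X(\Xi)}_{z_0}$, and on the toric side the same relative algebra is computed torically by the combinatorics of $\Xi'\to\Xi$: its $\mathrm{Proj}$ is precisely $\widehat{X(\Xi')}_{z_0}$, and this yields $\psi'$.

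Finally, the $\mathbb{R}$-divisor $\pi^{-1}_{\ast}B+\mathrm{Exc}(\pi)$ has support contained in the union of the strict transforms of components of $B$ and of the exceptional divisors $E_i$. Under $\psi'$, these are carried to the strict transforms of components of $\Gamma$ on $X(\Xi')$ together with the new invariant divisors $E'_i$. Setting $\Gamma':=\pi^{'-1}_{\ast}\Gamma+\sum_i E'_i$ gives a toric boundary on $X(\Xi')$ with $\psi'(\mathrm{Supp}\,\widehat{\Gamma'}_{z_0})=\mathrm{Supp}\,\widehat{(\pi^{-1}_{\ast}B+\mathrm{Exc}(\pi))}_z$, which verifies both conditions in Definition~\ref{def:formally-toric} for $X'\to Z$ at $z$. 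The main obstacle is the rigidity step above: ensuring that a relative MMP extracting the prescribed set of toric valuations commutes with completion at $z$ and is matched, step by step, with the combinatorial refinement $\Xi'\to\Xi$; once this is secured, the rest of the argument is bookkeeping on divisor supports.
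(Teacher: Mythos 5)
Your first step (identifying the extracted divisors with rays and refining $\Xi$ to $\Xi'$ so that $X(\Xi')\to X(\Xi)$ extracts exactly the corresponding toric valuations) agrees with the paper's proof. The gap is in what you call the crucial step. A birational model over $Z$ is \emph{not} "uniquely determined by the set of extracted divisors once one asks also for the minimal $\mathbb{Q}$-factorialisation": for a fixed set of divisorial valuations there are in general many models carrying exactly those divisors, differing by small modifications (flops), and $X'$ itself need not be $\mathbb{Q}$-factorial. Consequently your relative $\mathrm{Proj}$ is not pinned down — $\mathrm{Proj}$ of \emph{which} graded algebra? You never specify the polarization, and an "MMP extracting exactly the $E_i$" only produces \emph{some} model with the right divisors, matched to $\widehat{X'}$ only in codimension one. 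You flag this as "the main obstacle" but do not resolve it.

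The missing idea is how the paper closes exactly this gap: once you have the small birational map $\widehat{X(\Xi')}_{z_0}\dashrightarrow \widehat{X'}$ over $\widehat{Z}_z$, you pull back a relatively ample Cartier divisor $\widehat{A'}$ from $\widehat{X'}$, observe that the class group of $\widehat{X(\Xi')}_{z_0}$ is generated by the base changes of the torus-invariant divisors (this uses that the class group of the formal punctured disk is trivial — a point your argument would also need and does not address), so the pulled-back class is represented by an honest toric divisor $H$ on $X(\Xi')$. Running the toric $H$-MMP over $Z(\sigma)$ contracts no divisors ($H$ is movable) and terminates at the ample model $X(\Xi'')$; after completion, $\widehat{X(\Xi'')}_{z_0}$ and $\widehat{X'}$ are isomorphic in codimension one and carry corresponding ample divisors, hence are isomorphic (both being $\mathrm{Proj}$ of the same section ring). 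Without transporting an ample divisor in this way, your $\psi'$ cannot be constructed; with it, the rest of your bookkeeping on boundary supports goes through as you describe.
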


\begin{proof}
Let $X(\Xi)\rightarrow Z(\sigma)$, $z_0\in Z(\sigma)$, and $\Gamma$ on $X(\Xi)$,
be the data giving $X\rightarrow Z$ a formally toric structure around $z\in Z$ for the pair $(X,B)$ as in Definition~\ref{def:formally-toric}.
Since $\widehat{Z(\sigma)}_{z_0}\simeq \widehat{Z_z}$, we may identify both completions.
Let $\Xi'$ be a refinement of $\Xi$ such that the associated partial resolution $X(\Xi')\rightarrow X(\Xi)$
extracts all the toric lc places corresponding to formally toric lc places
extracted by $X'\dashrightarrow X$.
Without loss of generality, we may assume that $X(\Xi')$ is $\qq$-factorial.
Hence, there exists a birational morphism
\[
\widehat{X(\Xi')}_
{z_0} := X(\Xi') \times_{Z(\sigma)} \widehat{Z(\sigma)}_{z_0}
\dashrightarrow  \widehat{X'}
\]
over $\widehat{Z_z}$, which is defined at every point of codimension one, 
and extracts no divisors.
Let $\widehat{A'}$ be an ample Cartier divisor on $\widehat{X'}$ over $\widehat{Z_z}$
and let $\widehat{H}$ be its pull-back to $\widehat{X(\Xi')}$.
As $X(\Xi')$ is toric, then the Class group of $\widehat{X(\Xi')}$ is generated by the base change of the torus invariant divisors -- since the class group of the formal punctured disk is trivial.
Hence, we can assume that $\widehat H$ is linearly equivalent to the base change of a toric invariant divisor $H$ on $X(\Xi')$.
We can run a minimal model program for $H$ over $Z(\sigma)$ terminating with a toric ample minimal model $X(\Xi'') \to Z(\sigma)$.
Running such minimal model program does not contract any divisor on $X(\Xi')$ as $H$ is movable on $X(\Xi')$, by construction.
By taking the base-change $\widehat{X(\Xi'')}$ of $X(\Xi'')$ with the completion $\widehat{Z(\sigma)}_{z_0}$ of $Z(\sigma)$, and using the isomorphism $\widehat{Z(\sigma)}_{z_0} \to \widehat{Z_z}$,
we obtain a formally toric morphism $\widehat{X''}\rightarrow \widehat{Z_z}$, 
such that $\widehat{X''} \dashrightarrow \widehat{X'}$ is a birational map defined in codimension one and extracting no divisors.
Moreover, the pull-back of $\widehat{A'}$ to $\widehat{X''}$ is ample.
Hence, $\widehat{X''}\simeq \widehat{X'}$ over $\widehat{Z_{z}}$ and said isomorphism also identifies the support of the completion of the strict transform of $\Gamma$ on $X''$ with the support of $\hat{B}$.
\newline
As every prime component of $\pi^{-1}_\ast B+{\rm Exc}(\pi)$, after base change to the completion, corresponds to a ray of $\Xi'$, then their strict transforms on $\widehat{X''}$ corresponds to the images under the isomorphism $\widehat{X''} \simeq \widehat{X(\Xi'')}$ of the base change to completion of the toric invariant divisors of $\widehat{X(\Xi'')}$.
\end{proof}


\section{Orbifold complexity in the absolute case}
\label{sect:orb.absolute}
In this section we will show that Shokurov's conjecture holds for the orbifold complexity in the absolute case, namely, when $Z$ is just a point, see Theorem~\ref{thm:formally toric-projective}. 

\subsection{Orbifold complexity of Mori dream spaces}\label{sec:orbi-mds}

The aim of this subsection is to prove that a projective Mori dream space with a log Calabi-Yau structure of orbifold complexity zero is toric.

We start by proving that the local orbifold complexity is always non-negative and if it is zero, then the germ is formally toric.
This is an analog in the framework of orbifold complexity of~\cite[Lemma 2.4.3]{BMSZ18}. 
While this result is not crucially used in the rest of the article, it strongly motivates the treatment of this section.
For the details on cyclic covers, we refer the reader to~\cite[\S~2-3]{Amb16}. 

\begin{theorem}
\label{thm:local-orbifold-complexity}
Let $(x \in X,B)$ be the germ of a $d$-dimensional log canonical pair.
Let $n$ be an orbifold structure on $X$ and let $\Sigma$ be an orbifold decomposition of $B$
supported on the orbifold Weil divisors $B_1,\dots,B_k$.
Assume that $K_X$ and $B_1,\dots,B_k$ are $\qq$-Cartier at $x \in X$.
Then, the following hold:
\begin{enumerate}
    \item
$\orbcomp_x(X,B;\Sigma)\geq 0$;

    \item 
if $\orbcomp_x(X,B;\Sigma)<\frac{1}{2}$, then, up to reordering the $B_i$, $(X, \supp(B_1+\dots +B_d))$ is formally toric at $x$ and $\lfloor B\rfloor \subseteq \supp(B_1+\dots+B_n)$; 
and,

    \item 
if moreover $\orbcomp_x(X,B;\Sigma)=0$, then all $B_i$ are prime divisors, $\Sigma=B$, and $n$ is supported on formally toric divisorial lc places.
\end{enumerate}
\end{theorem}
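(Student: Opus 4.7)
The plan is to reduce the statement to the $\qq$-factorial case already handled by Kollár in~\cite[Theorem 18.22]{Kol92}, using the $\qq$-Cartier hypothesis on $K_X$ and the $B_i$ in an essential way. The first observation is that since $X_x = \Spec \mathcal{O}_{X,x}$ is the spectrum of a local ring, its Picard group vanishes, so any $\qq$-Cartier divisor has zero class in $\clxxq$. As each $B_i$ is $\qq$-Cartier at $x$, this forces $\dim_{\qq}\langle\Sigma\rangle=0$ and hence
\[
\orbcomp_x(X,B;\Sigma) = d - \sum_i b_i,
\]
so that statement $(1)$ becomes the inequality $\sum_i b_i \leq d$.

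I would then take a small $\qq$-factorialization $\pi\colon Y\to X$ of the lc pair $(X,B)$, which exists by~\cite{BCHM10}. Letting $(Y,B_Y)$ be the crepant pullback, $Y$ is $\qq$-factorial, $B_Y$ equals $\pi^{-1}_*B$, and since each $B_i$ is $\qq$-Cartier its strict transform on $Y$ coincides with $\pi^*B_i$. The orbifold structure $n$ lifts via strict transform to an orbifold structure on $Y$, and $\Sigma$ lifts to an orbifold decomposition $\Sigma_Y\leq B_Y$ with the same norm. Writing $B_Y=\sum_j c_j D_j$ as a sum of prime divisors, applying~\cite[Theorem 18.22]{Kol92} to the $\qq$-factorial lc germ $(Y,y)$ yields $\sum_j c_j\leq d$, which combined with $\Sigma_Y\leq B_Y$ read at every prime divisor gives
\[
\sum_P\Big(1-\tfrac{1}{n_P}\Big)+\sum_i b_i M_i\leq d,
\]
where $M_i:=\sum_Q\coeff_Q(B_i)$ is the total coefficient mass of $B_i$.

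The main technical obstacle is then to deduce $\sum_i b_i \leq d$ from the inequality just derived: unlike in the classical case, an orbifold Weil divisor $B_i$ may distribute its mass fractionally across divisors in the support of $n$, so $M_i$ can be strictly smaller than $1$. I would resolve this by a careful local analysis at each orbifold prime $P\subset\supp(n)$ through $x$: the fractional contributions $\mathrm{num}_P(B_i)/n_P$ to $M_i$ can be redistributed against the orbifold coefficients $1-1/n_P$ appearing in $\Sigma$, using that $B_i$ is effective and contains $x$, so that after this bookkeeping each $B_i$ carries ``virtual mass'' at least $1$. Summing over $i$ then yields the desired bound.

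For the equality and near-equality cases $(2)$ and $(3)$, when $\orbcomp_x(X,B;\Sigma)$ is zero (resp.\ $<1/2$) every inequality in the argument is saturated (resp.\ nearly so); in particular $(Y,B_Y)$ has complexity zero (resp.\ $<1/2$) at $y$ and so, by the equality part of~\cite[Theorem 18.22]{Kol92}, is formally toric at $y$. Because $\pi$ is small and every prime component of $B_Y$ tracked in the argument corresponds to a prime component of $B$ on $X$, the formally toric structure descends to $X$ with $\supp(B_1+\dots+B_d)$ mapped to toric invariant divisors, yielding $(2)$. In the equality case $(3)$ the saturated inequalities further force that each $B_i$ is prime, that $\Sigma=B$, and that the orbifold structure $n$ is supported on formally toric divisorial lc places of $(X,\Sigma)$.
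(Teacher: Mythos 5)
Your reduction of the span to zero is correct: since the Picard group of the local scheme $X_x$ is trivial, each $\qq$-Cartier $B_i$ is torsion in ${\rm Cl}(X_x)$, so $\orbcomp_x(X,B;\Sigma)=d-\sum_i b_i$ and the content of (1) is indeed the bound $\sum_i b_i\le d$. Your route via a small $\qq$-factorialization plus~\cite[Theorem 18.22]{Kol92} is genuinely different from the paper's, which instead takes the tower of index one covers of $K_X,B_1,\dots,B_k$: on that \emph{finite} cover $\phi\colon Y\to X$ every $B_{Y,i}:=\phi^\ast B_i$ becomes Cartier, a ramification computation (using $r_P\le n_P$ and $\sum_i b_i\,{\rm num}_P(B_i)\le 1$) gives $\sum_i b_iB_{Y,i}\le B_Y$, and~\cite[Lemma 2.4.3]{BMSZ18} is quoted directly. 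For part (1) your plan can be completed, but the ``bookkeeping'' you defer is the entire content of the step: one must prove $\sum_i b_i(1-M_i)\le\sum_{P}\left(1-\tfrac{1}{n_P}\right)$, which does follow by writing $1-M_i=\sum_Q{\rm num}_Q(B_i)\left(\tfrac{1}{N_i}-\tfrac{1}{n_Q}\right)$ with $N_i:=\sum_Q{\rm num}_Q(B_i)\ge 1$ and using $\sum_i b_i\,{\rm num}_Q(B_i)\le\coeff_Q(B)\le 1$; note also that Koll\'ar's bound only counts components of $B_Y$ through a chosen point $y$ of the possibly positive-dimensional fibre $\pi^{-1}(x)$, so all masses must be computed at such a $y$.

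The genuine gap is in (2) and (3). Knowing that $(Y,\supp B_Y)$ is formally toric at a point $y\in\pi^{-1}(x)$ does \emph{not} allow you to ``descend'' toricness to $x\in X$ along a small contraction: formal toricness at $y$ concerns $\widehat{\mathcal O_{Y,y}}$, not $\widehat{\mathcal O_{X,x}}$, and descending a toric structure along a small projective birational morphism is precisely the hard content of Sections~\ref{sect:plt.blow.ups} and~\ref{section:proof-local} (formally toric plt blow-ups, the relative Cox ring, Proposition~\ref{prop:toric-plt-implies-toric}). To make your descent legitimate you would need to know that $X$ is already $\qq$-factorial at $x$, so that $\pi$ is an isomorphism near the fibre --- but that is part of what must be proved, so the argument is circular. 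The index-one-cover proof gets exactly this for free: in the near-equality case~\cite[Lemma 2.4.3.(2)]{BMSZ18} shows the cover is log smooth, hence $x\in X$ is a finite quotient of a log smooth germ, hence $\qq$-factorial and formally toric, and only then is~\cite[Theorem 18.22]{Kol92} applied to $X$ itself. Similarly, the conclusions of (3) (each $B_i$ prime, $\Sigma=B$, $n$ supported on formally toric divisorial lc places) are not automatic consequences of ``saturation'' of your inequalities; in the paper they follow from refining the Cartier decomposition on the cover and from the strictness of the ramification inequality when $n_P>1$ but $P\not\subseteq\lfloor B\rfloor$, and your write-up would need the analogous strictness analysis in the redistribution step.
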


\begin{proof}
Let $\phi\colon Y\rightarrow X$ be the finite morphism obtained by inductively taking the index one cover the $\qq$-Cartier $\qq$-divisors $B_1,\dots,B_k$ and $K_X$.
Let $K_Y+B_Y=\pi^\ast(K_X+B)$ be the log pull-back.
For all $i$, set $B_{Y,i} :=\phi^\ast B_i$.
Thus, for all $i$, $B_{W,i}$ is Cartier on $Y$ and the point $x$ has a unique pre-image $w$ on $W$.
For $P\in X^1$ with $n_P>1$ and for each $B_i$, we will use the canonical expression of $B$, 
${\rm coeff}_P(B_i) := \frac{{\rm num}_P(B_i)}{n_P}$,
cf. Remark~\ref{rmk:coeff.orb.struct.canonical}.\ref{canonical.expr}, and we will denote by $Q_P$ the reduced pull-back on $Y$.
We denote by $r_P$ the ramification index over $P$, so that $r_P\leq n_P$ for each $P\subset X^1$.
Writing
\[
\Sigma=
\sum_{P\in X^1}
\left(
1-\frac{1}{n_P}
\right)
P+
\sum_{i=1}^r b_i B_i
\] 
then,
\begin{align}
\nonumber 
&
\phi^\ast(K_X+\Sigma)=
\\
\nonumber 
&  \phi^\ast \left(K_X+\sum_{P\in X^1}
\left(
\left(
1-\frac{1}{n_P}
\right)+
\sum_{i=1}^r b_i {\rm coeff}_{P}(B_i)
\right)P\right) =
\\ 
\label{equation-index-one-cover}
& K_Y+
\sum_{P \in X^1}
\left( 
\left(
1-\frac{r_P}{n_P}
\right) 
+\sum_{i=1}^r b_i\frac{r_P{\rm num }_P(B_i)}{n_P} 
\right)Q_P \geq \\
\nonumber 
& 
K_Y+ \sum_{P\in X^1 }
\left(
\sum_{i=1}^r b_i {\rm num}_{P}(B_i)
\right)Q_P  = 
& \hfill 
[r_P\leq n_P
\text{ and }
\sum_{i=1}^k b_i{\rm num}_P(B_i)
\leq 1]
\\ 
\nonumber 
& 
K_Y+
\sum_{i=1}^r b_i
\left( 
\sum_{P\in X^1} {\rm num}_P(B_i)Q_P
\right)
\geq 
K_Y+\sum_{i=1}^r b_iB_{Y,i}.
\end{align}
Defining $\Sigma_Y:=\sum_{i=1}^r b_i B_{Y,i}$, then~\eqref{equation-index-one-cover} implies that
$K_Y+\Sigma_Y \leq \phi^\ast (K_X+\Sigma) \leq \phi^\ast (K_X+B) = K_Y+B_Y$.
Thus, $B_Y\geq \sum_{i=1}^r b_i B_{Y,i}\geq 0$ and $(Y,B_Y)$ is a log canonical pair.
Moreover, $K_Y$ is Cartier as well and $\Sigma_Y$ gives a local decomposition of $B_Y$ around $y$.
As all the $B_{Y,i}$ are Cartier,~\cite[2.4.3.(1)]{BMSZ18} implies that $\orbcomp_x(X,B;\Sigma)=\finecomp_x(Y,B_Y;\Sigma_Y)\geq 0$.
Assuming $\orbcomp_x(X,B;\Sigma)<\frac{1}{2}$, then~\cite[2.4.3.(2)]{BMSZ18} implies that  $(Y,\supp(B_{Y,1}+\dots +B_{Y,n}))$ is log smooth and $\lfloor B_Y\rfloor\subseteq
\supp(B_{Y,1}+\dots+B_{Y,d})$ and that $x\in X$ is obtained by a sequence of quotients of a smooth germ;
in particular, $x \in X$ is a $\qq$-factorial singularity and it is formally toric by Example~\ref{ex:snc.toroidal}.
By construction $(X,\supp(B_1+\dots+B_d))$ is log canonical and $\lfloor B \rfloor \subset \supp(B_1+\dots+B_d)$.
Hence, $(X,\supp(B_1+\dots +B_d))$ is a log canonical
$\qq$-factorial pair of local complexity zero at $x$.
Then, by~\cite[18.22]{Kol92}, we conclude it is a toric pair.
\\
We prove the last statement.
If the $B_i$ are not prime, then also the $B_{Y,i}$ are not prime.
Hence, as $x \in X$ is a $\qq$-factorial singularity, we can refine the decomposition $\Sigma_Y$ by taking the prime components of $B_Y$, thus obtaining a decomposition of negative local complexity.
This gives a contradiction.
If $\Sigma<B$, then we can always add prime divisors supported on $\supp(\Sigma-B)$ and decrease the complexity.
Finally, if we have a prime divisor $P$ on $X$ so that $n_P>1$ and $P$ is not contained in $\lfloor B\rfloor$, then the equation~\ref{equation-index-one-cover} is a strict inequality, so we can find a decomposition for $(Y,B_Y)$ with negative complexity, leading to a contradiction.
\end{proof}

The following is the main theorem of this section.

\begin{theorem}
\label{thm:proj-MDS-orbifold-complexity}
Let $(X,B)$ be a log canonical pair.
Assume that $X$ a projective $\qq$-factorial Mori dream space and that $K_X+B\sim_\rr 0$.
Let $n$ be an orbifold structure on $X$ and let $\Sigma$ be an orbifold decomposition of $B$ with respect to $n$ supported on the orbifold Weil divisors $B_1,\dots,B_k$.
Then, $\orbcomp(X, B; \Sigma)\geq 0$.
Moreover, if $\orbcomp(X, B; \Sigma)<1$,
then the following conditions hold:
\begin{enumerate}
\item $X$ is a projective toric variety,
\item $\lfloor B\rfloor$ is torus invariant, and 
\item all but $\lfloor 2c\rfloor$ toric prime divisors appear among $\supp(B_1),\dots,\supp(B_k)$.
\end{enumerate}
\end{theorem}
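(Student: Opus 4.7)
The plan is to mirror the proof of Theorem~\ref{thm:local-orbifold-complexity} in the global setting: build a finite abelian cover $\phi\colon Y \to X$ that absorbs the orbifold indices, check that the setup on $Y$ reduces to the fine-complexity analysis of Mori dream spaces carried out in~\cite[Theorem 3.1]{BMSZ18}, and descend the conclusion from $Y$ back to $X$.

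\textbf{Step 1 (Global index-one cover).} Since $X$ is a projective Mori dream space, $\Pic(X)$ is finitely generated. Building on the $\qq$-Cartier divisors $K_X, B_1, \dots, B_k$, one constructs a finite Galois morphism $\phi\colon Y \to X$ whose pullback makes each $B_i$ and $K_X$ Cartier on $Y$. Defining $B_Y$ by $K_Y+B_Y=\phi^\ast(K_X+B)$ and $\Sigma_Y:=\sum_i b_i B_{Y,i}$ with $B_{Y,i}:=\phi^\ast B_i$, the global analogue of computation~\eqref{equation-index-one-cover} shows $0\leq \Sigma_Y\leq B_Y$; in particular $(Y,B_Y)$ is log canonical and $K_Y+B_Y\sim_\rr 0$.

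\textbf{Step 2 (MDS property transfers).} Verify that $Y$ is again a projective $\qq$-factorial Mori dream space. This is a standard property of finite abelian covers of MDS: the Cox ring of $Y$ is a finite extension of the Cox ring of $X$, hence finitely generated.

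\textbf{Step 3 (Complexity identification).} Both invariants share the same dimension $\dim Y=\dim X$ and the same norm $|\Sigma|=|\Sigma_Y|=\sum_i b_i$ (by Definition~\ref{def:orb.decomp}, $|\Sigma|$ does not include the terms $(1-1/n_P)$). Because $\phi$ is a finite surjection of normal varieties, the rational pullback $\phi^\ast\colon \mathrm{Cl}_\qq(X)\to \mathrm{Cl}_\qq(Y)$ is injective and maps $[B_i]$ to $[B_{Y,i}]$, so $\dim_\qq\langle \Sigma\rangle=\dim_\qq\langle \Sigma_Y\rangle$. Combined, this yields the key identity
\[
\orbcomp(X,B;\Sigma)=\finecomp(Y,B_Y;\Sigma_Y).
\]

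\textbf{Step 4 (Apply the Mori dream case).} Apply~\cite[Theorem 3.1]{BMSZ18} to $(Y,B_Y;\Sigma_Y)$: non-negativity of the right-hand side gives $\orbcomp(X,B;\Sigma)\geq 0$. If moreover $\orbcomp(X,B;\Sigma)<1$, then $Y$ is a projective toric variety, $\lfloor B_Y\rfloor$ is torus-invariant, and all but $\lfloor 2c\rfloor$ torus-invariant prime divisors of $Y$ appear among the $B_{Y,j}$.

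\textbf{Step 5 (Descent of toric structure).} Arrange the cover in Step 1 so that the Galois group $G$ acts on $Y$ through a subgroup of the big torus (this is automatic for cyclic covers built from the orbifold data). Then $X=Y/G$ is toric: quotients of toric varieties by toric subgroups are toric, and $\phi$ becomes a toric morphism. Torus-invariance of $\lfloor B\rfloor$ follows by taking the $\phi$-image of $\lfloor B_Y\rfloor$, and the count of toric prime divisors on $X$ missing from $\supp\Sigma$ is bounded by the corresponding count on $Y$, giving conclusion (3).

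\textbf{Main obstacle.} The most delicate point is the descent in Step 5, which forces the construction in Step 1 to be made carefully: the cover $\phi$ must be built as a tower of cyclic covers associated to the orbifold Weil divisors $B_i$ so that $G$ acts toroidally on $Y$. Without this compatibility, toricness of $Y$ would not propagate to $X$. A clean way to package this is via Cox rings: since the Cox ring of a projective MDS is a polynomial ring precisely when the variety is toric, and the Cox ring of $X$ is the ring of $G$-invariants in that of $Y$, the compatibility of the grading with the torus action is what guarantees that $X$ is toric as well.
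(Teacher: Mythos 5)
There is a genuine gap at Step~1, and it propagates through the entire argument. The ``global index-one cover'' you invoke does not exist in general: for a $\qq$-Cartier Weil divisor $D$ with $mD$ Cartier, the cyclic index-one cover is only defined locally (where $mD$ is actually principal); a \emph{global} finite cover $\phi\colon Y\to X$, quasi-\'etale so as to preserve log canonicity and the discrepancy computation in~\eqref{equation-index-one-cover}, on which all the $B_i$ become Cartier, typically cannot exist. For instance $X=\mathbb{P}(1,1,2)$ is a projective $\qq$-factorial toric Mori dream space whose smooth locus is simply connected, so it admits no nontrivial connected cover \'etale in codimension one, yet the generator of ${\rm Cl}(X)\simeq\zz$ has Cartier index $2$ at the singular point. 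Replacing the quasi-\'etale cover by a cover branched along the $B_i$ (adapted to the orbifold indices $n_P$) does not help either: such orbifold covers also fail to exist globally (already $(\mathbb{P}^1,\tfrac12\{0\})$ admits none), and branching would in any case alter the log discrepancies. This is precisely why the paper's local statement, Theorem~\ref{thm:local-orbifold-complexity}, can use index-one covers while the projective statement cannot; your strategy transplants a local tool into a global setting where it is unavailable. The descent in Step~5 inherits this problem, since there is no cover to descend from.

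The paper's proof stays entirely on $X$ and needs no cover. Writing each $B_i$ in its canonical expression ${\rm coeff}_P(B_i)={\rm num}_P(B_i)/n_P$, one sets $Q_i:=\sum_{P\in X^1}{\rm num}_P(B_i)P$, an honest Weil divisor with $\supp(Q_i)=\supp(B_i)$. The constraint $\Sigma\le B$ with $\coeff_P(B)\le 1$ forces $\sum_i b_i\,{\rm num}_P(B_i)\le 1$, and the elementary inequality
\[
1-\frac{1}{n_P}+\sum_{i=1}^k b_i\,\frac{{\rm num}_P(B_i)}{n_P}\;\ge\;\sum_{i=1}^k b_i\,{\rm num}_P(B_i)
\]
shows that $\tilde\Sigma:=\sum_i b_iQ_i$ is an ordinary decomposition of $B$ with $|\tilde\Sigma|=|\Sigma|$. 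Combined with Proposition~\ref{prop:gen-class-group} (which requires the induction on dimension that your proposal omits) one gets $\finecomp(X,B;\tilde\Sigma)\le\orbcomp(X,B;\Sigma)$, and~\cite[Theorem 1.2]{BMSZ18} then applies directly to $(X,B;\tilde\Sigma)$, yielding non-negativity, toricness of $X$ itself, torus-invariance of $\lfloor B\rfloor$, and the count in (3) via $\supp(Q_i)=\supp(B_i)$ --- with no descent step at all. If you want to keep a cover-based argument, you would have to restrict to settings where a global quasi-\'etale cover trivializing the relevant classes exists, which is strictly weaker than the theorem.
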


We will need the following two technical statements adapted from those in~\cite[\S 3]{BMSZ18}.

\begin{lemma}\label{lem:orb-comp-pic=1}
Fix a positive integer $d$.
Assume Theorem~\ref{thm:proj-MDS-orbifold-complexity} holds up to dimension $d-1$.
\\
Let $X$ be a $\qq$-factorial $d$-dimensional klt projective variety and let $(X,B)$ be a log canonical pair with $K_X+B\sim_\rr 0$.
Let $n$ an orbifold structure on $X$ and let $\Sigma$ be an orbifold decomposition of $B$ supported on the orbifold Weil divisors $B_1,\dots,B_k$.
Assume that $B_1,\dots,B_k$ span the same
ray in the cone of divisors.
If $|\Sigma|>d$, then $X$ has Picard rank one.
\end{lemma}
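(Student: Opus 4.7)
The plan is to argue by contradiction: assuming $\rho(X)\geq 2$, I will produce via the MMP a Mori fibration, and restrict to a general fibre to obtain a lower-dimensional log Calabi--Yau pair of strictly negative orbifold complexity, contradicting the inductive application of Theorem~\ref{thm:proj-MDS-orbifold-complexity} in dimension $d-1$.

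The numerical setup is immediate. Since the classes $[B_1],\dots,[B_k]$ all lie on a single ray of the divisor cone, $\dim_{\mathbb Q}\langle \Sigma\rangle\leq 1$, so the assumption $|\Sigma|>d$ gives
\[
\orbcomp(X,B;\Sigma)=d+\dim_{\mathbb Q}\langle \Sigma\rangle-|\Sigma|\leq d+1-|\Sigma|<1.
\]
If every $B_i$ is numerically trivial, then $\orbcomp(X,B;\Sigma)<0$ already on $X$, and a direct restriction argument (see below) on a general member of a Mori fibre space exhibits a contradiction. Hence, we may assume that the $B_i$ span a ray $\mathbb R_{>0}[D]$ with $[D]\neq 0$, in particular $[D]$ is pseudo-effective and $-[D]$ is not.

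Next, fix an index $j$ and a rational $0<\delta<b_j$ so that $(X,B-\delta B_j)$ remains log canonical (and indeed klt along $B_j$); the relevant coefficient bound $b_j\,\mathrm{coeff}_P(B_j)\leq \mathrm{coeff}_P(B)$ at every $P\in X^1$ is built into the definition of an orbifold decomposition. Because $K_X+B\sim_{\mathbb R} 0$, we have $K_X+(B-\delta B_j)\sim_{\mathbb R} -\delta B_j$, which is not pseudo-effective. In the Mori dream space setting of this subsection, one can therefore run a $(-B_j)$-MMP that terminates with a Mori fibre space $f\colon X^*\rightarrow Z^*$ with $\dim Z^*<d$. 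By iterated application of Corollary~\ref{complexity.MMP}(2), this MMP produces an orbifold structure and an orbifold decomposition $\Sigma^*$ of the push-forward $B^*$ on $X^*$ with $\orbcomp(X^*,B^*;\Sigma^*)\leq \orbcomp(X,B;\Sigma)$, and push-forward preserves proportionality of classes, so $\dim_{\mathbb Q}\langle \Sigma^*\rangle\leq 1$. Let $F\subset X^*$ be a general fibre of $f$; then $F$ is $\mathbb Q$-factorial klt projective of dimension $\dim F\leq d-1$ and inherits a Mori dream space structure. Adjunction yields an lc pair $(F,B^*|_F)$ with $K_F+B^*|_F\sim_{\mathbb R}0$; the restriction of the orbifold structure (Remark~\ref{rmk:orb.struct.restr}(2)) together with $\Sigma^*|_F:=\sum_i b_i(B_i^*|_F)$ forms an orbifold decomposition of $B^*|_F$. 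Since $-B_j^*$ is anti-ample over $Z^*$ and all $B_i^*$ are proportional to $B_j^*$, each $B_i^*$ meets $F$ in a nonzero effective orbifold divisor, so $|\Sigma^*|_F|=|\Sigma^*|\geq|\Sigma|>d$. Restriction cannot enlarge the span in $N^1(F)_{\mathbb Q}$, so
\[
\orbcomp(F,B^*|_F;\Sigma^*|_F)\leq \dim F+1-|\Sigma|\leq d-|\Sigma|<0,
\]
contradicting Theorem~\ref{thm:proj-MDS-orbifold-complexity} in dimension $d-1$.

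The main obstacles when writing out the full proof are: (i) arranging the $(-B_j)$-MMP to terminate at a genuine Mori fibre space (this relies on the Mori dream space hypothesis and on the non-pseudo-effectivity of $-B_j^*$ at each intermediate step, which is preserved along the MMP); (ii) verifying that a general fibre $F$ of the resulting fibration is itself a $\mathbb Q$-factorial klt projective Mori dream space so that the inductive hypothesis applies; and (iii) confirming that the restricted decomposition is again a valid orbifold decomposition, i.e., that the Cartier-index conditions of Definition~\ref{def.orb.W.div} hold for each $B_i^*|_F$ at codimension-one points of $F$. The last point is handled by exactly the same local analysis at the generic point of a codimension-one subvariety used in Step~3 of the proof of Lemma~\ref{lem:complexity-vs-adjunction}.
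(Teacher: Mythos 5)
Your overall strategy is the one the paper uses (it imports the proof of \cite[Lemma 3.3]{BMSZ18} verbatim): run a $(K_X+B-\epsilon B_j)$-MMP, i.e.\ a $(-B_j)$-MMP, reach a Mori fibre space, and restrict to a general fibre to contradict the inductive hypothesis. However, there is a genuine gap in your contradiction scheme: you never rule out divisorial contractions along this MMP. If a divisorial contraction occurs, the Picard number drops, and the MMP may perfectly well terminate with a Mori fibre space $X^*\to Z^*$ where $\rho(X^*)=1$ and $Z^*$ is a \emph{point}. In that case your general fibre is $F=X^*$ of dimension $d$ (so the claim $\dim F\leq d-1$ fails), the inductive hypothesis does not apply, and no contradiction with $\rho(X)\geq 2$ is obtained — the conclusion $\rho(X)=1$ concerns $X$ itself, not the end product of the MMP. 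Your parenthetical ``$\dim Z^*<d$'' is true but useless; what you need is $\dim Z^*>0$, and that only follows once you know the MMP consists of flips, so that $\rho(X^*)=\rho(X)\geq 2$ forces $\rho(Z^*)\geq 1$.

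The missing ingredient is exactly the step where the hypothesis that all $B_i$ span \emph{the same ray} is used in an essential way beyond bounding $\dim_\qq\langle\Sigma\rangle$: since the MMP is $B_j$-positive and every $B_i$ is proportional to $B_j$, each extremal contraction is $B_i$-positive for \emph{every} $i$. Hence, if a step contracted a divisor $E$ onto a subvariety $V$, every $B_i$ would contain a prime component whose strict transform passes through $V$; at a closed point $v\in V$ this produces a local orbifold decomposition supported at $v$ of norm $|\Sigma|>d=\dim$, contradicting the non-negativity of the local orbifold complexity (Theorem~\ref{thm:local-orbifold-complexity}, the analogue of \cite[Lemma 2.4.3]{BMSZ18}). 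This is why divisorial contractions cannot occur, why flips are the only birational steps (and they preserve $\rho$ and the proportionality of the $[B_i^*]$), and why the terminal Mori fibre space must either go to a point with $\rho(X)=\rho(X^*)=1$, or have positive-dimensional base, in which case your fibre-restriction computation applies. Incorporating this step turns your sketch into the paper's argument.
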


\begin{proof}
The proof of~\cite[Lemma 3.3]{BMSZ18} applies verbatim, using Remark~\ref{rmk:orb.struct.restr}.
\end{proof}

\begin{proposition}
\label{prop:gen-class-group}
Fix a positive integer $d$.
Assume Theorem~\ref{thm:proj-MDS-orbifold-complexity} holds up to dimension $d-1$.
\\
Let $X$ be a projective $\qq$-factorial Mori dream space of dimension $d$ and let $(X,B)$ be a log canonical pair with $K_X+B\sim_\rr 0$.
Let $n$ be an orbifold structure on $X$ compatible with $n$ and let $\Sigma$ be an orbifold decomposition of $B$ supported on the orbifold Weil divisors $B_1,\dots,B_k$.
If $\orbcomp(X,B;\Sigma)<1$, then $B_1,\dots,B_r$ generate ${\rm Cl}_\qq(X)$.
\end{proposition}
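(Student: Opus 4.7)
The plan is to argue by contradiction, assuming $V := \langle [B_1], \dots, [B_k]\rangle$ is a proper subspace of ${\rm Cl}_\qq(X)$, and to set $s := \dim_\qq V < \rho(X)$. The hypothesis $\orbcomp(X, B; \Sigma) < 1$ immediately yields $|\Sigma| > d + s - 1$. I would proceed by induction on $\rho(X)$. In the base case $\rho(X)=1$, the assumption forces $s=0$, meaning every $B_i$ appearing in $\Sigma$ with positive coefficient has class zero in ${\rm Cl}_\qq(X)$; as $X$ is projective, this forces all such $B_i$ to be the zero divisor, so $|\Sigma|=0$, contradicting $|\Sigma| > d-1 \geq 0$.

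For the inductive step I would pick a $\qq$-Cartier ample divisor $A$ on $X$ whose class lies outside the proper subspace $V$ (possible since the ample cone is open in $N^1(X)_\qq$). Then $-A$ is not pseudo-effective, so an $(-A)$-MMP with scaling terminates (by the MDS hypothesis) in a Mori fiber space $f \colon X' \to Y$ with $\dim Y < d$. Since $K_X+B \sim_\rr 0$, every step of this MMP is $(K_X+B)$-trivial: the pair pushes forward to a log canonical pair $(X', B')$ with $K_{X'}+B' \sim_\rr 0$, and by Corollary~\ref{complexity.MMP}(2) the orbifold decomposition descends to some $\Sigma'$ on $X'$ with $\orbcomp(X',B';\Sigma') \leq \orbcomp(X,B;\Sigma) < 1$ and $s' := \dim_\qq \langle \Sigma'\rangle \leq s$.

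I would then restrict everything to a general fiber $F$ of $f$. By adjunction, $(F,B_F)$ is a log canonical log Calabi--Yau pair, and $F$ is itself a projective $\qq$-factorial Mori dream space of dimension $\dim F < d$. Using Remark~\ref{rmk:orb.struct.restr}(2), the orbifold structure restricts to give an orbifold decomposition $\Sigma_F$ of $B_F$. The key numerical input is that the restriction map $N^1(X')_\qq \to N^1(F)_\qq$ factors through $N^1(X'/Y)_\qq$, which has dimension $\rho(X'/Y)=1$, so $\dim_\qq \langle \Sigma_F\rangle \leq 1$. Separating the orbifold Weil components $B'_i$ of $\Sigma'$ into those dominating $Y$ (horizontal) and those not (vertical), one has $|\Sigma_F| = \sum_{i\ \text{horizontal}} b_i$, while the vertical part has classes pulled back from $Y$ and thus contributes to $s'$ but not to $|\Sigma_F|$.

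Combining these facts, a quantitative accounting shows $\orbcomp(F,B_F;\Sigma_F) < 0$, contradicting the inductive hypothesis of Theorem~\ref{thm:proj-MDS-orbifold-complexity} in dimension $\dim F < d$. The main obstacle is controlling the vertical part $\Sigma'_v$: these components inflate $|\Sigma'|$ without inflating $|\Sigma_F|$, so a naive calculation loses too much. To overcome this I would either apply the generalized canonical bundle formula (\cite{Fil18}) to push $\Sigma'_v$ forward to an orbifold decomposition on $Y$ and argue inductively on $Y$ as well, or refine the initial choice of $A$ together with the $-A$-MMP so that the vertical contribution is absorbed into the orbifold structure; Lemma~\ref{lem:orb-comp-pic=1}, applied to the horizontal trace when all horizontal $B'_i$ lie on the same ray of the effective cone of $F$, provides the additional leverage needed to finish the accounting and close the contradiction.
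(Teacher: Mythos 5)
Your argument has a genuine gap, and it is exactly the one you flag yourself: the vertical part of $\Sigma'$ over the base $Y$ of the Mori fiber space. From $\orbcomp(X',B';\Sigma')<1$ you get $|\Sigma'_h|+|\Sigma'_v|>\dim F+\dim Y+s'-1$, and to contradict the inductive hypothesis on $F$ you need $|\Sigma'_h|>\dim F+1$, which requires $|\Sigma'_v|\leq \dim Y+s'-2$. Nothing in the hypotheses bounds $|\Sigma'_v|$: many orbifold Weil divisors of $\Sigma$ can become vertical after the $(-A)$-MMP, each contributing to $|\Sigma'|$ and to $s'$ but nothing to $|\Sigma_F|$. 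The two escape routes you sketch (pushing $\Sigma'_v$ to $Y$ via the canonical bundle formula, or tuning the choice of $A$) are not carried out, and the first in particular would require a separate non-negativity statement on $Y$ that is not available at this point of the induction. A secondary point to be careful about: for the orbifold complexity the relevant span is in ${\rm Cl}_\qq(F)$, not $N^1(F)_\qq$, so the bound $\dim_\qq\langle\Sigma_F\rangle\leq 1$ needs the fact that on a $\qq$-factorial Mori fiber space every vertical divisor is a pullback from $Y$; this is true but should be said.

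The paper takes a different route, precisely designed to sidestep this issue (it invokes \cite[Theorem 3.2]{BMSZ18} verbatim). The induction there is on $s=\dim_\qq\langle\Sigma\rangle$, not on $\rho(X)$. The base case $s=1$ is Lemma~\ref{lem:orb-comp-pic=1}: when all the $B_i$ span a single ray of the effective cone, a $(K_X+B-\epsilon B_1)$-MMP is simultaneously $B_i$-positive for \emph{every} $i$, so no $B_i$ can become vertical over the resulting Mori fiber space -- the vertical-part problem simply does not arise, and one concludes $\rho(X)=1$. The inductive step never touches a fibration: if $B_1,B_2$ are independent in ${\rm Cl}_\qq(X)$, one forms $\mathbb{P}_X(\mathcal{O}_X(m_1B_1)\oplus\mathcal{O}_X(m_2B_2))$ and contracts the two distinguished sections (each anti-effective on itself by the choice of $m_1,m_2$), producing a log Calabi--Yau Mori dream space $Z$ with $\rho(Z)=\rho(X)-1$ on which the relation $m_1S_1\sim m_2S_2$ drops the span by one while the orbifold complexity is unchanged; induction on $s$ then closes the argument. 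If you want to salvage your approach, you would essentially have to reprove the one-ray reduction, at which point you are reconstructing the paper's proof.
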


We remind the reader that a Mori dream space satisfies $H^1(X, \mathcal O_X)=0$.

\begin{proof}
The proof of~\cite[Theorem 3.2]{BMSZ18} applies verbatim, using Remark~\ref{rmk:orb.struct.restr}.
\end{proof}

\begin{proof}[Proof of Theorem~\ref{thm:proj-MDS-orbifold-complexity}]
We prove the result by induction on the dimension $d$ of $X$.
The case $d=1$ is trivially true.
Thus, we may assume that the result holds for any variety of dimension $d-1$.
By Proposition~\ref{prop:gen-class-group}, and inductive hypothesis, the divisors $B_i$ generate ${\rm Cl}_\mathbb{Q}(X)$, so that
\[
\orbcomp(X,B;\Sigma)= 
 \dim X +\dim_\qq{\rm Cl}(X)_\qq - \sum_{i=1}^k b_i.
\]
Writing, cf. Remark~\ref{rmk:coeff.orb.struct.canonical}.\ref{canonical.expr},
\[
\Sigma =
\sum_{P\in X^1}
\left(
\left(
1-\frac{1}{n_P}
\right)+
\sum_{i=1}^k b_i \frac{{\rm num}_{P}(B_i)}{n_P}
\right)P,
\]
and defining
\begin{align}
    \label{eqn:def.qi}
Q_i :=\sum_{P\in X^1} {\rm num}_P(B_i)P,
\end{align}
then for all $i$, $Q_i$ is a Weil divisor satisfying $\supp(Q_i)=\supp(B_i)$.
As for all $P \in X^1$, $\sum_{i=1}^k b_i {\rm num}_{P}(B_i) \leq 1$, then 
\begin{align}
\label{eqn:special.ineq}
1-\frac{1}{n_P} + \sum_{i=1}^k b_i \frac{{\rm num}_{P}(B_i)}{n_P} 
\geq 
\sum_{i=1}^k b_i {\rm num}_{P}(B_i)
\end{align}
and equality holds if and only $n_P=1$ or $\Sigma \geq P$.
Hence,
\begin{align*}
\Sigma \geq &
\sum_{P \in X^1}
\left( 
\sum_{i=1}^k b_i {\rm num}_{P}(B_i)
\right)P
=
\sum_{i=1}^k b_i 
\left( \sum_{P \in X^1}
{\rm num}_{P}(B_i) P
\right)
=\sum_{i=1}^k b_i Q_i.
\end{align*}
Hence, $\tilde{\Sigma}:=\sum_{i=1}^k b_i Q_i$ is a decomposition of $B$ of fine complexity complexity
\begin{align}
\label{eqn:complexity.vs.orb.complexity}
\finecomp(X, B; \tilde{\Sigma})=
\dim X+
\dim_\qq \langle \tilde{\Sigma} \rangle - 
\sum_{i=1}^k b_i
\leq \dim X +
\dim_\qq{\rm Cl}(X)_\qq -
\sum_{i=1}^k b_i
=\orbcomp\left(X, B; \Sigma\right) < 1.
\end{align}
By~\cite[Theorem 1.2]{BMSZ18}, $\finecomp(X, B; \tilde{\Sigma}) \geq 0$ and equality must hold in~\eqref{eqn:complexity.vs.orb.complexity}, since $\dim_\qq{\rm Cl}_\qq(X) - \dim_\qq \langle \tilde{\Sigma} \rangle \in \mathbb Z_{\leq 0}$.
As $\orbcomp\left(X, B; \Sigma\right)<1$,~\cite[Theorem 1.2]{BMSZ18} implies that
\begin{enumerate}  
\item $X$ is a projective toric variety,
\item $\lfloor B\rfloor$ is a torus invariant divisor, and 
\item all but $\lfloor 2c\rfloor$ toric prime divisors appear among the $Q_i$.
\end{enumerate}
By~\eqref{eqn:def.qi}, we conclude that property $(3)$ above must hold also for
$\supp(B_i)$.
\end{proof}

\subsection{Orbifold complexity of generalized projective pairs}
\label{section:projective-case}

The aim of this subsection, is to prove a version of Theorem~\ref{formally toric} for the orbifold complexity in the absolute case, that is, under the assumption that $Z$ is just a point.
In particular, the following theorem generalizes~\cite[Theorem 1.2]{BMSZ18} in a twofold manner:
firstly, we prove the theorem in the more general framework of the orbifold complexity;
moreover, we work in the category of generalized log canonical pairs that are log Calabi--Yau.

\begin{theorem}
\label{thm:formally toric-projective}
Let $(X,B+M)$ be a projective generalized log canonical pair such that $K_X+B+M\sim_{\qq} 0$.
Let $n$ orbifold structure on $X$ and let $\Sigma$ be an orbifold decomposition of $B$ supported on the orbifold Weil divisors $B_1,\dots,B_k$.
Then $\orbcomp(X,B+M;\Sigma)\geq 0$.
Moreover, if $\orbcomp(X,B+M;\Sigma)=c<1$, then the following conditions hold:
\begin{enumerate}
\item $X$ is a projective toric variety,
\item the divisor $\lfloor B\rfloor$ is torus invariant, and
\item all but $\lfloor 2c \rfloor$ torus invariant prime divisors appear as the support of the Weil orbifold divisors in the orbifold decomposition $\Sigma$.
\end{enumerate}  
In particular, the divisors appearing in the orbifold decomposition $\Sigma$ generate ${\rm Cl}_\qq(X)$.
Moreover, if $c(X, B+M)=0$, then $M\sim_\qq 0$ and $M$ descends on $X$ in the sense of b-divisors.
\end{theorem}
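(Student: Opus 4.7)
The proof follows the blueprint of~\cite[Theorem 1.2]{BMSZ18}: one passes to a Mori dream space (MDS) model via a well-chosen MMP, applies the MDS version of the statement, Theorem~\ref{thm:proj-MDS-orbifold-complexity}, and transports the resulting toric structure back to $X$. The genuinely new ingredients with respect to~\cite{BMSZ18} are the presence of the nef b-divisor $\mathbf M$, the upgrade from decompositions to orbifold decompositions, and the separate argument showing that $\mathbf M$ is torsion and descends when $c=0$.

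\noindent\textbf{Reduction to an MDS.} First, apply Lemma~\ref{lem:existence-gen-dlt-mod} to obtain a $\qq$-factorial gdlt modification $\pi\colon Y\to X$. By Lemma~\ref{complexity-dlt}.\ref{complexity-vs-dlt-modification-orbifold}, the orbifold decomposition $\Sigma$ lifts to an orbifold decomposition $\Sigma_Y$ of $B_Y$ in which every $\pi$-exceptional divisor appears with coefficient one and
\[
\orbcomp(Y,B_Y+M_Y;\Sigma_Y)\leq \orbcomp(X,B+M;\Sigma)=c.
\]
Since $(Y,B_Y-\epsilon\lfloor B_Y\rfloor+M_Y)$ is generalized klt for $0<\epsilon\ll 1$, Lemma~\ref{lemm:gen-with-ample-nef-part} combined with~\cite{BCHM10} allows us to run a suitable MMP terminating at a $\qq$-factorial projective model $Y'$ which is an MDS; by Corollary~\ref{complexity.MMP}.\ref{complexity-under-MMP-orbifold}, the induced orbifold complexity on $Y'$ still satisfies $\orbcomp(Y',B_{Y'}+M_{Y'};\Sigma_{Y'})\leq c$.

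\noindent\textbf{Toric conclusion and transport to $X$.} Applying Theorem~\ref{thm:proj-MDS-orbifold-complexity} to $Y'$ gives $c\geq 0$ and, whenever $c<1$, shows that $Y'$ is projective toric, $\lfloor B_{Y'}\rfloor$ is torus invariant, and all but at most $\lfloor 2c\rfloor$ torus-invariant prime divisors of $Y'$ occur among the supports of the orbifold Weil divisors in $\Sigma_{Y'}$. To transport this back to $X$, the birational map $Y'\dashrightarrow X$ is decomposed into flips, which preserve toricness by push-forward of fans, and divisorial extractions; in the latter, the equality case of Lemma~\ref{complexity.divisorial-contraction}.\ref{complexity-vs-divisorial-contraction-orbifold} forces the contracted divisor to appear in $\Sigma$ with coefficient one, hence to be a torus-invariant divisor of $Y'$ by the previous sentence. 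The projective (absolute) analogue of Lemma~\ref{formally toricity-vs-MMP} then yields the toric structure on $X$ and conclusions $(1)$--$(3)$; generation of ${\rm Cl}_\qq(X)$ by the supports of $\Sigma$ follows from the analogous generation on $Y'$ (Proposition~\ref{prop:gen-class-group}) together with the surjectivity of the push-forward ${\rm Cl}_\qq(Y')\twoheadrightarrow {\rm Cl}_\qq(X)$.

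\noindent\textbf{Triviality of $\mathbf M$ when $c=0$ and main obstacle.} If $c=0$, then $(3)$ forces every torus-invariant prime divisor of $X$ to appear in $\lfloor B\rfloor$ with coefficient one, so $\lfloor B\rfloor\geq -K_X$ and therefore $K_X+B\geq 0$ as an effective divisor; from $K_X+B+M_X\sim_\qq 0$ we get $-M_X\sim_\qq K_X+B\geq 0$. On any model $f\colon W\to X$ on which $M_W$ is nef, the negativity lemma applied to the $f$-exceptional divisor $f^*M_X-M_W$ gives $f^*M_X\geq M_W$; writing then
\[
M_W \;\sim_\qq\; -\bigl(f^*(K_X+B)+(f^*M_X-M_W)\bigr),
\]
we see $M_W$ is simultaneously nef and $\qq$-linearly equivalent to an anti-effective divisor, which forces $f^*(K_X+B)=0$ and $f^*M_X=M_W$. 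Consequently $K_X+B=0$, $M_X\sim_\qq 0$, and $\mathbf M$ descends to $X$, as claimed. The most delicate point in the whole argument is the coefficient-$1$ bookkeeping along the MMP reaching $Y'$: one must verify that every divisorial contraction contracts a prime divisor appearing in the current orbifold decomposition with coefficient one, so that the reverse extractions are genuinely extractions of torus-invariant divisors. This is where the equality statement of Lemma~\ref{complexity.divisorial-contraction}.\ref{complexity-vs-divisorial-contraction-orbifold}, combined with Corollary~\ref{complexity.MMP}, does the heavy lifting, paralleling the analogous argument in~\cite[\S~3]{BMSZ18}.
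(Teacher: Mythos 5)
Your overall blueprint (reduce to a Mori dream space, apply Theorem~\ref{thm:proj-MDS-orbifold-complexity}, transport back) matches the paper's, but two of your key steps have genuine gaps.

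First, the passage to the MDS and back. Since $K_X+B+M\sim_\qq 0$ is already nef, ``a suitable MMP'' must be an MMP for a \emph{perturbed} divisor, and the choice of perturbation is where all the difficulty lies. Your mechanism for controlling the contracted divisors --- that the equality case of Lemma~\ref{complexity.divisorial-contraction}.\ref{complexity-vs-divisorial-contraction-orbifold} forces each one to appear in $\Sigma$ with coefficient one --- does not apply here: that equality statement presupposes that the contracted divisor $E$ is the support of some $B_i$, and gives nothing when $E\not\subset\supp\Sigma$; moreover you only know the complexity does not increase, not that it is preserved. In fact the divisors contracted by any such perturbed MMP are components of $N_\sigma$ of the perturbation, and the paper deliberately arranges (Lemmas~\ref{lemma:gen-pair-low-numerical}--\ref{lem:good-dlt-model}) that this $N_\sigma$ shares \emph{no} component with $B_0$ and contains no glc center. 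Showing that these contracted divisors are nonetheless toric valuations of $X_m$ is the hardest part of the proof: the paper first shows every lc place of $(X_m,B_m)$ is toric, and then runs a tie-breaking argument with~\cite[Lemma 5.2]{BMSZ18} combined with induction on the number of components of $N$ to rule out a non-toric component. Nothing in your proposal replaces this. (You also omit the induction on $\dim X$, which the auxiliary lemmas genuinely require when restricting to fibers.)

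Second, the $c=0$ argument for $\mathbf M$. The claim that condition $(3)$ forces every torus-invariant prime divisor to appear in $\lfloor B\rfloor$ with coefficient one is false: $(3)$ only controls \emph{supports}, and the coefficients $b_i$ may be less than one. For instance $\bigl(\pp^1,\tfrac12(p_1+p_2+p_3+p_4)\bigr)$ has complexity $0$ with $\lfloor B\rfloor=0$, so $\lfloor B\rfloor\geq -K_X$ and hence $K_X+B\geq 0$ do not follow, and your derivation of $-M_X\sim_\qq K_X+B\geq 0$ collapses. The paper argues differently: after flopping to a model where $M_m$ is semiample, choose $0\leq D_m\sim_\qq M_m$; if $D_m\neq 0$, any of its components can be appended to $\Sigma_m$ without increasing $\dim_\qq\langle\Sigma_m\rangle$ (the span is already all of ${\rm Cl}_\qq(X_m)$), producing a decomposition of negative orbifold complexity, a contradiction; hence $D_m=0$ and $M\sim_\qq 0$, after which the negativity-lemma descent you sketch does go through.
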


Theorem~\ref{thm:formally toric-projective} holds in dimension $1$: indeed, $K_X+B+M\sim_\qq 0$ implies that $X \simeq \pp^1$, or $X$ is an elliptic curve and $B=0 \sim_\qq M$.
In the latter case, the orbifold complexity (resp. the complexity) of the generalized log pair is $1$ (resp. $2$).
In the former case, properties (1)-(3) follow from Theorem~\ref{thm:proj-MDS-orbifold-complexity}. 
If $c=0$, then $K_{\pp^1}+B\sim_\qq 0$ and $M\sim_\qq 0$, i.e., $M$ descends on $\pp^1$.
We will proceed to prove Theorem~\ref{thm:formally toric-projective} by induction on the dimension of $X$.

We start by extending some of the technical results of~\cite[\S 4]{BMSZ18} on the numerical dimension of log pairs of small complexity to the framework of orbifold complexity.

\begin{lemma}
\label{lemma:gen-pair-low-numerical}
Fix a positive real number $\alpha \in (0,1)$ and a positive integer $d$.
Assume that Theorem~\ref{thm:formally toric-projective} holds in dimension at most $d-1$.
\\
Let $(X,B+M)$ be a $\qq$-factorial projective generalized dlt pair of dimension $d$.
Assume that $K_X+B+M\sim_\qq 0$, and $\orbcomp(X,B+M)=c \geq 0$.
Then, there exists a projective generalized klt pair $(X,B_0+M_0)$ with the following properties: 
\begin{enumerate}
    \item $\alpha B < B_0 <B$;
    \item $ M_0=  M+\overline{A}$ for some ample divisor $A$ on $X$; 
    \item $N_\sigma(K_X+B_0+M_0)$ has no common component with $B_0$; and,
    \item the numerical dimension of $K_X+B_0+M_0$ is $ \leq c$.
\end{enumerate}
\end{lemma}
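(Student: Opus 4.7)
My plan is to extend to the orbifold setting (and to generalized pairs) the approach used in~\cite[Section~4]{BMSZ18} to prove the analogous statement in the framework of fine complexity.

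First, by Proposition~\ref{prop:complexity-minimum}, I would pick an orbifold decomposition
\[
\Sigma=\sum_{P\in X^1}\left(1-\tfrac{1}{n_P}\right)P+\sum_{i=1}^{k}b_iB_i
\]
of $B$ that realizes the value $\orbcomp(X,B+M)=c$. The defining identity $c=d+\dim_{\mathbb Q}\langle \Sigma\rangle-|\Sigma|$ yields $\dim_{\mathbb Q}\langle B_1,\dots,B_k\rangle =c+|\Sigma|-d$, which controls how much of $\mathrm{Cl}_{\mathbb Q}(X)$ is spanned by the components of the decomposition. This is the linear-algebraic data that will produce the numerical dimension bound in~(4). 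Denote $V:=\langle B_1,\dots,B_k\rangle$.

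Second, for any $\alpha'\in(\alpha,1)$, I would put $B_0:=B-(1-\alpha')\lfloor B\rfloor$, which automatically satisfies $\alpha B<B_0<B$ and $\lfloor B_0\rfloor=0$. Setting $M_0:=M+\overline{A}$ for a sufficiently small and general ample $\mathbb Q$-divisor $A$, the pair $(X,B_0+M_0)$ will be generalized klt (the ample perturbation can always be chosen to kill any remaining non-klt locus of the sub-klt pair $(X,B_0)$, since the generalized non-klt locus of $(X,B+M)$ is concentrated on $\lfloor B\rfloor$, which is the only part we reduced). Using $K_X+B+M\sim_{\mathbb Q}0$, we compute
\[
K_X+B_0+M_0\;\sim_{\mathbb Q}\;A-(1-\alpha')\lfloor B\rfloor.
\]

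Third, the crucial point is to tune the numerical class of $A$ inside the ample cone so that $[K_X+B_0+M_0]$ lands in $V$ (up to a factor coming from the orbifold part of $\Sigma$, whose components are all in $\lfloor B\rfloor$). Concretely, I would choose the class $[A]$ as a general ample class in the affine subspace $(1-\alpha')[\lfloor B\rfloor]+V$ of $\mathrm{Cl}_{\mathbb Q}(X)$. Such a choice is possible because $[-K_X-M]=[B]$ and, by Remark~\ref{rem:coeff.orb.decomp}, the round-down $\lfloor B\rfloor$ lies in the span $V$ together with $[B]$ itself (up to rational combinations determined by the $n_P$). Once this is arranged, Nakayama's $\sigma$-decomposition shows that the numerical dimension of a class lying in a subspace spanned by a collection of effective divisors is bounded by the dimension of that subspace, giving $\nu(K_X+B_0+M_0)\le c$. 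Condition~(3) follows by a standard generic-position argument: for $A$ general in its numerical class, no component of $B_0\subseteq \lfloor B\rfloor^c$ can appear in $N_\sigma(K_X+B_0+M_0)$, since the negative part is supported in the base locus of the $\mathbb Q$-linear system of $A-(1-\alpha')\lfloor B\rfloor$ which, by genericity, misses $B_0$.

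The main obstacle is the third paragraph, the interplay between the choice of $A$ and the numerical-dimension bound. Two issues make this more delicate than in the classical setting of~\cite{BMSZ18}: the orbifold contribution $\sum_P(1-1/n_P)P$ to $\Sigma$ must be controlled when tracking $[B]$ inside $V$, and the generalized nef part $\mathbf M$ need not descend to $X$, which potentially obstructs pinpointing the class $[M]$. The second issue is handled by passing to a sufficiently high birational model on which $\mathbf M$ does descend, applying the argument there, and pushing the result back to $X$ via Lemma~\ref{complexity.divisorial-contraction}, using that the orbifold complexity does not increase. The first issue is addressed by the identity relating $|\Sigma|$, $\dim_{\mathbb Q}V$, and $c$, which turns the combinatorial bookkeeping into the desired numerical-dimension bound.
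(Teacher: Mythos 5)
Your proposal has a fatal gap at its central step. The claim in your third paragraph --- that ``the numerical dimension of a class lying in a subspace spanned by a collection of effective divisors is bounded by the dimension of that subspace'' --- is false, and no version of Nakayama's $\sigma$-decomposition gives it. On $\mathbb{P}^n$ the class group is spanned by the single effective divisor $H$, yet $\nu(H)=n$; more generally, membership of $[K_X+B_0+M_0]$ in the span $\langle B_1,\dots,B_k\rangle\subseteq \mathrm{Cl}_{\mathbb Q}(X)$ carries no information about its numerical dimension. Since this is the only mechanism your argument offers for establishing conclusion (4), the proof does not go through. A second symptom of the same problem: your argument never invokes the standing hypothesis that Theorem~\ref{thm:formally toric-projective} holds in dimension $\le d-1$. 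That hypothesis is not decorative --- it is the engine of the actual proof.

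The paper's argument is a descent on numerical dimension rather than a direct construction. One considers the family $\mathcal{G}_\alpha(B,M)$ of generalized klt pairs satisfying (1)--(3) (non-empty, since a large ample perturbation works) and picks a member $(X,B_0+M_0)$ minimizing $\nu(K_X+B_0+M_0)$. If that minimum exceeded $c$, one passes to a good minimal model and its Iitaka fibration $Y\to W_0$; restricting to the general fiber $F$ and applying the inductive hypothesis to the induced orbifold decomposition on the $\mathbb{Q}$-trivial generalized pair $(F, B|_F+M|_F)$ forces some component $P$ of $B$ to be vertical over $W_0$ (otherwise the orbifold complexity of the fiber would be negative). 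One then interpolates between $K_X+B_0+M_0$ and $K_X+B+M-pP\sim_{\qq}-pP$ up to the pseudo-effective threshold; tracking how $P$ switches from vertical to horizontal across the corresponding ample models shows the numerical dimension strictly drops, and a final convex combination with $(X,B+M)$ restores conditions (1) and (3). This contradicts minimality. If you want to salvage your write-up, you would need to replace the third paragraph entirely with an argument of this kind; the class-group bookkeeping in your first paragraph cannot substitute for the induction on dimension.
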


For the definition of $N_\sigma(D)$ for a $\mathbb Q$-divisor $D$, the reader is referred to~\cite[\S~2.1]{BMSZ18}.

\begin{proof}
Let $\mathcal{G}_\alpha(B,M)$ to be the set of pseudo-effective generalized klt pairs $(X,B'+M')$ satisfying properties (1)-(3) above.
The set $\mathcal{G}_\alpha(B,M)$ is non-empty:
indeed, it contains the generalized pair  $(X,(\alpha+\epsilon)B + M+A)$, where $\epsilon$ is an arbitrarily small positive real number, and $A$ is sufficiently ample so that $K_X+(\alpha+\epsilon)B+M+A$ is ample.
In particular, the numerical dimension of the divisor $K_X+(\alpha+\epsilon)B+M+A$ is equal to the dimension of $X$, and $N_\sigma(K_X+(\alpha+\epsilon)B + M+A)=0$.
\\
Let $(X,B_0+M_0) \in \mathcal{G}_\alpha(B,M)$ be a generalized pair minimizing the numerical dimension of $K_X+B_0+M_0$.
If the minimal numerical dimension is $\leq c$, then we are done. 
Hence, by contradiction, we can assume that the minimal value of the numerical dimension of the generalized log divisors in $\mathcal{G}_\alpha(B,M)$ is $> c$.
As $K_X+B+M\sim_{\mathbb Q}0$, then this minimum is $<\dim X$.
Let $(Y,B_{Y,0}+M_{Y,0})$ be a good minimal model of $K_X+B_0+M_0$ whose existence is granted by~\cite[Lemma 4.4]{BZ16}, since $M_0$ is big. 
Let $B_Y$ be the strict transform of $B$ on $Y$ and let  $M_Y$ be the trace of the nef part of $(X, B+M)$ on $Y$.
The map $X\dashrightarrow Y$ does not contract any component of $B_0$:
indeed, it only contracts the components of $N_\sigma(K_X+B_0+M_0)$.
Let $Y \rightarrow W_0$ be the Iitaka fibration of $K_Y+B_{Y,0}+M_{Y,0}$.
We claim that there exists a component $P$ of $B$ vertical over $W_0$.
Otherwise, let $\Sigma$ be an orbifold decomposition of $B$, such that $\orbcomp(X,B+M;\Sigma) =c$.
Corollary~\ref{complexity.MMP} implies that there exists an orbifold decomposition $\Sigma_Y$ of $(Y, B_Y+M_Y)$ such that 
\[
\orbcomp(Y,B_Y+M_Y;\Sigma_Y) \leq \orbcomp(X,B+M;\Sigma)=c.
\]
Restricting $K_Y+B_Y+M_Y$ to the general fiber $F$ of $Y\rightarrow W$, and considering the induced orbifold structure on $F$, cf. Remark~\ref{rmk:orb.struct.restr}, together with the orbifold decomposition $\Sigma_F:=\Sigma_Y|_F$ of the boundary part $B|_F$ of $K_F+B|_F+M|_F\sim_\qq 0$, then the argument in the proof of~\cite[Lemma 4.3]{BMSZ18} applies verbatim.
Thus, we may assume that there exists a component $P$ of $B$ vertical over $W$.
\\
Let $p$ be the coefficient of $P$ in $B$. 
Fix $\lambda$ to be the minimal non-negative real number for which the following divisor is pseudo-effective:
\begin{align}
\nonumber 
& K_X+B_1+M_1 
= K_X + (\lambda B_0 + (1-\lambda)(B-pP)) + (\lambda M_0 + (1-\lambda) M) \\
\label{eqn:b1} 
= & \lambda(K_X+B_0+M_0)+(1-\lambda)(K_X+B+M-pP) 
\sim_\qq \lambda(K_X+B_0+M_0) - (1-\lambda) p P.
\end{align}
We set 
$
B_1 := \lambda B_0 + (1-\lambda)(B-pP)
$
and
$M_1 := \lambda  M_0 + (1-\lambda)  M$.
The generalized pair $(X,B_1+M_1)$ satisfies condition (2) in the definition of $\mathcal{G}_\alpha(B+M)$; furthermore, $\supp(B_0)=\supp(B_1)$.
We claim that $\lambda \in (0,1)$.
Clearly, $\lambda>0$. 
As $(X, B_0)$ is klt and $M_0$ is big on this model, there exists a klt pair $(X,B_0+\Delta_0)$ with $\Delta_0$ big, such that $K_{X}+B_0+M_0 \sim_\qq K_X+B_0+\Delta_0$;
thus, by~\cite[Lemma 2.2.1]{BMSZ18}, the divisor
\[
t (K_X+B_0+M_0) - (1-t)pP \sim_\qq
t (K_X+B_0+\Delta_0) - (1-t)pP
\]
is pseudo-effective for $0<\delta \ll1$ and for all $t \in [1-\delta, 1]$;
thus, $\lambda \in (0,1)$.
\\
We claim that the numerical dimension of $(X,B_1+M_1)$ is strictly less than the numerical dimension of $(X,B_0+M_0)$.
For $t\in [0,1]$, we set
\[
B_t:=(1-t)B_0+tB_1 
\quad 
\text{ and }
\quad 
M_t:=(1-t)M_0+tM_1.
\]
By construction, $K_X+B_t+M_t$ is pseudo-effective for each $t\in [0,1]$ and, by~\cite[Lemma 4.4]{BZ16}, it admits an ample model $X\dashrightarrow W_t$.
Moreover, by~\eqref{eqn:b1} $K_X+B_t+M_t$ is a convex linear combination of $K_X+B_0+M_0$ and $-P$. 
If $P$ is in the stable base locus of $K_X+B_t+M_t$, then $W_t=W_0$ and $P$ is vertical over $W_t$.
On the other hand,~\cite[Lemma 2.2.1]{BMSZ18} implies that $P$ is vertical over $W_t$ for each $t<1$. 
By~\cite[3.3.2]{HM13}, we may find $\delta>0$ such that $W_t$ is independent of $t\in (1-\delta,1)$, call it $W$, and there is a contraction morphism $W \rightarrow W_1$.
On the other hand~\cite[Lemma 2.2.1]{BMSZ18} implies that $P$ is horizontal for $W_1$, hence, $W\rightarrow W_1$ is not birational: 
the numerical dimension of $K_X+B_1+M_1$ is strictly less than the numerical dimension of $K_X+B_0+M_0$.
\\
The generalized pair $(X,B_1+M_1)$ may not satisfy conditions (1), (3) in the definition of $\mathcal{G}_\alpha(B,M)$.
Writing $K_X+B_1+M_1 \sim_\rr P_\sigma + N_\sigma$, and given $s\in (0,1]$, then
\[
K_X + B_s + M_s = (1-s)(K_X+B+M) + s(K_X+B_1+M_1) \sim_\rr s(P_\sigma + N_\sigma). 
\]
For $0< s \ll 1$
\begin{align*}
B_2 := s B_1 + (1-s) B - s(N_\sigma \wedge B) \geq 0
\quad 
\text{and}
\quad
\alpha B < B_2 < B .
\end{align*}
Setting $M_2:=M_s =(1-s)M + sM_1$, then 
\[
P_\sigma(K_X+B_2+M_2) = sP_\sigma
\quad 
\text{ and } 
\quad 
N_\sigma(K_X+B_2+M_2) = s(N_\sigma - N_\sigma \wedge B).
\]
Thus, the generalized klt pair $(X,B_2+M_2)$ belongs to $\mathcal{G}_\alpha(B+M)$ and its numerical dimension is strictly less than the numerical dimension of $(X,B_0+M_0)$, as the numerical dimension of $(X,B_2+M_2)$ equals the numerical dimension of $(X,B_1+M_1)$.
This gives the desired contradiction.
\end{proof}

\begin{corollary}
\label{cor:num-dim-zero-proj}
Fix a positive real number $\alpha \in (0,1)$ and a positive integer $d$.
Assume that Theorem~\ref{thm:formally toric-projective} holds in dimension $d-1$.
\\
Let $(X,B+M)$ be a $\qq$-factorial projective generalized dlt pair of dimension $d$.
Assume that $K_X+B+M\sim_\qq 0$, and $\orbcomp(X,B+M)=c <1$. 
Then there exists a generalized klt pair $(X,B_0+M_0)$ with the following properties: 
\begin{enumerate}
    \item $\alpha B < B_0 <B$,
    \item $M_0=M+\overline{A}$ for some ample divisor $A$ on $X$,  
    \item $K_X+B_0+M_0\sim_\rr N\geq 0$ has numerical dimension zero, and 
    \item $N$ has no common components with $B_0$.
\end{enumerate}
\end{corollary}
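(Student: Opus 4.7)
My plan is to derive Corollary~\ref{cor:num-dim-zero-proj} as a direct consequence of Lemma~\ref{lemma:gen-pair-low-numerical} by exploiting the integrality of the numerical dimension. First I would apply Lemma~\ref{lemma:gen-pair-low-numerical} with the given $\alpha \in (0,1)$ to produce a generalized klt pair $(X,B_0+M_0)$ satisfying conditions (1), (2), and (3) of that lemma, together with the bound $\nu(K_X+B_0+M_0) \leq c$. Since $c < 1$ by hypothesis and the numerical dimension takes values in $\zz_{\geq 0}$, this forces
\[
\nu(K_X+B_0+M_0) = 0,
\]
so that $K_X+B_0+M_0$ is a pseudo-effective divisor of numerical dimension zero. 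This takes care of items (1) and (2) of the corollary and reduces the task to upgrading the numerical dimension statement to the existence of an effective representative $N$ in its $\rr$-linear equivalence class.

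Next, to produce $N$, I would exploit that $M_0 = M + \overline{A}$ contains an ample summand. By Lemma~\ref{lemm:gen-with-ample-nef-part}, after replacing the nef part with a suitable boundary, there is a klt pair $(X,D)$ with $D$ big and
\[
K_X+B_0+M_0 \sim_{\rr} K_X+D.
\]
Since $(X,D)$ is klt with big boundary, BCHM and the basic theory of the MMP (\cite{BCHM10}) guarantee that $K_X+D$ admits a good minimal model; abundance in this setting then implies that the Kodaira dimension of $K_X+D$ equals its numerical dimension, namely zero. Thus $K_X+B_0+M_0 \sim_{\rr} N$ for some effective $\rr$-divisor $N$, and by general properties of divisorial Zariski decompositions for pseudo-effective divisors of numerical dimension zero (\cite[\S 2.1]{BMSZ18}), the support of $N$ agrees with that of $N_\sigma(K_X+B_0+M_0)$. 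This establishes item (3).

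Finally, item (4) is immediate from item (3) of Lemma~\ref{lemma:gen-pair-low-numerical}: that lemma guarantees that $N_\sigma(K_X+B_0+M_0)$ shares no component with $B_0$, and since $\supp N = \supp N_\sigma(K_X+B_0+M_0)$, the divisor $N$ shares no component with $B_0$ either. I do not foresee any significant obstacle: the core numerical observation (non-negative integer $\leq c < 1$ must equal zero) is elementary, and the passage from numerical dimension zero to an effective $\rr$-linear representative is a standard application of the machinery already invoked earlier in the section.
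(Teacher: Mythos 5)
Your proof is correct and is precisely the intended derivation: the paper states this as a corollary with no written proof, and the two ingredients you supply — the integrality of the numerical dimension forcing $\nu(K_X+B_0+M_0)=0$ when $c<1$, and the passage to an effective $\rr$-linear representative $N=N_\sigma(K_X+B_0+M_0)$ via Lemma~\ref{lemm:gen-with-ample-nef-part} and \cite{BCHM10} — are exactly what is needed to upgrade Lemma~\ref{lemma:gen-pair-low-numerical} to the stated conclusion.
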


\begin{lemma}
\label{lem:good-dlt-model}
Fix a positive real number $\alpha \in (0,1)$ and a positive integer $d$.
Assume that Theorem~\ref{thm:formally toric-projective} holds in dimension $d-1$.
\newline
Let $(X,B+M)$ be a $\qq$-factorial projective generalized dlt pair of dimension $d$.
Assume that $K_X+B+M\sim_\qq 0$, and $\orbcomp(X,B+M)=c <1$. 
Then there exists a $\qq$-factorial gdlt modification $(Y,B_Y+M_Y)$ of $(X,B+M)$, and a generalized klt pair $(Y,B'_Y+M'_Y)$ with the following properties:
\begin{enumerate}
    \item $\alpha B_Y < B'_Y <B_Y$,
    \item $M_Y'=M_Y+A_Y$ where $A_Y$ is a big and nef divisor,
    \item $K_Y+B_Y'+M_Y'\sim_\rr N_Y'\geq 0$ has numerical dimension zero, 
    \item $N_Y'$ has no common components with $B_Y'$, and
    \item no generalized non-klt center of $(Y,B_Y+M_Y)$ is contained in $N_Y'$.
\end{enumerate}
\end{lemma}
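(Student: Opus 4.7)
The plan is to apply Corollary~\ref{cor:num-dim-zero-proj} not directly on $X$, but on a suitable $\qq$-factorial gdlt modification $\pi \colon Y \to X$ of $(X,B+M)$. Given any such modification, whose existence is guaranteed by Lemma~\ref{lem:existence-gen-dlt-mod}, the log pullback formula yields $K_Y + B_Y + M_Y = \pi^\ast(K_X+B+M) \sim_\qq 0$, and Lemma~\ref{complexity-dlt} gives $\orbcomp(Y,B_Y+M_Y) \leq c < 1$. Corollary~\ref{cor:num-dim-zero-proj} therefore applies to $(Y,B_Y+M_Y)$ with a parameter $\alpha' \in (\alpha,1)$ chosen close enough to $1$ so that the log pullback of the resulting $B_0$ still satisfies $\alpha B_Y < B_Y' < B_Y$ (the log discrepancies of extracted divisors with respect to $(X,B_0+M_0)$ can be made arbitrarily small by taking $B_0$ close to $B$). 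This produces a generalized klt pair $(Y,B_Y'+M_Y')$ with $M_Y' = M_Y + \overline{A_Y}$ for some ample divisor $A_Y$ on $Y$. Properties $(1)$--$(4)$ of the lemma are then immediate from the corresponding properties in Corollary~\ref{cor:num-dim-zero-proj}, using that $A_Y$ ample is in particular big and nef.

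The only remaining issue is property $(5)$. The key observation is that $\supp B_Y' = \supp B_Y$, since $\alpha B_Y < B_Y' < B_Y$; hence property $(4)$ forces $N_Y'$ to have no component in common with $B_Y$ itself. In particular, since a prime divisor is contained in the support of an effective divisor $D$ if and only if it is a component of $D$, no divisorial generalized non-klt center of $(Y, B_Y+M_Y)$ can be contained in $\supp N_Y'$. Property $(5)$ therefore needs only to be checked on the higher-codimension strata of $\lfloor B_Y \rfloor$. To handle these, the plan is to choose $\pi$ to extract enough divisorial generalized log canonical places of $(X,B+M)$: for each potentially problematic minimal non-klt center $V$, one extracts a divisorial glc place centered at $V$, thereby turning $V$ into a divisor on $Y$ and reducing to the divisorial case just treated.

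Since $\supp N_Y'$ is not known a priori, the construction must be iterated: produce a first candidate $Y$, apply Corollary~\ref{cor:num-dim-zero-proj} to obtain $N_Y'$, identify the generalized non-klt centers still contained in $\supp N_Y'$, extract divisorial glc places of $(X,B+M)$ centered there to pass to a higher model, and re-run the construction using Lemma~\ref{complexity-dlt} to keep control of the orbifold complexity at each stage. The main obstacle is proving that this iteration terminates and that the successive negative parts avoid the newly extracted exceptional divisors; the latter will be automatic from property $(4)$ on the new model, while termination should follow from a double induction on the dimension and the number of problematic non-klt centers, since each extraction strictly decreases the minimal dimension at which a generalized non-klt center can still sit inside the corresponding $\supp N$.
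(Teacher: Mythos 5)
Your treatment of properties (1)--(4) is essentially fine: taking a gdlt modification first and then running Corollary~\ref{cor:num-dim-zero-proj} on $Y$ (with the complexity controlled via Lemma~\ref{complexity-dlt}) does produce a generalized klt pair satisfying those four conditions, and your observation that $\supp B_Y'=\supp B_Y$ disposes of the divisorial non-klt centers in (5). The problem is the higher-codimensional strata of $\lfloor B_Y\rfloor$, and here your argument has a genuine gap. Each pass of your iteration re-applies Corollary~\ref{cor:num-dim-zero-proj} on the new model, which produces a \emph{fresh} effective divisor $N$ whose support bears no controlled relation to the previous one; after extracting glc places over the old problematic centers you also create new codimension $\geq 2$ strata (intersections of the new exceptional divisors with the old boundary components), any of which may lie in the support of the new $N$. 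The claimed termination measure --- that the minimal dimension of a non-klt center contained in $\supp N$ strictly increases --- is therefore not justified, and without it the induction does not close.

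The paper sidesteps the iteration entirely by reversing the order of operations. One first applies Corollary~\ref{cor:num-dim-zero-proj} on $X$ itself, obtaining $(X,B_0+M_0)$ with $K_X+B_0+M_0\sim_\rr N\geq 0$ of numerical dimension zero and $N$ sharing no component with $B_0$. One then chooses the gdlt modification $\pi\colon Y\to X$ \emph{adapted to this fixed $N$}: namely a gdlt modification of $(X,B+M+tN)$ for $0<t\ll1$, which is still a gdlt modification of $(X,B+M)$. By construction $(Y,B_Y+N_Y'+M_Y)$ is gdlt, where $N_Y'$ collects the components of $\pi^\ast N$ not lying in $\supp B_Y$, and gdlt-ness immediately forces every generalized non-klt center of $(Y,B_Y+M_Y)$ --- these are strata of $\lfloor B_Y\rfloor$ --- to avoid $N_Y'$, which is exactly (5). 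The pair on $Y$ is then produced not by re-running the corollary but by interpolating between the (possibly non-effective) log pullback $(Y,B_{Y,0}+M_{Y,0})$ of $(X,B_0+M_0)$ and $(Y,B_Y+M_Y)$, i.e.\ $B_Y':=sB_{Y,0}+(1-s)B_Y-s(\pi^\ast N\wedge B_Y)$ for $0<s\ll1$, which yields $K_Y+B_Y'+M_Y'\sim_\rr sN_Y'$ and all of (1)--(4) at once. If you want to keep your order of operations, you would need to fix the negative part once and for all before choosing the model; as written, the proof is incomplete.
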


\begin{proof}
Let $(X,B_0+M_0)$ be a generalized klt pair provided by Corollary~\ref{cor:num-dim-zero-proj}.
We know that $B_0$ has no common components with $N$.
Fix $0<t\ll 1$ such that $\lfloor tN \rfloor =0$.
Let $\pi \colon Y \rightarrow X$ be a gdlt modification of $(X,B+M+tN)$.
For $0< t \ll 1$, it is also a gdlt modification for $(X,B+M)$.
Then,
\[
K_Y+B_Y+M_Y = \pi^\ast (K_X+B+M) \sim_\qq 0 
\quad
\text{ and }
\quad
N_Y=\pi^\ast (N)
\]
with $(Y,B_Y+M_Y)$ gdlt, and $(Y,B_Y+N_Y'+M_Y)$ gdlt as well, where $N_Y'$ is the sum of the components of $N_Y$ which are not contained in the support of $B_Y$.
In particular, the effective divisor $N_Y'$ does not contain any generalized non-klt center of the pair $(Y,B_Y+M_Y)$.
Setting 
\[
K_Y+B_{Y,0}+M_{Y,0}=\pi^\ast (K_X+B_0+M_0), 
\]
where $M_{Y,0}$ is the trace on $Y$ of the b-divisor corresponding to $M_0$, $B_{Y, 0}$ is not necessarily effective and 
\[
K_Y+B_{Y,0}+M_{Y,0}\sim_\rr N_Y.
\]
Defining
\begin{align}
\nonumber 
B'_Y := sB_{Y,0}+(1-s)B_Y-s(N_Y\wedge B_Y) 
\quad 
\text{ and } 
\quad 
M'_Y :=(1-s)M_Y + sM_{Y,0},
\end{align}
then, for $0<s\ll1$, $B'_Y$ is effective and $K_Y+B'_Y+M'_Y \sim_\rr sN'_Y \geq 0$.
\end{proof}

\begin{lemma}\label{lemm:X-toric}
Let $(X,B+M)$ be a projective generalized log canonical pair.
Assume that $K_X+B+M\sim_\qq 0$.
Let $n$ be an orbifold structure on $X$ and let $\Sigma$ be an orbifold decomposition of $B$.
If $\orbcomp(X,B+M;\Sigma)<1$, then $X$ is a toric variety.
\end{lemma}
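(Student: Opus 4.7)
The plan is to reduce, via birational modifications that do not increase the orbifold complexity, to a setting in which Theorem~\ref{thm:proj-MDS-orbifold-complexity} applies, and then to transfer the resulting toric structure back to $X$. Throughout, we use the inductive hypothesis that Theorem~\ref{thm:formally toric-projective} holds in dimension strictly less than $\dim X$, which is what makes Lemmas~\ref{lemma:gen-pair-low-numerical}--\ref{lem:good-dlt-model} available.

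First, by Lemma~\ref{lem:existence-gen-dlt-mod}, we pass to a $\qq$-factorial gdlt modification $\pi \colon Y \to X$ of $(X, B+M)$. Lemma~\ref{complexity-dlt}(\ref{complexity-vs-dlt-modification-orbifold}) supplies an orbifold decomposition $\Sigma_Y$ of $B_Y$ with $\orbcomp(Y, B_Y + M_Y; \Sigma_Y) \leq \orbcomp(X, B+M; \Sigma) < 1$, while $K_Y + B_Y + M_Y \sim_\qq 0$ is preserved. Then Lemma~\ref{lem:good-dlt-model} produces an auxiliary generalized klt pair $(Y, B_Y' + M_Y')$ with $M_Y' = M_Y + \overline{A}$ big, $K_Y + B_Y' + M_Y' \sim_\rr N_Y' \geq 0$ of numerical dimension zero, and $N_Y'$ containing no gnklt center of $(Y, B_Y + M_Y)$. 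Running a $(K_Y + B_Y' + M_Y')$-MMP with scaling of an ample divisor terminates by~\cite[Lemma 4.4]{BZ16}; since the numerical dimension is zero, we reach a model $Y'$ on which $K_{Y'} + B_{Y'}' + M_{Y'}' \sim_\qq 0$. The presence of this klt, big-boundary, numerically trivial structure makes $Y'$ a Mori dream space.

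By Corollary~\ref{complexity.MMP}(\ref{complexity-under-MMP-orbifold}) we obtain an orbifold decomposition of $(Y', B_{Y'} + M_{Y'})$ of orbifold complexity $< 1$, and $K_{Y'} + B_{Y'} + M_{Y'} \sim_\qq 0$ still holds. Because $Y'$ is a MDS, the b-divisor $\mathbf M$ descends to $Y'$ and $M_{Y'}$ is semiample; choose a general $\tilde M \sim_\qq M_{Y'}$ so that $(Y', B_{Y'} + \tilde M)$ is log canonical. The same orbifold decomposition is an orbifold decomposition for the enlarged boundary $B_{Y'} + \tilde M$ of the same complexity (enlarging the boundary only weakens the constraint $\Sigma \leq B$). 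Theorem~\ref{thm:proj-MDS-orbifold-complexity} applied to $(Y', B_{Y'} + \tilde M)$ then yields that $Y'$ is a projective toric variety with $\lfloor B_{Y'} + \tilde M \rfloor$ torus-invariant.

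The main obstacle is the final descent of toricness from $Y'$ back to $X$. The map $Y \dashrightarrow Y'$ consists of flips, which are isomorphisms in codimension one and automatically preserve toric structure, together with divisorial contractions whose exceptional divisors lie in $\supp(N_Y')$; similarly, $\pi$ extracts glc places of $(X, B+M)$. The hard step is to identify each such contracted or extracted divisor with a torus-invariant divisor of $Y'$, because then $Y$ becomes a toric modification of $Y'$ and $X$ a toric contraction of $Y$, forcing $X$ to be toric. This identification will follow by combining the classification in Theorem~\ref{thm:proj-MDS-orbifold-complexity} (components of $\lfloor B_{Y'} + \tilde M \rfloor$ are torus-invariant, and all but $\lfloor 2c \rfloor$ torus-invariant divisors appear in the orbifold decomposition) with the observation that $\pi$-exceptional divisors are gnklt places of $(X, B+M)$, corresponding under the toric identification of $Y'$ to torus-invariant valuations, and that components of $N_Y'$, being effective and disjoint from $\lfloor B_Y \rfloor$, must be traceable to torus-invariant divisors on $Y'$.
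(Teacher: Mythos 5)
Your reduction to the Mori dream space case follows the same route as the paper: pass to a gdlt modification, use Lemma~\ref{lem:good-dlt-model} to produce an auxiliary klt pair of numerical dimension zero with big nef part, run the MMP to a good minimal model $X_m$ which is a MDS, transport an orbifold decomposition of complexity $<1$ there via Corollary~\ref{complexity.MMP}, and invoke Theorem~\ref{thm:proj-MDS-orbifold-complexity} to conclude that $X_m$ is toric with $\lfloor B_m\rfloor$ torus-invariant and almost all torus-invariant divisors appearing in the decomposition. Up to that point the argument is sound.

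The genuine gap is in the step you yourself flag as ``the hard step'': showing that every divisor contracted by $X\dashrightarrow X_m$ (equivalently, every component of $N$) corresponds to a toric valuation of $X_m$. You assert that components of $N_Y'$ ``must be traceable to torus-invariant divisors on $Y'$'' because they are effective and disjoint from $\lfloor B_Y\rfloor$, but no mechanism is given, and none of the cited results delivers this. In the paper this is the content of an entire separate lemma, proved by contradiction: one first shows that every lc place of $(X_m,B_m)$ is toric (splitting into places centered outside the indeterminacy locus, where dlt-ness and the toricity of $\lfloor B_m\rfloor$ apply, and places centered inside it, which would be glc places of $(X,B+M)$ centered in $N$, contradicting the no-lc-center condition from Lemma~\ref{lem:good-dlt-model}); one then fixes $N$ with the \emph{minimal} number of components among all pairs satisfying Lemma~\ref{lem:good-dlt-model}, assumes some component $E$ of $N$ is not a toric lc place, and uses~\cite[Lemma~5.2]{BMSZ18} to perturb $B_m$ to a divisor $\Gamma_m$ of strictly larger numerical dimension, producing after a careful convexity argument a new pair in the family whose $N$ has strictly fewer components --- contradiction. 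Without this minimality set-up and the perturbation via~\cite[Lemma~5.2]{BMSZ18}, the identification of the exceptional divisors with toric valuations does not follow, so your proof is incomplete at precisely the point where the lemma's difficulty lies. (A secondary, smaller issue: your claim that $M_{Y'}$ is semiample because $Y'$ is a MDS needs the more careful treatment the paper gives, distinguishing the big nef part $M_{0,m}$ from the merely movable $M_m$.)
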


\begin{proof}
By Lemma~\ref{lem:existence-gen-dlt-mod} and~\cite[Lemma 2.3.2]{BMSZ18}, we may assume that $(X, B+M)$ is a $\mathbb{Q}$ factorial gdlt pair with $K_X+B+M \sim_\qq 0$.
By Lemma~\ref{lemma:gen-pair-low-numerical}, for any real number $\alpha \in (0,1)$, there exists a projective generalized klt pair $(X,B_0+M_0)$ with numerical dimension zero, so that $\alpha B < B_0 < B$ and $ M_0= M+\overline{A}$, $A$ ample on $X$.
Moreover, by Lemma~\ref{lem:good-dlt-model}, up to replacing $X$ with a higher $\qq$-factorial dlt modification of $(X,B+M)$, we can assume that $K_X+B_0+M_0\sim_\rr N \geq 0$, where $N$ has no common component with $B_0$ and it contains no generalized lc center of $(X,B+M)$.
In particular, we can choose $0< \delta \ll 1$ and $\alpha$ close enough to $1$ such that, setting
\[
\Sigma_0:=\sum_{P\ {\rm prime}}\left(1-\frac{1}{n_P}\right) P
+\sum_{i=1}^k (1-\delta) b_i B_i,
\]
$\Sigma_0$ is an orbifold decomposition of $B_0$ and $\orbcomp(X,B_0+M_0;\Sigma_0)<1$.
Hence, the generalized klt pair $(X,B_0+M_0)$ has orbifold complexity $<1$ and numerical dimension $0$.
By~\cite[Lemma 4.4]{BZ16}, as $M_0$ is big, $(X,B_0+M_0)$ admits a good minimal model $X_m$ which is a Mori Dream Space.
We shall use the subscript $m$ to denote the strict transform of a divisor or the trace of a birational-divisor on $X_m$.
Then, the klt pair $(X_m,B_{0,m}+M_{0,m})$ is $\qq$-trivial with big boundary.
By Corollary~\ref{complexity.MMP}.\ref{complexity-under-MMP-orbifold}, we conclude that there exists an orbifold decomposition $\Sigma_{0,m}$ of $B_{0,m}$ with
\[
\orbcomp(X_m,B_{0,m}+M_{0,m};\Sigma_{0,m}) <1.
\]
Moreover, as $M_{0, m}$ is the pushforward of a big and nef divisor from a higher model, there exists $0 \leq D \sim_\qq M_{0,m}$ such that $(X_m,B_{0,m}+D)$ is klt and 
$\orbcomp(X_m,B_{0,m}+M_{0,m};\Sigma_{0,m})=\orbcomp(X_m,B_{0,m}+D;\Sigma_{0,m})$.
By Theorem~\ref{thm:proj-MDS-orbifold-complexity}, $X_m$ is a toric variety and all but $\lfloor \orbcomp(X_m,B_{0,m}+M_{0,m};\Sigma_{0,m}) \rfloor$ toric invariant divisors appear among the $B_{i, m}$.
Moreover,  as $X_m$ is a Mori Dream Space and $M_m$ is movable, then there exists $0 \leq D' \sim_\qq M_m$ such $(X_m,B_{0,m}+D)$ is klt and 
$\orbcomp(X_m,B_{m}+D')\leq \orbcomp(X_m,B_{0,m}+M_{0, m};\Sigma_{0,m})< 1$.
In particular, $\lfloor B_m \rfloor$ is a toric divisor, cf.~Theorem~\ref{thm:proj-MDS-orbifold-complexity}.
Note that, as we can choose $D'$ general enough, 
then by Theorem~\ref{thm:proj-MDS-orbifold-complexity} (3), the support of $B_m$ contains all but one toric prime divisors.
\\
To conclude, it suffices to prove that every exceptional divisor contracted by $X\dashrightarrow X_m$ corresponds to a toric valuation of $X_m$ which makes the content of the following lemma.
\end{proof}

\begin{lemma}
With the notation and assumptions of the proof of Lemma~\ref{lemm:X-toric}, the birational morphism $X\dashrightarrow X_m$ is an equivariant birational map of toric varieties.
\end{lemma}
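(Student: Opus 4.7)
The plan is to show that every prime divisor $E$ on $X$ that is exceptional over $X_m$ corresponds to a toric valuation on $X_m$. Once this is established, $X$ is obtained from $X_m$ by a finite sequence of extractions of toric divisors, intermixed with isomorphisms in codimension one (i.e.\ toric flops and flips). Since each such step can be realized in toric coordinates on $X_m$, the variety $X$ inherits a compatible torus action, and the birational map $X \dashrightarrow X_m$ is equivariant by construction.

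The first step is to verify that each such exceptional divisor $E$ is a generalized log canonical place of $(X_m, B_m + M_m)$. Since $K_X + B + M \sim_{\mathbb Q} 0$, the $(K_X + B_0 + M_0)$-MMP realizing $X \dashrightarrow X_m$ is simultaneously $(K_X + B + M)$-trivial. Log discrepancies with respect to the pair $(X, B + M)$ are therefore preserved at each step, so that $a_E(X_m, B_m + M_m) = a_E(X, B + M)$. Now $(X, B + M)$ is $\mathbb Q$-factorial gdlt, and the only prime divisors that can be extremally contracted along a $(K + B + M)$-trivial step of the MMP on a gdlt model are components of $\lfloor B \rfloor$, by the standard negativity argument applied to an extremal ray $R$ satisfying $(K_X + B_0 + M_0)\cdot R < 0 = (K_X + B + M)\cdot R$. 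Hence $a_E(X, B + M) = 0$, as required.

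The second step is to show that every divisorial glc place of $(X_m, B_m + M_m)$ is necessarily a toric valuation on $X_m$. Let $\Gamma_m$ denote the reduced sum of all torus invariant prime divisors on $X_m$; then $K_{X_m} + \Gamma_m \sim 0 \sim_{\mathbb Q} K_{X_m} + B_m + M_m$. By the classical characterization of log canonical places of toric log Calabi--Yau pairs, see, e.g.,~\cite[Theorem 18.22]{Kol92}, the divisorial lc places of $(X_m, \Gamma_m)$ are exactly the toric valuations. By Theorem~\ref{thm:proj-MDS-orbifold-complexity}, $\lfloor B_m \rfloor$ is torus invariant and all but a controlled number of toric prime divisors of $X_m$ appear in the orbifold decomposition on $X_m$; combining this with the $\mathbb Q$-linear equivalence $B_m + M_m \sim_{\mathbb Q} \Gamma_m$, a comparison of log discrepancies at $E$ yields $a_E(X_m, \Gamma_m) \leq a_E(X_m, B_m + M_m) = 0$, so $E$ is a toric valuation on $X_m$.

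The main obstacle lies in the numerical comparison in the second step: the nef b-divisor $M_m$ need not be effective nor torus invariant, so the expected inequality $B_m + M_m \leq \Gamma_m$ fails at the level of divisors. To remedy this, I would use that $X_m$ is a Mori dream space and $M_m$ is movable, so $M_m$ is $\mathbb Q$-linearly equivalent to an effective divisor whose support avoids any prescribed divisorial center on $X_m$. Substituting such a representative and invoking the equivalence $B_m + M_m \sim_{\mathbb Q} \Gamma_m$ makes the comparison of log discrepancies at $E$ rigorous and yields the toricity of $E$, completing the identification of $X \dashrightarrow X_m$ as an equivariant birational map of toric varieties.
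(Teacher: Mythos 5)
Your Step 1 rests on a claim that is not only unproved but actually false in the setting of Lemma~\ref{lemm:X-toric}: the divisors contracted by $X\dashrightarrow X_m$ are \emph{not} glc places of $(X,B+M)$. The map $X\dashrightarrow X_m$ is the good minimal model of $(X,B_0+M_0)$, so the contracted divisors are exactly the components of $N=N_\sigma(K_X+B_0+M_0)$; by Lemma~\ref{lem:good-dlt-model} the divisor $N$ shares no component with $B_0$ (hence none with $B$, since $\alpha B< B_0<B$) and contains no generalized non-klt center of $(X,B+M)$. Thus $a_E(X,B+M)=1$ for every contracted divisor $E$, not $0$. Your ``standard negativity argument'' only shows that on a contracted extremal ray $R$ one has $(B-B_0-A)\cdot R>0$, i.e.\ some component of $B$ meets $R$ positively; it says nothing about the divisor $E$ with $E\cdot R<0$, which need not lie in $\supp B$ at all. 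Step 2 has an independent problem: $B_m+M_m\sim_{\qq}\Gamma_m$ does not yield any inequality between $a_E(X_m,\Gamma_m)$ and $a_E(X_m,B_m+M_m)$, since log discrepancies depend on the actual divisors and not on their $\qq$-linear equivalence classes; replacing $M_m$ by an effective movable representative does not produce the coefficient-wise inequality $B_m+M_m\le\Gamma_m$ that such a comparison would require.

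The paper's argument is structured around precisely the difficulty you run into: since the contracted divisors carry no log-discrepancy information, one cannot identify them as toric valuations directly. Instead, the paper first shows that every lc place of $(X_m,B_m)$ is toric by a dichotomy on whether its center meets the indeterminacy locus of $X_m\dashrightarrow X$ (outside it, $(X_m,B_m)$ is dlt and $\lfloor B_m\rfloor$ is toric; inside it, the place would be a glc place of $(X,B+M)$ with center in $N$, contradicting Lemma~\ref{lem:good-dlt-model}(5)). It then takes $(X,B_0+M_0)$ with $N$ having the minimal number of components and argues by contradiction: if some component $E$ of $N$ were not a toric valuation, \cite[Lemma~5.2]{BMSZ18} produces $\Gamma_m\sim_{\rr}B_m$ with $\nu(\Gamma_m)>\nu(B_m)$ and all lc places toric, and a convex-combination construction then yields a new member of the family whose $N$ has strictly fewer components. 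You would need to replace both of your steps with an argument of this kind; as written, the proposal does not establish the lemma.
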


\begin{proof}
We claim that every lc place of $(X_m,B_m)$ is a toric lc place.
In primis, as $(X, B+M)$ is assumed to be dlt, then $(X_m,B_m)$ is dlt outside the indeterminacy locus of $\psi \colon X_m\dashrightarrow X$.
Hence, if the center of a lc place $E$ of $(X_m, B_m)$ is not contained in the intedeterminacy locus of $\psi$, then it is a lc place also for $(X_m, \lfloor B_m \rfloor)$ and the latter is a toric pair.
On the other hand, any lc place $E$ of $(X_m,B_m)$ whose center is contained in the intedeterminacy locus of $\psi$ is also a glc place of $(X_m,B_m+M_m)$.
Since $X\dashrightarrow X_m$ is $(K_X+B+M)$-trivial, then $E$ is also a generalized lc place of $(X,B+M)$ whose center on $X$ is contained in $N$.
This leads to a contradiction since we assumed that $N$ does not contain any lc center of $(X, B+M)$.
Hence, every lc place of $(X_m,B_m)$ is toric.
\\
Assume that $N$ has the minimal number of irreducible components among the generalized pairs $(X,B_0+M_0)$ satisfying conditions (1)-(5) in the statement of Lemma~\ref{lem:good-dlt-model}.
If $N=0$, then $X$ is toric, since it is isomorphic in codimension one to $X_m$.
Hence, we may assume that $N$ is non-trivial.
Let $E$ be a component of $N$ which is not a toric lc place.
In particular, $E$ is not a lc place of $(X_m,B_m)$.
We show that we can apply~\cite[Lemma 5.2]{BMSZ18}.
Indeed, the following conditions are satisfied:
\begin{enumerate}
    \item By construction, $X_m$ is a $\qq$-factorial projective toric variety, 
    \item by Theorem~\ref{thm:proj-MDS-orbifold-complexity}, the support of $B_m$ contains all but one toric prime divisor, and
    \item by the first paragraph $(X_m,B_m)$ is log canonical and every log canonical center is a toric valuation.
\end{enumerate} 
Thus, by~\cite[Lemma 5.2]{BMSZ18}, we conclude that there exists $0\leq \Gamma_m \sim_\rr B_m$ such that $\nu(\Gamma_m)>\nu(B_m)$,
$(X_m,\Gamma_m)$ is log canonical and every lc place of $(X_m,\Gamma_m)$ is a toric valuation.
We denote by $\Gamma$ the strict transform of $\Gamma_m$ on $X$.
\\
We claim that we can choose $\Gamma_m$ so that $(X,\Gamma+M)$ is glc.
Note that 
\[
(X,B+\epsilon N +(1+\epsilon)M)
\] 
is gdlt for $\epsilon$ small enough.
Moreover, 
\[
\pi^\ast (K_{X_m}+B_m+(1+\delta)M_m) = K_X+B+(1+\delta)M_m +E_\delta,
\]
where $E_\delta$ is an effective divisor supported on $N$.
Furthermore, its coefficients are linear functions on $\delta$ and $E_0=0$.
Thus, for $\delta$ small enough, we have $E_\delta \leq \epsilon N$.
So, we conclude that $(X_m,B_m+(1+\delta)M_m)$ is glc.
Note that for $s$ small enough the generalized pair
\[
(X,(1-s)B+(1-s)E_\delta +s \Gamma +(1-s)(1+\delta)M)
\]
is glc.
Indeed, it is a linear combination of $(X,B+(1+\delta)M+E_\delta)$ and $(X,\Gamma)$, where the non-lc centers of the later are contained in the support of $N$.
In particular, we may choose $s_\delta =1-(1+\delta)^{-1}$ small enough, so that
\[
(X, (1-s_\delta )B+(1-s_\delta)E_\delta  + s_\delta \Gamma+ M)
\]
is glc.
Define $\Gamma':= (1-s_\delta) B + s_\delta \Gamma$.
Define $\Gamma'_m$ to be the pushforward of $\Gamma'$ to $X_m$.
Note that $(X,\Gamma'+M)$ is glc.
Observe that $\nu(\Gamma'_m) > \nu(B_m)$ and that every lc place of $(X_m,\Gamma'_m)$ is a toric valuation.
Thus, we may replace $\Gamma$ with $\Gamma'$, and $\Gamma_m$ with $\Gamma'_m$,
to achieve that $(X,\Gamma+M)$ is glc.
Note that no glc center of $(X,\Gamma+M)$ is contained in $N$.
\\
Write
\[
K_X+B =\pi^\ast (K_{X_m}+B_m)+E_B,
\text{ and }
K_X+\Gamma=\pi^\ast (K_{X_m}+\Gamma_m)+E_\Gamma.
\]
For any $\epsilon$, we can choose $s_\delta$ small enough so that
\[
-\epsilon N \leq E_B-E_\Gamma \leq \epsilon N.
\]
Let $E_t = t(E_\Gamma-E_B)+(1-t)N$, and write $E_t=E^{+}_t - E^{-}_t$.
Here, we may assume both
effective divisors $E_t^{\pm}$ have disjoint support, and satisfy $0\leq E^{\pm}_t \leq 2\epsilon N$, for $t$ close enough to one.
In particular, $(X,\Gamma+E^{-}_t + M)$ is glc, whenever $t$ is close enough to one.
We define
\[
B_t:=(1-t)B_0+t\Gamma+E^{-}_t, \text{ and } M_t:=(1-t)M_0+tM.
\]
Note that $(X,B_t+M_t)$ is generalized klt for $t$ close enough to one.
Then, we have that
\begin{align}
 \nonumber   K_X+B_t+M_t & = (1-t)(K_X+B_0+M_0)+t(K_X+\Gamma+M)+E^{-}_t \\
 \nonumber   & \sim_\qq (1-t)(K_X+B_0+M_0)+t(\Gamma - B) + E^{-}_t\\
 \nonumber  & \sim_\rr (1-t)N + t(E_\Gamma-E_B)+ E^{-}_t= E_t +E_t^{-} = E_t^{+}.
\end{align}
Since $\nu(\Gamma_m)>\nu(B_m)$, then the effective divisor $E_t^{+}$
has strictly less components than $N$.
For $t$ close enough to one, $(X,B_t+M_t)$ satisfies all the conditions of Lemma~\ref{lem:good-dlt-model}.
We lead to a contradiction in the minimality of the number of components of $N$.
Thus, $X$ must be a projective toric variety.
\end{proof}

\begin{proof}[Proof of Theorem~\ref{thm:formally toric-projective}]
Let $(X,B+M)$ be a projective generalized log canonical pair, such that $K_X+B+M\sim_\qq 0$.
Let $\Sigma$ be an orbifold decomposition of $B$.
Assume that $\orbcomp(X,B+M;\Sigma)<1$.
By Lemma~\ref{lemm:X-toric}, we know that $X$ is a projective toric variety.
Replacing $X$ with a small toric $\qq$-factorialization, we may assume it is $\qq$-factorial.
Since $M$ is the pushforward of a nef divisor on a higher birational model of $X$,
we conclude that its diminished base locus does not contain divisors.
Hence, we may run a $M$-MMP which terminates with a good minimal model for $M$.
We denote such a minimal model by $X_m$.
Observe that $X\dashrightarrow X_m$ only consists of toric flops.
Since $M_m$ is semiample, we may find $M_m\sim_\qq D_m$ so that $(X_m,B_m+D_m)$ is log canonical and $\qq$-trivial.
By Lemma~\ref{complexity.flops}.\ref{complexity-vs-flops-orbifold}, we know that there exists an orbifold decomposition $\Sigma_m$ of $B_m$ for which
\[
\orbcomp(X_m,B_m+D_m;\Sigma_m)=\orbcomp(X,B+M;\Sigma)=c.
\]
Hence, by~\cite[Theorem 1.2]{BMSZ18} we conclude that $c\geq 0$, $\lfloor B_m \rfloor$ is torus invariant,
hence $\lfloor B \rfloor$ is torus invariant as well.
Moreover, all but $\lfloor 2c \rfloor$ toric invariant prime divisors appear in $\Sigma_m$,
so the same statement holds for $\Sigma$.
Finally, if $c=0$, we claim that $D_m=0$.
Otherwise, we may add any component of $D_m$ as a summand of the decomposition $\Sigma_m$, since $\langle \Sigma \rangle = {\rm Cl}_\mathbb{Q}(X_m)$, thus obtaining an orbifold decomposition $\Sigma'_m$ such that
\[
\orbcomp(X_m,B_m+D_m;\Sigma'_m)<0,
\]
leading to a contradiction.
So, if $c=0$, we have $D_m\sim_\qq 0$, then $M\sim_\qq 0$ as well.
Let $\pi\colon X'\rightarrow X$ be a birational model of $X$, where $M$ descends.
By the negativity lemma we can write $\pi^\ast (M)=M'+E$, where $E$ is effective.
Since $M\sim_\qq 0$, we conclude $M'+E\sim_\qq 0$ and $M'$ is nef.
Assume that the effective divisor $E$ is non-trivial.
Intersecting $M'+E$ by an irreducible curve $C$ which intersect $E$ non-trivially, but is not contained in $E$,
we conclude that $M'\cdot C<0$.
This leads to a contradiction.
Thus, we conclude that $E$ is trivial, analogously we have $\pi^\ast (M)=M'$.
Hence, 
$M \sim_\mathbb{Q} 0$ as a b-divisor.
\end{proof}


\section{Applications of the generalized projective case}
\label{section:applications-projective}

In this subsection, we shall prove some applications of Theorem~\ref{thm:formally toric-projective}.
These statements will be used in the next sections to prove the local case of Theorem~\ref{formally toric}.
First, we will prove that, under certain conditions, a divisorial glc center of a generalized log canonical  pair of orbifold complexity zero must be normal.

\begin{lemma}
\label{lem:equality-sigma-B}
Let $(X,B+M)$ be a projective generalized log canonical pair with $K_X+B+M\sim_\rr 0$.
Let $n$ be an orbifold structure on $X$ and let $\Sigma$
be an orbifold decomposition of $B$ with $\orbcomp(X,B+M;\Sigma)=0$.
Let $B_1,\dots,B_k$ be the orbifold Weil divisors of $\Sigma$.
Then, the following conditions hold:
\begin{enumerate}
\item $X$ is a normal projective toric variety;
\item $\Sigma=B$;
\item $B_i=\frac{P_i}{n_{P_i}}$ for some $P_i\in X^1$; and,
\item the orbifold structure $n$ is supported on $\lfloor B\rfloor$, hence on the toric invariant divisors.
\end{enumerate}
In particular, letting $\Sigma_0$ be the prime decomposition of $B$, then
$\finecomp(X,B+M;\Sigma_0)=0=c(X,B+M;\Sigma_0)$.
\end{lemma}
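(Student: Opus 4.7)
The plan is to bootstrap from Theorem~\ref{thm:formally toric-projective}, which applied with $c=0<1$ immediately yields (1): $X$ is a projective toric variety (hence normal), $\lfloor B\rfloor$ is torus invariant, $M\sim_\qq 0$ as a b-divisor, and, crucially, the divisors $B_1,\dots,B_k$ already generate ${\rm Cl}_\qq(X)$. The remaining claims (2), (3), (4) will all follow by refinement arguments exploiting the equality $\orbcomp(X,B+M;\Sigma)=0$ together with the non-negativity of the orbifold complexity (again from Theorem~\ref{thm:formally toric-projective}).

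For (3), I would write each $B_i$ in canonical form $B_i=\sum_P \frac{\mathrm{num}_P(B_i)}{n_P}P$ and replace the single summand $b_iB_i$ of $\Sigma$ by $\sum_P \mathrm{num}_P(B_i)$ copies of $b_i\cdot\frac{P}{n_P}$, each of which is itself an orbifold Weil divisor for $n$. This leaves $\Sigma$ unchanged as a divisor but increases $|\Sigma|$ by $b_i\bigl(\sum_P\mathrm{num}_P(B_i)-1\bigr)\geq 0$, while preserving the span since the new summands are scalar multiples of primes lying in the existing span. Non-negativity of orbifold complexity forces $\sum_P\mathrm{num}_P(B_i)=1$, giving $B_i=P_i/n_{P_i}$ for a single prime $P_i$. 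For (2), if $\Sigma<B$ one picks a prime $P$ with positive coefficient $\epsilon$ in $B-\Sigma$; since any prime Weil divisor is tautologically an orbifold Weil divisor, adding $\epsilon P$ to $\Sigma$ yields an orbifold decomposition of $B$ with norm increased by $\epsilon$ and unchanged span (the $B_i$ already generate ${\rm Cl}_\qq(X)$), producing negative orbifold complexity, a contradiction.

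For (4), suppose there exists a prime $P$ with $n_P>1$ and $P\not\subset\lfloor B\rfloor$, so that $b_P:=\coeff_P(B)\in[1-1/n_P,1)$ by Remark~\ref{rem:coeff.orb.decomp}. Matching the coefficient of $P$ on the two sides of $\Sigma=B$ (using (3)) gives $\sum_{i:P_i=P}b_i=n_Pb_P-(n_P-1)$. Define $n'$ to agree with $n$ away from $P$ with $n'_P=1$, and construct $\Sigma'$ by excising all contributions to $\Sigma$ at $P$ and replacing them by the single orbifold Weil summand $b_P\cdot P$; then $\Sigma'=B$ as a divisor and one computes $|\Sigma'|-|\Sigma|=(n_P-1)(1-b_P)>0$ with unchanged span, again contradicting non-negativity of the orbifold complexity.

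For the final assertion, items (3)--(4) together show that every prime component of $B$ appears as some $P_i$ and that $b_P=1$ whenever $n_P>1$; a short direct calculation then gives $|\Sigma_0|=|\Sigma|$, while $\langle\Sigma_0\rangle={\rm span}\{[P_i]\}={\rm span}\{[B_i]\}={\rm Cl}_\qq(X)$, whence
\[
\finecomp(X,B+M;\Sigma_0)=c(X,B+M;\Sigma_0)=\orbcomp(X,B+M;\Sigma)=0.
\]
The main delicate point throughout is the bookkeeping in step (4), where one simultaneously modifies the orbifold structure and the orbifold decomposition and must verify that $\Sigma'$ remains a valid orbifold decomposition of $B$ with respect to $n'$, and that the span is genuinely stable once the $B_i$ are known to generate ${\rm Cl}_\qq(X)$.
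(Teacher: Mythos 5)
Your proof is correct and follows essentially the same strategy as the paper's: invoke Theorem~\ref{thm:formally toric-projective} for (1) and for the fact that the $B_i$ span ${\rm Cl}_\qq(X)$, then derive (2)--(4) by perturbing $\Sigma$ (and, for (4), the orbifold structure) so as to strictly increase the norm while keeping the span full, contradicting non-negativity of the orbifold complexity. The only differences are organizational --- your uniform computation $(n_P-1)(1-b_P)>0$ in (4) replaces the paper's two-case analysis, and your canonical-expression splitting in (3) merges the paper's two steps (prime support, then exact coefficient) --- but the underlying mechanism is identical.
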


\begin{proof}
\begin{itemize}
    \item [(1)-(2)]
By Theorem~\ref{thm:formally toric-projective}, we know that $X$ is a normal projective toric variety,  and the components of $\Sigma$ span the $\qq$-Class group of $X$.
If $\Sigma<B$, then increasing the coefficient of some prime component of $B$ in $\Sigma$, we can find an orbifold decomposition $\Sigma < \Sigma' \leq B$, with $\Sigma'> \Sigma$, $|\Sigma'|>|\Sigma|$ and
${\rm dim}_\qq \langle \Sigma \rangle = 
{\rm dim}_\qq \langle \Sigma' \rangle = 
{\rm dim}_\qq {\rm Cl}_\qq(X)$.
Hence, $\orbcomp(X,B+M;\Sigma')<0$, contradicting Theorem~\ref{thm:formally toric-projective}.
Thus, $\Sigma=B$.
    \item[(3)]
Writing
\[
\Sigma=\sum_{P\in X^1}\left(1-\frac{1}{n_P}\right)P 
+ \sum_{i=1}^k b_iB_i,
\]
if we assume that $B_1$ is not prime, then we can decompose  $B_1=B_{1,1}+B_{1,2}$, where $B_{1,1}$ and $B_{1,2}$ are non-trivial orbifold Weil divisors with distinct supports.
Then, the orbifold decomposition
\[
\Sigma' :=
\sum_{P\in X^1}
\left(
1-\frac{1}{n_P}
\right) P+ 
b_1 B_{1,1}+
b_1 B_{1,2}+
\sum_{i=2}^k b_i B_i
\]
has negative orbifold complexity, which leads to a contradiction.
Thus, $B_1=\frac{1}{a}P_1$, for some $P_1 \in X^1$ and for some positive integer $a$ dividing $n_{P_1}$.
If $a< n_{P_1}$, then the orbifold decomposition
\[
\Sigma' =
\sum_{P\in X^1}
\left(
1-\frac{1}{n_P}
\right) 
P+ 
\left(
\frac{n_{P_1}}{a}b_1 
\right) 
\frac{P_1}{n_{P_1}} 
+\sum_{i=2}^k b_i B_i
\]
has negative orbifold complexity since $\frac{n_p}{a}b_1 > b_1$ contradicting Theorem~\ref{thm:formally toric-projective}.
Then, $B_1=\frac{P_1}{n_{P_1}}$.
    \item[(4)]
Let $P \in X^1$ such that $n_P>1$.
\\
Let us assume first that ${\rm coeff}_P(B) > 1-\frac{1}{n_P}$.
As $\Sigma=B$ then ${\rm coeff}_P(\Sigma)={\rm coeff}_P(B)$ and,
up to relabeling and summing the $B_i$, we may assume that $B_1=\frac{1}{n_P} P$ and $\textrm{supp}(B_j) \neq P$, for all $j >1$.
Hence,
\[
{\rm coeff}_P(\Sigma) =
\left(
1-\frac{1}{n_P}
\right) + 
\frac{b_1}{n_P}.
\]
If ${\rm coeff}_P(B)< 1$, then $b_1 < 1$ and 
\begin{align}
\label{eqn:ineq.coeff.<1}
\left(1-\frac{1}{n_P}\right) 
+ \frac{b_1}{n_P} > b_1.
\end{align}
Defining the orbifold structure $n'$ by 
\[
n'(Q):=\begin{cases}
1 & \quad Q=P\\
n_Q & \quad Q \neq P
\end{cases},
\]
for $0< \epsilon \ll 1$ the decomposition 
\[
\Sigma'' =
\sum_{Q\in X^1}
\left(
1-\frac{1}{n'_Q}
\right) Q+ 
(b_1+\epsilon) P+
\sum_{i=s+1}^k b_i B_i
\]
is an orbifold decomposition of $B$ for the orbifold structure $n'$ and $\orbcomp(X, B; \Sigma'')=-\epsilon<0$, which gives the sought contradiction.
\\
If ${\rm coeff}_P(B) = 1-\frac{1}{n_P}$, then $P \neq P_i$ for all $i$, where $P_i \in X^1$ is the support of the orbifold Weil divisor $B_i$.
Defining 
\[
\Sigma^3:= \sum_{i=1}^k b_i P_i,
\] it follows from~\eqref{eqn:ineq.coeff.<1} that $\Sigma^3$ is a decomposition of $B$.
By construction and (1-3),
$\finecomp(X, B+M; \Sigma^3)=\orbcomp(X, B+M; \Sigma)=0$.
Hence, the $P_i$ span ${\rm Cl}_\qq(X)$ and defining
\[
\Sigma^4:= \Sigma^3+\epsilon P,
\quad 0< \epsilon \ll 1,
\]
then also $\Sigma^4$ is a decomposition for $B$, by our initial assumption on $P$, and $\finecomp(X, B+M; \Sigma^4)<\finecomp(X, B+M; \Sigma^3)=0$, which leads to a contradiction.
Hence, ${\rm coeff}_P(B)={\rm coeff}_P(\Sigma)=1$ holds for any $P \in X^1$ in the support of $n$.
By Theorem~\ref{thm:proj-MDS-orbifold-complexity}.2, $\lfloor B \rfloor$ is supported on the toric invariant part.
\\
Finally, as $n$ is supported on $\lfloor B \rfloor$, then $\orbcomp(X,B+M;\Sigma)=\finecomp(X,B+M;\Sigma)=c(X,B+M;\Sigma)$.
Since ${\rm dim}_\qq \langle \Sigma \rangle = {\rm dim}_\qq {\rm Cl}_\qq(X)$, then $c(X,B+M;\Sigma_0) \leq c(X,B+M;\Sigma)$, thus, equality must hold.
That implies that also $\finecomp(X,B+M;\Sigma_0)=0$, by Lemma~\ref{lem:basic.ineq}.

\end{itemize}
\end{proof}

\begin{lemma}
\label{lem:gen-slc-case}
Let $(X/Z,B+M)$ be a $\qq$-factorial generalized log canonical pair and let $z \in Z$ be a closed point.
Assume the following conditions hold:
\begin{enumerate}
    \item 
$K_X+B+M\sim_{\qq,Z}0$;
    \item 
$\orbcomp_z(X/Z,B+M)\leq 0$;
    \item 
$(X/Z,B+M)$ has a divisorial glc center $E$ contained in the fiber over $z$; and,
    \item 
all prime components of $B-E$ intersect $E$ non-trivially.
\end{enumerate}
Then, $E$ is a normal variety.
\end{lemma}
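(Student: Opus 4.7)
My strategy is to derive a contradiction from the assumption that $E$ is non-normal, by restricting the generalized log Calabi--Yau structure on $X$ to (the normalization of) $E$ and invoking the absolute projective case Theorem~\ref{thm:formally toric-projective} together with its refinement Lemma~\ref{lem:equality-sigma-B}. Since $E$ lies in the fiber over $z$, it is projective, and adjunction will produce an orbifold decomposition on $E$ (or on its normalization) whose orbifold complexity is $\leq 0$; the assumed non-normality will either make this inequality strict, directly contradicting Theorem~\ref{thm:formally toric-projective}, or force $E$ to acquire a projective toric structure, which is again a contradiction.

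First I would pass to a $\qq$-factorial gdlt modification $\pi\colon Y\to X$ from Lemma~\ref{lem:existence-gen-dlt-mod}: by Lemma~\ref{complexity-dlt} the orbifold complexity does not increase on $Y$, and the strict transform $E_Y$ of $E$ is a normal divisorial glc center appearing with coefficient $1$ in $B_Y$. Then, via Lemma~\ref{lem:extraction-gslc-pair}, I would run the $(-\epsilon E_Y)$-MMP over $Z$ to obtain a birational contraction $Y\dashrightarrow Y'$ with ${\pi'}^{-1}(z)=E_{Y'}$; Corollary~\ref{complexity.MMP} keeps the orbifold complexity on $Y'$ at most $0$. Since the fiber of $Y'\to Z$ over $z$ is now precisely the strict transform $E_{Y'}$, hypothesis~(4) guarantees that every prime component of a suitable orbifold decomposition $\Sigma_{Y'}$ of $B_{Y'}$ meets $E_{Y'}$, so conditions~(1)--(4) of Lemma~\ref{lem:complexity-vs-adjunction} hold with $B_1=E_{Y'}$ and $b_1=1$, provided that $E_{Y'}$ is itself normal.

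If $E_{Y'}$ is normal, Lemma~\ref{lem:complexity-vs-adjunction} yields $\orbcomp(E_{Y'},B_{E_{Y'}}+M_{E_{Y'}})\leq 0$, and Theorem~\ref{thm:formally toric-projective} together with Lemma~\ref{lem:equality-sigma-B} forces this complexity to be zero and $E_{Y'}$ to be a projective toric variety with torus invariant boundary; transferring this structure along the birational identification $E_{Y'}\dashrightarrow E_Y\to E$ shows that $E$ is a toric variety, in particular normal, contradicting the assumption. If $E_{Y'}$ fails to be normal, I would perform the analogous adjunction with the normalization $\nu\colon E_{Y'}^\nu\to E_{Y'}$, adapting the construction of Lemma~\ref{lem:complexity-vs-adjunction} to include the conductor divisor $D$ of $\nu$ with coefficient $1$: this produces an orbifold decomposition on $E_{Y'}^\nu$ whose orbifold complexity differs from that of $\Sigma_{Y'}$ by the contribution of $D$ to the span inside the class group of $E_{Y'}^\nu$ versus the number of components of $D$. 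Either this difference is strict, giving strict negativity in contradiction with Theorem~\ref{thm:formally toric-projective}, or it vanishes, in which case Lemma~\ref{lem:equality-sigma-B} makes $E_{Y'}^\nu$ a projective toric variety with the conductor supported on torus invariant divisors, and descending the toric structure along $E_{Y'}^\nu\to E_{Y'}\to E$ again contradicts non-normality.

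The main obstacle is controlling the possible non-normality of the strict transform $E_{Y'}$ after the MMP of Lemma~\ref{lem:extraction-gslc-pair} and adapting the adjunction construction of Lemma~\ref{lem:complexity-vs-adjunction} to the normalization of a divisorial glc center: the technical heart is the bookkeeping of the conductor's contribution to both the norm and the span of the constructed orbifold decomposition on $E_{Y'}^\nu$, and the descent of the resulting toric structure along $E_{Y'}^\nu\to E_{Y'}\to E$ that is required to rule out the non-normality of $E$ in the equality case.
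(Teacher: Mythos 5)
Your overall strategy---restrict the log Calabi--Yau structure to $E$ and play the resulting orbifold complexity against Theorem~\ref{thm:formally toric-projective} and Lemma~\ref{lem:equality-sigma-B}---is the right one, and your preliminary reductions are harmless (though not needed: the lemma is stated for $X$ already $\qq$-factorial with hypotheses (3)--(4) in place, and the gdlt modification and the MMP of Lemma~\ref{lem:extraction-gslc-pair} are performed where the lemma is invoked, in the proof of Theorem~\ref{thm:inequality-cgeq0}). The genuine gap is in how you extract a contradiction from non-normality. In both branches of your dichotomy the closing step is of the form ``the (normalization of the) divisor is projective toric with torus-invariant conductor, and descending this structure contradicts non-normality.'' That implication is false: a projective toric variety glued along torus-invariant divisors is a perfectly good non-normal slc variety (e.g.\ $\pp^1\times\pp^1$ glued along two invariant fibers), and a proper birational morphism from a normal, even toric, variety onto $E$ does not force $E$ to be normal. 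So neither branch of your argument actually closes, and your conductor bookkeeping can indeed come out even, leaving no contradiction.

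What the paper does instead is make the two branches of each node visible inside a single member of the decomposition. For each component $D_i$ of the conductor of $E$ (these are lc centers of $(X,B)$), one extracts by~\cite[Theorem~1]{Mor19} a single divisorial lc place $F_i$ centered at $D_i$, obtaining a tower $X_k\to\cdots\to X$. On $X_k$ the strict transform $E_k$ of $E$ is $S_2$ and smooth in codimension one, hence normal, and each $F_i\cap E_k$ has at least two prime components by~\cite[Proposition~16.4]{Kol92}. By Lemma~\ref{complexity-dlt} the $F_i$ enter a decomposition of orbifold complexity $\leq 0$ with coefficient one, and by Lemma~\ref{lem:complexity-vs-adjunction} each $F_i|_{E_k}$ is an orbifold Weil divisor of the induced decomposition $\Sigma_{E_k}$, whose complexity is forced to equal $0$ by Theorem~\ref{thm:formally toric-projective}. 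Lemma~\ref{lem:equality-sigma-B}.3 then says every orbifold Weil divisor of $\Sigma_{E_k}$ is prime, while $F_i|_{E_k}$ is not. It is this primality constraint---not toricness of the normalization---that detects the node; your proof needs this (or an equivalent mechanism) to go through.
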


\begin{proof}
Let $\Sigma$ be an orbifold decomposition  of $B$ such that $\orbcomp_z(X/Z,B+M;\Sigma)\leq 0$.
Let $(E,B_E+M_E)$ be the generalized pair obtained by adjunction of $(X,B+M)$ to $E$.
$(E,B_E+M_E)$ is generalized semi-log canonical.
In particular, those codimension one points of $E$ along which $E$ is not normal are nodal points.
Furthermore, the prime components of $B$ intersect $E$ along codimension one points of $E$ contained in its normal locus.
\\
Let us now assume for the sake of contradiction that $E$ is not normal.
Each irreducible component of the conductor of $E$ is a log canonical center of $(X,B)$, thus, also a glc center of $(X,B+M)$.
Let $D_1,\dots,D_k$ be the irreducible components of the conductor of $E$.
By~\cite[Theorem 1]{Mor19}, we can extract a unique divisorial log canonical center of $(X,B)$ centered at $D_1$:
we denote by $X_1\rightarrow X$ be said extraction and by $F_1$ its exceptional divisor.
The divisor $F_1$ is a glc place for $(X,B+M)$, and the cited theorem guarantees that $X_1$ is $\qq$-factorial.
Proceeding inductively, we construct a sequence of birational morphisms 
\[
X_k\rightarrow X_{k-1}\rightarrow \dots \rightarrow X_1\rightarrow X
\]
by successively extracting a glc place centered at each of the $D_i$.
Let $F_i$ be the prime divisor of generalized log discrepancy zero for $(X,B+M)$ extracted at the $i$-th step $X_i \to X_{i-1}$.
Abusing notation, we will denote by $F_i$ its strict transform on $X_j$, for $j>i$.
\\
Let $(X_k,B_{X_k}+M_{X_k})$ be the log pull-back of $(X,B+M)$ to $X_k$ and let $E_k$ be the strict transform of $E$ on $X_k$;
let $(E_k,B_{E_k}+M_{E_k})$ be the generalized log canonical pair obtained by adjunction of $(X_k,B_{X_k}+M_{X_k})$ along $E_k$.
As $(X_k,E_k)$ is lc and $\mathbb Q$-factorial, $X_k$ is klt in a neighborhood of $E_k$ and $E_k$ is $S_2$ by~\cite[Corollary 5.25]{KM98}.
On the other hand, by construction, $E_k$ is smooth in codimension 1 as $E_k$ is seminormal and dominates $E$, and for all $i$ the intersection $F_i\cap E_k$ must have at least two prime components, by~\cite[Proposition 16.4]{Kol92}.
By Lemma~\ref{complexity-dlt} there exists an orbifold decomposition $\Sigma_{X_k}$ of $B_{X_k}$ for which $\orbcomp_z(X_k,B_{X_k}+M_{X_k};\Sigma_{X_k})\leq 0$
and the $F_i$ appear in $\Sigma$ with coefficient $1$.
By Lemma~\ref{lem:complexity-vs-adjunction} and Remark~\ref{rmk:decomposition.div.lc.center}, we can assume that $E_k$ appears in $\Sigma_k$ with coefficient $1$ and that there exists an orbifold decomposition $\Sigma_{E_k}$ for which
\begin{align}
\label{eq:complexity-for-normality}
\orbcomp(E_k,B_{E_k}+M_{E_k};\Sigma_{E_k}) \leq 
\orbcomp(X_k,B_{X_k}+M_{X_k};\Sigma_{X_k})
\leq 0.
\end{align}
By Theorem~\ref{thm:formally toric-projective}, $\orbcomp(E_k,B_{E_k}+M_{E_k};\Sigma_{E_k})= 0$.
By Step 3 in the proof of Lemma~\ref{lem:complexity-vs-adjunction}, the divisors $F_i\vert_{E_k}$ appear as orbifold Weil divisors of the orbifold decomposition
$\Sigma_{E_k}$.
On the other hand, by Lemma~\ref{lem:equality-sigma-B}.3, each orbifold Weil divisors of $\Sigma_{E_k}$ must be prime.
However, since for all $i$ $F_i \cap E_k$ is supported at at least $2$ codimension one points of $E_k$, that leads to the sought contradiction.
\end{proof}

Now, we turn to prove a first partial version of Theorem~\ref{formally toric}: namely, we show that the orbifold complexity is always non-negative for log canonical pairs that are relatively trivial over the base.
This implies that the same holds for all complexities.

\begin{theorem}
\label{thm:inequality-cgeq0}
Let $(X/Z,B+M)$ be a generalized log canonical pair, such that $K_X+B+M\sim_{\rr,Z} 0$.
Let $z\in Z$ be a closed point. 
Then,
\begin{enumerate}
    \item 
$\abscompz(X/Z,B+M)\geq \finecomp_z(X/Z,B+M)\geq \orbcomp_z(X/Z,B+M)\geq 0$; and,
    \item 
\label{thm:implications-c=0}
if $\orbcomp_z(X/Z,B+M)=0$,
there exists a glc center of $(X/Z,B+M)$ contained in the fiber over $z\in Z$.
\end{enumerate}
\end{theorem}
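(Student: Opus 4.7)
The plan is as follows. The inequalities $c_z(X/Z,B+M) \geq \finecomp_z(X/Z,B+M) \geq \orbcomp_z(X/Z,B+M)$ are purely formal from Lemma~\ref{lem:basic.ineq}, so the substantive statement in (1) is non-negativity of the orbifold complexity; part (2) will follow as a short deduction. I will prove (1) by contradiction, assuming $\orbcomp_z(X/Z,B+M)<0$, and reduce to the projective $\qq$-trivial generalized case already settled in Theorem~\ref{thm:formally toric-projective}.

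First, Lemma~\ref{lem:cut-down-lcc} allows me to enlarge $B$ by an effective divisor $B'\sim_{\qq,Z}0$ so that $(X/Z,B+B'+M)$ acquires a glc center inside $\pi^{-1}(z)$ while the orbifold complexity does not increase; in particular it remains strictly negative. Next, Lemma~\ref{lem:existence-gen-dlt-mod} produces a $\qq$-factorial gdlt modification $Y\to X$ on which one of the extracted prime divisors is a glc place mapping to $z$, and Lemma~\ref{complexity-dlt} guarantees that the orbifold complexity does not grow in this passage. Then, Lemma~\ref{lem:extraction-gslc-pair} furnishes a $(K+B+M)$-trivial birational contraction over $Z$ terminating on a model $Y'$ on which the strict transform $E'$ of the distinguished glc place equals the whole fiber ${\pi'}^{-1}(z)$; Corollary~\ref{complexity.MMP} controls the orbifold complexity along this MMP. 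Replacing $Z$ by a sufficiently small neighborhood of $z$, I may further assume that every prime component of $B_{Y'}$ intersects $E'$ non-trivially, which is precisely hypothesis (4) of Lemma~\ref{lem:gen-slc-case}.

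At this point the standing assumption gives $\orbcomp_z(Y'/Z,B_{Y'}+M_{Y'})<0$, so Lemma~\ref{lem:gen-slc-case} applies and forces $E'$ to be normal. Using Remark~\ref{rmk:decomposition.div.lc.center} to adjust a minimizing orbifold decomposition so that $E'$ appears with coefficient one, Lemma~\ref{lem:complexity-vs-adjunction} produces an orbifold decomposition of the generalized log canonical pair $(E',B_{E'}+M_{E'})$ obtained by adjunction, whose orbifold complexity is at most $\orbcomp_z(Y'/Z,B_{Y'}+M_{Y'})<0$. Since $E'$ is contained in the fiber over the closed point $z$ and $K_{Y'}+B_{Y'}+M_{Y'}\sim_{\qq,Z}0$, the generalized log divisor $K_{E'}+B_{E'}+M_{E'}$ is $\qq$-trivial, and $E'$ is projective; Theorem~\ref{thm:formally toric-projective} then forces this orbifold complexity to be $\geq 0$, a contradiction that proves (1).

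For (2), assume $\orbcomp_z(X/Z,B+M)=0$ and that no glc center lies in $\pi^{-1}(z)$. Then the divisor $B'$ supplied by Lemma~\ref{lem:cut-down-lcc} is non-zero and the inequality there is \emph{strict}, yielding $\orbcomp_z(X/Z,B+B'+M)<0$; since $K_X+B+B'+M\sim_{\qq,Z}0$ still holds, this contradicts part (1). The main technical hurdle I expect is the bookkeeping across this chain of reductions: each step must simultaneously preserve $\qq$-triviality of the generalized log divisor over $Z$, not increase the orbifold complexity, and leave $E'$ carrying enough structure (normality, together with an orbifold decomposition in which $E'$ is a coefficient-one summand and all remaining orbifold Weil summands meet $E'$) so that the projective adjunction machinery of Lemma~\ref{lem:complexity-vs-adjunction} is actually applicable at the end.
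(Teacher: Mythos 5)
Your proposal is correct and follows essentially the same route as the paper's proof: the identical chain Lemma~\ref{lem:cut-down-lcc} $\to$ Lemma~\ref{lem:existence-gen-dlt-mod} $\to$ Lemma~\ref{complexity-dlt} $\to$ Lemma~\ref{lem:extraction-gslc-pair} $\to$ Corollary~\ref{complexity.MMP} $\to$ Lemma~\ref{lem:gen-slc-case} $\to$ Lemma~\ref{lem:complexity-vs-adjunction}, terminating in a contradiction with Theorem~\ref{thm:formally toric-projective}, and the same deduction of (2) from the rigidity statement in Lemma~\ref{lem:cut-down-lcc}. The only differences are cosmetic (you make explicit the shrinking of $Z$ and the appeal to Remark~\ref{rmk:decomposition.div.lc.center}, both of which the paper leaves implicit).
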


\begin{proof}
To prove $(1)$ it suffices to show that $\orbcomp_z(X/Z,B+M;\Sigma)\geq 0$.
Hence, we shall assume, for the sake of contradiction, that there exists an orbifold structure $n$ on $X$ and an orbifold decomposition $\Sigma$ of $B$ such that $\orbcomp_z(X/Z,B+M;\Sigma)<0$.
By Lemma~\ref{lem:cut-down-lcc}, there exists $B' \geq 0$ such that $(X,B+B'+M)$ has a glc center contained in $\pi^{-1}(z)$ and $K_X+B+B'+M \sim_{\rr, Z}0$, $\orbcomp_z(X/Z,B+B'+M; \Sigma)<0$.
By Lemma~\ref{lem:existence-gen-dlt-mod}, there exists a $\qq$-factorial gdlt modification $(Y,B_Y+M_Y)$ of $(X,B+B'+M)$ over $Z$ with a divisorial glc center $E \subset \pi^{-1}(z)$.
Lemma~\ref{complexity-dlt}.\ref{complexity-vs-dlt-modification-orbifold} implies that there exists an orbifold decomposition $\Sigma_Y$ of $B_Y$ with $\orbcomp_z(Y,B_Y+M_Y;\Sigma_Y)<0$, and $E$ appears in $\Sigma_Y$ with coefficient $1$.
By Lemma~\ref{lem:extraction-gslc-pair} and Remark~\ref{rem:gdlt}, there exists a birational contraction $Y \dashrightarrow Y'$ over $Z$ which is a composition of divisorial contractions and small maps and a generalized log canonical pair $(Y'/Z,B_{Y'}+M_{Y'})$ satisfying the following conditions:
\begin{enumerate}
    \item 
$(Y'/Z,B_{Y'}-\epsilon E'+M_{Y'})$ is $\qq$-factorial gdlt for any $0< \epsilon\ll 1$, where $B_{Y'}$ (resp. $E', M_{Y'}$) is the strict transforms of $B_Y$ (resp. $E, M_Y$) on $Y$;
    \item 
$K_{Y'}+B_{Y'}+M_{Y'} \sim_{\rr, Z} 0$;
    \item 
the fiber of $Y'$ over $z \in Z$ is supported on $E'$; and,
    \item 
there exists a generalized semi-log canonical pair $(E', B_{E'}+M_{E'})$ satisfying the adjunction formula $(K_{Y'}+B_{Y'}+M_{Y'})|_{E'} =K_{E'}+B_{E'}+M_{E'}$.
\end{enumerate} 
By Corollary~\ref{complexity.MMP}.\ref{complexity-under-MMP-orbifold}, there exists an orbifold decomposition $\Sigma_{Y'}$ for which $\orbcomp_z(Y',B_{Y'}+M_{Y'};\Sigma_{Y'})<0$.
By Lemma~\ref{lem:gen-slc-case}, $E'$ is normal, thus, $(E',B_{E'}+M_{E'})$ is a glc log Calabi--Yau pair.
By Lemma~\ref{lem:complexity-vs-adjunction}, there exists an orbifold decomposition $\Sigma_{E'}$ of $B_{E'}$ with $\orbcomp(E',B_{E'}+M_{E'};\Sigma_{E'}) <0$: this contradicts Theorem~\ref{thm:formally toric-projective}.
\\
If $\orbcomp_z(X/Z,B+M; \Sigma)=0$, then 
\begin{align*}
\orbcomp_z(X/Z,B+B'+M)=
\orbcomp_z(X/Z,B+M)=0
\end{align*}
and the last claim of Lemma~\ref{lem:cut-down-lcc} implies that $B'=0$, thus proving $(2)$.
\end{proof}

\begin{corollary}
\label{cor:compl.pt}
Let $(X,B+M)$ be a generalized log canonical pair and $x\in X$ be a closed point.
Then, 
\[
c_x(X, B+M) 
\geq 
\finecomp_x(X,B+M)
\geq 
\orbcomp_x(X,B+M)
\geq 0.
\]
\end{corollary}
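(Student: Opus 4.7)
The plan is to deduce this local statement directly from Theorem~\ref{thm:inequality-cgeq0} by regarding $X$ as a relative variety over itself via the identity morphism $\pi \colon X \to X$. The morphism $\pi$ is a projective contraction, and since $K_X + B + \mathbf{M}_X$ is $\qq$-Cartier by the very definition of a generalized pair, the relation $K_X + B + \mathbf{M}_X \sim_{\qq, X} 0$ holds for free: any $\qq$-Cartier divisor on $X$ coincides with its own pullback under the identity, hence is $\sim_{\qq, X}$-equivalent to zero. Thus $(X/X, B + \mathbf{M})$ is a generalized log canonical pair satisfying the hypotheses of Theorem~\ref{thm:inequality-cgeq0}, which applied at the closed point $x \in X$ (playing the role of $z$) yields
\[
c_x(X/X, B+M) \geq \finecomp_x(X/X, B+M) \geq \orbcomp_x(X/X, B+M) \geq 0.
\]

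The remaining step is to identify these three relative complexities, for the identity morphism at $x$, with the absolute local complexities appearing in the corollary. Here I will invoke the equivalent formulation of the relative complexity recorded in the remark immediately after Definition~\ref{complexity.def}: one may substitute ${\rm Cl}_\qq(X_z)$ for $\relpicxz$ and the image of $\Sigma$ in ${\rm Cl}_\qq(X_z)$ for $\langle \Sigma / Z \rangle$. For $\pi = \operatorname{id}_X$ and $z = x$ the base change $X_z = X \times_X \Spec \mathcal O_{X, x}$ is simply $\Spec \mathcal O_{X, x}$, so ${\rm Cl}_\qq(X_z) = {\rm Cl}_\qq(X_x)$; and the relative condition that an (orbifold) decomposition be supported at $z$, namely that every component meet the fiber $\pi^{-1}(x) = \{x\}$, is exactly the local condition that every component pass through $x$. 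Hence the relative complexities at $x$ for $X/X$ agree on the nose with $c_x(X, B+M)$, $\finecomp_x(X, B+M)$, and $\orbcomp_x(X, B+M)$ respectively, and the three inequalities of the corollary follow.

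There is no real obstacle beyond bookkeeping. The one small point worth flagging is that the cited remark after Definition~\ref{complexity.def} is stated only for the complexity and fine complexity; one has to record the parallel substitution for the orbifold complexity, but this is immediate since Definition~\ref{def:orb.compl} is built on the same span $\langle \Sigma/Z \rangle$ already handled by the remark.
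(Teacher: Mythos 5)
Your overall strategy — feed the identity morphism into Theorem~\ref{thm:inequality-cgeq0}, noting that $K_X+B+\mathbf M_X\sim_{\qq,X}0$ holds automatically — is exactly the paper's. But the identification step has a direction-of-inequality problem that the remark after Definition~\ref{complexity.def} does not resolve by itself. The relative (fine/orbifold) complexity of $X/X$ at $x$ is computed with the span of the $B_i$ inside ${\rm Cl}_\qq(X/X)$, whereas the local complexity of Definition~\ref{def:orb.compl.loc} uses the span inside ${\rm Cl}_\qq(X_x)$. There is a natural surjection ${\rm Cl}_\qq(X/X)\twoheadrightarrow{\rm Cl}_\qq(X_x)$ (a divisor trivial in the relative class group is in particular principal, up to Cartier divisors, near $x$), so for a fixed decomposition $\Sigma$ one only gets
$\finecomp_x(X/X,B;\Sigma)\geq\finecomp_x(X,B;\Sigma)$, and likewise for the other two invariants. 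Consequently, applying Theorem~\ref{thm:inequality-cgeq0} to $X/X$ yields non-negativity of the \emph{larger} quantity, which does not bound the local complexity from below; the two do not "agree on the nose" for $X/X$ (take $X$ a cone over a quadric surface and $x$ a smooth point: the local class group at $x$ is trivial while ${\rm Cl}_\qq(X/X)$ is not). The remark you cite only asserts that the substitution \emph{could} be made in the definition; it is not an equivalence you can quote for the version of the invariant that Theorem~\ref{thm:inequality-cgeq0} actually controls.

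The fix is small and is precisely what the paper does: given a decomposition $\Sigma$ supported at $x$ with, say, $\orbcomp_x(X,B+M;\Sigma)<0$, choose an open neighborhood $U\ni x$ on which the finitely many $\qq$-linear relations among the $[B_i]$ in ${\rm Cl}_\qq(X_x)$ are already realized, so that $\dim_\qq\langle\Sigma/U\rangle$ computed in ${\rm Cl}_\qq(U/U)$ equals $\dim_\qq\langle\Sigma\rangle$ in ${\rm Cl}_\qq(X_x)$. Then $\orbcomp_x(U/U,B|_U+M|_U;\Sigma)=\orbcomp_x(X,B+M;\Sigma)<0$, and Theorem~\ref{thm:inequality-cgeq0} applied to the identity morphism of $U$ gives the contradiction. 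In short: shrink first, then apply the theorem; with that one insertion your argument coincides with the paper's proof.
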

\begin{proof}
It suffices to observe that if $n$ is an orbifold structure on $X$ and
\[
\Sigma= \sum_{p \in X^1} \left(
1-\frac{1}{n_P}\right) P
+ \sum_{i=1}^r b_iB_i
\]
is an orbifold decomposition of $B$ for $n$ such that $\orbcomp_x(X, B+M; \Sigma)<0$, then there exists an open set $x \in U \subset X$ such that the dimension of the span of the $B_i$ in ${\rm Cl}_\qq(U)$ is the same as that of the span of the $B_i$ in $\clxx$, by the very definition of $\clxx$, cf. Definition~\ref{def:cl.gr}.
Thus, $\orbcomp_x(X, B+M; \Sigma)=\orbcomp_x(U/U, B\vert_U+M\vert_U; \Sigma) <0$, where for $\orbcomp_x(U/U, B\vert_U+M\vert_U; \Sigma)$ we consider the identity map of $U$ as the structure morphism.
This contradicts Theorem~\ref{thm:inequality-cgeq0}.
\end{proof}

\section{Formally toric plt blow-ups}
\label{sect:plt.blow.ups}
In this section, we will introduce the concept of formally
toric plt blow-ups.
We will prove that a log canonical germ of orbifold complexity zero admits a formally toric $\qq$-factorial plt blow-up (Proposition~\ref{prop:toroida-blow-up}).
We start by recalling the concept of a plt blow-ups
and introducing the concept of formally toric plt blow-ups.
We start by recalling the definition of plt blow-ups (see, e.g.,~\cite[Lemma 1]{MR3187625}).

\begin{definition}
\label{def:plt.blowup}
{\em 
Let $(x\in X,\Gamma)$ be a germ of a klt singularity.
\begin{enumerate}
    \item 
A {\em plt blow-up} for $(X,\Gamma)$ at $x$ is a projective birational morphism $\pi \colon Y \rightarrow X$ such that
\begin{itemize}
    \item 
$\pi$ extracts a unique divisor $E$ and $E=\pi^{-1}(x)$; 
    \item 
$-E$ is ample over the base; and,
    \item 
$(Y,\pi_\ast^{-1}\Gamma+E)$ is plt.
\end{itemize}
    \item 
A {\em $\qq$-factorial plt blow-up} for $(X,\Gamma)$ at $x$ is a projective birational morphism $\pi\colon Y\rightarrow X$ such that
\begin{itemize}
    \item 
$Y$ is $\qq$-factorial;
    \item 
$\pi$ extracts a unique divisor $E$ and $E=\pi^{-1}(x)$;
    \item 
$-E$ is nef over $X$; and,
    \item 
$(Y,\pi_\ast^{-1}\Gamma+E)$ is plt.
\end{itemize}
\end{enumerate}
}
\end{definition}

The existence of $\qq$-factorial plt blow-ups follows from the existence of plt blow-ups,~\cite[Lemma 1]{MR3187625}, and small $\qq$-factorializations for plt pairs,~\cite[Corollary 1.4.3]{BCHM10}.
Moreover, it is a completely local matter, that is, the construction only depends on the germ of the singularity and we are free to shrink to a smaller neighborhood of $x \in X$.
Let us also recall that a $\qq$-factorial plt blow-up is also a relative Mori dream space by~\cite[Corollary 1.3.2]{BCHM10}.

We now define a toroidal version of a $\qq$-factorial plt blow-up:
that will correspond to a special plt extraction in which the exceptional divisor $E$ is toric and the torus invariant divisors on $E$ can be lifted to irreducible divisors on the total space of the plt blow-up, with nice singularities.

\begin{definition}
\label{def:form.tor.plt.blowup}
{\em
Let $(x \in X,\Gamma)$ be a germ of a klt singularity.
Let $\pi \colon Y\rightarrow X$ be a $\qq$-factorial plt blow-up for $(X,\Gamma)$ at $x\in X$ with exceptional divisor $E$.
The morphism $\pi$ is {\em a formally toric plt blow-up} for $(X,\Gamma)$ at $x$ if the following conditions hold:
\begin{enumerate}
    \item 
$(X, \lceil \Gamma \rceil)$ is a log canonical pair at $x$;
    \item 
$E$ is an lc place of $(X, \lceil \Gamma \rceil)$;
    \item 
$E$ is a projective toric variety and the divisor $\Gamma_E$ defined by the adjunction
\[
(K_{Y}+E+ \pi_\ast^{-1}\lceil \Gamma \rceil)\vert_E =
K_E+\Gamma_E
\]
is the reduced sum of all torus invariant divisors; and,

    \item 
each prime component of $\pi_\ast^{-1}\lceil \Gamma \rceil$ restricts on $E$ to a prime component of $\Gamma_E$.
\end{enumerate}
}
\end{definition}

\begin{remark}
\label{rem:gamma_i} 
{\em 
We adopt the notation of Definition~\ref{def:form.tor.plt.blowup}.
\begin{enumerate}
    \item 
Condition $(2)$ implies that 
$K_Y+ \pi_\ast^{-1}\lceil\Gamma\rceil + E= \pi^\ast (K_X+\lceil\Gamma\rceil)$ and $(Y, E+ \pi_\ast^{-1}\lceil \Gamma \rceil)$ is lc.
Hence, $K_E+\Gamma_E \sim 0$ and $E \setminus \supp(\Gamma_E)$ is a torus acting equivariantly on $E$.
    \item 
Let $E_i$ be a component of $\pi_\ast^{-1}\lceil \Gamma \rceil$ and let $T_i=\supp(E_i\vert_E)$.
By condition $(3)$, $T_i$ is a torus invariant prime component of $\Gamma_E$ and
\[
E_i\vert_E = \frac{1}{m_i} T_{i},
\]
where $m_i$ is the Cartier index of $E$ at $T_i$.
\end{enumerate}
}
\end{remark}

In the next three sections, we will use the following technical result for $\qq$-factorial plt blowups.

\begin{lemma}
\label{lem:linear-equivalence.gen}
Let $\pi \colon Y \to X$ be a $\qq$-factorial plt blow-up of exceptional divisor $E$.
Let $D$ and $D'$ be two Weil divisors on $Y$ whose support do not contain $E$.
If $D\vert_E \sim_\qq D'\vert_E$, then $[D-D']s \in \clxx_{\rm tor}$.
\end{lemma}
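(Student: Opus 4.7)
The plan is to set $L := D - D'$, a $\qq$-Cartier divisor on $Y$ (by $\qq$-factoriality) whose support does not contain $E$; the hypothesis states $L|_E \sim_\qq 0$. Choose a positive integer $m$ with $mL$ and $mE$ both Cartier and $mL|_E \sim 0$, and set $\mathcal L := \mathcal O_Y(mL)$, so that $\mathcal L|_E \cong \mathcal O_E$. The goal reduces to showing $\pi_\ast \mathcal L \cong \mathcal O_X$ in a neighborhood of $x$, which would yield $m \pi_\ast L \sim 0$ near $x$ and hence $[\pi_\ast L] \in \clxx_{\rm tor}$.

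The key structural observation is that $(E, \mathrm{Diff}_E(\pi^{-1}_\ast \Gamma))$ is a klt log Fano variety. Klt-ness follows from plt adjunction applied to $(Y, \pi^{-1}_\ast \Gamma + E)$. Writing $K_Y + \pi^{-1}_\ast \Gamma + E = \pi^\ast(K_X + \Gamma) + aE$ with $a = a(E; X, \Gamma) > 0$ (since $(X, \Gamma)$ is klt), and noting that $\pi^\ast(K_X+\Gamma)|_E \sim_\qq 0$ because $\pi$ contracts $E$ to $x$, adjunction yields $K_E + \mathrm{Diff}_E \sim_\qq a E|_E$. Moreover, $-E|_E$ is ample on $E$: any $\pi$-ample divisor $A$ on $Y$ is, over the local base $\mathcal O_{X,x}$, $\qq$-linearly equivalent to a multiple of $E$ (since $E$ is the unique $\pi$-exceptional divisor on the $\qq$-factorial $Y$), and this multiple must be negative by $\pi$-ampleness; restricting to $E$ exhibits $A|_E$ as a positive multiple of $-E|_E$, so the latter is ample. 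Consequently, $-(K_E + \mathrm{Diff}_E) \sim_\qq -a E|_E$ is ample, establishing the log Fano claim.

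The final step is a formal-lifting argument powered by Kawamata--Viehweg vanishing. Let $E_n$ denote the $n$-th infinitesimal thickening of $E$ in $Y$ and $\mathcal I_E$ its ideal sheaf. The short exact sequences
\[
0 \to \mathcal L|_E \otimes (\mathcal I_E^n/\mathcal I_E^{n+1}) \to \mathcal L|_{E_{n+1}} \to \mathcal L|_{E_n} \to 0
\]
identify the associated graded pieces with $\mathcal O_E(-n E|_E)$ (after a harmless index-one cover handling the $\qq$-Cartier nature of $E$). Applying Kawamata--Viehweg on the klt log Fano $(E, \mathrm{Diff}_E)$ to the ample divisor $-nE|_E - (K_E + \mathrm{Diff}_E) \sim_\qq -(n+a)E|_E$ gives $H^1(E, \mathcal O_E(-nE|_E)) = 0$ for every $n \geq 0$, so each restriction $H^0(E_{n+1}, \mathcal L) \to H^0(E_n, \mathcal L)$ is surjective. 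A nowhere-vanishing section of $\mathcal L|_E$ thus lifts inductively to every $E_n$, and then to a generator of $(\widehat{\pi_\ast \mathcal L})_x$ by the theorem on formal functions. Nakayama's lemma applied to the rank-one reflexive sheaf $\pi_\ast \mathcal L = \mathcal O_X(m\pi_\ast L)$ then upgrades this to a trivialization on an actual neighborhood of $x$, completing the proof. The main delicate point will be the uniform vanishing for all $n \geq 0$, which genuinely uses the klt log Fano structure of $E$ rather than Serre vanishing alone.
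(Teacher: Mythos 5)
Your route is genuinely different from the paper's. The paper's proof is purely MMP-theoretic: since a $\qq$-factorial plt blow-up is a relative Mori dream space, one runs a $(D-D')$-MMP over $X$, which terminates with a good minimal model because $\pi$ is birational; as $D-D'$ is numerically trivial on the fiber over $x$, this MMP and the subsequent ample model are isomorphisms near $x$, while the ample model must contract $E$, forcing $m(D-D')\sim 0$ over a neighborhood of $x$. Your cohomological argument is instead in the spirit of~\cite[Proposition 12.1.4]{KM92}, which the authors themselves invoke for statements of exactly this type. The trade-off: the paper's proof is shorter but leans on BCHM-type finiteness, whereas yours isolates the geometric input ($E$ is a klt log Fano with $-E|_E$ positive and the relevant $H^1$'s vanish). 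However, two steps of your write-up are not correct as stated.

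First, your justification that $-E|_E$ is ample is wrong. A $\pi$-ample divisor $A$ on $Y$ is \emph{not} in general $\qq$-linearly equivalent over $X_x$ to a multiple of $E$: the kernel of $\pi_\ast\colon {\rm Cl}(Y_x)\to{\rm Cl}(X_x)$ is generated by $E$, but $X$ is not assumed $\qq$-factorial, so the strict-transform part of $A$ need not be relatively trivial near $x$. Indeed, by Definition~\ref{def:plt.blowup}(2) one only has $-E$ \emph{nef} over $X$; when $Y$ is a small $\qq$-factorialization of a genuine plt blow-up, $-E|_E$ is the pullback of an ample divisor under a nontrivial birational morphism, hence nef and big but typically not ample. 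This is harmless for your Kawamata--Viehweg step, which only needs nef and big, but the claim and its proof must be replaced accordingly.

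Second, the identification $\mathcal I_E^n/\mathcal I_E^{n+1}\cong\mathcal O_E(-nE|_E)$ fails when $E$ is not Cartier, and "a harmless index-one cover" does not obviously repair it: the index-one cover of $E$ is a local construction and does not glue to a global cover of a neighborhood of $E$ on which the infinitesimal-neighborhood filtration can be compared termwise. The clean ways to complete your argument are either to quote~\cite[Proposition 12.1.4]{KM92} directly (using that $X$ and $Y$ have rational singularities, so $R^1\pi_\ast\mathcal O_Y=0$), or to filter by $\pi_\ast\mathcal O_Y(-nE)$ along Cartier multiples of $E$, as the authors do in the proof of Proposition~\ref{prop:no-torsion-toric}. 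With these two repairs your argument goes through and yields the same conclusion as the paper's proof.
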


\begin{proof}
Since $Y$ is $\qq$-factorial, then
$D-D'$ is a $\qq$-Cartier divisor.
Thus, $D-D'$ is a Weil $\qq$-Cartier divisor on $Y$ which intersect 
all projective curves on $E$ trivially.
As $\pi$ is a relative Mori dream space, we can run a $(D-D')$-MMP over the base. 
Since $Y\rightarrow X$ is birational, this minimal model program
must terminate with a good minimal model for $D-D'$ over $X$.
Since this divisor is trivial on the fiber over $x$, we assume this MMP is an isomorphism over a neighborhood of such point.
Hence, $D-D'$ is semiample over a neighborhood of $x$.
By taking the ample model over $X$, we conclude that
$m(D-D')\sim 0$ for some $m \in \mathbb{N}_{>0}$ over a neighborhood of $x$:
indeed, the ample model must be an isomorphism over a neighborhood of $x$ since it must contract $E$, by construction, which proves our claim. 
\end{proof}

This simple observation immediately yields the following useful corollary.

\begin{corollary}
\label{cor:generation}
Let $\pi \colon Y \to X$ be a formally plt blow-up of a klt germ $(x\in X, \Gamma)$. 
Then, the components of $\Gamma$ span $\clxxq$.
\end{corollary}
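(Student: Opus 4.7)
The plan is to lift an arbitrary Weil divisor class on $X_x$ to $Y$, use the toric structure on $E$ to express its restriction to $E$ in terms of the torus-invariant prime divisors of $E$, and then apply Lemma~\ref{lem:linear-equivalence.gen} to descend the resulting relation back to $\clxxq$.

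Concretely, given a Weil divisor $D$ on $X$ representing a class in $\clxxq$, I will consider its strict transform $D_Y := \pi_\ast^{-1} D$ on $Y$. Since $Y$ is $\qq$-factorial, $D_Y$ is $\qq$-Cartier, so $D_Y|_E$ is well defined as an element of ${\rm Cl}(E)_\qq$. As $E$ is a projective toric variety, ${\rm Cl}(E)_\qq$ is generated by the classes of its torus-invariant prime divisors $T_1, \dots, T_r$; hence $D_Y|_E \sim_\qq \sum_i a_i T_i$ for some $a_i \in \qq$.

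The next step is to translate this relation from the $T_i$ back to the components $E_i$ of $\pi_\ast^{-1}\lceil\Gamma\rceil$. Unwinding the generalized plt adjunction $K_E + \Gamma_E = (K_Y + E + \pi_\ast^{-1}\lceil\Gamma\rceil)|_E$ together with the assumption from Definition~\ref{def:form.tor.plt.blowup}(3) that $\Gamma_E$ is the reduced sum of all torus invariant divisors, and comparing coefficients against the different $\sum_i(1 - 1/m_i)T_i$, I deduce that the prime components $E_1, \dots, E_r$ of $\pi_\ast^{-1}\lceil\Gamma\rceil$ are in bijection with the $T_i$ via $E_i|_E = \tfrac{1}{m_i} T_i$, exactly as recorded in Remark~\ref{rem:gamma_i}(2). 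Substituting, $D_Y|_E \sim_\qq \sum_i (a_i m_i)\, E_i|_E$.

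Finally, after clearing denominators to work with honest Weil divisors, Lemma~\ref{lem:linear-equivalence.gen} yields that $D_Y - \sum_i a_i m_i E_i$ becomes torsion in $\clxx$ upon pushforward by $\pi$. Since $\pi_\ast D_Y = D$ and $\pi_\ast E_i$ is a prime component $\Gamma_i$ of $\lceil\Gamma\rceil$ (equivalently, of $\Gamma$), this gives $[D] = \sum_i (a_i m_i)[\Gamma_i]$ in $\clxxq$, completing the argument. I expect the main subtle point to be justifying the bijection $E_i \leftrightarrow T_i$ and the coefficients $m_i$, but this is forced by the reducedness of $\Gamma_E$ in condition (3) combined with the plt different formula; the rest is a direct descent via Lemma~\ref{lem:linear-equivalence.gen}.
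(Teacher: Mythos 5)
Your argument is correct and is essentially the paper's own proof: take the strict transform of an arbitrary Weil divisor, express its restriction to $E$ in terms of the torus-invariant divisors (hence in terms of the $E_i\vert_E = T_i/m_i$), and descend via Lemma~\ref{lem:linear-equivalence.gen} and pushforward. The only difference is that you re-derive the bijection $E_i \leftrightarrow T_i$ from the different formula, whereas the paper simply cites Remark~\ref{rem:gamma_i}(2); both are fine.
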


\begin{proof}
Let $W$ be a Weil divisor on $X$ and let $\widetilde W$ be its strict transform.
Then since the components $E_1, \dots, E_r$ of $\pi_\ast^{-1} \lceil \Gamma \rceil$ restrict to the prime components of the torus invariant divisor $\Gamma_E$ on $E$, then there exists rational numbers $a_1, \dots, a_r$ such that $\widetilde W\vert_E \sim_\qq \sum_{i=1}^r a_i E_i\vert_E$.
Hence, by Lemma~\ref{lem:linear-equivalence.gen}, 
$\widetilde W - (\sum_{i=1}^r a_i E_i) \sim_{\qq, X} 0$.
Pushing forward to $X$, then $W$ is $\qq$ linear equivalent, over a suitable neighborhood of $x \in X$ to a sum of components of $\Gamma$.
\end{proof}

From now on, we turn to prove that a germ of a generalized log canonical pair of complexity zero admits
a formally toric plt blow-up.

\begin{proposition}
\label{prop:toroida-blow-up}
Let $(Y/X,B_Y+M_Y)$ be a generalized log canonical pair.
Let $x\in X$ be a closed point and let $B:= \pi_\ast B_Y$.
Assume that:
\begin{itemize}
	\item 
$\pi \colon Y\rightarrow X$ is birational; 
	\item
$K_Y+B_Y+M_Y \sim_{\qq, X} 0$; and,
	\item
$\orbcomp_x(Y/X,B_Y+M_Y)=0$.
\end{itemize}
Then,
\begin{enumerate}
    \item 
the b-divisor $M$ descends to a neighborhood of $x\in X$. 
In particular, $M\sim_{\qq, X} 0$ in the sense of b-divisors;
    \item 
there exists a klt pair $(X,\Gamma)$ and $\Gamma < \lceil B \rceil$, $\lfloor B\rfloor\leq \lceil \Gamma\rceil$;
and,
    \item 
the germ $(x\in X,\Gamma)$ admits a formally toric plt blow-up.
\end{enumerate}
\end{proposition}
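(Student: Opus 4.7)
Since $\orbcomp_x(Y/X,B_Y+M_Y)=0$, Theorem~\ref{thm:inequality-cgeq0}.(2) yields a glc center of $(Y/X,B_Y+M_Y)$ contained in $\pi^{-1}(x)$. The first step is to produce a birational model $\pi''\colon Y''\to X$ such that $Y''$ is $\mathbb{Q}$-factorial, $(Y'',B_{Y''}+M_{Y''})$ is glc with $K_{Y''}+B_{Y''}+M_{Y''}\sim_{\mathbb{Q},X}0$, and such that $E:=\pi''^{-1}(x)$ is a divisorial glc center. This is obtained by first passing to a $\mathbb{Q}$-factorial gdlt modification of $Y$ extracting a divisorial glc center mapping to $x$ (Lemma~\ref{lem:existence-gen-dlt-mod}), and then running the MMP as in Lemma~\ref{lem:extraction-gslc-pair} to contract everything in the fiber over $x$ other than $E$. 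By Lemma~\ref{complexity-dlt} and Corollary~\ref{complexity.MMP}, the orbifold complexity is preserved, so it remains $0$. Then Lemma~\ref{lem:gen-slc-case} applies and shows that $E$ is normal, so generalized adjunction produces a projective glc pair $(E,B_E+M_E)$ with $K_E+B_E+M_E\sim_\mathbb{Q} 0$.

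Next, Lemma~\ref{lem:complexity-vs-adjunction} delivers an orbifold decomposition $\Sigma_E$ of $B_E$ with $\orbcomp(E,B_E+M_E;\Sigma_E)\leq 0$; by Theorem~\ref{thm:formally toric-projective} this orbifold complexity must equal $0$. Applying Theorem~\ref{thm:formally toric-projective} together with Lemma~\ref{lem:equality-sigma-B} to $(E,B_E+M_E)$, I conclude that $E$ is a projective toric variety, the support of $B_E$ is the full reduced sum of torus-invariant prime divisors of $E$ (here I use that with $c=0$ all torus-invariant divisors appear in $\Sigma_E$), and that $\mathbf{M}|_E$ descends on $E$ with $M_E\sim_{\mathbb{Q}}0$.

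To establish (1), I will lift the $\mathbb{Q}$-triviality of $\mathbf{M}$ from $E$ to $X$. The morphism $Y''\to X$ is a relative Mori dream space (it is birational and $(Y'',B_{Y''}+M_{Y''})$ is $\mathbb{Q}$-factorial gdlt with $K+B+M\sim_{\mathbb{Q},X}0$), so a relative version of Lemma~\ref{lem:linear-equivalence.gen} applies: any $\mathbb{Q}$-Cartier divisor on $Y''$ whose restriction to $E$ is $\mathbb{Q}$-linearly trivial is itself $\mathbb{Q}$-linearly trivial over a neighborhood of $x$. Combined with the fact that $\mathbf{M}|_E$ descends on $E$, pulling back to a common resolution and comparing traces then shows that $\mathbf{M}$ descends near $x$ and satisfies $\mathbf{M}\sim_{\mathbb{Q},X}0$ as b-divisors.

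Finally, for (2) and (3), I will take $\Gamma$ to be the push-forward to $X$ of $B_{Y''}-E-\sum_i\epsilon_i D_i$, where $D_i$ are the non-$E$ prime components of $B_{Y''}$ and $0<\epsilon_i\ll 1$. This makes $(X,\Gamma)$ klt with $\lfloor B\rfloor\leq\lceil\Gamma\rceil\leq\lceil B\rceil$ and $E$ an lc place of $(X,\lceil\Gamma\rceil)$. The requirements of Definition~\ref{def:form.tor.plt.blowup} on the toric structure of $E$, the form of $\Gamma_E$, and the nefness of $-E$ all follow from Step 2 and from the MMP construction of $Y''$. The main obstacle I anticipate is condition (4): every prime component of $\pi''^{-1}_\ast\lceil\Gamma\rceil$ must restrict to a single irreducible component of $\Gamma_E$. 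This can fail if some $D_i$ meets $E$ along a reducible divisor. To handle this, I would perform additional $\mathbb{Q}$-factorial extractions of the relevant glc places separating these intersections, checking via Lemma~\ref{complexity-dlt} that the orbifold complexity remains $0$ so that the toric description of $E$ persists on the new model and the properties required of a formally toric plt blow-up are realized.
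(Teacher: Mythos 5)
Your reduction to a model $Y''\to X$ whose fiber over $x$ is a single normal divisorial glc center $E$, followed by adjunction and the identification of $E$ as a toric variety with $\orbcomp(E,B_E+M_E)=0$, matches Step~1 of the paper's proof. After that, however, the proposal has two substantive gaps. First, the claim that $Y''\to X$ is a relative Mori dream space because it is birational and $\qq$-factorial gdlt with $K_{Y''}+B_{Y''}+M_{Y''}\sim_{\qq,X}0$ is unjustified, and the argument built on it is circular: in the paper, the finite-generation statement behind Lemma~\ref{lem:linear-equivalence.gen} is available only for a plt blow-up, i.e.\ only \emph{after} one has produced a klt pair with big boundary that is relatively $\qq$-trivial (Steps~5--7), which in turn requires first lifting the invariant divisors of $E$ to prime divisors $E_i$ on $Y$ and exhibiting the plt structure $(Y,E+D_Y)$. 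Moreover, before one can exploit $M_E\sim_\qq 0$ at all, one needs the paper's Step~4 (a negativity-lemma argument) showing that the b-divisor $\mathbf M$ already descends on $Y''$ over a neighborhood of $E$ and that $M_{Y''}\vert_E\equiv 0$; ``pulling back to a common resolution and comparing traces'' silently presupposes exactly this.

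Second, and more seriously, condition~(4) of Definition~\ref{def:form.tor.plt.blowup} is not an incidental obstacle to be patched by further extractions --- it is the core content, and the proposed fix does not work: if a prime divisor of $Y''$ met $E$ along a reducible divisor, extracting additional glc places would not make that restriction irreducible. The paper's Step~3 \emph{proves} the required statement directly from the complexity-zero hypothesis: Lemma~\ref{lem:equality-sigma-B} forces every orbifold Weil divisor of $\Sigma_E$ to be prime of the form $T/m_T$, and a coefficient computation with the canonical expression of the orbifold Weil divisors $B_i$ then shows that each invariant divisor $T\subset E$ is the restriction of a \emph{unique} prime divisor $E_T\subset Y''$ with $E_T\vert_E=T/i_T$. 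This is precisely where the orbifold (rather than fine) complexity earns its keep, and everything downstream --- the identity $(K_{Y''}+E+\sum_iE_i)\vert_E=K_E+\sum_iT_i$, the plt pair of Step~6, the descent of $\mathbf M$, and the klt boundary $\Gamma$ --- depends on it. Relatedly, your $\Gamma$ (pushforward of $B_{Y''}-E-\sum_i\epsilon_iD_i$) is not the right object: the paper takes $\Gamma=\sum_i(1-\epsilon' b_im_i)F_i$ with the $b_i$ read off from the toric relation $-E\vert_E\sim_\qq\sum_ib_iT_i$, precisely so that the pair is $\qq$-trivial on $E$ and klt by inversion of adjunction; and one must still run a $(-E)$-MMP and pass to a $\qq$-factorialization of the ample model to make $-E$ nef over $X$, which your $Y''$ need not satisfy.
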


\begin{proof}
For the reader's convenience, we divide the proof into several steps.\\

\noindent
{\it Step 1}. In this step, we make some basic reductions.\\

\noindent
By Lemma~\ref{complexity-dlt}.\ref{complexity-vs-dlt-modification-orbifold}, we may assume that $(Y,B_Y+M_Y)$ is $\qq$-factorial gdlt.
By Theorem~\ref{thm:inequality-cgeq0}.\ref{thm:implications-c=0}, we may assume that $x$ is a glc center of $(X,B+M)$.
Replacing $(Y,B_Y+M_Y)$ with the birational model constructed in the proof of Theorem~\ref{thm:inequality-cgeq0}, we may assume that $(Y,B_{Y}+M_{Y})$ is $\qq$-factorial generalized log canonical and that the fiber over $x$ consists of a unique divisorial glc center $E$ which is normal, see also Lemma~\ref{lem:gen-slc-case}.
By Corollary~\ref{complexity.MMP}.\ref{complexity-under-MMP-orbifold},  $\orbcomp_x(Y,B_Y+M_Y)$ can only decrease in performing such substitution, thus such substitution does not alter the assumptions of the proposition.
\\
Let $(E,B_E+M_E)$ be the generalized log canonical pair obtained by adjunction to $E$.
By Proposition~\ref{prop:complexity-minimum} and Lemma~\ref{lem:complexity-vs-adjunction}, there exists an orbifold structure $n$ (resp. $m$, cf.~\eqref{def:orb.struct.m.adj}) on $Y$ (resp. $E$) together with an orbifold decomposition $\Sigma_Y\leq B_Y$ (resp. $\Sigma_E\leq B_E$) for $n$ (resp. $m$) such that
\begin{align}
\label{eqn:ineq.adj.1}
0 \geq
\orbcomp_x(Y/X,B_Y+M_Y;\Sigma_Y) \geq
\orbcomp(E,B_E+M_E;\Sigma_E).
\end{align} 
By Theorem~\ref{thm:formally toric-projective}, $\orbcomp(E,B_E+M_E; \Sigma_E)=0$ and equality must hold in~\eqref{eqn:ineq.adj.1}.
Hence, $E$ is a projective normal toric variety
and $\lfloor B_E\rfloor$ is contained in the torus invariant boundary;
$M_E\sim_\qq 0$ and the nef part of $(E, B_E +M_E)$ descends onto $E$, in the sense of b-divisors; $\langle \Sigma_E \rangle = {\rm Cl}_\mathbb{Q}(E)$ by Proposition~\ref{prop:gen-class-group}.
\\
By Remark~\ref{rmk:decomposition.div.lc.center}, we can assume that $n_E=1$ and 
\[
\Sigma = \sum_{P \in X^1} \left( 1 - \frac{1}{n_P}\right) P + \sum_{i=1}^k b_i B_i
\]
with $B_1=E$, $b_1=1$, and $E \not \subseteq \supp(B_i)$, for $i \geq 2$.
By construction, we can assume that the orbifold structure $m$ on $E$ is the one defined in~\eqref{def:orb.struct.m.adj}, and that the orbifold decomposition $\Sigma_E$ of $B_E$ is of the form
\[
\Sigma_E :=\sum_{Q\in E^1}
\left(1-\frac{1}{m_Q}\right)Q
+\sum_{i=2}^kb_i B_i\vert_E.
\]

\noindent
{\it Step 2.} In this step, we prove that for each torus invariant prime divisor $T \subset E$, there exists $i \geq 2$ such that $\supp(B_i \cap E)=T$. 
We also prove some additional properties of $B_i$.
\\ 

\noindent
By Lemma~\ref{lem:equality-sigma-B}, $\Sigma_E$ must actually be the prime decomposition of $B_E$ and the set $\mathcal{S} \subset E^1$ defined in~\eqref{eqn:def.S} must be empty, by Remark~\ref{rem:equality-adjunction}, since, as already observed above, $\langle \Sigma_E \rangle = {\rm Cl}_\mathbb{Q}(E)$ and the orbifold complexity is $0$.
For all $i$ the support of $B_i\vert_E$ is irreducible, again by Lemma~\ref{lem:equality-sigma-B}.
Given $T \in E^1$ a torus invariant divisor, by Theorem~\ref{thm:formally toric-projective}, there exists $i\in \{2,\dots,k\}$ such that $\supp(B_i\vert_E)=T$.
\\ 

\noindent
{\it Step 3.}
In this step, we prove that for each torus invariant prime divisor $T$ on $E$,  there exists $E_T \in Y^1$ such that $\supp(E_T \cap E)=T$ and $E_T\vert_E=\frac{1}{i_{T}}T$, where $i_{T}$ is the Cartier index of $E$ at the generic point of $T$.
\\ 
Let $T\subset E^1$ be a torus invariant divisor.
By the previous step, there exists a natural number $i \geq 2$ such that $\supp(B_i\vert_E)=T$.
Applying Lemma~\ref{lem:equality-sigma-B}, then, since $B_i\vert_E$ is an orbifold Weil divisor, 
\begin{equation}
\label{eq:pull-back-of-bi} 
B_i\vert_E =
\frac{1}{m_T}T.
\end{equation} 
Let us assume that $m(T)=i_T$; 
this case only happens if the support of $n$ does not contains $T$, 
cf.~\eqref{def:orb.struct.m.adj}, 
in which case $B_i$ is a Weil divisor on $Y$.
Furthermore,~\eqref{eq:pull-back-of-bi} holds only if $B_i$ has a unique prime component through $T$, since $m(T)=i_T$.
Hence, it suffices to take $E_T=B_i$.
\\
If, instead, the support of $n$ contains $T$, then there exists a unique $P_T \in Y^1$ with $n_{P_T}>1$ containing $T$;
furthermore, $m(T)=i_T n_{P_T}$, see Step 2 of the proof of Lemma~\ref{lem:complexity-vs-adjunction}. 
Writing
\[
B_i =\frac{{\rm num}_{P_T}(B_i)}{n_{P_T}}P_T+W_i, 
\quad 
{\rm num}_{P_T}(B_i) \in \mathbb{Z}_{>0},
\quad 
P_T \not \subset \supp W_i,
\] 
we can assume that $W_i$ is a Weil divisor around $T$, so that
\[
B_i\vert_E= 
\left(
\frac{{\rm num}_{P_T}(B_i) \gamma_{_{P_T}, T}}{n_{P_T} i_T} +\frac{n_{P_T}\gamma_{W, T}}{n_{P_T} i_T}
\right)T,
\qquad 
\gamma_{P, T},\
\gamma_{W, T} \in \zz_{\geq 0},
\]
which, by~\eqref{eq:pull-back-of-bi}, implies that 
\[
{\rm num}_{P_T}(B_i)=1=\gamma_{P_T, T}
\quad 
\text{ and }
\quad 
\gamma_{W, T}=0,
\]
Thus, $B_i=\frac{1}{n_{P_T}}P_T$.
Hence, $P_T\vert_E=\frac{1}{i_T}T$ around $T$ and it suffices to define $E_T:=P_T$.
\\

\noindent
{\it Step 4}. 
In this step, we prove that the nef part of $(Y, B_Y+M_Y)$ descends onto $Y$ over a neighborhood of $E$.
Moreover, $M_Y\vert_E\equiv 0$.\\

\noindent
Let us fix a sufficiently high model $\pi' \colon Y' \to Y$ which is a log resolution of $(Y, B_Y)$ and on which the nef part of $(X, B+M)$ descends to the divisor $M_{Y'}$ nef over $X$.
We can also assume that the nef part of $(E, B_E+M_E)$ descends on $E'$, the transform of $E$ on $Y'$, and its trace $M_{E'}$ is given by the restriction of $M_{Y'}\vert_{E'}$.
As we know that the nef part of $(E, B_E+M_E)$ on $E$ is $\qq$-trivial as a b-divisor by Theorem~\ref{thm:formally toric-projective}, then $M_{E'}:=M_{Y'}\vert_{E'} \sim_{\qq} 0$.
On the other hand, $M_{Y}\vert_E \equiv 0$, as well, since otherwise by Remark~\ref{rem:res-MY} we could find an orbifold decomposition $\Sigma'_E > \Sigma_E$ of $B_E$ and $\orbcomp(E, B_E+M
_E; \Sigma'_E)<0$ which would give a contradiction.
We shall denote by $Y'_x$ the fiber of $Y'$ over $x$;
$Y'_x$ contains $E'$.
By the negativity lemma, as $M_{Y'}$ is nef over $X$, hence also over $Y$,
\begin{align}
\label{eqn:neg.lemma}    
{\pi'}^\ast (M_Y)=M_{Y'}+F,
\quad 
F \geq 0
\end{align}
where $F$ is $\pi'$-exceptional.
That implies, in particular, that the nef part of $(X, B+M)$ descends to $M_Y$ on $Y \setminus \pi'(\supp F)$.
Thus, if $F\cap Y'_x=\emptyset$, then the nef part descends over a neighborhood of $E$.
On the other hand, if $F\cap Y'_x\neq \emptyset$, as $F \geq 0$ and $\supp(F)$ does not contain the whole fiber $Y'_x$, then there exists a curve $C\subset Y'_x$ such that $C\cdot F>0$.
By the projection formula, since on $Y$ the fiber over $x$ is completely supported on $E$ and $M_Y\vert_E \equiv 0$, then $C\cdot {\pi'}^\ast (M_Y)=0$, so that $M_{Y'}\cdot C<0$, contradicting the nefness of $M_{Y'}$ over $X$.
Thus, $F\cap Y'_x=\emptyset$ and $M$ descends over a neighborhood of $E$.
\\

\noindent
{\it Step 5}. 
Let $T_1, \dots, T_r$ be the toric invariant divisors of $E$ and let $E_i := E_{T_i}$, $i=1, \dots, r$ be the divisors defined in Step 3.
In this step, we show that the log pair $(Y, E+ \sum_{i=1}^r E_i)$ is log canonical around $E$
and 
\[
(K_Y+ E+ \sum_{i=1}^r E_i)\vert_E= K_E+\sum_{i=1}^r T_i.
\]
\\
By adjunction, since $\mathcal S =\emptyset$ and hence $m_Q=i_Q$ for all $Q$ in the support of the orbifold structure defined in Step 1, 
\begin{equation}\label{adjunction-of-cq}
(K_Y+E)|_E = 
K_E +\sum_{Q \in E^1}\left( 1-\frac{1}{i_Q}\right) Q \leq K_E+B_E.
\end{equation} 
If $i_Q>1$, then $Q$ is a torus invariant divisor by Lemma~\ref{lem:equality-sigma-B}. 
Let $T_1,\dots,T_r$ be the torus invariant divisors of $E$.
We will denote by $E_i$ the divisor $E_{T_i} \in Y^1$ constructed in Step 3.
Then, by Step 3
\[
(
K_Y+E+\sum_{i=1}^rE_i
)|_E = K_E +\sum_{i=1}^s\left(
1-\frac{1}{i_{T_i}}\right)T_i + \sum_{i=1}^s\frac{1}{i_{T_i}}T_i
+ \sum_{i=s+1}^r T_i =
K_E +\sum_{i=1}^r T_i
\sim 0
\]
is a log canonical pair.\\

\noindent
{\it Step 6.} In this step, we prove that there exists an effective divisor $D_Y$ so that $(Y,D_Y+E)$ is plt around $E$ and $(K_Y+E+D_Y)\vert_E\equiv 0$.
\\

\noindent 
Let $H$ be an ample Cartier divisor on $Y$.
Then, $H_E:=H|_E$ is an ample Cartier divisor on $E$ and there exists positive integers $a_i$, $i=1, \dots, r$ such that
\[
H_E \sim_\qq \sum_{i=1}^r a_i T_i, 
\]
Let us fix a positive rational $0<\epsilon\ll 1$
and $0 \leq D_\epsilon\sim_{\qq, X} \epsilon H$ a general element of the relative $\qq$-linear system of $\epsilon H$. 
Then, the log pair $(E, \sum_{i=1}^r(1-\epsilon a_i)T_i+D_\epsilon|_E)$ is klt.
Defining the divisor
\[
D_Y:=\sum_{i=1}^r (1- \epsilon a_i m_i)E_i + D_\epsilon, 
\]
then by adjunction and the above observations
\[
(K_Y+
E+
(1-\epsilon a_i m_i)E_i+D_\epsilon)|_E =
K_E+ 
\sum_{i=1}^r(1-\epsilon a_i)T_i+
D_\epsilon|_E
\sim_\qq 0.
\]
By inversion of adjunction, we conclude that 
$(Y,D_Y+E)$ is plt around $E$.
\\

\noindent
{\it Step 7.} 
In this step, we prove that $M$ descends to a neighborhood of $x$ in $X$.\\

\noindent 
We run a $(K_Y+E+D_Y)$-MMP with scaling of an ample divisor relatively over $X$.
We show that this run of the MMP terminates with a good minimal model:
indeed, no step of such MMP contracts curves contained in $E$, as $(K_Y+E+D_Y)\vert_E\sim_\qq 0$.
Hence, each step of this MMP must preserve the pre-image of a neighborhood of $x \in X$.
On the other hand, up to shrinking, over $U:= X\setminus \{x\}$ the pair $(Y, E+D_Y)$ is klt, hence the $(K_Y+E+D_Y)$-MMP must terminate over $U$. 
Hence, after finitely many steps of the above MMP, all disjoint from $E$, we obtain a model $Y'' \to X$ on which $K_{Y''}+D_{Y''}+E''$ is dlt big and nef over $X$, $D_{Y''}$ (resp. $E''$) being the strict transform of $D_Y$ (resp. $E$) on $Y''$.
By construction $D_\epsilon\vert_E$ is ample.
As $E$ is the fiber over $x$, it follows that up to shrinking $X$ around $x$, we can assume that $D_\epsilon$ is ample over $X$.
Hence, for $0<\eta \ll 1$, $\Theta:=D_\epsilon -\eta E$ is ample over $X$ and choosing $0 \leq G\sim_{\qq, X} \Theta$ a general element of the $\qq$-linear system of $\Theta$ over $X$, then $(Y, (1-\eta)E+ \sum_{i=1}^r (1-\epsilon a_im_i) E_i + G)$ is klt and 
\begin{align}
\label{eqn:klt.trick.1}
K_Y+ 
(1-\eta)E+
\sum_{i=1}^r (1-\epsilon a_im_i) E_i + 
G \sim_{\qq, X}
K_Y+E+D_Y.
\end{align}
In particular, the relative log canonical ring of $K_Y+E+D_Y$ over $X$ is finitely generated, see~\cite[Theorem 1.2]{BCHM10}, and $K_{Y''}+D_{Y''}+E''$ is semiample over $X$.
Furthermore, since $(K_{Y''}+D_{Y''}+E'')\vert_{E''} \sim_\qq 0$, then the contraction $Y'' \to Y'''$ to the ample model of $K_{Y''}+D_{Y''}+E''$ must contract $E$;
thus, $Y''$ is isomorphic to $X$ in a neighborhood of $x$.
A fortiori, the same holds on $Y$, up to shrinking around $x\in X$, that is, 
\begin{align}
\label{eqn:klt.trick.2}
K_Y+E+D_Y
\sim_{\qq, X} 
0
\sim_{\qq, X}
K_Y+ 
(1-\eta)E+
\sum_{i=1}^r (1-\epsilon a_im_i) E_i + 
G
\end{align}
since, by construction, $Y \dashrightarrow Y''$ is an isomorphism over a suitable neighborhood of $x$.
\\
For $0< \lambda \ll 1$, 
 we define the generalized log pair $(Y,
(1-\eta)E+
\sum_{i=1}^r (1-\epsilon a_im_i) E_i + G+
\lambda M_Y)$
which is a generalized klt pair whose nef part is that of $(Y, B_Y+M_Y)$.
By~\eqref{eqn:klt.trick.2},
\begin{align}
\label{eqn:klt.trick.3}
K_Y+
(1-\eta)E+
\sum_{i=1}^r (1-\epsilon a_im_i) E_i + 
G+
\lambda M_Y
\sim_{\qq, X}
\lambda M_Y.
\end{align}
By~\cite[Lemma 4.4]{BZ16}, the relative $(K_Y+
(1-\eta)E+
\sum_{i=1}^r (1-\epsilon a_im_i) E_i + 
G+
\lambda M_Y)$-MMP over $X$ with scaling of an ample divisor terminates.
Since, by Step 4, $M_Y\vert_E \sim_\qq 0$, then the same argument as in the previous paragraph works to show that $M_Y \sim_{\qq, X} 0$.
By Step 4 then the nef part of $
(X, B+M)$ descends to a neighborhood of $X$, i.e., $M \sim_{\qq, X}0$ as a b-divisor.
In particular,~\eqref{eqn:klt.trick.2}-\eqref{eqn:klt.trick.3} imply that $(Y, (1-\eta) E+\sum_{i=1}^r (1-\epsilon a_im_i) E_i+G)$ descends to a log pair $(X, \sum_{i=1}^r (1-\epsilon a_im_i) \tilde E_i+ \tilde G)$ which is klt around $x$, where $\tilde E_i$ (resp. $\tilde G$) is the strict transform of $E_i$ (resp. $G$) on $X$.
Furthermore, $Y \to X$ is a relative Mori dream space, by 
\\

\noindent
{\it Step 8}. 
In this step, we prove that there exists a divisor $\Gamma_Y$ on $Y$ such that $(Y,\Gamma_Y)$ is klt and $K_Y+\Gamma_Y \sim_{\qq, X} 0$.
Moreover, $(X, \Gamma)$ satisfies properties (2) and (3) of the statement of the proposition, where $\Gamma$ is the strict transform of $\Gamma_Y$ on $X$.
\\

\noindent 
As $Y$ is a relative Mori dream space over $X$, we can run the relative $(-E)$-MMP on $Y$ which terminates with a good minimal model $Y \dashrightarrow Y'''$.
As the fiber of $Y$ over $x$ is completely supported on $E$ it follows that the map $Y \dashrightarrow Y'''$ is a composition of a  finite sequence of $(-E)$-flips.
Thus, by Lemma~\ref{complexity.flops}, 
$\orbcomp_x(Y'''/X, B_{Y'''}+M_{Y'''})=0$, where $B_{Y'''}$ (resp. $M_{Y'''}$) is the strict transform of $B_Y$ (resp. $M_Y$) on $Y'''$.
As $Y'''$ is a relative good minimal model, then $-E'''|_{E'''}$ is nef and big.
Since the effective cone of divisors of a toric variety is generated by the prime toric divisors, there exists positive rational numbers $b_i$, $i=1, \dots, r$ such that
\begin{align*}
-E|_E 
&\sim_\qq 
\sum_{i=1}^s b_iT_i, \\
-E'''|_{E'''} 
&\sim_\qq 
\sum_{i=1}^s b_iT'''_i,
\end{align*}
where the second line follows from the first one since $Y \dashrightarrow Y'''$ is an isomorphism in codimension one and $T'''_i$ is the strict transform of $T_i$ on $E'''$.
By inversion of adjunction, we conclude that for $0< \epsilon' \ll 1$, $(Y''', (1-\epsilon')E'''+\sum_{i=1}^r (1-\epsilon' b_i m_i)E'''_i)$ is klt around $E'''$ and
$
K_{Y'''}+
(1-\epsilon')E'''+
\sum_{i=1}^r (1-\epsilon' b_i m_i)E'''_i
\vert_{E'''} 
\sim_\qq 
0$.
The same argument as in Step 7 implies that 
\[
K_{Y'''}+(1-\epsilon')E'''+\sum_{i=1}^r (1-\epsilon' b_i m_i)E'''_i \sim_{\qq, X} 0.
\]
Setting 
\[
\Gamma:= \sum_{i=1}^r (1-\epsilon' b_im_i)F_i,
\]
where $F_i$ is the strict transform of $E'''_i$ on $X$, thus, $(X,\Gamma)$ is a klt pair and moreover, by construction, it satisfies property $(2)$ of the statement.
To construct a formally toric plt blow-up of $(X, \Gamma)$, it suffices to consider $Y'''' \to X$ to be a $\qq$-factorialization of the ample model of $-E'''$ over $X$. 
To show that such model satisfies all conditions in the definition of a formally toric plt blow-up, it suffices to notice that $\orbcomp_x(Y''''/X, B_{Y''''}+M_{Y''''})=0$, by Corollary~\ref{complexity.MMP}, and repeat the arguments from the previous steps of the proof.
\end{proof} 

\begin{corollary}
\label{cor:locally-M-descends}
Let $(X,B+M)$ be a generalized log canonical pair and $x\in X$ be a closed point.
If $\orbcomp_x(X,B+M)=0$, 
then
\begin{enumerate}
    \item 
$M$ descends over a neighborhood of $x\in X$;
     \item 
there exists a klt pair $(X,\Gamma)$ and $\Gamma < \lceil B \rceil$, $\lfloor B\rfloor\leq \lceil \Gamma\rceil$;
and,
    \item 
the germ $(x\in X,\Gamma)$ admits a formally toric plt blow-up.
\end{enumerate}
\end{corollary}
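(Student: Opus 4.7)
The plan is to reduce this result to Proposition~\ref{prop:toroida-blow-up} by constructing a suitable birational model of $X$. By Lemma~\ref{lem:existence-gen-dlt-mod}, we may choose a $\qq$-factorial gdlt modification $\pi \colon Y \to X$ of $(X, B+M)$, with exceptional prime divisors $E_1, \dots, E_r$. Defining $K_Y+B_Y+M_Y := \pi^\ast(K_X+B+M)$, the pair $(Y/X, B_Y+M_Y)$ is $\qq$-factorial gdlt and satisfies $K_Y+B_Y+M_Y \sim_{\qq, X} 0$.

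In order to apply Proposition~\ref{prop:toroida-blow-up}, it then suffices to verify that $\orbcomp_x(Y/X, B_Y+M_Y) = 0$. The inequality $\geq 0$ is the content of Theorem~\ref{thm:inequality-cgeq0}. For the opposite inequality, Proposition~\ref{prop:complexity-minimum} allows us to fix an orbifold structure $n$ on $X$ and an orbifold decomposition $\Sigma$ of $B$ at $x$ realizing $\orbcomp_x(X, B+M; \Sigma) = 0$. Mimicking the construction in Lemma~\ref{complexity-dlt}.(2), we lift $n$ to an orbifold structure $n'$ on $Y$ (declaring $n'_{E_j}=1$ for each exceptional divisor) and we produce the orbifold decomposition
\[
\Sigma_Y := \sum_{Q \in Y^1} \left(1 - \frac{1}{n'_Q}\right) Q + \sum_{j=1}^r E_j + \sum_{i=1}^k b_i \pi_\ast^{-1} B_i
\]
of $B_Y$, which satisfies $|\Sigma_Y| = |\Sigma|+r$. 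The key dimension estimate $\dim_\qq \langle \Sigma_Y/X \rangle \leq \dim_\qq \langle \Sigma \rangle + r$, together with $\dim Y = \dim X$, will then yield $\orbcomp_x(Y/X, B_Y+M_Y; \Sigma_Y) \leq \orbcomp_x(X, B+M; \Sigma) = 0$.

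Once the equality $\orbcomp_x(Y/X, B_Y+M_Y) = 0$ is established, Proposition~\ref{prop:toroida-blow-up} directly supplies all three conclusions: part (1) gives that $M$ descends on a neighborhood of $x \in X$, and parts (2) and (3) produce the klt pair $(X, \Gamma)$ with $\Gamma < \lceil B \rceil$ and $\lfloor B \rfloor \leq \lceil \Gamma \rceil$ along with a formally toric plt blow-up of the germ $(x \in X, \Gamma)$.

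The main technical hurdle is the dimension estimate $\dim_\qq \langle \Sigma_Y/X \rangle \leq \dim_\qq \langle \Sigma \rangle + r$, the local-to-relative analogue of the estimate driving Lemma~\ref{complexity-dlt}.(2). Concretely, one must show that the strict transform map induces a well-defined morphism $\clxxq \to {\rm Cl}_\qq(Y/X)/\langle [E_1], \dots, [E_r] \rangle$ over a sufficiently small affine neighborhood of $x$. This follows because any principal relation $\sum c_i B_i = \divv(f)$ valid near $x$ on $X$ pulls back to $\sum c_i \pi_\ast^{-1} B_i = \divv(\pi^\ast f) - \sum a_j E_j$ on $Y$, so $\sum c_i [\pi_\ast^{-1} B_i]$ vanishes modulo the exceptional classes $[E_j]$ in ${\rm Cl}_\qq(Y/X)$, giving the required bound on the span.
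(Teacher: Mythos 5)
Your proposal is correct and follows essentially the same route as the paper: both reduce to Proposition~\ref{prop:toroida-blow-up} by realizing the local orbifold complexity at $x$ as a relative orbifold complexity over a sufficiently small affine neighborhood $U$ of $x$, which is exactly the point where relations in $\clxxq$ become relations in the relative class group. The only difference is that you first pass to a gdlt modification and lift the decomposition as in Lemma~\ref{complexity-dlt}; this is harmless but redundant, since the paper applies Proposition~\ref{prop:toroida-blow-up} directly to the identity morphism of $U$ (the gdlt modification being performed inside the proof of that proposition anyway).
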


\begin{proof}
It suffices to observe that if $n$ is an orbifold structure on $X$ supported at $x$ and 
\[
\Sigma= \sum_{p \in X^1} \left(
1-\frac{1}{n_P}\right) P
+ \sum_{i=1}^r b_iB_i
\]
is an orbifold decomposition of $B$ for $n$ at $x$ such that $\orbcomp_x(X, B+M; \Sigma)=0$, then there exists an open set $x \in U \subset X$ such that the dimension of the span of the $B_i$ in ${\rm Cl}_\qq(U)$ is the same as that of the span of the $B_i$ in $\clxx$, by the very definition of $\clxx$, cf. Definition~\ref{def:cl.gr}.
Hence, $\orbcomp_x(X, B+M; \Sigma)=0=\orbcomp_x(U/U, B\vert_U+M\vert_U; \Sigma)$ where for $\orbcomp_x(U/U, B\vert_U+M\vert_U; \Sigma)$ we consider the identity map of $U$ as the structure morphism.
Thus, the conclusion follows from Proposition~\ref{prop:toroida-blow-up}.
\end{proof}

\section{Toric formality}
\label{section:proof-local}

In this section, we introduce the concept of toric formality. 
We show that a formally toric plt blow-up, as defined in \S~\ref{sect:plt.blow.ups}, is a special type of birational extraction that is formally isomorphic to a special blow-up of a toric singularity.
We will prove that the existence of a formally toric plt blow-up implies that the germ is formally toric, cf.~Proposition~\ref{prop:toric-plt-implies-toric}. 
These results will constitute the main ingredient in the proof of the local version of Theorem~\ref{formally toric}, cf.~Theorem~\ref{thm:local-case}.

\subsection{The relative Cox ring}
\label{subsect:rel.cox}
In order to prove Proposition~\ref{prop:toric-plt-implies-toric}, we will introduce and prove a few preliminary results related to Cox rings.
We will study the relative Cox ring of a formally toric blow-up $\pi\colon Y\rightarrow X$.
In particular, we will focus on describing the ideal generated by sections whose associated divisor on $Y$ intersects $E$ non-trivially.
We start by providing some basic definitions and introducing some notation.

\begin{notation}
\label{notation:class.gr}
{\em 
Let $Y\rightarrow X$ be a projective birational morphism between normal quasi-projective varieties, $X={\rm Spec}(R)$, and let $x\in X$ be a distinguished (closed) point.
Let $Y_x$ be the base change of $Y$ induced by the natural morphism $X_x\rightarrow X$ and let ${\rm Cl}(Y_x)$ be its class group, that is, ${\rm Cl}(Y_x):={\rm WDiv}(Y_x)/{\rm Prin}(Y_x)$
}
\end{notation}

As $X_x$ is birational to $Y_x$, ${\rm Cl}(Y_x)={\rm WDiv}(Y_x)/\principal(X_x)$.
Moreover, as $X_x$ is essentially of finite type, ${\rm Cl}(Y_x/X_x)\simeq {\rm Cl}(Y_x)$.

\begin{assumption}
\label{assumption.cox.ring}
{\em
We adopt Notation~\ref{notation:class.gr}.
We assume that 
\begin{enumerate}
    \item 
    ${\rm Cl}(Y_x)$ is finitely generated, and
    \item
    \label{assumption:tors.free}
    ${\rm Cl}(Y_x)$ is torsion free.
\end{enumerate}
}
\end{assumption}

\begin{definition}
\label{def:cox.ring}
{\em 
We adopt Notation~\ref{notation:class.gr} and Assumption~\ref{assumption.cox.ring}.
\\
Let $K\subset {\rm WDiv}(Y)$ be a finitely generated subgroup whose elements map isomorphically onto ${\rm Cl}(Y_x)$.
The {\rm Cox ring} of $Y$ relatively to $X$ around $x$ is defined as
\begin{align*}
\coxyx
:=
\bigoplus_{D \in K} 
H^0(Y,\mathcal{O}_Y(D)) 
\end{align*}
The multiplication in $\coxyx$ is defined in the function field of $Y$, as per usual.
}
\end{definition}

The ring $\coxyx$ is naturally graded by the free group $\clyx$, by identifying $K$ with its image in $\clyx$.

\begin{remark}
\label{rem:well-definedness}
{\em 
We adopt Notation~\ref{notation:class.gr} and Assumption~\ref{assumption.cox.ring}.
\\
The Cox ring $\coxyx$ parametrizes all effective Weil divisors on $Y$ locally above $x$.
However, since $H^0(X,\mathcal{O}_X)^\ast =R^\ast $, and $R^\ast$ may be strictly larger than $\mathbb K^\ast$, then $\coxyx$ may have non-trivial units.
In particular, for a Weil divisor $W$ on $Y$, there exists a ${\rm Cl}(Y)$-invariant element $w\in \coxyx$ and such choice of $w$ is determined up to multplication by an element in $R^\ast $.
}
\end{remark}

\subsection{Toric formality and plt blow-ups}
\label{subsect:torsion.free}
In this subsection, we shall prove the following proposition relating the existence of a formally toric plt blow-up to the formal toricness of a fixed germ of klt singularity.

\begin{proposition}
\label{prop:toric-plt-implies-toric}
Let $(x \in X,\Gamma)$ be a germ of a klt pair. 
Assume that $(x \in X,\Gamma)$ admits a formally toric plt blow-up $ \pi \colon Y \to X$ at $x \in X$ and let $E$ be the exceptional divisor on $\pi$.
Then, $\pi$ is formally toric over $x$ with respect to the pair $(Y, \pi^{-1}_\ast \lceil \Gamma \rceil +E)$ and $(x \in X,\lceil \Gamma \rceil)$ is a formally toric singularity.
\end{proposition}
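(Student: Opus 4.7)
The plan is to analyze the relative Cox ring $\coxyx$ of the formally toric plt blow-up $\pi\colon Y\to X$, pass to its completion at a distinguished maximal ideal, and then extract the formally toric structure on $X$ as the degree-zero subring with respect to the natural $\clyx$-grading. This is precisely the strategy sketched in the introduction, and it simultaneously yields the formal toricness of $\pi$ and of the germ $(x\in X,\lceil\Gamma\rceil)$.

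First I would verify that Assumption~\ref{assumption.cox.ring} holds here. Finite generation of $\clyx$ follows from finite generation of $\clxx$ (Remark~\ref{rmk:local.complex}.\ref{rmk:local.complex.f.g.}) together with the fact that $\pi$ extracts a single divisor $E$. Torsion freeness follows from Lemma~\ref{lem:linear-equivalence.gen} combined with the toricness of $E$: any torsion class of $\clyx$ is represented by a divisor whose restriction to $E$ is $\qq$-trivial on the toric variety $E$, and by Lemma~\ref{lem:linear-equivalence.gen} such a divisor pushes forward to a torsion class on $\clxx$; a separate argument using rational singularities of klt germs shows that $\clxx$ is also torsion free locally in the formal setting, or alternatively one works throughout modulo torsion. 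Letting $E_1,\dots,E_r$ be the prime components of $\pi_\ast^{-1}\lceil\Gamma\rceil$ and letting $e,x_1,\dots,x_r\in\coxyx$ be distinguished sections associated to $E, E_1,\dots,E_r$, Corollary~\ref{cor:generation} together with property (4) of Definition~\ref{def:form.tor.plt.blowup} and the toric surjection $\clyx\to{\rm Cl}(E)$ ensures that $E,E_1,\dots,E_r$ generate $\clyx$.

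Next I would study the ideal $\mcox:=(e,x_1,\dots,x_r)\subset\coxyx$ and show that its localization-completion is isomorphic to the power series ring $\kk[\![e,x_1,\dots,x_r]\!]$, with each variable carrying its natural $\clyx$-degree. The key assertion is that $\mcox$ defines a smooth closed point of ${\rm Spec}\,\coxyx$ of dimension $r+1=\dim X+1$. This will follow from the formally toric structure on $E$: the restrictions $x_i|_E$ are, by Remark~\ref{rem:gamma_i}, (up to a Cartier index) the torus invariant prime divisors of $E$, and the plt hypothesis together with the $\pi$-ampleness of $-E$ lets one match the generators of $\coxyx$ at $\mcox$ with a regular system of parameters pulled back from a toric chart around the invariant point of the affine cone over $E$. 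Once this is established, the completion $\compcox$ is a formal power series ring graded by $\clyx$, and the degree-zero subring $S$ is by definition the formal spectrum of the invariants under the $\clyx$-action, hence an affine toric formal scheme.

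Finally, I would identify $\widehat{\mathcal O_{X,x}}$ with $S$. On one hand, the inclusion $\pi_\ast\mathcal O_Y=\mathcal O_X$ identifies $\mathcal O_{X,x}$ with the degree-zero piece of $\coxyx$, and this identification survives completion at the corresponding maximal ideals. On the other hand, a monomial $e^\ell x^\alpha$ has degree zero in $\clyx$ precisely when $\ell[E]+\sum\alpha_i[E_i]=0$; restricting to $E$, such monomials span the torus-invariant sections of $\mathcal O_E(-\ell E|_E)$ for $\ell\geq 0$, so $S$ is formally isomorphic to the completion at its vertex of the orbifold cone $\conee={\rm Spec}\bigl(\bigoplus_{\ell\geq 0}H^0(E,\mathcal O_E(-\ell E|_E))\bigr)$. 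Since $E$ is toric and $-E|_E$ is a torus invariant ample $\qq$-divisor, this cone is a normal affine toric variety, giving the data $\psi,\phi$ of Definition~\ref{def:formally-toric} for the identity morphism of $X$ at $x$; the components of $\lceil\Gamma\rceil$ correspond to the torus invariant divisors of the cone by construction. The formally toric structure for $\pi$ itself is obtained by the same argument applied one level up: the toric blow-up of the cone along its vertex is the natural toric extraction of $E$, and it matches $\pi$ formally because both are encoded by the same $\clyx$-graded formal power series ring.

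The main obstacle will be the smoothness assertion at $\mcox$ together with the precise identification of the grading. This requires carefully leveraging $\pi$-ampleness of $-E$ (to control generation of $\coxyx$ near $E$), the plt condition (so that $Y$ is klt near $E$, $E$ is normal, and adjunction behaves as expected), and above all conditions (3) and (4) of Definition~\ref{def:form.tor.plt.blowup}, which dictate that each $x_i|_E$ maps to a torus invariant prime divisor with the correct multiplicity. A secondary technical point is the interaction of completion with the $\clyx$-grading, since $\compcox$ is only a topological $\clyx$-graded ring; this is handled by working with the $\mcox$-adic topology, which respects the grading because $\mcox$ is generated by homogeneous elements.
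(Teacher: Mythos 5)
Your strategy is exactly the one the paper uses for the case where ${\rm Cl}(X_x)$ is torsion free: set up the relative Cox ring $\coxyx$, show that $\mathfrak{m}_{\rm Cox}=(x_1,\dots,x_r,e)$ is a smooth point of dimension $r+1$ (Lemma~\ref{lem:regular-Cox}), complete to get a $\clyx$-graded power series ring, and identify $\widehat{X_x}$ with the degree-zero subring, which is the completion at the vertex of the orbifold cone $\conee$ (Proposition~\ref{prop:no-torsion-toric}). On that portion your outline is faithful to the paper, including the role of conditions (3)--(4) of Definition~\ref{def:form.tor.plt.blowup} and of Lemma~\ref{lem:linear-equivalence.gen} in controlling linear equivalence via restriction to $E$.

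The genuine gap is your treatment of torsion. You assert that ``a separate argument using rational singularities of klt germs shows that $\clxx$ is also torsion free locally in the formal setting, or alternatively one works throughout modulo torsion.'' Neither alternative works. The local class group of a klt germ is very often torsion: already for the $A_1$ surface singularity $\mathbb{A}^2/(\zz/2\zz)$ one has ${\rm Cl}(X_x)\simeq\zz/2\zz$, and rational singularities give no control on torsion in the class group (they control ${\rm Pic}$ versus ${\rm Cl}$ only up to the local fundamental group, which is precisely where the torsion lives). Working ``modulo torsion'' destroys the key identification: the degree-zero part of the Cox ring graded by $\clyx/\text{tors}$ recovers the ring of the index-one cover of $X$, not of $X$ itself, so the conclusion $\widehat{X_x}\simeq\conev$ would fail. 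The whole Cox-ring machinery (choice of the subgroup $K$ mapping isomorphically onto $\clyx$, primality of $\langle e\rangle$, Lemma~\ref{lem:linear-equivalence}) is set up under Assumption~\ref{assumption.cox.ring}.\ref{assumption:tors.free}. The paper closes this gap with a separate inductive argument (Proposition~\ref{prop:local-toricity}): one takes a torsion class $W$ of order $l$, passes to the index-one cover $\widetilde X\to X$, shows the induced blow-up $\widetilde Y\to\widetilde X$ is again a formally toric plt blow-up with strictly smaller torsion in ${\rm Cl}(\widetilde X_{\tilde x})$, applies induction, and then verifies that the residual $\zz_l$-action on $\widehat{\widetilde X_{\tilde x}}$ factors through the formal torus, so that the quotient is again formally toric. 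Without this (or an equivalent) argument your proof only establishes the proposition under the additional hypothesis that ${\rm Cl}(X_x)$ is free.
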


\subsubsection{Torsion free case}
We first proceed to prove Proposition~\ref{prop:toric-plt-implies-toric} under the assumption that ${\rm Cl}(X_x)$ is free. The main tool is the relative Cox ring introduced in the previous subsection.

We will be working in the framework
of the following construction arising from a formally toric plt blow-up of a germ of a klt singularity, cf.~Definition~\ref{def:form.tor.plt.blowup}.

\begin{construction}
\label{const:plt-toric}
{\em 
We adopt Notation~\ref{notation:class.gr}.
\\
Let $(x \in X,\Gamma)$ be the germ of a klt singularity.
We assume that $X = {\rm Spec}(R)$.
We denote by $\mathfrak{m}_x \subset R$ the maximal ideal associated to the closed point $x$.
Let $\pi\colon Y\rightarrow X$ be a formally toric plt blow-up for $(X, \Gamma)$ with exceptional divisor $E$.
\\
By~\cite[Theorem 3.27]{BM21}, ${\rm Cl}(Y_x)$ is finitely generated;
thus, we can adopt Assumption~\ref{assumption.cox.ring}.1.
As already mentioned above, we will also adopt~\ref{assumption.cox.ring}.\ref{assumption:tors.free}.
Thus we can defined $\coxyx$ as in Definition~\ref{def:cox.ring}.
\\
We proceed to the following construction:
\begin{enumerate}
   
    \item 
We fix a homogeneous element $e \in \coxyx$ such that ${\rm div}(e)=E$.
By Remark~\ref{rem:well-definedness}, $e$ is defined up to multiplication by a unit of $R$.

	\item 
We denote by $T_1,\dots, T_r \subset E$ the torus invariant divisors of $E$ supported at the intersection between components $E_i$ of $\pi_\ast^{-1}\lceil \Gamma \rceil$ and $E$.
We fix homogeneous elements $x_1,\dots, x_r \in \coxyx$ such that ${\rm div}(x_i)=E_i$.
By Remark~\ref{rem:well-definedness}, all these elements are uniquely defined up to multiplication by a unit of $R$.
Moreover, 
\[
E_i\vert_E= \frac{T_i}{m_i},
\]
where $m_i$ is the Cartier index of $E$ at the generic point of $T_i$, cf.~Remark~\ref{rem:gamma_i}.
We also set $F_i:=\pi_\ast E_i$.

	\item 
We denote by $\mathfrak{m}_{\rm Cox}$ the ideal of $\coxyx$ generated by $\mathfrak m_x$ and by sections of $H^0(Y,\mathcal{O}_Y(D))$ whose divisor of zeroes on $Y$ intersects $E$ non-trivially, for $D \in K$ the finitely generated subgroup of ${\rm WDiv}(X)$ defined in Definition~\ref{def:cox.ring}.

	\item 
If we pass to an affine open subset $X' \subset X$ containing $x$, denoting $Y':=\pi^{-1}(X') \subseteq Y$,
then we can define, using the same procedure of Definition~\ref{def:cox.ring}.
${\rm Cox}(Y'/X')$ is a $\clyx$-graded ring.
To define ${\rm Cox}(Y'/X')$, it suffices to restrict to $Y'$ the divisor in the finitely generated group $K$.

	\item 
In ${\rm Cox}(Y'/X')$, we define the ideal $\mathfrak{m}_{\rm Cox,X'}$, as the ideal generated by $\mathfrak m_{x, X'}$ and by sections of $H^0(Y',\mathcal{O}_{Y'}(D))$, $D \in K$ whose divisor of zeroes on $Y'$ intersect $E$ non-trivially.
In particular, $\mathfrak{m}_{{\rm Cox},X'} \subset {\rm Cox}(Y'/X')$ is the ideal generated by the image of $\mathfrak{m}_{{\rm Cox},X}$ under the natural restriction map (of sections/cycle) from $Y$ to $Y'$.
\end{enumerate}
}
\end{construction} 

The following lemma relates the freeness of ${\rm Cl}(X_x)$ and ${\rm Cl}(Y_x)$ in the context of Construction~\ref{const:plt-toric}.

\begin{lemma}
\label{lem:no-torsion-cl}
With the notation and assumptions of Construction~\ref{const:plt-toric}, if ${\rm Cl}(X_x)$ is free, then ${\rm Cl}(Y_x)$ is also free.
\end{lemma}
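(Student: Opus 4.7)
The plan is to derive freeness of ${\rm Cl}(Y_x)$ from the natural exact sequence comparing it with ${\rm Cl}(X_x)$, whose kernel is a cyclic group generated by the class of the unique exceptional divisor $E$. First, I would set up this sequence: since $E$ is the only $\pi$-exceptional prime divisor, pushforward of Weil divisors should give a short exact sequence
\[
\mathbb{Z}\cdot[E] \xrightarrow{\iota} {\rm Cl}(Y_x) \xrightarrow{\pi_\ast} {\rm Cl}(X_x) \to 0,
\]
where surjectivity of $\pi_\ast$ follows from lifting any Weil divisor on $X_x$ to its strict transform on $Y_x$, and the description of the kernel is standard for a birational morphism of normal varieties with a single exceptional divisor.

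The key step will be to show that $\iota$ is injective, i.e., that $[E]$ has infinite order in ${\rm Cl}(Y_x)$. Suppose to the contrary that $n[E]=0$ for some positive integer $n$, so that $nE = \mathrm{div}_{Y_x}(f)$ for some nonzero rational function $f$ in the common function field of $X$ and $Y$. Pushing forward, $\mathrm{div}_{X_x}(f) = \pi_\ast(nE) = 0$, because $E$ is $\pi$-exceptional. Since $X_x$ is the spectrum of a normal local ring, a rational function with trivial divisor is a unit, so $f \in \mathcal{O}_{X,x}^\ast$. But then $\pi^\ast f = f$ is a regular function on $Y_x$ that does not vanish anywhere on the fiber $\pi^{-1}(x) \supseteq E$, so $\mathrm{ord}_E(f)=0$ and $\mathrm{div}_{Y_x}(f)=0$, contradicting $nE \neq 0$.

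Having established the short exact sequence
\[
0 \to \mathbb{Z}\cdot[E] \to {\rm Cl}(Y_x) \to {\rm Cl}(X_x) \to 0,
\]
in which both outer groups are free abelian — ${\rm Cl}(X_x)$ by hypothesis and the left-hand one by the previous step — I would conclude by observing that $\mathrm{Ext}^1({\rm Cl}(X_x),\mathbb{Z})=0$ when ${\rm Cl}(X_x)$ is free, so the sequence splits and ${\rm Cl}(Y_x) \cong \mathbb{Z} \oplus {\rm Cl}(X_x)$ is free.

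The main obstacle is the infinite-order claim for $[E]$, but the argument above using normality of $X$ is quite elementary; an alternative route via the negativity lemma (using that $-E$ is $\pi$-nef and $\pi$-exceptional, forcing $E$ to not be numerically trivial over $X$) would also work but relies on more of the plt blow-up structure than is really needed.
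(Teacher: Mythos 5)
Your proof is correct. The skeleton matches the paper's --- both arguments hinge on the pushforward $\pi_\ast\colon {\rm Cl}(Y_x)\to{\rm Cl}(X_x)$, whose kernel is generated by $[E]$, and both must rule out torsion coming from $E$ itself --- but the key step is justified differently. The paper takes a torsion class $[W]$, uses freeness of ${\rm Cl}(X_x)$ to get $\pi^\ast(\pi_\ast W)=W+\alpha E\sim 0$ near $x$, and then concludes $\alpha=0$ from the positivity of the plt blow-up, namely that $-E|_E$ is big and hence $E|_E\not\sim_\qq 0$. You instead prove that $[E]$ has infinite order by a purely elementary argument: if $nE=\divv_{Y_x}(f)$ then $\divv_{X_x}(f)=0$, so $f$ is a unit of the normal local ring $\mathcal O_{X,x}$, so $\divv_{Y_x}(f)=0$, a contradiction. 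This uses only that $E$ is exceptional over a normal base, not the nefness or bigness of $-E$, so it is both more elementary and more general; and packaging the conclusion as a split short exact sequence (via $\operatorname{Ext}^1$ of a free group vanishing) also avoids the appeal to finite generation of ${\rm Cl}(Y_x)$ that the paper's ``torsion-free plus finitely generated implies free'' route implicitly requires. The paper's version is shorter in context because the bigness of $-E|_E$ is already in play throughout Section 7.
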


\begin{proof}
Let $W$ be a Weil divisor on $Y$ such that $mW\sim 0$ for some $m \in \mathbb{N}_{>0}$.
Setting $W_X:=\pi_\ast W$, $W_X$ is a Weil divisor on $X$ and $mW_X\sim 0$.
Since ${\rm Cl}(X_x)$ has no torsion, then $W_X\sim 0$ on a neighborhood $X'$ of $x$.
Then, $\pi^\ast (W_X)=W+\alpha E \sim 0$ over the counter-image $Y'$ of $X'$ on $Y$.
Since $W\sim_\mathbb{Q}0$, then also $aE\sim_\mathbb{Q}0$.
However, since $E\vert_E \not \sim_\mathbb{Q}0$ as $-E \vert_E$ is big, then $a=0$ and $W\sim 0$ on $Y'$.
\end{proof}

In view of Lemma~\ref{lem:no-torsion-cl}, we can substitute Assumption~\ref{assumption.cox.ring}.\ref{assumption:tors.free} with the assumption that ${\rm Cl}(X_x)$ is free.
We will do so for the remainder of this subsection.

We now proceed to prove a few technical results that will be used in the proof of Proposition~\ref{prop:no-torsion-toric}.

\begin{lemma}
\label{mcox-maximal}
With the notation and assumptions of Construction~\ref{const:plt-toric},
the ideal
$\mathfrak{m}_{{\rm Cox}} \subset \coxyx$ is a maximal ideal.
\end{lemma}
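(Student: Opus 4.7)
The plan is to prove that $\coxyx / \mathfrak{m}_{{\rm Cox}} \cong \mathbb{K}$, which will establish maximality. The ideal $\mathfrak{m}_{{\rm Cox}}$ is homogeneous with respect to the $\clyx$-grading on $\coxyx$, since $\mathfrak{m}_x$ sits in degree zero and the remaining generators are each homogeneous in a prescribed non-zero degree. Consequently the quotient splits as a direct sum over the graded pieces, and the task reduces to showing that $(\mathfrak{m}_{{\rm Cox}})_D = \coxyx_D$ for every $D \in K$ with $[D] \neq 0$ in $\clyx$, together with $(\mathfrak{m}_{{\rm Cox}})_0 = \mathfrak{m}_x$.

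For $D \in K$ with $[D] \neq 0$ in $\clyx$, I will show that every nonzero homogeneous $s \in \coxyx_D$ lies in $\mathfrak{m}_{{\rm Cox}}$. The effective divisor of zeros $Z := \operatorname{div}(s) + D$ is linearly equivalent to $D$ on $Y$. If $\supp(Z)$ were disjoint from $E = \pi^{-1}(x)$, then each component of $\supp Z$ would map to a closed irreducible subset of $X$ avoiding $x$; its generic point would therefore fail to lie in $Y_x$, so $Z$ would restrict to $0$ on $Y_x$. This would force $D|_{Y_x} \sim 0$, hence $[D] = 0$ in $\clyx$, contradicting our assumption. Therefore $\supp(Z) \cap E \neq \emptyset$, and $s \in \mathfrak{m}_{{\rm Cox}}$ by the very definition of the ideal.

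In degree zero, the inclusion $\mathfrak{m}_x \subseteq (\mathfrak{m}_{{\rm Cox}})_0$ is immediate. For the reverse, any degree-zero element of $\mathfrak{m}_{{\rm Cox}}$ is a finite sum $\sum_i s_i t_i$ in which each $s_i$ is a generator of $\mathfrak{m}_{{\rm Cox}}$ and $\deg(t_i) = -\deg(s_i)$. Each product $s_i t_i$ lies in $R = H^0(X, \mathcal{O}_X)$ via the identification $\pi_\ast \mathcal{O}_Y = \mathcal{O}_X$, and as a rational function its divisor of zeros equals $Z(s_i) + Z(t_i)$. Either $s_i \in \mathfrak{m}_x$ (in which case $s_i$ vanishes along the entire fiber $E$), or the zero divisor of $s_i$ meets $E$; in both cases $s_i t_i$ vanishes at some point of $E$. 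Since $s_i t_i$ is pulled back from $X$, its restriction to $E$ is the constant $(s_i t_i)(x)$, so vanishing at a point of $E$ forces $(s_i t_i)(x) = 0$, giving $s_i t_i \in \mathfrak{m}_x$; summing, $\sum_i s_i t_i \in \mathfrak{m}_x$.

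The main obstacle is the argument in the second paragraph: translating the geometric statement that the zero divisor of a section avoids $E$ into the algebraic statement that the underlying class is trivial in $\clyx$ relies crucially on the identification $E = \pi^{-1}(x)$ and on the definition of $\clyx$ via linear equivalence on the localization $Y_x$. Once both graded computations are in place, we obtain $\coxyx / \mathfrak{m}_{{\rm Cox}} \cong R / \mathfrak{m}_x = \mathbb{K}$, a field, which proves that $\mathfrak{m}_{{\rm Cox}}$ is maximal.
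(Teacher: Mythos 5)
Your proof is correct and follows essentially the same route as the paper's: the core step --- that a nonzero section of nonzero degree whose divisor of zeros missed $E=\pi^{-1}(x)$ would force $[D]=0$ in ${\rm Cl}(Y_x)$ after shrinking $X$ around $x$ --- is exactly the paper's argument. Your third paragraph, verifying that the degree-zero piece of $\mathfrak{m}_{\rm Cox}$ is exactly $\mathfrak{m}_x$, spells out a point the paper leaves implicit, but this is a refinement of the same proof rather than a different approach.
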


\begin{proof}
Let $D \neq 0$ be a Weil divisor of $K$.
Then $0 \neq [D] \in {\rm Cl}(Y_x)$, since, by assumption, $K$ maps isomorphically onto ${\rm Cl}(Y_x)$.
For any section $0 \neq \sigma \in H^0(Y, \mathcal O_{Y}(D))$, ${\rm div}(\sigma) \cap E \neq \emptyset$: 
indeed, if that were not the case, then considering the open set $X \setminus{ \pi(\supp ({\rm div}(\sigma)))}$, it immediately follows that $[D] = 0 \in {\rm Cl}(Y_x)$, leading to a contradiction.
Hence, 
\begin{align*}
\bigoplus_{0 \neq D \in K} H^0(Y, \mathcal O_Y(D)) \subset \mathfrak{m}_{\rm Cox}
\quad 
\text{and}
\quad 
\coxyx/\mathfrak{m}_{\rm Cox}\simeq 
H^0(X, \mathcal{O}_{X})/\mathfrak{m}_{x} \simeq \kk.
\end{align*}
\end{proof}

\begin{lemma}
\label{lem:linear-equivalence}
With the notation and assumptions of Construction~\ref{const:plt-toric},
let $D$ and $D'$ be two Weil divisors on $Y$ whose support do not contain $E$.
If $D\vert_E \sim_\qq D'\vert_E$, then $[D]= [D'] \in \clyx$.
\end{lemma}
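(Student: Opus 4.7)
The plan is to combine Lemma~\ref{lem:linear-equivalence.gen} with Lemma~\ref{lem:no-torsion-cl} and the standing assumption that $\clxx$ is torsion-free. More specifically, since $D$ and $D'$ are Weil divisors on $Y$ whose supports do not contain $E$, they can be fed directly into Lemma~\ref{lem:linear-equivalence.gen}, which applies because $\pi\colon Y\to X$ is, in particular, a $\qq$-factorial plt blow-up. From the hypothesis $D|_E\sim_\qq D'|_E$, that lemma yields that the class $[D-D']$ is torsion in $\clyx$ (this is the content of the proof of Lemma~\ref{lem:linear-equivalence.gen}, which shows that a suitable positive multiple $m(D-D')$ is trivial over a neighborhood of $x$).

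Next, I would invoke the standing assumption in this subsection that $\clxx$ is torsion-free. By Lemma~\ref{lem:no-torsion-cl}, this guarantees that $\clyx$ is also torsion-free. Hence the torsion class $[D-D']\in \clyx$ must be zero, i.e., $[D]=[D']\in \clyx$, as claimed.

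There is no real obstacle here: the lemma is essentially a torsion-free refinement of Lemma~\ref{lem:linear-equivalence.gen}, obtained by upgrading $\qq$-linear equivalence to linear equivalence under the extra hypothesis on $\clxx$. The only point to double-check is that the hypothesis $D|_E\sim_\qq D'|_E$ (rather than $\sim_\qq 0$ on $E$ for $D-D'|_E$) is exactly what Lemma~\ref{lem:linear-equivalence.gen} needs; this is immediate by applying it to the difference $D-D'$, whose support may contain $E$ only with some coefficient, but this does not affect the argument of Lemma~\ref{lem:linear-equivalence.gen} as that proof proceeds by running a $(D-D')$-MMP over $X$ and using the fact that $D-D'$ is $\qq$-Cartier (as $Y$ is $\qq$-factorial) and trivial on curves contained in $E$.
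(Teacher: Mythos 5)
Your proposal is correct and follows essentially the same route as the paper: apply Lemma~\ref{lem:linear-equivalence.gen} to conclude that $[D-D']$ is torsion in $\clyx$, then kill the torsion via Lemma~\ref{lem:no-torsion-cl} together with the standing assumption that $\clxx$ is free. Nothing to add.
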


\begin{proof}
By Lemma~\ref{lem:linear-equivalence.gen}, $D-D'\sim_\qq 0$ over a sufficiently small neighborhood of $x \in X$. 
The conclusion then follows from Lemma~\ref{lem:no-torsion-cl}.
\end{proof}

\begin{lemma}\label{E-no-torsion}
With the notation and assumptions of Construction~\ref{const:plt-toric},
the group ${\rm Cl}(E)$ is torsion free.
\end{lemma}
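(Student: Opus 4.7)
The strategy is to lift a potential torsion class on $E$ to a class on $Y$ which is then killed by Lemma~\ref{lem:linear-equivalence}. Let $W$ be a Weil divisor on $E$ with $nW \sim 0$ for some $n \in \zz_{>0}$; the goal is to show $W\sim 0$. Since $E$ is a projective normal toric variety, I may assume that $W$ is torus invariant and write $W = \sum_{j=1}^{s} a_j T_j'$, where $T_1',\dots,T_s'$ are the torus invariant prime divisors on $E$ and $a_j\in\zz$.

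A key preliminary step is to show that the set $\{T_1,\dots,T_r\}$ of Construction~\ref{const:plt-toric} already exhausts $\{T_1',\dots,T_s'\}$: every torus invariant prime of $E$ arises as the support of $E_i|_E$ for exactly one index $i$. This will follow from the adjunction identity $\Gamma_E=\mathrm{Diff}_E(\pi^{-1}_\ast\lceil\Gamma\rceil)$: for any torus invariant prime $T\subset E$, a direct computation of the different gives $\mathrm{coeff}_T(\Gamma_E) = 1-\tfrac{1}{m_T} + \tfrac{k}{m_T}$, where $m_T$ is the Cartier index of $E$ at $T$ and $k$ is the number of indices $i$ with $T_i = T$; by Definition~\ref{def:form.tor.plt.blowup}(3) this coefficient equals $1$, forcing $k = 1$.

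With this in hand I define the Weil divisor $D := \sum_{i=1}^{r} a_i m_i E_i$ on $Y$. Since $Y$ is $\qq$-factorial, $D$ is $\qq$-Cartier, so $D|_E$ is a well-defined $\qq$-divisor on $E$, and using $E_i|_E = T_i/m_i$ from Remark~\ref{rem:gamma_i} I verify $D|_E = W$. The hypothesis $nW \sim 0$ implies $D|_E \sim_\qq 0$, and since $E \not\subseteq \supp(D)$, Lemma~\ref{lem:linear-equivalence} applied to $D$ and the zero divisor gives $[D] = 0$ in $\clyx$ (whose torsion-freeness comes from Lemma~\ref{lem:no-torsion-cl} together with the standing assumption on $\clxx$). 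Hence $D \sim 0$ over a suitable neighborhood of $x$, and restricting this linear equivalence to $E$ yields $W = D|_E \sim 0$.

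The only real obstacle is the exhaustion claim in the second paragraph, and it is essentially forced by the reducedness of $\Gamma_E$; once that is settled, lifting $W$ to the integral divisor $D$ on $Y$ is a direct consequence of the formula $E_i|_E = T_i/m_i$, and Lemma~\ref{lem:linear-equivalence} closes the argument.
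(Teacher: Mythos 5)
Your proof is correct and follows essentially the same route as the paper's: represent the torsion class by a torus-invariant divisor $\sum a_i T_i$, lift it to the integral divisor $\sum a_i m_i E_i$ on $Y$ using $E_i|_E = T_i/m_i$, and conclude via Lemma~\ref{lem:linear-equivalence} together with the torsion-freeness of ${\rm Cl}(Y_x)$. The only difference is that you spell out the exhaustion claim (every torus-invariant prime of $E$ is the support of exactly one $E_i|_E$), which the paper leaves implicit in Definition~\ref{def:form.tor.plt.blowup}(3) and Remark~\ref{rem:gamma_i}; your adjunction computation of the coefficient is the right justification.
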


\begin{proof}
Let $W_E$ be a torsion Weil divisor on $E$.
Then $W_E \sim \sum_{i=1}^r a_iT_i$, $a_i \in \mathbb Z$.
Setting $W_Y:= \sum_{i=1}^r a_i m_i E_i$, then $W_Y|_E=W_E$.
By Lemma~\ref{lem:linear-equivalence}, then $[W_Y]=0 \in \clyx$.
Thus, $W_E\sim 0$ on $E$.
\end{proof}

\begin{lemma}
\label{-E-is-a-gen}
With the notation and assumptions of Construction~\ref{const:plt-toric},
let $R_E$ be the ray generated by $-E$ in ${\rm Cl}(Y_x) \otimes \mathbb Q$.
Then $R_E\cap {\rm Cl}(Y_x)$ is generated by $-E$.
\end{lemma}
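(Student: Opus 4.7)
My plan is to exploit the divisorial pushforward $\pi_*\colon \clyx \to \clxx$, which is well defined since $\pi$ is proper birational. The key first step is the identification $\ker \pi_* = \zz\cdot[E]$. Indeed, if $\pi_*W$ is principal on $X_x$, say $\pi_*W=(f)$, then $W-(f)$ is a $\pi$-exceptional Weil divisor on $Y_x$, and since $E$ is the only $\pi$-exceptional prime divisor, $W-(f) \in \zz\cdot E$; the reverse inclusion is obvious since $\pi_*E = 0$. Moreover $[E]\neq 0$ in $\clyx$: if we had $E=(g)$ for some $g\in K(Y)^\ast$, then $\pi_*E=(g)=0$ on $X_x$ would force $g\in R^\ast$, giving $E=0$ on $Y_x$, a contradiction. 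Combined with the torsion-freeness of $\clyx$ granted by Lemma~\ref{lem:no-torsion-cl}, this shows that $\ker\pi_*$ is the infinite cyclic group $\zz\cdot[E]$.

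Given this identification, the conclusion is a short diagram chase. Take $[W]\in R_E\cap \clyx$; by definition of $R_E$, there exist coprime integers $n\geq 0$, $m>0$ with $[W]=(n/m)[-E]$ in $\clyx\otimes\qq$. Torsion-freeness of $\clyx$ upgrades $m[W]+n[E]=0$ to an identity in $\clyx$. Applying $\pi_*$ and using $\pi_*[E]=0$ yields $m\pi_*[W]=0$ in $\clxx$; by the torsion-freeness of $\clxx$ (the standing assumption of \S\ref{subsect:torsion.free}, cf.\ the remark following Lemma~\ref{lem:no-torsion-cl}), $\pi_*[W]=0$. Hence $[W]=\alpha[E]$ for some $\alpha\in\zz$ by the kernel computation, and substituting back gives $(m\alpha+n)[E]=0$. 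Since $[E]$ is non-torsion, $m\alpha=-n$, and coprimality of $m,n$ forces $m=1$. We conclude that $[W]=n\cdot[-E]$ is a non-negative integer multiple of $[-E]$, which is exactly the claim.

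I do not anticipate any genuine obstacle here: everything reduces to the identification of $\ker\pi_*$ with $\zz[E]$, which is essentially tautological in the present setting (only one $\pi$-exceptional prime), together with the torsion-freeness of both $\clyx$ and $\clxx$ which is already in force. The lemma is therefore a purely formal consequence of the framework established in Construction~\ref{const:plt-toric} and Lemma~\ref{lem:no-torsion-cl}.
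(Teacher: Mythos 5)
Your proof is correct and follows essentially the same route as the paper's: push forward to $X$, invoke the freeness of ${\rm Cl}(X_x)$, and use that $E$ is the unique $\pi$-exceptional prime divisor with $[E]\neq 0$. The only cosmetic differences are that you package the argument as the computation $\ker\pi_\ast=\zz[E]$ and close with torsion-freeness of ${\rm Cl}(Y_x)$, whereas the paper manipulates explicit divisors $W=W'+\alpha E$ and rules out $E\sim_\qq 0$ via the negativity of $E$ on curves contained in $E$.
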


\begin{proof}
Let $W$ be a Weil divisor on $Y$ such that $mW\sim -E$.
Write $W=W'+\alpha E$ and $E  \not \subset \supp W'$.
Then, $W'':=mW'+(m\alpha+1)E\sim 0$ and $\pi_\ast W''\sim 0$.
Setting $W'_X:=\pi_\ast W'$, then $mW'_X\sim 0$; 
thus, $W'_X\sim 0$ by the freeness of ${\rm Cl}(X_x)$.
Writing $\pi^\ast (W'_X)=W'+\beta E$, $\beta \in \mathbb Z$, then, $mW'\sim -m\beta E$ and 
\[
(m(\alpha-\beta)+1)E \sim 0.
\]
Since the intersection of a general curve on $E$ with $E$ is negative, we conclude that $m=1$ as claimed.
\end{proof}

\begin{lemma}
\label{lem:q-linear}
With the notation and assumptions of Construction~\ref{const:plt-toric},
the following equalities holds:
\begin{align}
\label{eqn:equal.cl.gr}
\rank ({\rm Cl}(Y_x))
=
\rank ({\rm Cl}(X_x))+1
=
\rank({\rm Cl}(E)).
\end{align}
In particular, we can choose $K = \zz E \oplus K' \subset {\rm WDiv}(X)$ in the construction of $\coxyx$ in such a way that the support of any non-zero element of $K'$ is contained in $E_1 \cup \dots \cup E_r$.
\end{lemma}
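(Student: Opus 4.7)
The plan is to establish the two rank equalities via two short exact sequences, and then to derive the structural statement about $K$ from the freeness of ${\rm Cl}(X_x)$.

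For the first equality, I will exhibit the short exact sequence
\[
0 \to \zz[E] \to {\rm Cl}(Y_x) \xrightarrow{\pi_\ast} {\rm Cl}(X_x) \to 0.
\]
The pushforward $\pi_\ast$ is surjective, since every Weil divisor on $X_x$ admits a strict transform on $Y_x$. Its kernel is generated by $[E]$: any class $[D]$ with $\pi_\ast D$ principal must satisfy $D \sim aE$ on $Y_x$ for some $a \in \zz$, because $\pi$ contracts only $E$ and pullback of the principal divisor realizing $\pi_\ast D \sim 0$ differs from $D$ only by a multiple of $E$. Injectivity of $\zz[E]\hookrightarrow {\rm Cl}(Y_x)$ is exactly Lemma~\ref{-E-is-a-gen}, which ensures that $-E$ generates the ray it spans in the class group. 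This yields $\rank {\rm Cl}(Y_x) = \rank {\rm Cl}(X_x) + 1$.

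For the second equality, I will construct a restriction homomorphism $r\colon {\rm Cl}(Y_x) \to {\rm Cl}_\qq(E)$: for each class $[D]$ pick a representative $D$ whose support does not contain $E$ (subtracting an appropriate multiple of $E$), and set $r([D]) := [D\vert_E]$. Well-definedness follows because two such representatives differ by a principal divisor $\divv(f)$ with $v_E(f) = 0$, whose restriction to $E$ is again principal. Injectivity of $r$ is precisely Lemma~\ref{lem:linear-equivalence} (after clearing denominators to apply it to $\qq$-equivalences). Since $r(m_i[E_i]) = [T_i]$ and the $[T_i]$ generate ${\rm Cl}_\qq(E)$ as a $\qq$-vector space, the image of $r$ has rank equal to $\rank {\rm Cl}(E)$. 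Hence $\rank {\rm Cl}(Y_x) = \rank {\rm Cl}(E)$, and combining with the first equality gives $\rank {\rm Cl}(X_x) + 1 = \rank {\rm Cl}(E)$.

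For the structural conclusion, since ${\rm Cl}(X_x)$ is free by Lemma~\ref{lem:no-torsion-cl}, the first short exact sequence splits, yielding a complement $N \subset {\rm Cl}(Y_x)$ to $\zz[E]$ with $N \cong {\rm Cl}(X_x)$. To realize $N$ by divisors supported on $E_1 \cup \dots \cup E_r$, I will show that the classes $[F_1], \dots, [F_r]$ generate ${\rm Cl}(X_x)$ as an abelian group, by tracking classes through the injection $r$ and exploiting the character exact sequence for the toric variety $E$ together with the explicit Cartier indices $m_i$. The main obstacle will be precisely this integral refinement of Corollary~\ref{cor:generation}: while the $\qq$-generation of ${\rm Cl}(X_x)$ by the $[F_i]$ is immediate from that corollary, ruling out extra integral generators beyond $[E_1], \ldots, [E_r]$ modulo $[E]$ requires carefully combining the $r$-injectivity with the explicit toric data on $E$, to ensure that no prime divisor on $Y_x$ outside $\{E, E_1, \ldots, E_r\}$ introduces a class that escapes the $\zz$-span of these.
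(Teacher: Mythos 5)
Your two rank equalities are correct and follow essentially the same route as the paper: the first is the standard excision sequence for a birational morphism extracting the single divisor $E$ (the paper simply cites~\cite[Proposition~6.5]{Har77}), and the second rests, exactly as in the paper, on restriction to $E$ together with Lemma~\ref{lem:linear-equivalence} --- the paper exhibits an explicit $\qq$-basis $[W_{Y,i}],[-E]$ of ${\rm Cl}_\qq(Y_x)$ lifted from a basis of ${\rm Cl}_\qq(E)$, while you package the same two facts as injectivity of your map $r$ plus fullness of its image; the difference is purely expository.

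The gap is in the ``in particular'' clause, and you have located it without closing it. For $K=\zz E\oplus K'$ to map isomorphically onto ${\rm Cl}(Y_x)$ with every nonzero element of $K'$ supported on $E_1\cup\dots\cup E_r$, you need $[E],[E_1],\dots,[E_r]$ to generate ${\rm Cl}(Y_x)$ \emph{integrally}; rational generation (Corollary~\ref{cor:generation}, or the surjectivity of the $\qq$-span under $r$) together with freeness of ${\rm Cl}(Y_x)$ only bounds the cokernel by a finite group, and declaring the integral refinement ``the main obstacle'' to be handled by ``carefully combining the $r$-injectivity with the explicit toric data'' is a statement of intent, not an argument. The missing step can be carried out as follows. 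Let $P$ be a prime divisor on $Y$ other than $E,E_1,\dots,E_r$. Since $(Y,E)$ is plt with different $\sum_j\bigl(1-\tfrac{1}{m_j}\bigr)T_j$, at the generic point of each $T_j$ the germ of $Y\supset E$ is a cyclic quotient of order $m_j$ whose local class group $\zz/m_j$ is generated by $[E_j]$ (because $E_j\vert_E=\tfrac{1}{m_j}T_j$ realizes the minimal fractional coefficient); hence there are integers $a_j$ such that $P-\sum_j a_jE_j$ is Cartier at every such point, while at the remaining codimension-one points of $E$ the different vanishes, $Y$ is smooth there, and Weil divisors are already Cartier. Therefore $\bigl(P-\sum_j a_jE_j\bigr)\vert_E$ is an \emph{integral} Weil divisor on $E$, so it is integrally linearly equivalent to $\sum_j b_jT_j=\bigl(\sum_j b_jm_jE_j\bigr)\vert_E$ with $b_j\in\zz$, and Lemma~\ref{lem:linear-equivalence} upgrades this to $[P]=\sum_j(a_j+b_jm_j)[E_j]$ in ${\rm Cl}(Y_x)$. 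This yields the integral generation and hence the desired choice of $K'$. You were right that this is the delicate point --- the paper's own one-sentence justification only records the rational span --- but a complete proof has to execute the local class group computation above rather than defer it.
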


\begin{proof}
The first equality in~\eqref{eqn:equal.cl.gr} is simply a consequence of~\cite[Proposition 6.5]{Har77}.
\\
To prove the second equality, let $W_1,\dots,W_k$ be Weil divisors supported on the torus invariant divisors of $E$ and such that the classes $[W_1],\dots, [W_k],[-E|_E]$ give a basis
of ${\rm Cl}_\qq(E)$.
For all $i$, we can write $W_i = \sum_{j=1}^r \alpha_{i, j} T_j$, $i=1, \dots, k$, $\alpha_{i, j} \in \qq$.
Setting $W_{Y,i}:= \sum_{j=1}^r \alpha_{i, j} m_j E_j$, $i=1, \dots, k$, then $W_{Y, i} \vert_E = W_i$, cf. Remark~\ref{rem:gamma_i} and we claim that $[W_{Y,1}],\dots,[W_{Y,k}],[-E]$ is a basis of ${\rm Cl}_\qq (Y_x)$.
First, we prove that the $W_{Y, i}$ generate ${\rm Cl}_\qq (Y_x)$.
Indeed, let $D$ be a Weil divisor on $Y$ whose support does not contain $E$.
There exist $\alpha_1,\dots,\alpha_k \in \qq$ such that $\sum_{i=1}^k \alpha_i W_i \sim_\qq D|_E$.
Lemma~\ref{lem:linear-equivalence} implies that $\sum_{i=1}^k \alpha_i [W_{Y,i}]=[D]$ in ${\rm Cl}_\qq (Y_x)$.
On the other hand, the classes $\{ [W_{Y, 1}], \dots, [W_{Y, k}], [E]\}$ are linearly independent in ${\rm Cl}_\qq (Y_x)$ since they restrict to linearly independent classes in ${\rm Cl}_\qq (E)$.
\\
To show the second part of the statement, it suffices to notice that the classes $[E_i \vert_E]$ span the image of the restriction map
\[
{\rm Cl}(Y_x) \to {\rm Cl}_\qq(E), 
\quad 
[D] \mapsto [D\vert_E].
\]
\end{proof}

\begin{lemma}\label{dimension-of-cox-ring}
With the notation and assumptions of Construction~\ref{const:plt-toric},
the dimension of the Cox ring $\coxyx$ is at most $\dim Y + \rho(E)$.
\end{lemma}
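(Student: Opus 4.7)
The plan is to exploit the $\clyx$-grading on $\coxyx$, which gives an action of the torus $T := \Spec \mathbb{K}[\clyx]$ of dimension $\rank \clyx$ on $\Spec \coxyx$, and bound the dimension via the $T$-equivariant map to $X$.

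First I would observe that since $\pi \colon Y \to X$ is a proper birational morphism and $X = \Spec R$ is affine, the degree-zero piece satisfies $\coxyx^0 = H^0(Y, \mathcal{O}_Y) = R$. The inclusion $R \hookrightarrow \coxyx$ of graded rings therefore induces a dominant $T$-equivariant morphism
\[
\phi \colon \Spec \coxyx \longrightarrow \Spec R = X,
\]
where $T$ acts trivially on $X$. In particular $\phi$ is fibrewise $T$-equivariant.

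Next I would bound the dimension of the generic fibre of $\phi$. For any $D \in K$, the module $H^0(Y, \mathcal{O}_Y(D))$ sits inside $K(Y) = K(X)$ as a subset of rational functions, so the tensor product $H^0(Y, \mathcal{O}_Y(D)) \otimes_R K(X)$ is either $0$ (when no global section exists) or a one-dimensional $K(X)$-vector space inside $K(X)$. Hence
\[
\coxyx \otimes_R K(X) \;\subseteq\; \bigoplus_{D \in K_{\rm eff}} K(X),
\]
where $K_{\rm eff} \subseteq K \cong \clyx$ is the submonoid of effective classes (which is finitely generated since $Y \to X$ is a relative Mori dream space by Proposition~\ref{prop:toroida-blow-up}). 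The spectrum of such a semigroup algebra over $K(X)$ has dimension at most the rank of the group generated by $K_{\rm eff}$, which is at most $\rank \clyx$. Therefore
\[
\dim \coxyx \;\leq\; \dim R + \rank \clyx \;=\; \dim Y + \rank \clyx.
\]

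Finally I would identify $\rank \clyx$ with $\rho(E)$. By Lemma~\ref{lem:q-linear}, $\rank \clyx = \rank \Cl(E)$, and since $E$ is a normal projective toric variety (as established in the proof of Proposition~\ref{prop:toroida-blow-up}), this rank agrees with the Picard number $\rho(E)$. Combining these bounds yields $\dim \coxyx \leq \dim Y + \rho(E)$. The main subtlety is the bookkeeping for the generic fibre dimension: one must ensure that the contribution of non-effective classes is harmless and that the monoid $K_{\rm eff}$ generates a subgroup of rank at most $\rank \clyx$, which ultimately follows from the finite generation of $\coxyx$ as an $R$-algebra together with the torsion-freeness of $\clyx$ (Lemma~\ref{lem:no-torsion-cl}).
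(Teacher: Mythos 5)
Your proof is correct and follows essentially the same route as the paper: both arguments reduce to the inequality $\dim\coxyx\leq\dim X+\rank\clyx$ together with Lemma~\ref{lem:q-linear} to identify $\rank\clyx$ with $\rho(E)$. The only difference is that the paper obtains the key inequality by quoting the GIT-quotient description of $X$ as $\Spec\coxyx$ modulo the Picard torus (citing ADHL), whereas you derive it by hand from the dominant morphism $\Spec\coxyx\to X$ and the observation that each graded piece has rank at most one over $R$, which is a perfectly valid (and somewhat more self-contained) substitute.
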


By Construction~\ref{const:plt-toric}, $\dim Y + \rho(E)=r+1$.

\begin{proof}
The variety $X$ is a GIT quotient of the spectrum of $\coxyx$ by the Picard torus ${\rm Spec}(\mathbb{K}[{\rm Cl}(Y_x)])$, see~\cite[Remark~1.2.3.2]{ADHL15}.
Hence, 
$\dim\coxyx
\leq 
\dim X_x + \rank {\rm Cl}(E)$.
By Lemma~\ref{lem:q-linear}, the Picard torus has dimension $\rank{\rm Cl}(E)=\rho(E)$.
Hence, $\dim \coxyx\leq \dim Y+\rho(E)\leq \dim E+\rho(E)+1=r+1$.
\end{proof}

Now, we turn to prove that the spectrum of the $\clyx$-graded ring $\coxyx$ is regular at the maximal ideal $\mathfrak{m}_{\rm Cox}$.
As we are working under the assumption that ${\rm Cl}(X_x)$ has no torsion, we shall use Lemma~\ref{lem:linear-equivalence} to measure linear equivalence on $Y$ by restricting to $E$.

\begin{lemma}
\label{lem:regular-Cox}
With the notation and assumptions of Construction~\ref{const:plt-toric},
the spectrum of the ring $\coxyx$ is regular at the maximal ideal $\mathfrak{m}_{\rm Cox}$.
\end{lemma}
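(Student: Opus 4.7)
The strategy is to prove the equality $\mcox = (e, x_1, \ldots, x_r)$ in $\coxyx$, after which the dimension bound $\dim \coxyx \leq r+1$ from Lemma~\ref{dimension-of-cox-ring} forces the localization at $\mcox$ to be a regular local ring of dimension exactly $r+1$, generated by these $r+1$ parameters. The containment $(e, x_1, \ldots, x_r) \subseteq \mcox$ is clear from the definitions.

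For the reverse containment, first handle the degree $0$ piece: for $f \in \mathfrak{m}_x$, since $E = \pi^{-1}(x)$ set-theoretically, the pullback $\pi^\ast f$ vanishes along $E$, whence $f \in (e)$. For a non-zero class $[D]$ and a homogeneous $\sigma \in H^0(Y, \mathcal{O}_Y(D))$, consider the associated effective divisor $F := D + \divv(\sigma)$ and write $F = aE + \sum b_i E_i + F'$, where $F'$ has no component equal to $E$ or any $E_i$. If $a > 0$ or some $b_i > 0$, factor $\sigma = e^a \prod x_i^{b_i} \sigma'$ to place $\sigma$ in $(e, x_1, \ldots, x_r)$ directly.

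It remains to handle the case $F = F'$. The plan is to produce an explicit $\tau \in (x_1, \ldots, x_r)$ with $\tau|_E = \sigma|_E$, so that $\sigma - \tau$ vanishes along $E$ and therefore lies in $(e)$. The restriction $\sigma|_E$ is a non-zero section of the rank-one reflexive sheaf on $E$ attached to $D|_E$. An argument paralleling Lemma~\ref{mcox-maximal} shows $[D|_E] \neq 0$ in ${\rm Cl}(E)$: otherwise $\sigma|_E$ would be a non-vanishing constant on $E$, forcing $F \cap E = \emptyset$ and contradicting $\sigma \in \mcox$. Because $E$ is a projective toric variety, $\cox(E) \cong \kk[y_1, \ldots, y_r]$ with $y_i$ the canonical section cutting out $T_i$. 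Every element of the non-zero ${\rm Cl}(E)$-graded piece indexed by $[D|_E]$ has strictly positive total degree in the $y_i$ and thus lies in the irrelevant ideal $(y_1, \ldots, y_r)$.

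The lift $\tau$ is produced monomially: using $(x_i^{m_i})|_E = y_i$ (up to a unit from $R$), each monomial $y^{\alpha} = y_1^{\alpha_1}\cdots y_r^{\alpha_r}$ appearing in $\sigma|_E$, with some $\alpha_i > 0$, lifts to $x^{m\alpha} := x_1^{\alpha_1 m_1}\cdots x_r^{\alpha_r m_r} \in (x_1, \ldots, x_r)$. The $\clyx$-degree of $x^{m\alpha}$ is $[\sum \alpha_i m_i E_i]$, which restricts to $[\sum \alpha_i T_i] = [D|_E]$ on $E$; combining Lemma~\ref{lem:linear-equivalence} with the torsion-freeness of $\clyx$ from Lemma~\ref{lem:no-torsion-cl} forces this class to equal $[D]$ on the nose, so every monomial lift sits in the same graded piece as $\sigma$ and they can be summed with the coefficients of $\sigma|_E$ to build $\tau \in (x_1, \ldots, x_r)$ with $\tau|_E = \sigma|_E$. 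The main technical obstacle is precisely this degree bookkeeping: the Cartier indices $m_i$ prevent $x_i$ from lifting $y_i$ directly and require working with $x_i^{m_i}$, while the torsion-freeness of $\clyx$ is essential so that the natural monomial lifts all land in the correct $\clyx$-graded piece. Once $\tau$ is in place, $\sigma - \tau$ vanishes along $E$, is divisible by $e$, and we conclude $\sigma \in (e, x_1, \ldots, x_r)$, completing the proof.
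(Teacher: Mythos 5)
Your overall architecture is the same as the paper's first proof: show $\mcox=(e,x_1,\dots,x_r)$, combine with Lemma~\ref{dimension-of-cox-ring}, and conclude regularity; the degree-zero reduction and the stripping of $E$- and $E_i$-components are fine. The gap is in the lifting step, and it is exactly the point you flag as ``the main technical obstacle'' but do not actually resolve: the fractional Cartier indices $m_i$. Since $E_i\vert_E=\tfrac{1}{m_i}T_i$, the restriction $D\vert_E$ of a class $D\in K$ is in general only a $\qq$-divisor whose class lies in ${\rm Cl}_\qq(E)$ but \emph{not} in ${\rm Cl}(E)$; concretely, for $D=E_1$ with $m_1>1$ the class $[D\vert_E]=[\tfrac{1}{m_1}T_1]$ indexes no graded piece of $\cox(E)=\kk[y_1,\dots,y_r]$. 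The section $\sigma\vert_E$ naturally lives in $H^0(E,\mathcal O_E(\lfloor D\vert_E\rfloor))$, i.e.\ in the graded piece $[\lfloor D\vert_E\rfloor]$, so its monomials $y^\alpha$ satisfy $[\sum\alpha_iT_i]=[\lfloor D\vert_E\rfloor]$, not $[D\vert_E]$. Consequently your lifts $x^{m\alpha}$ have $\clyx$-degree restricting to $[\lfloor D\vert_E\rfloor]\neq[D\vert_E]$ in ${\rm Cl}_\qq(E)$, and Lemma~\ref{lem:linear-equivalence} then forces $[\sum\alpha_im_iE_i]\neq[D]$: the element $\tau$ lands in the \emph{wrong} graded piece, $\sigma-\tau$ is not homogeneous, and the conclusion ``$\sigma-\tau$ vanishes along $E$, hence lies in $(e)$'' does not follow. (In the extreme case $\lfloor D\vert_E\rfloor=0$, $\sigma\vert_E$ is a nonzero constant and your recipe produces no element of $(x_1,\dots,x_r)$ at all.) Your argument is correct only when all $m_i=1$.

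The paper avoids this by matching \emph{divisors} rather than sections: the torus-invariant part of $W\vert_E$, with coefficients $q_i/m_i$, is lifted by $x_i^{q_i}$ (not by $m_i$-th powers, so the fractional parts are reproduced exactly), while each non-invariant prime component $P$, along which $W$ is Cartier, is lifted via $p(x_1^{m_1},\dots,x_r^{m_r})$ for the defining polynomial $p\in\cox(E)$ of $P$. Having arranged $W'\vert_E=W\vert_E$ as divisors, one cannot directly compare sections because $\mathcal O_Y(D)$ is only reflexive along the $T_i$; the paper therefore passes to a Cartier multiple $m$, obtains $(w')^m\vert_E=\lambda w^m\vert_E$, and uses that $(e)$ is prime (irreducibility of $e$, Lemma~\ref{-E-is-a-gen}) to extract an $m$-th root of the relation and conclude $w\in(x_1,\dots,x_r,e)$. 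To repair your proof you would need both corrections: lift the fractional invariant part separately so as to land in the graded piece $[D]$, and replace the direct equality of restricted sections by the Cartier-multiple-plus-primality argument.
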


\begin{proof}
By Lemma~\ref{dimension-of-cox-ring},
$\dim \coxyx\leq r+1$.
Hence, it suffices to prove that $\mathfrak{m}_{\rm Cox}$ is generated by $r+1$ elements.
We shall show that the elements $x_1,\dots,x_r, e \in \coxyx$ provide a set of generators of $\mcox$.
\\
Let $0 \neq w \in \mcox$ be a homogeneous element $w \in H^0(Y, \mathcal O_Y(D))$, for some $D \in K$.
Let $W:= \ddivv(w)$ on $Y$.
We may assume that $W$ doesn't contain $E$ in its support, as otherwise $w\in \langle e\rangle$.
In particular, since
\[
H^0(Y, \mathcal O_Y(-E)) \cdot e = \mathfrak{m}_x \subset H^0(Y, \mathcal O_Y) = H^0(X, \mathcal O_X),
\]
we can assume that the degree of $w \in \coxyx$ is not zero.
\\

\noindent {\it Claim.}  
If there exists $w'\in \langle x_1, \dots, x_r\rangle$ such that $W' := \ddivv(w')$ satisfies $W'|_E=W|_E$, then $w \in \langle x_1, \dots, x_r, e\rangle$.

\begin{proof}[Proof of Claim]
By Lemma~\ref{lem:linear-equivalence}, $[W']= [W] \in \clyx$.
Hence, $w$ and $w'$ are homogeneous element of the same degree in $\coxyx$.
Let $m$ be divisible enough so that $mW$ and $mW'$ are Cartier divisors on $Y$.
Then, $mW'|_E =mW|_E$ and $(w')^m\vert_E = \lambda w^m\vert_E$, for some $\lambda \in \mathbb K^\ast$, as $E$ is projective. 
Hence, by the restriction exact sequence, we conclude that $w^m-\lambda(w')^m \in \langle  e\rangle$.
By~\cite[Proposition 1.3.3.4]{ADHL15}, the ideal $\langle e\rangle$ is prime since $e$ is irreducible in $\coxyx$ by Lemma~\ref{-E-is-a-gen}.
Hence, $w-\mu_m w' \in \langle  e\rangle$ for an appropriate $m$-th root $\mu_m$ of $\lambda$.
As we assumed that $w' \in \langle x_1,\dots, x_r\rangle$, then $w\in \langle x_1,\dots,x_r,e\rangle$ as claimed.
\end{proof}
\noindent
We are now going to construct an element $w' \in \coxyx$ satisfying the conditions of the Claim.
To this end, we may assume that $W$ is a reduced effective Weil divisor.
Let 
$T_i \in E^1$ be a torus invariant divisor.
Let us recall that ${\rm coeff}_{T_i}W|_E=\frac{q}{m_i}$, $q \in \mathbb N$ and $q = {\rm coeff}_P(W|_E)$.
On the other hand, if $P$ is a prime component of $W|_E$ which is not torus invariant, then $W$ is a Cartier divisor at the generic point of $P$ and ${\rm coeff}_P(W|_E)P$ is an integral multiple of $P$.
As ${\rm Cox}(E)$ is a free polynomial ring generated by the sections $t_j$, corresponding to the torus invariants divisors $T_j$, $j=1, \dots, r$,~\cite[Theorem 2.1.3.2]{ADHL15}, then $P$ is just the vanishing locus of $p(t_1,\dots,t_r)$ for some polynomial $p \in \mathbb K[t_1, \dots, t_r]$ which is homogeneous with respect to the natural ${\rm Cl}(E)$-grading on $K[t_1, \dots, t_r]$.
All monomials of $p(t_1, \dots, t_r)$ correspond to effective Weil divisors supported on the toric invariant divisors and 
they belong to the same linear equivalence class as $P$.
Considering the polynomial $p(x_1^{m_1},\dots,x_r^{m_r}) \in \clyx$, by Lemma~\ref{lem:linear-equivalence}, all monomials of $p(x_1^{m_1},\dots, x_m^{m_r})$ in the $x_i$ correspond to effective Weil divisors in the same linear equivalence class on $Y$ which are supported on $E_1\cup\dots\cup E_r$.
Thus, $p(x_1^{m_1},\dots,x_r^{m_r})\in \langle x_1,\dots,x_r\rangle$ is ${\rm Cl}(Y_x)$-invariant.
We conclude that the associated Weil divisor $W'$ on $Y$ satisfies $W'|_E=P$.
This concludes the construction of $w'$.
\end{proof}

\begin{proof}[Alternative proof of Lemma~\ref{lem:regular-Cox}]
We consider the elements  
$x_1,\dots,x_r,e \in \coxyx$, cf. Construction~\ref{const:plt-toric}.
We define $\scoxyx:={\rm Spec}\coxyx$;
for an element $s \in \coxyx$, ${\rm div}(s)$ will denote the corresponding principal divisor on $\scoxyx$.
Let $K_{\scoxyx}+ {\rm div}(x_1)+\dots +{\rm div}(x_r)+{\rm div}(e)$ be the log pull-back of 
$K_Y+E+E_1+\dots+E_r$ to $\scoxyx$,
via the natural rational map $\scoxyx\dashrightarrow Y$, cf.~\cite[Construction 1.6.3.1]{ADHL15}.
The divisor $K_{\scoxyx}$ is Cartier by the freeness of the Class group and it is actually the pull-back of $K_X$, cf.~\cite[Theorem 1.1]{Arz09}.
\\
We claim that $(\scoxyx,{\rm div}(x_1)+\dots +{\rm div}(x_r)+{\rm div}(e))$ is a log canonical pair.
This argument is similar to the one in~\cite{KO15}.
Let $W\rightarrow \scoxyx$ be a torus equivariant log resolution of the above log pair.
Assume that $W$ admits a good torus quotient $W\rightarrow W_Y$ so that $W_Y\rightarrow Y$ is birational.
By the proof of~\cite[Theorem 4.7]{LS13}, there exists a $\qq$-trivial log sub-pair $(W_Y,B_{W_Y})$ so that the total discrepancy of $(\coxyx,{\rm div}(x_1)+\dots+{\rm div}(x_r)+{\rm div}(e))$ is larger than or equal to the total discrepancy of $(W_Y,B_{W_Y})$.
Furthermore, by equation (7) in the proof of~\cite[Theorem 4.7]{LS13}, we know that $B_{W_Y}$ has coefficient one at the strict transform of each $E_1,\dots,E_r$ and $E$.
By the negativity lemma, we conclude that
$K_{W_Y}+B_{W_Y}$ is the log pull-back of
$K_Y+E_1+\dots+E_r+E$ to $W_Y$.
In particular, $(W_Y,B_{W_Y})$ is a log canonical sub-pair.
Therefore, the log pull-back of 
$(\scoxyx,{\rm div}(x_1)+\dots+{\rm div}(x_r)+{\rm div}(e))$ to $W$ is a log canonical sub-pair.
In particular, the above pair is log canonical.
Now, each principal divisor ${\rm div}(x_i)$ and ${\rm div}(e)$ passes through the closed point defined by $\mathfrak{m}_{\rm Cox}$.
We have exactly $r+1$ Cartier divisors passing through that closed point.
Recall by Lemma~\ref{dimension-of-cox-ring}, that $\scoxyx$ has dimension $r+1$. 
By~\cite[Lemma 2.4.3]{BMSZ18}, $(\scoxyx,{\rm div}(x_1)+\dots +{\rm div}(x_r)+{\rm div}(e))$ is log smooth at the closed point corresponding to $\mathfrak{m}_{\rm Cox} \subset \coxyx$.
Hence, the elements $x_1,\dots,x_r,e$ generate $\mathfrak{m}_{\rm Cox}$.
\end{proof}

Our first step toward a proof of Proposition~\ref{prop:toric-plt-implies-toric} is to show that the formally toric singularity $x \in X$ is formally isomorphic to the singularity at the vertex of the cone over the exceptional divisor $E$ of $\pi$ with respect to the semiample $\qq$-divisor $-E|_E$.
To this end, we denote by ${\rm Cone}(E,-E|_E)$ the orbifold cone corresponding to the semiample and big $\qq$-divisor $-E|_E$ and by $\mathfrak{m}_v$ the maximal ideal of the vertex of the cone which we denote by $v$.
Let $R(E, -E\vert_E)$ be the $\mathbb N$-graded ring
\[
\ringcone:=\bigoplus_{m\in \nn} H^0(E,m(-E|_E))t^m \subset \mathbb K(E)[t],
\]
where $t$ is simply an auxiliary indeterminate.
We set $\mathfrak{m}_{E} \subset \ringcone$ to be the maximal ideal
\[
\mathfrak{m}_{E} := \bigoplus_{m\in \nn_{>0}} H^0(E,m(-E|_E))t^m.
\]
By definition, 
\[
\conee := {\rm Spec}(\ringcone)
\]
and $\mathfrak m_v:=\mathfrak{m}_{E}$.
Alternatively, taking $c \in \nn_{>0}$ the least natural number such that $-cE \vert_E$ is Cartier and
denoting by $R^{(c)}(E, -E\vert_E)$ the ring
\[
\ringconec:=\bigoplus_{m\in \nn} H^0(E,m(-cE|_E))t^{cm}\subset \mathbb K(E)[t^c],
\]
then $\conee$ can be defined as the normalization of ${\rm Spec}(\ringconec)$ inside $\mathbb K(E)(t)$.

\begin{proposition}
\label{prop:no-torsion-toric}

With the notation and assumptions of Construction~\ref{const:plt-toric},
\[
\widehat{X_x} \simeq \conev
\]
and under this isomorphism the completions of the $F_i$ correspond to the completions of the toric invariant divisors of $\conee$.
In particular, $(x \in X, \sum_{i=1}^r F_i)$ is a formally toric singularity.
\end{proposition}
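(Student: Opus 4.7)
The plan is to leverage the regularity of the relative Cox ring at $\mcox$ (Lemma~\ref{lem:regular-Cox}) to describe $\widehat{\mathcal O_{X,x}}$ as the $\clyx$-invariant subring of a formal power series ring, and then to match this invariant subring with the completion of $R(E,-E|_E)$ at the vertex.

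First I would apply Lemma~\ref{lem:regular-Cox}: the local ring $(\coxyx)_{\mcox}$ is regular of dimension $r+1$ with maximal ideal generated by the $\clyx$-homogeneous elements $x_1,\dots,x_r,e$ of degrees $[E_i]$ and $[E]$. The graded Cohen structure theorem then yields a $\clyx$-graded isomorphism
\[
\compcox \simeq \kk[\![x_1,\dots,x_r,e]\!].
\]
Since $\coxyx_0=\mathcal O_X$ locally near $x$ and $\mcox\cap \coxyx_0=\mathfrak m_x$, passing to degree-zero parts produces $\widehat{\mathcal O_{X,x}}\simeq(\compcox)_0$, which is the completion (at its irrelevant maximal ideal) of the $\nn$-graded subring $A=\bigoplus_{b\geq 0}A_b\subset \kk[x_1,\dots,x_r,e]$ whose $b$-th piece $A_b$ is spanned by the monomials $x_1^{a_1}\cdots x_r^{a_r}e^b$ satisfying the degree-zero condition $\sum a_i[E_i]+b[E]=0$ in $\clyx$.

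The key step is identifying $A$ with $R(E,-E|_E)$. Using the Cox description $\Cox(E)=\kk[t_1,\dots,t_r]$ together with the orbifold formula $E_i|_E=T_i/m_i$ from Construction~\ref{const:plt-toric}, Lemma~\ref{lem:linear-equivalence} (together with the torsion-freeness of $\clyx$ from Lemma~\ref{lem:no-torsion-cl}) translates the $\clyx$-degree-zero condition on $(a,b)$ into a combinatorial condition on the lattice points of the rational polyhedral cone describing $\conee$ as a toric variety. Assembling these monomials by their $e$-exponent, each graded piece $A_b$ becomes canonically identified with $H^0(E,\lfloor -bE|_E\rfloor)$ via the Cox ring of $E$; summing over $b$ produces a graded $\kk$-algebra isomorphism $A\simeq R(E,-E|_E)$, which passes to completions to yield the desired $\widehat{\mathcal O_{X,x}}\simeq \widehat{R(E,-E|_E)_v}$. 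The assertion about the $F_i$ is then immediate: the principal ideal $(x_i)\subset\coxyx$ cuts out $E_i$ on $Y$, hence $F_i$ on $X$, and corresponds under the isomorphism to the defining equation of the torus-invariant divisor of $\conee$ lying above $T_i$.

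The main obstacle is the monomial-level matching in the key step: one must carefully track the orbifold integrality conditions (the floor functions in $\lfloor m(-E|_E)\rfloor$ and the fractional coefficients $a_i/m_i$) and verify that they match up with the $\clyx$-degree-zero monomials. The torsion-freeness of $\clyx$ (Lemma~\ref{lem:no-torsion-cl}) is essential here, as it is what makes the restriction map in Lemma~\ref{lem:linear-equivalence} faithful enough to upgrade the natural $\qq$-level correspondence between classes on $Y_x$ and classes on $E$ to the integral monomial-level bijection required for the ring isomorphism.
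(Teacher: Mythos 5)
Your proposal follows essentially the same route as the paper: regularity of $\coxyx$ at $\mcox$, the graded isomorphism $\compcox\simeq\kk[\![x_1,\dots,x_r,e]\!]$, identification of $\widehat{\mathcal O_{X,x}}$ with the completion of the subring of degree-zero monomials, and matching that subring with $\ringcone$. The one step you flag as the main obstacle --- matching the degree-zero monomials of $e$-exponent $b$ with $H^0(E,-bE|_E)$ through the orbifold integrality conditions --- is resolved in the paper by applying relative Kawamata--Viehweg vanishing to get surjectivity of restriction onto $H^0(E,-ctE|_E)$ for the Cartier index $c$ of $-E|_E$, identifying the Veronese subring $\ringconec$ with the monomial ring $\mathbb{K}[V'_c]$, and then passing to normalizations to recover $\ringcone\simeq\mathbb{K}[V']$, which sidesteps the floor-function bookkeeping.
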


\begin{proof}
By Lemma~\ref{lem:regular-Cox}, the spectrum of ${\rm Cl}(Y/X)$ is regular at the maximal ${\rm Cl}(Y_x)$-invariant ideal $\mathfrak{m}_{\rm Cox}$.
Let $\kk[ \xi_1,\dots,\xi_r,\eta ]$ be a free graded $\mathbb K$-algebra for which we define the following ${\rm Cl}(Y_x)$-grading:
\begin{align}
\label{eqn:grad.defn}
\deg \xi_i= [E_i], \quad \deg \eta= [E].
\end{align}
Then, we can define the following graded morphism of $\mathbb K$-algebras
\begin{align*}
\phi \colon \kk[ \xi_1,\dots,\xi_r,\eta ] &\to \coxyx\\
\xi_i \mapsto x_i\quad & \eta \mapsto e
\end{align*}
which, by definition, is equivariant for the $\clyx$-grading on the two rings.
Denoting by $\compcox$ the completion of $\coxyx$ with the respect to the $\mcox$-adic topology, by Lemma~\ref{lem:regular-Cox}, $\phi$ descends to an isomorphism of the completions
\[
\hat \phi \colon 
\kk[\![ \xi_1,\dots,\xi_r, \eta ]\!] \to \compcox.
\] 
Any monomial in the $\xi_1, \dots, \xi_r, \eta$ has a $\clyx$-grading that descends from the $\clyx$-grading in~\eqref{eqn:grad.defn}.
Such grading descends also to the images of the monomials via $\hat \phi$, as $\phi$ is $\clyx$-equivariant by construction.
By abusing notation, we will indicate the images of $x_i$ and $e$ via the natural map $\coxyx \to \widehat{\coxyx_{\mcox}}$ with the same notation.
Hence, any monomial in the $x_i, e$, $i=1, \dots, r$ in $\compcox$ inherit the same $\clyx$-degree as their original one in $\coxyx$.
\\

\noindent
{\it Claim.} 
Let $S$ be the ring of power series on degree $0$ monomials on the variables $\xi_1,\dots,\xi_r,\eta$.
Then, 
\[
S=\hat \phi(\compR),
\]
where $\compR$ is the completion of $R_{\mathfrak m_x}$ with respect to the $\mathfrak m_x$-adic topology.

\begin{proof}[Proof of Claim]
The following diagram is commutative and all the arrows correspond to injective morphisms
\[
\xymatrix{
R\ar[r]\ar[d] & \coxyx\ar[d]\\
\compR \ar[r] & \widehat{\coxyx}_{\mathfrak{m}_{\rm Cox}}
} 
\]
The inclusion $\compR \subset \hat{\phi}^{-1}(S)$ follows from the fact that $R_{\mathfrak{m}_x}$ consists of fractions of homogenous elements of $\coxyx$ of degree $0$.
On the other hand, any element of $\hat{\phi}^{-1}(S)$ can be written as a formal sum of degree $0$ monomials in the $x_i, e$, with respect to the ${\rm Cl}(Y_x)$-grading.
As all such monomials are contained in $\compR$, by degree considerations, then the inclusion $\compR \subset \hat{\phi}^{-1}(S)$ follows from~\cite[3.9.8]{EGA1} since the $\mcox$-adic topology on $\coxyx$ induces the $\mathfrak{m}_x$-adic topology on $R_{\mathfrak{m}_x}$.
\end{proof}
\noindent
Let $V \subset \mathbb N^{r+1}_{\geq 0}$ be the submonoid defined by
\[
V:= \left \{(a_1, \dots, a_r, b) \in  \mathbb N^{r+1}
\ \Big \vert \
\left[ \sum_{i=1}^r a_i E_i + b E \right] = 0 \in \clyx \right \}.
\]
Then the above claim shows that there exists an isomorphism
\begin{align}
\label{eqn:isom.compl.1}
\compR \simeq 
\left \{ 
\sum_{(a_1, \dots, a_r, b) \in V}^\infty c_{(a_1, \dots, a_r. b)} 
\xi_1^{a_1} \dots \xi_r^{a_r} \eta^b, 
\ \ 
c_{(a_1, \dots, a_r, b)} \in \mathbb K 
\right \},
\end{align}
Given a degree $0$ monomial 
$\xi_1^{a_1}\dots \xi_r^{a_r} \eta^b$ the exponent $b$ of $\eta$ is automatically defined by the $\clyx$-grading and the exponents $a_i$ of the $\xi_i$, since 
\[
\left[\sum_{i=1}^r a_i E_i \right] = [-bE] \in \clyx.
\]
Defining a morphism $\tau$ of monoids by
\begin{align*}
\tau  \colon 
\mathbb Z_{\geq 0}^r &\to \clyx\\
 (a_1, \dots, a_r) &\mapsto \left[\sum_{i=1}^r a_iE_i \right],
\end{align*}
and defining $V':=\tau^{-1}(\mathbb{N}[-E])$, then monomials of degree 0 in the $\xi_i, \eta$ are all of the following form
\begin{align}
\label{eqn:monomials.def}
\xi_1^{a_1}\dots \xi_r^{a_r} \eta^{\tilde{\tau}(a_1, \dots, a_r)}  \text{ for} \ (a_1, \dots, a_r) \in V', 
\end{align}
where $\tilde{\tau}(a_1, \dots, a_r)$ is the unique non-negative integer such that $[\sum a_i E_i + \tilde{\tau}(a_1, \dots, a_r) E ]= 0 \in \clyx$.
Thus, $\tilde{\tau}$ induces a natural $\mathbb N$-grading $\tilde{\tau} \colon V' \to \mathbb N$ on the monomials in~\eqref{eqn:monomials.def}
defined by simply defining the degree of $\xi_1^{a_1}\dots \xi_r^{a_r}$, for $(a_1, \dots, a_r) \in V'$, to be $\tilde{\tau}(a_1, \dots, a_r)$.
This observation and the Claim imply that there exists an isomorphism
\begin{align}
\label{eqn:isom.compl}
\compR \simeq 
S':=
\left \{ 
\sum^\infty_{(a_1, \dots, a_r) \in V'} c_{(a_1, \dots, a_r)} y_1^{a_1} \dots y_r^{a_r}, 
\ c_{(a_1, \dots, a_r)} \in \mathbb K 
\right \},
\end{align}
where the multiplication for the ring on the right is simply determined by the multiplication of the monomials in the $y_i$.
In particular, the ring $S'$ in~\eqref{eqn:isom.compl} is isomorphic to $S$, by~\eqref{eqn:isom.compl.1}; 
thus, $S'$ is normal, being isomorphic to the completion of a normal excellent local ring.
Moreover, as $V'$ is the set of integral points of a saturated rational polyhedral cone by construction, cf. Lemma~\ref{-E-is-a-gen}, then $S'$ is the completion of the monomial ring $\mathbb K[V']$ whose spectrum is a normal toric variety.
\\
To conclude the proof, we just need to show that 
\begin{align}
\label{eqn:isom.spec.compls}
\widehat{X_x} \simeq \conev.
\end{align}
As $-E|_E$ has fractional coefficients only along the torus invariant divisors of $E$, ${\rm Cone}(E,-E|_E)$  is endowed with a natural structure  of an affine toric variety, cf. also the paragraph before the statement of the proposition.
By Lemma~\ref{E-no-torsion}, ${\rm Cl}(E)$ is torsion-free;
let $c$ be the minimal positive natural number such that $cE$ is Cartier along $E$.
By relative Kawamata-Viehweg  vanishing applied to the short exact sequence,
\[
\xymatrix{
0 \ar[r] &
\mathcal O_Y(-(ct+1)E) \ar[r] &
\mathcal O_Y(-ctE) \ar[r] &
\mathcal O_E(-ctE\vert_E) \ar[r] &
0, &
t \in \mathbb Z_{\geq 0},
}
\]
it follows immediately that 
\[
H^0(Y, \mathcal O_E(-ctE\vert_E)) \simeq \pi_\ast \mathcal O_Y(-ctE) / \pi_\ast \mathcal O_Y(-(ct+1)E).
\]
By construction, a basis of $H^0(Y, \mathcal O_E(-ctE\vert_E))$ is given by restricting to $E$ monomials of the form	
\[
x_1^{a_1} \dots x_r^{a_r} 
\quad \text{ for } 
(a_1, \dots, a_r) \in V' 
\text{ and } 
\tilde{\tau}(a_1, \dots, a_r)=ct.
\]
\noindent
Hence, defining $V'_c$ to be the monoid $V'_c:= V' \cap \tilde{\tau}^{-1}(\mathbb N c)$, then $\ringconec$ is isomorphic as a graded algebra to the $\mathbb K$-algebra $\mathbb K[V'_c]$ generated by monomials $x_1^{a_1} \dots x_r^{a_r}$ with $(a_1, \dots, a_r) \in V'_c$.
As, the normalization of $\mathbb K[V'_c]$ is $\mathbb K[V']$
then $\ringcone \simeq \mathbb K[V']$ and passing to the completions we obtain that
\begin{align}
\label{eqn:isom.compl.2}
\widehat{\ringcone_{\mathfrak{m}_{E}}} \simeq 
\left \{ \sum^\infty_{(a_1, \dots, a_r) \in V'} c_{(a_1, \dots, a_r)} y_1^{a_1} \dots y_r^{a_r}, \ c_{(a_1, \dots, a_r)} \in \mathbb K \right \}
\end{align}
and the isomorphism in~\eqref{eqn:isom.spec.compls} follows from~\eqref{eqn:isom.compl}.
As in~\eqref{eqn:isom.compl} the support of any monomial in the $y_i$ is contained in $\sum_{i=1}^r F_i$, while in~\eqref{eqn:isom.compl.2} such monomials correspond to toric invariant elements, then it immediately follows that the $\sum_{i=1}^r F_i$ is a formally toric divisor, which concludes the proof.
\end{proof}

\subsubsection{The general case} 
We proceed to prove Proposition~\ref{prop:toric-plt-implies-toric} in full generality, i.e., without assuming that the $\clyx$ is torsion free.
In order to do so, we will reduce the general case, inductively, to the one in which ${\rm Cl}(X_x)$ is torsion free case by means of appropriate Galois coverings.
To this end, let us introduce the following notion of toric formality for finite group actions.
\begin{definition}
\label{def:finite-toric-act}
{\em 
Let $G$ be a finite abelian group.
Let $x \in X$ be a formally toric singularity and let $\xymatrix{
G \hspace{-.2cm} & \ar@(ul,dl)[]^{\rho} X
}$ be a faithful action fixing $x$.
We say that the action $\rho$ is a {\em formally toric action} if there exists a formal isomorphism $\phi \colon \widehat{X_x} \to \widehat{Z_{z}}$ to the completion of an affine toric variety $Z$ at a torus invariant point $z$ and a faithful action 
$\xymatrix{
Z \ar@(ur,dr)[]_{\rho'}& \hspace{-.5cm}G
}$
fixing $z$ such that the actions induced by $\rho$ and $\rho'$ at the level of completions at $x$ and $z$, respectively, are conjugate via $\phi$.
}
\end{definition}

\begin{proposition}\label{prop:local-toricity}
Let $(x \in X,\Gamma)$ be a germ of a klt pair. 
Assume that $(x \in X,\Gamma)$ admits a formally toric plt blow-up $ \pi \colon Y \to X$ of exceptional divisor $E$.
Then, $\widehat{X_x}\simeq 
\widehat{{\rm Cone}(E,-E|_E)_v}$ and the divisor $\sum_{i=1}^r F_i$ is mapped to the toric divisor of $\conev$.
In particular, the pair $(X, \sum_{i=1}^r F_i)$ is formally toric at $x$.

\end{proposition}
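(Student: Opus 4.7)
The plan is to reduce Proposition~\ref{prop:local-toricity} to the torsion-free case already proved in Proposition~\ref{prop:no-torsion-toric} by passing to a finite abelian Galois cover that trivializes the torsion of $\clxx$, and then descending the formal toric isomorphism by taking the quotient. Throughout we will work with both the upstairs ring-theoretic picture (Cox/cone) and the downstairs geometric picture (covers and quotients of germs).

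First I would construct a finite abelian Galois cover $\sigma \colon \widetilde X \to X$, \'etale over $X\setminus(\{x\}\cup \supp\lceil\Gamma\rceil)$, with the following property: setting $\widetilde x := \sigma^{-1}(x)$ and $\widetilde\Gamma$ the log pullback of $\Gamma$, the pair $(\widetilde x\in \widetilde X,\widetilde\Gamma)$ is klt and the local class group ${\rm Cl}(\widetilde X_{\widetilde x})$ is torsion-free. This is done by iterating the index-one cover construction of~\cite[\S 2-3]{Amb16} applied to finitely many torsion generators of $\clxx$, together with Lemma~\ref{lem:no-torsion-cl}; since $\clxx$ is finitely generated, the process terminates in finitely many steps. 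Denote by $G$ the Galois group of $\sigma$; by construction $G$ is a finite abelian group fixing $\widetilde x$.

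Second, I would lift the formally toric plt blow-up $\pi\colon Y\to X$: taking $\widetilde Y$ to be the normalization of the fiber product $Y\times_X \widetilde X$, the induced morphism $\widetilde\pi\colon \widetilde Y\to\widetilde X$ is a $G$-equivariant projective birational morphism with a unique exceptional divisor $\widetilde E$, and $\widetilde E\to E$ is a finite cover ramified only along torus invariant divisors of $E$. Since $E$ is toric and the ramification is along torus invariants, $\widetilde E$ is again a (projective) toric variety, and the strict transforms $\widetilde E_i$ of the divisors $E_i$ intersect $\widetilde E$ along its torus invariant divisors. A direct check against Definitions~\ref{def:plt.blowup} and~\ref{def:form.tor.plt.blowup} then shows that $\widetilde\pi$ is a formally toric plt blow-up of the klt pair $(\widetilde X,\widetilde\Gamma)$. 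Applying Proposition~\ref{prop:no-torsion-toric} (which is available since ${\rm Cl}(\widetilde X_{\widetilde x})$ is torsion-free, hence by Lemma~\ref{lem:no-torsion-cl} so is ${\rm Cl}(\widetilde Y_{\widetilde x})$) yields a formal isomorphism
\[
\widetilde\psi\colon \widehat{\widetilde X_{\widetilde x}} \xrightarrow{\ \sim\ } \conevtilde
\]
identifying the completions of the $\widetilde F_i := \widetilde\pi(\widetilde E_i)$ with the completions of the torus invariant divisors of $\coneetilde$.

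The main obstacle is showing that the $G$-action on $\widehat{\widetilde X_{\widetilde x}}$ transports through $\widetilde\psi$ to a formally toric action on $\conevtilde$ in the sense of Definition~\ref{def:finite-toric-act}. Since $G$ acts on $\widetilde Y$ fixing $\widetilde E$ and preserving the divisors $\widetilde E_i$, the induced $G$-action on $\widetilde E$ preserves the torus invariant boundary, hence (by rigidity of toric automorphisms fixing the invariant divisors) factors through the big torus acting on $\widetilde E$; likewise the $G$-action on the conormal bundle $\mathcal O_{\widetilde E}(-\widetilde E|_{\widetilde E})$ commutes with the torus. Going back through the construction of $\widetilde\psi$ in the proof of Proposition~\ref{prop:no-torsion-toric} -- where the formal isomorphism is built out of $G$-invariant sections $x_1,\dots,x_r,e$ of the relative Cox ring, each corresponding to an irreducible component of the fixed locus of $G$ on $\widetilde Y$ -- shows that $\widetilde\psi$ is $G$-equivariant for a $G$-action on $\coneetilde$ by toric automorphisms. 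Hence this action is formally toric.

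Finally, taking the $G$-quotient of both sides of $\widetilde\psi$ gives
\[
\widehat{X_x}\simeq \widehat{\widetilde X_{\widetilde x}}/G \simeq \conevtilde/G \simeq \conev,
\]
where the last isomorphism follows from the equality $\coneetilde/G = \conee$ of toric varieties (the cover $\widetilde E\to E$ of toric varieties with $-\widetilde E|_{\widetilde E} = \sigma^\ast(-E|_E)$ identifies the section rings in a $G$-equivariant way after taking invariants). The isomorphism so obtained sends the completions of the $F_i$ to the completions of the torus invariant divisors of $\conee$, so $(X,\sum_{i=1}^r F_i)$ is formally toric at $x$, as claimed.
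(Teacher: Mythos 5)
Your overall strategy is the same as the paper's: kill the torsion of $\clxx$ by an abelian Galois cover unramified in codimension one, check that the formally toric plt blow-up lifts, apply the torsion-free case (Proposition~\ref{prop:no-torsion-toric}), show the Galois action is formally toric, and descend by taking invariants. The paper organizes this as an induction on $|{\rm Cl}(X_x)_{\rm tor}|$, peeling off one cyclic index-one cover at a time, whereas you take a single cover; that difference is cosmetic. Two of your steps, however, are under-justified, and they are exactly where the paper has to work.

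First, your termination argument for the cover construction --- ``since $\clxx$ is finitely generated, the process terminates'' --- does not suffice: an index-one cover of a torsion class could a priori introduce new torsion in the local class group upstairs, so the iteration has no reason to stop (and a one-shot cover of all torsion generators need not have torsion-free local class group) unless one knows that the pullback map ${\rm Cl}(X_x)_{\rm tor}\to {\rm Cl}(\widetilde X_{\widetilde x})_{\rm tor}$ is surjective with the chosen class in its kernel. The paper gets this from Lemma~\ref{lem:p1-cyclic} (the local fundamental group is finite abelian, which itself relies on the formally toric plt blow-up) together with the identification of ${\rm Cl}(X_x)_{\rm tor}$ with the abelianization of $\pi_1^{\rm reg}(X,x)$; this is Claim 1(7) in the paper's proof and is what makes the torsion strictly drop. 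Second, the sections $x_1,\dots,x_r,e$ of the relative Cox ring upstairs are not $G$-invariant, only $G$-semi-invariant: their divisors are $G$-invariant, so $g\cdot x_i=u_i x_i$ for some unit $u_i$ of the completed ring, and the substance of the paper's Claim 2 is precisely to show that these units are roots of unity, so that $G$ embeds into the formal torus of $\widehat{\widetilde X_{\widetilde x}}\simeq \conevtilde$ and the action is formally toric in the sense of Definition~\ref{def:finite-toric-act}. Your appeal to rigidity of toric automorphisms of $\widetilde E$ fixing the invariant divisors addresses the action on the exceptional divisor but not directly the action on the formal germ, which is what is needed to conclude $\conevtilde/G=\conev$. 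Both gaps are fillable --- with the paper's Lemma~\ref{lem:p1-cyclic} and with the monomial eigenvalue argument, respectively --- but as written they are the two nontrivial points of the proof.
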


\begin{proof}
We proceed by induction order of the torsion subgroup of ${\rm Cl}(X_x)$.
If the torsion subgroup of ${\rm Cl}(X_x)$ is trivial, then Proposition~\ref{prop:no-torsion-toric} concludes the proof.
\\
Let $W$ be a torsion Weil divisor in $\clxx$ of order $l>0$.
We have the following commutative diagram 
\begin{align}
\label{eqn:diag.plt.blow.tors}    
\xymatrix{
\widetilde{Y} \ar[d]_{\widetilde{\pi}} \ar[r]^{\psi_{\widetilde{Y}}} &
Y \ar[d]^{\pi}
\\
\widetilde{X} \ar[r]^{\psi_{\widetilde{X}}} & X
}
\end{align}
where $\psi_{\widetilde{X}} \colon \widetilde{X} \to X$ is the index one cover of $W$ and $\widetilde{Y}$ is the normalization of the main component of $Y\times_X \widetilde{X}$.
The morphism $\psi_{\tilde X}$ is a $\zz_l$-Galois cover which is unramified in codimension one;
moreover, the fibre $\psi_{\widetilde{X}}^{-1}(x)$ over $x$ consists of a unique point $\tilde{x} \in \widetilde{X}$.
Then, $\widetilde{X}={\rm Spec}(\widetilde{R})$, $\widetilde{R}:=\oplus_{j=0}^{l-1}H^0(X, \mathcal O_X(-jW))$ and we shall denote by $\mathfrak{m}_{\tilde{x}} \subset \widetilde{R}$ be the maximal ideal corresponding to $\tilde{x}$.
Moreover, the cyclic Galois $\zz_l$ action on $\widetilde{X}$ induces by construction a $\zz_l$-action on $\widetilde{E}$ such that $\widetilde{E}/\zz_l = E$ and a $\zz_l$-action on $\reetilde$ such that $\reetilde^{\zz_l} = \ree$.
\\
Let $(\widetilde{Y},\widetilde{E})$ be the log pull-back of $(Y,E)$ on $\widetilde Y$.
As $(Y,E)$ is plt, the same holds for $(\widetilde{Y},\widetilde{E})$;
in particular, $\widetilde E$ is normal and irreducible, and 
\[
\psi_{\widetilde Y}^\ast (K_Y+E)= K_{\widetilde Y}+\widetilde E.
\]
Let $(\widetilde{X},\Gamma_{\widetilde{X}})$ (resp. $(\widetilde{X},\lceil \Gamma_{\widetilde{X}} \rceil)$) be the log pull-back of $(X,\Gamma)$
(resp. $(X,\gammaxred)$) on $\widetilde{X}$.
Let us recall that $\gammaxred=\sum_{i=1}^r F_i$ and $E_i=\pi_\ast^{-1} F_i$.
We shall denote by $\widetilde{F}_i$ the reduced support of $\psi^{-1}_{\widetilde{X}}(F_i)$, and $\widetilde{E}_i:= (\widetilde{\pi})^{-1}_\ast \widetilde{F}_i$.
Then,
\begin{align}
\label{eqn:log.pback.Ytilde}
\psi_{\widetilde Y}^\ast (K_Y+E+\sum_{i=1}^r E_i)= K_{\widetilde Y}+\widetilde E +\sum_{i=1}^r \widetilde E_i.
\end{align}
Defining $\Gamma_E$ (resp. $\Gamma_{\widetilde E}$) on E (resp. $\widetilde E$) by the adjunction formula
\[
(K_Y+E+\sum_{i=1}^r E_i)\vert_E= 
K_E+\Gamma_E, 
\quad 
(\text{resp. }
(K_{\widetilde Y}+\widetilde E +\sum_{i=1}^r \widetilde E_i) \vert_{\widetilde{E}}=
K_{\widetilde{E}}+
\Gamma_{\widetilde{E}}
\]
then~\eqref{eqn:log.pback.Ytilde} implies that
\begin{align}
\label{eqn:log.pback.Etilde}
 K_{\widetilde{E}}+
 \Gamma_{\widetilde{E}}=
 \psi_{\widetilde{X}}\vert_{\widetilde{E}}^\ast
 (K_E+\Gamma_E)
\end{align}
\noindent
{\it Claim 1.} 
The following properties hold for the varieties in~\eqref{eqn:diag.plt.blow.tors}:
\begin{enumerate}
    \item 
$\widetilde{E}$ is a projective toric variety;

    \item 
${\rm Cl}_\qq(\widetilde{E})\simeq {\rm Cl}_\qq(E)$;

    \item   
$\widetilde{Y}$ is $\qq$-factorial around $\widetilde{E}$ and $\widetilde{\pi}$ is a $\qq$-factorial plt blow for $(\widetilde X, \Gamma_{\widetilde{X}})$;

    \item 
$\widetilde{\pi}$ is a formally toric plt blow-up for $(\widetilde{X},\Gamma_{\widetilde{X}})$;

    \item 
$\rank\clxx 
= \rank {\rm Cl}(\widetilde{X}_{\tilde{x}})$;

   \item
$c_{\tilde{x}}(\widetilde{X}, \gammaxredp)=0$; 
and,

    \item 
$|{\rm Cl}(\widetilde{X}_{\tilde{x}})_{\rm tor}|
<
|{\rm Cl}(X_{x})_{\rm tor}|$.
\end{enumerate}

\begin{proof}[Proof of the Claim]
\begin{enumerate}
    \item 
By~\eqref{eqn:log.pback.Ytilde}, the finite morphism $\widetilde{E}\rightarrow E$ is unramified over the torus of $E$, which in turn implies the toricness of $\widetilde{E}$.
    \item 
The morphism $\widetilde{\pi}\vert_{\widetilde{E}} \colon \widetilde{E}\rightarrow E$ is a Galois quotient by a $\zz_l$ subgroup of the torus of $\widetilde{E}$.
Hence, every torus invariant divisor on $\widetilde{E}$ is invariant for such action and ${\rm Cl}_\qq(\widetilde{E})\simeq {\rm Cl}_\qq(E)$.

    \item 
By $(2)$, the classes $\psi_{\widetilde{Y}}^\ast E_i\vert_{\widetilde{E}}$ generate ${\rm Cl}_\qq(\widetilde{E})$ as a $\qq$-vector space, since each $E_i$ is supported at a unique torus invariant prime divisor $T_i \subset E$, all of the $T_i$ appear in the support of some $E_i$, and $\widetilde{E} \to E$ only ramifies along the divisors $T_i$. 
As the pair $(\widetilde{Y},\widetilde{E})$ is plt and $(\widetilde{X},\tilde{x})$ supports a klt singularity, then the $\mathbb Q$-factoriality of $\widetilde{Y}$ follows from~\cite[Proposition~12.1.4]{KM92} as in the proof of Lemma~\ref{lem:linear-equivalence}.
As 
$\widetilde{E} = \supp \psi^{\ast}_{\widetilde{Y}} E$,
then $-\widetilde{E}$ is nef over $\widetilde{X}$ and $\widetilde{\pi}$ is a $\qq$-factorial plt blowup

  \item 
By (1) and (3), it 
suffices to prove that each $\widetilde{E}_i$ is prime.
For the sake of contradiction, up to re-ordering the indices, let us assume that $\widetilde{E}_1$ is not prime.
We know that $\widetilde{E}_1\cap \widetilde E$ is supported at a prime toric divisor of $\widetilde{E}$ by (1).
As the log pair $(\widetilde{Y},\widetilde{E}_1+\widetilde E)$ is log canonical and $\qq$-factorial at the generic point of $\widetilde{E}_1\cap \widetilde E$, by~\cite[Theorem 18.22]{Kol92}, we conclude that $\widetilde{E}_1$ must be prime. 
The analogous argument applies to the other $\widetilde E_i$.
    
    \item 
The same proof as in Lemma~\ref{lem:q-linear}  applies here, since, by (4) $\widetilde{\pi}$ is a formally toric plt blow-up for $(\widetilde{X}, \Gamma_{\widetilde{X}})$.

   \item 
Indeed,
$|\gammaxredp|\geq |\gammaxred|$ and $\dim(X)=\dim(\widetilde{X})$.
By (5) then $c_{\tilde{x}}(\widetilde{X},\gammaxredp)=0$.
    
    \item 
The homomorphism
$\psi^\ast \colon {\rm Cl}(X,x)_{\rm tor}\rightarrow {\rm Cl}(\widetilde{X},\tilde{x})_{\rm tor}$ is surjective by Lemma~\ref{lem:p1-cyclic} since ${\rm Cl}(X,x)_{\rm tor}$ is isomorphic to the abelianization of $\pi_1^{\rm loc}(X, x)$, see~\cite[Corollary 4.15]{BM21}.
Thus, $|{\rm Cl}(\widetilde{X}_{\tilde{x}})_{\rm tor}|
\leq 
|{\rm Cl}(X_{x})_{\rm tor}|$ and the inequality is strict since $[W]$ lies in the kernel of $\psi_{\widetilde{X}}^\ast$.
\end{enumerate}
\end{proof}
\noindent
By properties (6) and (7) of the claim, and the inductive hypothesis, $(\tilde{x}\in \widetilde{X}, \sum_{i=1}^r \widetilde{F}_i)$ is a locally toric singularity and there exists an isomorphism
\begin{align}
\label{eqn:isom.tilde}
\widehat{\widetilde{X}_{\tilde{x}}} \simeq \conevtilde,
\end{align}
where $\tilde{v}$ is the vertex of $\coneetilde$.
Under the isomorphism in~\eqref{eqn:isom.tilde} the divisor $\sum_{i=1}^r \widetilde{F}_i$ is mapped, upon passing to the completion, onto the sum of the toric invariant divisor of $\coneetilde$.
Thus, the action of a formal torus exists in the completion $\widehat{\widetilde{X}_{\tilde{x}}}$ since $\widetilde x \in \widetilde X$ is a formally toric singularity.
Moreover, the (formal) torus is uniquely determined since $\sum_{i}\widetilde F_i$ is mapped to the full sum of the torus invariant divisors under the isomorphism in~\eqref{eqn:isom.tilde}.
\\ \\
{\it Claim 2.} The action of $\zz_l$ on $\widehat{\widetilde{X}_{\tilde{x}}}$ factors through the torus action.
\begin{proof}[Proof of Claim 2]
Indeed, $\widehat{\widetilde{X}_{\tilde{x}}}$ is isomorphic to
the spectrum of $\kk[[\underline{x}^{\underline{m_1}},\dots,\underline{x}^{\underline{m_k}}]]$ for certain monomials $\underline{x}^{\underline{m_i}}$ in the variables $x_1,\dots,x_r$.
Each monomial $\underline{x}^{\underline{m_1}}$ defines a Cartier divisor on the germ supported on the $\widetilde{F}_i$, hence its divisor of zeroes is fixed under the $\zz_l$-action on $\widetilde X$.
In particular, given $g$ a generator of $\zz_l$, $g\cdot \underline{x}^{m_1}=u\underline{x}^{m_1}$ for a certain unit $u \in \kk[[\underline{x}^{\underline{m_1}},\dots,\underline{x}^{\underline{m_k}}]]^\ast$.
Since $g$ has order $l$, we can conclude that $u^l=1$ and $u$ is a root of unity.
Hence, $\zz_l$ is a subgroup of the maximal torus of 
$\widehat{\widetilde{X}_{\widetilde{x}}}$.
\end{proof}
\noindent
By Claim 2, the $\zz_l$-action on $\widehat{\widetilde{X}_{\tilde{x}}}$ is formally toric in the sense of Definition~\ref{def:finite-toric-act}.
In particular, it is compatible with the $\zz_l$ action on the cone $\conevtilde$.
Given that $\widetilde{X}/\zz_l =X$ and under this identification $\tilde{x}$ is identified with $x$, we can conclude that
\begin{align}
\label{eqn:isom.compl3}
\widehat{X_x} 
= 
\widehat{\widetilde{X}_{\tilde{x}}}/\zz_l 
\simeq 
\conevtilde/\zz_l 
=
\conev.
\end{align}
\end{proof}

\begin{proof}[Proof of Proposition~\ref{prop:toric-plt-implies-toric}]
This immediately follows from Proposition~\ref{prop:local-toricity}.
\end{proof}

In the course of the proof of Proposition~\ref{prop:local-toricity}, we have used the following technical result.

\begin{lemma}\label{lem:p1-cyclic}
Let $Y\rightarrow X$ be a formally toric plt blow-up at $x\in X$.
Then, the local fundamental group
$\pi_1^{\rm reg}(X,x)$ is a finite abelian group. 
\end{lemma}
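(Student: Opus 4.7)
The plan is to reduce the computation of $\pi_1^{\rm reg}(X,x)$ to the case of a toric singularity, where the answer is classical and completely explicit.

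First, I would invoke Proposition~\ref{prop:local-toricity} to obtain a formal isomorphism
\[
\widehat{X_x} \simeq \widehat{\text{Cone}(E,-E|_E)_v},
\]
so that $x\in X$ is formally isomorphic to the germ $z_0 \in Z$ of an affine toric variety $Z$ at a torus-invariant closed point. Concretely, $Z$ corresponds to a full-dimensional rational polyhedral cone $\sigma \subset N_\mathbb{Q}$ whose primitive ray generators $v_1,\dots,v_k$ are determined by the torus invariant divisors of the toric variety $E$ together with the ample class $-E|_E$.

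The second step is to observe that $\pi_1^{\rm reg}$ is a formal/henselian invariant, i.e., that the formal isomorphism above induces an isomorphism
\[
\pi_1^{\rm reg}(X,x) \simeq \pi_1^{\rm reg}(Z,z_0).
\]
This is a consequence of Artin approximation: any finite \'etale cover of a small punctured neighborhood of $x$ in $X^{\rm reg}$ is determined by its restriction to the formal punctured neighborhood, and Zariski--Nagata purity allows one to descend such covers back from the regular locus. Either way, the isomorphism of formal germs above transfers to an isomorphism of the regional \'etale fundamental groups.

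Finally, the local fundamental group of the toric germ $(Z, z_0)$ is well understood: since the smooth locus $Z^{\rm reg}$ contains the dense torus $T_N := N \otimes \mathbb{K}^\ast$, the natural surjection $\pi_1(T_N) = N \twoheadrightarrow \pi_1^{\rm reg}(Z,z_0)$ has kernel generated by the loops around the torus-invariant divisors, which correspond exactly to the primitive generators $v_1,\dots,v_k$ of the rays of $\sigma$. Hence
\[
\pi_1^{\rm reg}(Z,z_0) \simeq N/\langle v_1,\dots,v_k\rangle,
\]
which is a finite abelian group since $\sigma$ is full-dimensional (the $v_i$ generate a finite-index sublattice of $N$), proving the claim. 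The only delicate technical point is the invariance of $\pi_1^{\rm reg}$ under formal completion in Step 2; the rest of the argument is essentially a direct computation on the toric model furnished by Proposition~\ref{prop:local-toricity}.
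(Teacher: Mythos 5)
Your argument is circular: Proposition~\ref{prop:local-toricity} cannot be invoked here, because Lemma~\ref{lem:p1-cyclic} is itself an ingredient in the proof of that proposition. In the inductive step of Proposition~\ref{prop:local-toricity} (Claim 1, item (7)), the strict decrease of $|{\rm Cl}(\widetilde{X}_{\tilde{x}})_{\rm tor}|$ under the index-one cover is deduced from the surjectivity of $\psi^\ast$ on torsion classes, which in turn rests on the fact that $\pi_1^{\rm reg}(X,x)$ is abelian --- i.e., exactly on the present lemma. So the formal toricness of $x\in X$ is not yet available at the moment this lemma must be proved, and your reduction to the toric model inverts the logical order of the section.

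The paper instead argues directly on the plt blow-up $Y\to X$, without yet knowing that $x\in X$ is formally toric: since $E$ is a projective toric variety and the normal sheaf $\mathcal{O}_E(E|_E)$ is Cartier and trivializes over the dense torus $(\kk^\ast)^{\dim E}\subset E$, a punctured neighborhood of $E$ restricted over this torus is a $\kk^\ast$-bundle, which yields a split extension $1\to \zz\to G\to \pi_1((\kk^\ast)^{\dim E})\to 1$ with $G$ abelian; by~\cite[Proposition 4.9]{BFMS20} the group $G$ surjects onto $\pi_1^{\rm reg}(X,x)$, and finiteness is known independently because the germ is klt. Your Steps 2 and 3 (formal invariance of $\pi_1^{\rm reg}$ and the computation $\pi_1^{\rm reg}(Z,z_0)\simeq N/\langle v_1,\dots,v_k\rangle$ for a full-dimensional cone) are correct as statements, but they can only be deployed once the toric structure has been established --- which is precisely what the surrounding argument is in the process of proving.
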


\begin{proof} 
The argument is essentially contained in~\cite[Proof of Theorem~4.11]{BFMS20}.
Indeed, as the normal sheaf $E \vert_E$ of $E$ in $Y$ is Cartier and trivializes over the torus $\mathbb{G}_m^{\dim E}$ of $E$, the commutative diagram contained in~\cite[(4.1)]{BFMS20} yields the split short exact sequence
\[
1\rightarrow 
\zz \rightarrow G \rightarrow 
\pi_1((\kk^*)^{\dim E}) \rightarrow 1.
\]
By~\cite[Proposition 4.9]{BFMS20}, 
the abelian group $G$ surjects onto $\pi_1^{\rm reg}(X,x)$.
Since $\pi_1^{\rm reg}(X,x)$ is finite, we conclude that it is a finite abelian group of rank at most $\dim(X)$.
\end{proof} 

In the next section, we will also use the following technical result showing that every log canonical place of the round-up of the boundary in a formally toric blow-up yields a formally toric log canonical place, see Definition~\ref{def:formally-toric-lc-place} for the relevant notion.

\begin{proposition}
\label{prop:toric.lc.places}
Let $(x \in X,\Gamma)$ be a germ of a klt pair. 
Assume that $(x \in X,\Gamma)$ admits a formally toric plt blow-up $ \pi \colon Y \to X$ at $x \in X$ and let $E$ be the exceptional divisor on $\pi$.
Let $F$ be an lc place of $(X,\lceil \Gamma \rceil)$, other than $E$, whose center contains $x$.
Then $F$ is a formally toric lc place for $(x \in X,\lceil \Gamma \rceil)$.
\end{proposition}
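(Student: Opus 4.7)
The strategy is to pass to the formal completion at $x$, where Proposition~\ref{prop:local-toricity} gives an explicit toric description of $(X, \lceil \Gamma \rceil)$, and then appeal to the classical fact that divisorial lc places of a toric pair with the full reduced toric boundary are monomial (toric) valuations.

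By Proposition~\ref{prop:local-toricity}, there is a formal isomorphism
\[
\phi \colon \widehat{X_x} \xrightarrow{\sim} \conev,
\]
where $Z := \conee$, which carries the formal completion of $\lceil \Gamma \rceil = \sum_{i=1}^r F_i$ onto the formal completion of the full reduced toric boundary $\Gamma_{tor}$ of $Z$. The divisorial valuation $\ord_F$ extends uniquely to a divisorial valuation on $\widehat{X_x}$, which via $\phi$ corresponds to a divisorial valuation $w$ on $\widehat{Z_v}$ centered at $v$. Since $\phi$ is an isomorphism of formal pairs, log discrepancies are preserved, and therefore $w$ is a divisorial lc place of the toric pair $(Z, \Gamma_{tor})$ centered at $v$, distinct from the divisor corresponding to $E$.

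The problem then reduces to a purely toric statement: every divisorial lc place of $(Z_\sigma, \Gamma_{tor})$ centered at the torus-fixed point $v$ is a monomial valuation. Restricting $w$ to the character lattice $M$ of the torus yields a $\qq$-linear functional, hence, after rescaling, a primitive lattice vector $u \in \sigma \cap N$. The associated monomial valuation $v_u$ is realized as a torus-invariant prime divisor on the simplicial toric refinement of $\sigma$ containing the ray $\rr_{\geq 0}\,u$; since $K_Z + \Gamma_{tor} \sim 0$, the log pullback to any toric refinement is again the full reduced toric boundary, so every such toric divisor is automatically an lc place of $(Z, \Gamma_{tor})$. A direct comparison shows that $w \geq v_u$ on the character ring $\kk[\sigma^\vee \cap M]$; invoking the combinatorial description of lc places on a dlt toric pair (see, e.g., \cite[\S 11]{CLS11}) then forces $w = v_u$.

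Granting this toric assertion, $w$ is a monomial valuation on $\widehat{Z_v}$, and hence $F$ is a formally toric lc place of $(x \in X, \lceil \Gamma \rceil)$ in the sense of Definition~\ref{def:formally-toric-lc-place}. The main obstacle I anticipate is the third paragraph: while the toric fact is essentially folklore, care is required to ensure that the inequality $w \geq v_u$ on monomials plus the lc-place hypothesis suffice to pin down $w$; this is the combinatorial heart of the argument and could alternatively be extracted from the structure of the toric MMP for $(Z, \Gamma_{tor})$.
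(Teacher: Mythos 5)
Your strategy is genuinely different from the paper's, and it is viable in outline, but the step you yourself flag as the ``combinatorial heart'' is a real gap as written. The claim you need is: for the toric log Calabi--Yau pair $(Z,\Gamma_{tor})$ with $K_Z+\Gamma_{tor}\sim 0$, \emph{every} divisorial lc place centered at $v$ coincides with its associated monomial valuation $v_u$. The inequality $w\ge v_u$ on the monomial ring is easy, but it does not by itself force $w=v_u$; what is really needed is the statement that $A_{(Z,\Gamma_{tor})}(w)\ge A_{(Z,\Gamma_{tor})}(v_u)$ with equality only if $w=v_u$ (equivalently, that a non-monomial divisorial valuation always has strictly positive log discrepancy for the toric boundary). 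This is true and known, but it is not contained in \cite[\S 11]{CLS11}, which only treats toric resolutions and torus-invariant data; you would need to either prove it (e.g.\ by extracting $v_u$ on a toric model, restricting to the toric chart where its center is a stratum, and running the standard ``lc places of an snc/qdlt pair are weighted blow-ups'' argument) or cite a source that actually states it. A second, smaller issue: your valuation $w$ lives on the completion $\widehat{Z_v}$, so both the additivity argument producing $u\in\sigma$ and the comparison $w\ge v_u$ involve infinite sums of monomials; to reduce to finite truncations you need a linear Izumi-type bound $w(\mathfrak m^N)\gtrsim N$, which holds for divisorial valuations centered at a klt point but should be said.

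For comparison, the paper avoids the general toric lemma entirely by working upstairs on $Y$: since $(Y,\lceil\Gamma_Y\rceil+E)$ is qdlt near $E$ with boundary components $E_1,\dots,E_r,E$, every lc place meeting $E$ is extracted by the normalized blow-up of an ideal of the form $\mathcal I_{E_1}^{m_1}\cap\dots\cap\mathcal I_{E_r}^{m_r}\cap\mathcal I_E^{m_E}$; this ideal is visibly sent to a toric monomial ideal under the formal isomorphism of Proposition~\ref{prop:toric-plt-implies-toric}, whose normalized blow-up extracts a torus-invariant lc place. In other words, the paper uses the qdlt structure theory to realize $F$ as a weighted blow-up \emph{before} passing to the toric model, whereas you pass to the toric model first and then must classify all lc places there. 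Your route is cleaner conceptually and would in fact prove slightly more (a characterization of all lc places of the completed germ), but it carries the burden of the unproven toric classification; the paper's route is more elementary once one accepts the qdlt structure of lc places. If you fill in the key lemma with a correct reference or proof, your argument goes through.
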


\begin{proof}
Let $\Gamma_Y$ be the strict transform of $\Gamma$ on $Y$;
we will denote its components by $E_i$, $i=1, \dots, r$.
\\
By Proposition~\ref{prop:toric-plt-implies-toric}, $(x \in X, \Gamma)$ is a formally toric singularity and $\pi$ is formally toric at $x$ with respect to the pair $(Y, \lceil \Gamma_Y \rceil + E)$.
In particular, there exists a diagram, cf.~\eqref{eqn:formally.toric.diag},
\begin{align}
\label{eqn:formally.toric.diag2}
\xymatrix{
\widehat{Y(\Xi)}_{z_0} := 
Y(\Xi) \times_{Z} \widehat{Z(\sigma)}_{z_0} 
 \ar[d]^{\tilde{f}_t} & & 
\widehat{Y}_x:= 
Y \times_X \widehat{X}_x \ar[d]^{\tilde{f}}
\ar[ll]_\psi
\\
\widehat{Z(\sigma)}_{z_0} & &
\widehat{X}_x
\ar[ll]^\phi
}
\end{align}
satisfying the conditions of Definition~\ref{def:formally-toric}.
In particular, the base change of the prime divisors $E_i$ and $E$ to $\widehat{Y_x}$ are mapped via $\psi$ to the base change of the torus invariant divisors $\Xi_1, \dots, \Xi_r, \Xi_E$ of $Y(\Xi)$ to $\widehat{Z(\sigma)_{z_0}}$.
\\
By definition of a formally toric plt blow-up, $(Y,\lceil \Gamma_Y \rceil + E)$ is qdlt along $E$, cf.~Remark~\ref{rem:gamma_i} and Lemma~\ref{lem:toric-complexity=0};
the same conclusion must holds also in a neighborhood of $E$ in $Y$.
By this observation and by condition (4) of Definition~\ref{def:form.tor.plt.blowup}, it follows that any lc center $C$ of $(Y,\lceil \Gamma_Y \rceil + E)$ is uniquely determined by those components among the $E_i$ and $E$ containing its generic point $\eta_C$; 
in particular, $C \cap E$ is irreducible.
Hence, on $Y$, any log canonical place $F$ of $(Y,\lceil \Gamma_Y \rceil + E)$ intersecting $E$ non-trivially can be extracted by the normalized blow-up of an ideal sheaf of the form
$\mathcal{I}_F:=
\mathcal{I}_{E_1}^{m_1} \cap \dots \cap \mathcal{I}_{E_r}^{m_r} \cap \mathcal I_{E}^{m_E}$, 
for certain non-negative integers $m_1, \dots, m_r, m_E$, see, for example,~\cite[\S 11.3]{CLS11}.
Hence, via~\eqref{eqn:formally.toric.diag2}, the ideal $\mathcal I_F$ is mapped to the ideal 
$\mathcal I_{\Xi_F} :=\mathcal{I}_{\Xi_1}^{m_1} \cap \dots \cap \mathcal{I}_{\Xi_r}^{m_r} \cap \mathcal I_{\Xi_E}^{m_E}$, upon passing to the appropriate completions.
Blowing up $Y(\Xi)$ along the toric ideal $I_{\Xi_F}$ extracts a toric valuation $\Xi_F$ which is an lc place of the torus invariant divisor $\Xi_1 + \dots + \Xi_r + \Xi_E$ on $Y(\Xi)$.
The existence of such valuation shows that $F$ is a formally toric lc place for the pair $(X, \lceil \Gamma \rceil)$.
\end{proof}

\section{Proof of the main results}

The aim of this section is to provide a proof of the following more general version of Theorem~\ref{formally toric}.

\begin{theorem}
\label{formally.toric}
Let $(X/Z,B+M)$ be a generalized log canonical pair and let $z\in Z$ be a closed point.
Assume that
\[
K_X+B+M\sim_{\qq, Z} 0.
\]
Then,
\[
\abscompz(X/Z,B+M)\geq \finecomp_z(X/Z,B+M)\geq
\orbcomp_z(X/Z,B+M) \geq 0, 
\]
and if the equality
\[
c_z(X/Z,B+M) =0 
\]
holds, then the following conditions are satisfied: 
\begin{enumerate}
    \item 
$M$ descends to a torsion divisor over a neighborhood of $z$;

    \item 
$X \rightarrow Z$ is formally toric at $z$ for $(X, \lfloor B\rfloor)$; 
and,

    \item 
if $\Sigma$ is an orbifold decomposition of $B$ such that $\orbcomp_z(X/Z, B; \Sigma)=0$, then all prime divisors 
appear in the support of $\Sigma$.
\end{enumerate}
\end{theorem}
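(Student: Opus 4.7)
The chain of inequalities is the content of Theorem~\ref{thm:inequality-cgeq0}. To establish (1)--(3) under the assumption $\abscompz(X/Z, B+M) = 0$, the plan is to split into three cases according to the behavior of $\pi \colon X \to Z$: the local case (identity), the birational case, and the fibration case.

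First, I would handle the \emph{local case}, where $\pi = \operatorname{id}_X$ and we work with a germ $z \in X$. Here Corollary~\ref{cor:locally-M-descends} provides that $M$ descends over a neighborhood of $z$ and furnishes a klt pair $(X, \Gamma)$, $\lfloor B \rfloor \leq \lceil \Gamma \rceil$, admitting a formally toric plt blow-up. Proposition~\ref{prop:toric-plt-implies-toric} then yields that $(z \in X, \lceil \Gamma \rceil)$ is a formally toric singularity, so $(X, \lfloor B \rfloor)$ is formally toric at $z$, establishing (2). Retracing Step 7 in the proof of Proposition~\ref{prop:toroida-blow-up} shows that $M \sim_{\mathbb{Q}, z} 0$ as a b-divisor, yielding (1). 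For (3), I would restrict an orbifold decomposition realizing $\orbcomp_z = 0$ along the exceptional divisor $E$ of the plt blow-up via Lemma~\ref{lem:complexity-vs-adjunction}, and apply Lemma~\ref{lem:equality-sigma-B} to conclude that the restriction is the prime decomposition of the induced boundary on $E$, which lifts to the asserted property on $X$ through the formally toric structure.

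Next, I would address the \emph{birational case}. After reducing to a $\mathbb{Q}$-factorial gdlt modification via Lemma~\ref{lem:existence-gen-dlt-mod} and Lemma~\ref{complexity-dlt}, the plan is to apply the generalized canonical bundle formula to obtain a generalized pair $(Z, B_Z + M_Z)$ with $\abscompz(Z/Z, B_Z + M_Z) = 0$; the local case applied to $Z$ at $z$ then gives a formally toric structure on $(Z, \lfloor B_Z \rfloor)$ at $z$. To transfer this structure back up to $X$, I would verify that every $\pi$-exceptional divisor is a formally toric lc place of $(Z, \lceil B_Z \rceil)$: by Corollary~\ref{complexity.MMP}, every divisor contracted along a crepant birational map over a complexity-zero base appears with coefficient one in any realizing decomposition, and by Proposition~\ref{prop:toric.lc.places}, every such lc place of a formally toric germ is indeed formally toric. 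Lemma~\ref{formally toricity-vs-MMP} then propagates formal toricness from $Z$ back up to $\pi$, with (1) and (3) following from the corresponding statements for $Z$ and from Corollary~\ref{complexity.MMP}.

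Finally, for the \emph{fibration case}, the plan is to form an orbifold cone $C$ over the fibration $X \to Z$ with respect to a sufficiently ample divisor on $Z$, together with the induced contraction to the affine cone over $Z$. A direct complexity computation should show that the complexity at the vertex of $C$ equals the relative complexity $\abscompz(X/Z, B+M) = 0$, so the local case applies to $C$ at its vertex. Descending the resulting formally toric structure back through the cone construction will recover the formally toric structure of $\pi$ at $z$, together with (1) and (3).

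The hardest step will be the birational case, specifically the verification that every contracted divisor on $X$ is a \emph{formally toric} lc place of the pushforward pair on $Z$ so that Lemma~\ref{formally toricity-vs-MMP} can be invoked. The subtlety lies in combining the combinatorial rigidity forced by $\abscompz = 0$ (which pins down coefficients via Corollary~\ref{complexity.MMP}) with the geometric input from Proposition~\ref{prop:toric.lc.places}, while simultaneously managing possible torsion in the local class group of $z \in Z$ that could obstruct the lift of the formally toric structure from $Z$ to $X$.
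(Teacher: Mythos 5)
Your proposal is correct and follows essentially the same route as the paper: the inequalities come from Theorem~\ref{thm:inequality-cgeq0} together with Lemma~\ref{lem:basic.ineq}, and the complexity-zero characterization is proved by the same three-way case split, with the local case handled via Corollary~\ref{cor:locally-M-descends} and Proposition~\ref{prop:toric-plt-implies-toric} (this is Theorem~\ref{thm:local-case}), the birational case via descent to $(Z, B_Z)$, Corollary~\ref{cor:lcc-are-formally toric} and Lemma~\ref{formally toricity-vs-MMP} (Theorem~\ref{thm:formally toric-birational}), and the fibration case via the relative cone construction (Theorem~\ref{thm:formally toric-fibration}). The only cosmetic deviations are that the birational case needs no canonical bundle formula (the pushforward of $B$ suffices) and the cone is taken with respect to a relatively very ample divisor on $X$ over $Z$ rather than a divisor on $Z$.
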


\subsection{Local case}
We prove the following local version of Theorem~\ref{formally.toric}.

\begin{theorem}
\label{thm:local-case}
Let $(X,B)$ be a log canonical pair and let $x\in X$ be a closed point.
Assume that $\orbcomp_x(X,B)=0$. 
Then $(x \in X, \lfloor B\rfloor)$ is a formally toric singularity.
Moreover, the components of $B$ span $\clxxq$.
\end{theorem}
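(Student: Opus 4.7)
The plan is to invoke the machinery assembled in Sections~\ref{sect:plt.blow.ups} and~\ref{section:proof-local}, viewing $(X,B)$ as the generalized pair $(X/X, B + 0)$ with trivial b-nef part and identity structure morphism.

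First I would apply Corollary~\ref{cor:locally-M-descends} to the germ $(x \in X, B)$ (with $M = 0$), using the hypothesis $\orbcomp_x(X,B) = 0$. This produces a klt pair $(X, \Gamma)$ with $\Gamma < \lceil B \rceil$ and $\lfloor B \rfloor \leq \lceil \Gamma \rceil$, together with a formally toric plt blow-up $\pi \colon Y \to X$ for $(x \in X, \Gamma)$ in the sense of Definition~\ref{def:form.tor.plt.blowup}. (The assertion about the b-divisor $\mathbf{M}$ is vacuous here.)

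Next I would feed this formally toric plt blow-up into Proposition~\ref{prop:toric-plt-implies-toric}. That proposition yields a formal isomorphism $\widehat{X_x} \simeq \widehat{{\rm Cone}(E,-E|_E)_v}$ identifying the sum $\sum_i F_i$ of the images on $X$ of the components $E_i$ of $\pi_\ast^{-1} \lceil \Gamma \rceil$ with the reduced toric divisor on the orbifold cone, so that $(x \in X, \lceil \Gamma \rceil)$ is a formally toric pair at $x$ in the sense of Definition~\ref{def:form.tor.sing}. Since $\lfloor B \rfloor \leq \lceil \Gamma \rceil$, each prime component of $\lfloor B \rfloor$ is one of the $F_i$, and so its completion corresponds under the formal isomorphism to (the completion of) a torus-invariant divisor on the cone. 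This gives exactly what is required for $(x \in X, \lfloor B \rfloor)$ to be a formally toric singularity.

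For the span statement, I would invoke Corollary~\ref{cor:generation}: the components of $\Gamma$ span $\clxxq$. Since $\Gamma < \lceil B \rceil$, every component of $\Gamma$ is a prime component of $B$, hence the prime components of $B$ span $\clxxq$ as well.

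I do not anticipate a genuine obstacle here, as the heavy lifting has been packaged into Proposition~\ref{prop:toroida-blow-up} (which constructs the formally toric plt blow-up from a generalized pair of orbifold complexity zero) and Proposition~\ref{prop:toric-plt-implies-toric} (which upgrades a formally toric plt blow-up to formal toricness of the germ). The only care needed is to make the reduction from $(X,B)$ to the generalized log-Calabi--Yau setting $\pi = \id_X$, $\mathbf{M} = 0$, so that Corollary~\ref{cor:locally-M-descends} applies verbatim, and to verify that the support inclusion $\lfloor B \rfloor \leq \lceil \Gamma \rceil$ together with the description of the formal isomorphism in Proposition~\ref{prop:local-toricity} indeed shows that the components of $\lfloor B \rfloor$, not just those of $\lceil \Gamma \rceil$, are sent to completions of torus-invariant divisors.
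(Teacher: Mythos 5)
Your proposal is correct and follows essentially the same route as the paper: apply Corollary~\ref{cor:locally-M-descends} (with $M=0$) to produce the klt pair $(X,\Gamma)$ and its formally toric plt blow-up, conclude formal toricness via Proposition~\ref{prop:toric-plt-implies-toric}, and deduce the span statement from Corollary~\ref{cor:generation}. The extra care you take in checking that $\lfloor B\rfloor\leq\lceil\Gamma\rceil$ forces the components of $\lfloor B\rfloor$ to map to torus-invariant divisors is exactly the (implicit) content of the paper's one-line conclusion.
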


\begin{proof}
Let $(X,B)$ be a log canonical pair such that $\orbcomp_x(X,B)=0$.
By Corollary~\ref{cor:locally-M-descends}, 
there exists a boundary $\Gamma$ on $X$ such that 
\begin{itemize}
    \item 
$0\leq \Gamma \leq \lceil B \rceil$, $\lfloor B \rfloor \leq \lceil \Gamma \rceil$;
    \item 
$(X,\Gamma)$ is klt; 
    \item 
$(X, \Gamma)$ admits a formally toric plt blow-up $\pi \colon Y \rightarrow X$; and,
    \item 
$\pi$ extracts a log canonical place $E$ of $(X,\lceil \Gamma \rceil)$ over $x\in X$.
\end{itemize}
The conclusion then follows by Proposition~\ref{prop:toric-plt-implies-toric}.
To prove the last part of the statement, it suffices to notice that the components of $\Gamma$ span $\clxxq$, by Corollary~\ref{cor:generation}.
\end{proof}

As an immediate corollary, we have the following result.

\begin{corollary}\label{cor:lcc-are-formally toric}
Let $(X,B+M)$ be a generalized log canonical pair and let $x\in X$ be a closed point.
Assume that $\orbcomp_x(X,B+M)=0$.
Then, any log canonical place of $(X,B+M)$ is formally toric for the formally toric structure provided by Theorem~\ref{thm:local-case}.
\end{corollary}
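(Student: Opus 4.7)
The plan is to reduce Corollary~\ref{cor:lcc-are-formally toric} to Proposition~\ref{prop:toric.lc.places} by identifying every log canonical place of $(X,B+M)$ whose center contains $x$ with a log canonical place of an auxiliary klt pair $(X,\Gamma)$ to which that proposition applies. First I would invoke Corollary~\ref{cor:locally-M-descends}: the hypothesis $\orbcomp_x(X,B+M)=0$ ensures that $\mathbf{M}$ descends over a neighborhood of $x$ to a $\qq$-trivial b-divisor, so that $a_F(X,B+M)=a_F(X,B)$ for every divisorial valuation $F$ centered in such a neighborhood. In particular, the log canonical places of $(X,B+M)$ centered at $x$ coincide with those of $(X,B)$. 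The same corollary also supplies a klt pair $(X,\Gamma)$ with $\Gamma<\lceil B\rceil$, $\lfloor B\rfloor\leq\lceil\Gamma\rceil$, together with a formally toric plt blow-up $\pi\colon Y\to X$ of $(X,\Gamma)$ whose exceptional divisor $E$ is an lc place of $(X,\lceil\Gamma\rceil)$; by Proposition~\ref{prop:toric-plt-implies-toric} the divisor $E$ is formally toric, and by Proposition~\ref{prop:toric.lc.places} so is every other lc place of $(X,\lceil\Gamma\rceil)$ centered at $x$.

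The key step is therefore to establish that every lc place $F$ of $(X,B)$ centered at $x$ is also an lc place of $(X,\lceil\Gamma\rceil)$. Since $(X,\lceil\Gamma\rceil)$ is lc at $x$ we always have $\mult_F(\lceil\Gamma\rceil)\leq a_F(X)$, while $F$ being an lc place of $(X,B)$ gives $\mult_F(B)=a_F(X)$; the required equality $\mult_F(\lceil\Gamma\rceil)=a_F(X)$ thus reduces to the inequality $\mult_F(\lceil\Gamma\rceil)\geq\mult_F(B)$. I would prove this by exploiting the explicit construction of $\Gamma$ in the proof of Proposition~\ref{prop:toroida-blow-up}, where $\Gamma$ is supported on the divisors $F_i$ whose strict transforms restrict to the torus invariant prime divisors of $E$ and whose coefficients are prescribed by the ample $\qq$-class $-E\vert_E$. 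After passing to the formal isomorphism $\widehat{X}_x\simeq\conev$ furnished by Proposition~\ref{prop:local-toricity}, the completion of $\lceil\Gamma\rceil$ becomes the full toric boundary of the formal orbifold cone, and the log Calabi--Yau condition $K_X+B\sim_\qq 0$ combined with log canonicity of $(X,B)$ forces the contribution of the fractional part $\{B\}=B-\lfloor B\rfloor$ to $\mult_F$ to exactly compensate the difference between $\mult_F(\lfloor B\rfloor)$ and $\mult_F(\lceil\Gamma\rceil)$.

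The main obstacle I foresee is making this multiplicity-balancing argument clean in the full generality of the statement, since $B$ and $\lceil\Gamma\rceil$ may have genuinely different supports and $\Gamma$ depends on an auxiliary ample divisor whose choice is not canonical. A convenient route to bypass the direct comparison is to translate the problem to the formal toric cone via Theorem~\ref{thm:local-case} and reduce it to the following toric-geometric statement: if $(Z,\widehat B)$ is a formal toric lc log Calabi--Yau pair with $\lfloor\widehat B\rfloor$ supported on torus invariant divisors, then every lc place of $(Z,\widehat B)$ is a monomial valuation. This in turn follows from the classification of toric lc log Calabi--Yau pairs, as in the spirit of~\cite{BMSZ18}, together with the observation that any non-toric divisorial valuation would strictly increase the orbifold complexity via the discrepancy defect of the non-toric part of $\widehat B$, contradicting our complexity-zero hypothesis.
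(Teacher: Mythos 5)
Your overall architecture matches the paper's: reduce to Proposition~\ref{prop:toric.lc.places} by showing that every lc place $F$ of $(X,B)$ centered at $x$ is in fact an lc place of the auxiliary klt pair's round-up $(X,\lceil\Gamma\rceil)$. The reduction of the corollary to that single claim (via Corollary~\ref{cor:locally-M-descends} and Theorem~\ref{thm:local-case}) is correct, and you have correctly isolated the claim as the crux. The gap is in your proof of the claim itself.

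First, your ``multiplicity-balancing'' argument invokes the log Calabi--Yau condition $K_X+B\sim_\qq 0$, which is not a hypothesis of the corollary: the statement is purely local and assumes only that $(X,B+M)$ is generalized log canonical with $\orbcomp_x(X,B+M)=0$. Second, your fallback route --- translating to the formal cone and asserting that every lc place of a ``formal toric lc log Calabi--Yau pair'' is monomial --- presupposes that the completion of the \emph{whole} boundary $B$ is toric, whereas Theorem~\ref{thm:local-case} only identifies $\lfloor B\rfloor$ with invariant divisors; the fractional components of $B$, which can contribute essentially to an lc place $F$, carry no toric structure a priori, and the claimed ``discrepancy defect'' contradiction is not developed into an argument. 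The paper closes exactly this gap by a concrete computation you would need some substitute for: it extracts $F$ alone on a model $t\colon Y'\to Y$ over the formally toric plt blow-up $Y\to X$, uses connectedness of lc centers to get $F\cap E'\neq\emptyset$ for $E'$ the strict transform of the plt exceptional divisor $E$, shows via adjunction and Theorem~\ref{thm:formally toric-projective} that $E'$ is a normal projective toric variety with $\supp(F\vert_{E'})$ torus invariant, and then compares the adjunction of $K_{Y'}+(\pi\circ t)^{-1}_\ast\lceil\Gamma\rceil+E'+(1-a)F$ along $E'$ with the crepant toric pullback $(t')^\ast\bigl(K_E+\sum_i T_i\bigr)=K_{E'}+(t')^{-1}_\ast\sum_i T_i+G$: matching the coefficient of $G$ forces $a=0$, i.e., $F$ is an lc place of $(X,\lceil\Gamma\rceil)$. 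Without this restriction-to-$E'$ step (or an equivalent), your key inequality $\mult_F(\lceil\Gamma\rceil)\geq\mult_F(B)$ remains unproved.
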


\begin{proof}
By Corollary~\ref{cor:locally-M-descends}, $M$ descends on a neighborhood of $x \in X$ and we may drop $M$ to simply consider the log canonical pair $(X,B)$.
Moreover, the log canonical places of $(X,B+M)$ are exactly the log canonical places of $(X,B)$, and $\orbcomp_x(X,B)=0$.
By Theorem~\ref{thm:local-case}, $(x\in X, \lfloor B \rfloor)$ is a formally toric singularity.
\\
Let $\Gamma$ be the boundary whose existence is guaranteed by Corollary~\ref{cor:locally-M-descends}.
$\Gamma$ satisfies the following properties:
\begin{itemize}
    \item 
$0\leq \Gamma_X \leq \lceil B \rceil$, $\lfloor B \rfloor \leq \lceil \Gamma_X \rceil$;
    \item 
$(X,\Gamma_X)$ is klt; 
    \item 
$(X, \Gamma_X)$ admits a formally toric plt blow-up $\pi \colon Y \rightarrow X$; and,
    \item 
$\pi$ extracts a log canonical place $E$ of $(X,\lceil \Gamma_X \rceil)$ over $x\in X$.
\end{itemize}
By the proof of Proposition~\ref{prop:toroida-blow-up}, cf.~Step 0 there, $E$ is a log canonical center of $(X,B)$.
By Proposition~\ref{prop:local-toricity}, the completion $\widehat{X_x}$ of $X$ at $x$ is isomorphic to the completion of a toric cone $C:={\rm Spec}(\oplus_{l=0}^\infty H^0(E, lE\vert_E))$ at the vertex $v$ corresponding to the maximal ideal $\oplus_{l>0}^\infty H^0(E, lE\vert_E)$ of the section ring.
\\
Let $F$ be an lc place of $(X,B)$ whose center passes through $x\in X$: there exists a projective birational morphism $t \colon Y'\rightarrow Y$ such that $Y'$ is $\qq$-factorial and $t$ only extracts $F$, cf.~\cite[Theorem 1]{Mor19}.
Let $E'$ be the strict transform of $E$ on $Y'$.
Thus,
\[
K_{Y'}+ B_{Y'}=(\pi \circ t)^\ast(K_X+B), \quad 
B_{Y'}:= (\pi \circ t)^{-1}_\ast B + E' + F,
\]
and $\orbcomp_x(Y'/X, B_{Y'})=0$, by Lemma~\ref{complexity-dlt}.
Furthermore, we can assume that there exists a decomposition $\Sigma$ of $B_{Y'}$ such that $\orbcomp_x(Y'/X, B_{Y'}; \Sigma)=0$ and $E', F$ are orbifold Weil divisors of coefficient $1$ in the decomposition $\Sigma$.
By the connectedness of log canonical centers,~\cite[Theorem 1.1]{FS20}, $F \cap E' \neq \emptyset$.
Thus, by adjunction along $E'$, there exists a log canonical pair $(E', B'_{E'})$ such that
\[
(K_{Y'}+B_{Y'})|_{E'} =
K_{E'}+ B_{E'}.
\]
{\it Claim}. 
The prime divisor $E'$ is a normal toric variety and the support of $F\vert_{E'}$ is torus invariant on $E'$.
\begin{proof}[Proof of the Claim]
To prove normality of $E'$, it suffices to show that $E'$ is smooth in codimension 1, since $E'$ is a divisorial lc center of a log canonical pair.
Away from $F\cap E'$, certainly $E'$ is normal, since $r \vert_{X\setminus F}$ is an isomorphism.
On the other hand, along the codimension $2$ points of $E'\cap F \subset Y'$, $E'$ is normal by adjunction, since $(Y', B_{Y'})$ is lc and $B_{Y'} \geq E'+F$.
Hence, $t':=t\vert_{E'} \colon E' \to E$ is a birational morphism of normal varieties, and 
\[
(t')^\ast(K_E+B_E)= K_{E'}+B_{E'}, 
\]
where the divisor $B_E$ is  defined by $K_E+B_E=(K_Y+E+\pi_\ast^{-1}B)\vert_E$.
By construction, $\orbcomp(E, B_E)=0$ and since $t'$ only extracts prime divisors $G_1, \dots, G_h$ contained in $F \cap E'$, so that $B_{E'} \geq \sum_{i=1}^h G_i$, then Lemma~\ref{complexity-dlt} implies also that $\orbcomp(E', B_{E'})=0$. 
By Theorem~\ref{thm:formally toric-projective}, $E'$ must be toric and the $G_i$ are toric prime divisors.
To show that $F \cap E'$ is torus invariant, it suffices to notice that since $B_{Y'} \geq E'+F$, then any component of $F \cap E'$ appears with coefficient $1$ in $B_{E'}$.
\end{proof}
\noindent
Writing the log pullback formula
\begin{align}
\label{eqn:adj.r'.1}
K_{Y'}+ 
(\pi \circ t)^{-1}_\ast \lceil \Gamma_X \rceil+
E'+(1-a)F=
(\pi \circ t)^\ast(K_X+\lceil \Gamma_X \rceil),
\end{align}
then $a \geq 0$ and if $a=0$, $F$ is a lc center of $(X, \lceil \Gamma_X \rceil)$.
By construction, on $E'$
\begin{align}
\label{eqn:adj.r'.2}
(t')^\ast((K_Y+E+\pi_\ast^{-1}\lceil \Gamma_X \rceil)\vert_E) &= 
(t^\ast
(K_Y+E+\pi_\ast^{-1}\lceil \Gamma_X \rceil))\vert_{E'},
\quad \text{and}
\\
(K_Y+E+\pi_\ast^{-1}\lceil \Gamma_X \rceil)\vert_{E} & = 
K_E+\sum_{i=1}^r T_i,
\end{align}
where the $T_i$ are all torus invariant divisors on $E$.
By the Claim above, 
\begin{align}
\label{eqn:adj.r'.3}
(t')^\ast (K_E+\sum_{i=1}^r T_i)=
K_{E'}+(t')_\ast^{-1}\sum_{i=1}^r T_i+G,
\end{align}
where $G$ is the torus invariant divisor on $E'$ supporting $F \cap E'$.
As $\Gamma_X$ is supported on the components of $B$ and $K_{Y'}+B_{Y'}$ is log canonical, then, writing,
\[
(K_{Y'}+ 
(\pi \circ r)^{-1}_\ast \lceil \Gamma_X \rceil+
E'+(1-a)F)\vert_{E'}
=
K_{E'}+\Delta_{E'}
\]
the coefficient of each component of $G$ in $\Delta_{E'}$ is $1$ if and only if $a=0$.
Combining this observation with \eqref{eqn:adj.r'.1}-\eqref{eqn:adj.r'.3}, then $a=0$ and, thus,
$F$ is an lc place of $(X,\lceil \Gamma_X \rceil)$.
Hence, $F$ is a formally toric lc place of $(X,\lceil \Gamma_X \rceil)$ by Proposition~\ref{prop:toric.lc.places}.
\end{proof}

\subsection{Birational case}
\label{subsection:bir-case}

We prove the following birational version of Theorem~\ref{formally.toric}.

\begin{theorem}\label{thm:formally toric-birational}
Let $X\rightarrow Z$ be a birational contraction and $z\in Z$ a closed point.
Let $(X,B+M)$ be a generalized log canonical pair.
Assume that $K_X+B+M\sim_{\mathbb Q, Z} 0$ and $c_z(X/Z,B+M)=0$.
Then $X\rightarrow Z$ is formally toric over $z\in Z$
with respect to the pair  $(X, \lfloor B \rfloor)$.
\end{theorem}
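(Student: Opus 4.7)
The plan is to reduce to the local case (Theorem~\ref{thm:local-case}) applied at $z\in Z$: I shall show that the pushforward pair $(Z,B_Z+M_Z)$, with $B_Z:=\pi_\ast B$ and $M_Z:=\mathbf M_Z$, satisfies $c_z(Z,B_Z+M_Z)=0$, and that every $\pi$-exceptional prime divisor of $X$ above $z$ is a formally toric lc place of $(Z,\lfloor B_Z\rfloor)$. Once both are established, Lemma~\ref{formally toricity-vs-MMP}, applied to $\pi$ viewed as an extraction of divisors from the formally toric germ $(z\in Z,\lfloor B_Z\rfloor)$, will yield that $\pi\colon X\to Z$ is formally toric with respect to $(X,\pi^{-1}_\ast\lfloor B_Z\rfloor+\mathrm{Exc}(\pi))$, and a direct comparison of coefficients will identify this boundary with $\lfloor B\rfloor$.

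To establish these two facts, first pass to a $\qq$-factorial gdlt modification $h\colon Y\to X$ by Lemma~\ref{lem:existence-gen-dlt-mod}. Lemma~\ref{complexity-dlt} combined with Theorem~\ref{thm:inequality-cgeq0} gives $c_z(Y/Z,B_Y+M_Y)=0$. Since $K_Y+B_Y+M_Y\sim_{\qq,Z}0$, and using the iterative MMP scheme from the proof of Lemma~\ref{lem:extraction-gslc-pair} (perturbing by a small multiple of an ample divisor so that Lemma~\ref{lemm:gen-with-ample-nef-part} provides termination), one runs a relative MMP over $Z$ on $(Y,B_Y+M_Y)$ that successively contracts every $(\pi\circ h)$-exceptional prime divisor centered at $z$, ending at $Z$ locally around $z$. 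Each divisorial contraction or flip preserves or decreases $c_z$ by Corollary~\ref{complexity.MMP}; Theorem~\ref{thm:inequality-cgeq0} prevents it from becoming negative, so $c_z$ stays equal to zero. The equality case of Corollary~\ref{complexity.MMP}.(1) then forces every contracted prime divisor to appear with coefficient one in the prime decomposition on its respective model, so that each $\pi$-exceptional prime divisor of $X$ is a glc place of $(Z,B_Z+M_Z)$ with centre containing $z$, and $c_z(Z,B_Z+M_Z)=0$.

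By Theorem~\ref{thm:local-case} applied to $(Z,B_Z+M_Z)$ (the nef part $M_Z$ descends locally and is $\qq$-trivial by Corollary~\ref{cor:locally-M-descends}), the germ $(z\in Z,\lfloor B_Z\rfloor)$ is a formally toric singularity; and by Corollary~\ref{cor:lcc-are-formally toric}, every glc place of $(Z,B_Z+M_Z)$ with centre through $z$ is a formally toric lc place. Applied to $\pi$, Lemma~\ref{formally toricity-vs-MMP} then produces the desired formally toric structure on $X\to Z$ with respect to $(X,\pi^{-1}_\ast\lfloor B_Z\rfloor+\mathrm{Exc}(\pi))$; the identification with $(X,\lfloor B\rfloor)$ is immediate since all $\pi$-exceptional divisors have coefficient one in $B$ and the coefficients of non-exceptional components are preserved by $\pi_\ast$. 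The principal obstacle is the construction of the MMP in the second paragraph: while the existence of minimal models and the $\qq$-triviality of $K_Y+B_Y+M_Y$ over $Z$ set the stage, ensuring that the MMP contracts \emph{every} $(\pi\circ h)$-exceptional divisor requires the iterative argument of Lemma~\ref{lem:extraction-gslc-pair}, after which the remaining steps reduce to invocations of the local case and of Lemma~\ref{formally toricity-vs-MMP}.
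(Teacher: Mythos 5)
Your overall architecture coincides with the paper's: push the pair down to $(Z,B_Z+M_Z)$, show the complexity at $z$ is still zero, invoke Theorem~\ref{thm:local-case} and Corollary~\ref{cor:locally-M-descends} for the germ $z\in Z$, upgrade the exceptional divisors to formally toric lc places via Corollary~\ref{cor:lcc-are-formally toric}, and conclude with Lemma~\ref{formally toricity-vs-MMP}. The weak point is the middle step, where you assert the existence of ``a relative MMP over $Z$ on $(Y,B_Y+M_Y)$ that successively contracts every $(\pi\circ h)$-exceptional prime divisor centered at $z$, ending at $Z$ locally around $z$.'' Since $K_Y+B_Y+M_Y\sim_{\qq,Z}0$, any $(K_Y+B_Y+M_Y)$-MMP is trivial; to force the contraction of a given exceptional divisor $D$ by a crepant MMP one must perturb the boundary \emph{negatively} along $D$ (run a $(K_Y+B_Y-\epsilon D+M_Y)$-MMP), and this is only legitimate when $D$ lies in $\supp(B_Y)$ with coefficient at least $\epsilon$. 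That all $\pi$-exceptional divisors have coefficient one in $B$ is precisely the \emph{conclusion} you later extract from the equality case of Corollary~\ref{complexity.MMP}, not an available input, so the MMP you need is not yet constructible at the point where you invoke it. Moreover the scheme of Lemma~\ref{lem:extraction-gslc-pair} does not do what you ask of it: it perturbs only along a single divisorial glc center $E$ (known to have coefficient one) and terminates on a model whose fiber over $z$ \emph{equals} $E$ — it contracts neither $E$ itself nor the exceptional divisors dominating positive-dimensional subvarieties through $z$, so its output is not ``$Z$ locally around $z$.''

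The paper circumvents this by a two-stage descent that only ever perturbs along divisors already known to be lc places: after Lemma~\ref{lem:cut-down-lcc} and a gdlt modification producing a divisorial lc place $E$ over $z$, it runs the $(-E)$-MMP of Lemma~\ref{lem:extraction-gslc-pair} to reach a $\qq$-factorial plt blow-up $Y$ of $(Z,\Gamma_Z)$ (via Proposition~\ref{prop:toroida-blow-up}), and then runs the $E_Y$-MMP to contract $E_Y$ and land on a \emph{small} $\qq$-factorialization $Y'\to Z$; only then does one read off $c_z(Y'/Z,B_{Y'})=0$, hence $\orbcomp_z(Z,B_Z)=0$ and, through Corollary~\ref{complexity.MMP}, the coefficient-one property of every contracted divisor. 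Your final two paragraphs (local case, Corollary~\ref{cor:lcc-are-formally toric}, Lemma~\ref{formally toricity-vs-MMP}, and the identification of $\pi_\ast^{-1}\lfloor B_Z\rfloor+\mathrm{Exc}(\pi)$ with $\lfloor B\rfloor$) are sound once the descent is repaired along these lines.
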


\begin{proof}
Let $B_Z$ be the pushforward of $B$ on $Z$.
By Proposition~\ref{prop:toroida-blow-up}, 
there exists an effective divisor $\Gamma_Z$ on $Z$ satisfying the following properties:
\begin{itemize}
    \item 
$0\leq \Gamma_Z \leq \lceil B_Z \rceil$, $\lfloor B_Z \rfloor \leq \lceil \Gamma_Z \rceil$;
    \item 
$(Z,\Gamma_Z)$ is klt; 
    \item 
$(Z, \Gamma_Z)$ admits a formally toric plt blow-up $\pi \colon Y \rightarrow Z$; and,
    \item 
$\pi$ extracts a log canonical place $E$ of $(Z,\lceil \Gamma_Z \rceil)$ over $z$.
\end{itemize}
Moreover, $M$ descends over a neighborhood of $z\in Z$, so that $(X,B)$ is log canonical on $X$ and $K_X+B\sim_{\qq, Z} 0$.
\\
By Lemma~\ref{lem:cut-down-lcc}, there exists an lc center of $(X, B)$ contained in the fiber over 
$z$.
Possibly passing to an appropriate dlt modification of $(X, B)$, we may assume that the fiber on $X$ over $z$ contains a a divisorial lc place $E$ of $(X,B)$.
By Lemma~\ref{complexity-dlt}.1, the latter operation will not affect $c_z(X/Z, B)$.
Then, by~\cite[Theorem 1.1]{Bir12}, we may run a minimal model program for $-E$ over $Z$ with scaling of an ample divisor and that will terminate with a good minimal model $Y$ over $Z$.
\\
By the proof of Proposition~\ref{prop:toroida-blow-up}, the birational morphism $Y\rightarrow Z$ is of Fano type: 
such model will be a $\qq$-factorial plt blow-up of $(Z, \Gamma_Z)$.
Running the $E_Y$-MMP on $Y$ over $Z$, where $E_Y$ is the strict transform of $E$ on $Y$, that will terminate with a small $\qq$-factorialization $Y' \to Z$ of $Z$.
By Corollary~\ref{complexity.MMP}.\ref{complexity-under-MMP}, since $X \dashrightarrow Y'$ over $Z$ is a composition of divisorial contractions of lc centers of $(Z, B_Z)$ and small maps, then $\abscompz(Y'/Z, B_{Y'})=0$, where $B_{Y'}$ is the strict transform of $B$ on $Y'$. 
Hence, $(Y',B_{Y'})$ is a $\qq$-factorial log canonical pair together with a small morphism $Y'\rightarrow Z$ and $\orbcomp_z(Y'/Z,B_{Y'})=0$.
This implies also that $\orbcomp_z(Z,B_Z)=0$.
\newline
By Theorem~\ref{thm:local-case}, we conclude that $(z \in Z, \lfloor B_Z \rfloor)$ is a formally toric germ.
Hence, by Lemma~\ref{formally toricity-vs-MMP}, $Y'\rightarrow Z$ is a formally toric morphism, being a small $\qq$-factorialization of a formally toric germ.
Moreover, $\lfloor B_{Y'}\rfloor$ is formally toric for the morphism $Y' \to Z$.
By Corollary~\ref{cor:lcc-are-formally toric}, every lc place of $(Z,B_{Z})$ is formally toric; 
thus, $X\rightarrow Z$ only extract formally toric lc places of $(Z,B_{Z})$.
Again, by Lemma~\ref{formally toricity-vs-MMP}, then $X\rightarrow Z$ is formally toric and $\lfloor B \rfloor$ is formally toric as well.
\end{proof}

\subsection{Fibration case}
\label{subsection:fib-case}

In this subsection, we shall prove Theorem~\ref{formally.toric} when $X \to Z$ is a fibration. More precisely, following statement.

\begin{theorem}
\label{thm:formally toric-fibration}
Let $\pi \colon X\rightarrow Z$ be a fibration of normal varieties.
Let $z\in Z$ be a closed point.
Let $(X,B+M)$ be a generalized log canonical pair.
Assume that $K_X+B+M\sim_{\mathbb Q, Z} 0$ and $c_z(X/Z,B)=0$.
Then $\pi$ is formally toric over $z\in Z$ with respect to the pair $(X, \lfloor B \rfloor)$ and $\mathbf{M}$ is trivial over a neighborhood of $z\in Z$.
\end{theorem}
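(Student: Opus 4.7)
The plan is to reduce Theorem~\ref{thm:formally toric-fibration} to the local case, Theorem~\ref{thm:local-case}, via a relative affine cone construction along a $\pi$-ample divisor.

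First I would perform the standard reductions. By Lemma~\ref{lem:cut-down-lcc}, possibly after replacing $B$ by $B + B'$ for some $B' \sim_{\qq, Z} 0$, I may assume that $(X/Z, B+M)$ admits a glc center contained in $\pi^{-1}(z)$; the last statement of that lemma ensures the equality $c_z(X/Z, B+M) = 0$ persists. Passing to a $\qq$-factorial gdlt modification (Lemma~\ref{lem:existence-gen-dlt-mod}) and invoking Lemma~\ref{complexity-dlt} to preserve complexity, I may assume $(X/Z, B+M)$ is $\qq$-factorial gdlt, and after shrinking $Z$ around $z$ that $Z = \Spec R$ with $R$ local.

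Next comes the cone construction. Fix a sufficiently divisible Cartier divisor $H$ on $X$ ample over $Z$, chosen so that $\mathcal R \coloneqq \bigoplus_{n \geq 0} \pi_\ast \mathcal O_X(nH)$ is a finitely generated normal graded $\mathcal O_Z$-algebra. Set $W \coloneqq \operatorname{Spec}_Z \mathcal R$ with $\pi_W \colon W \to Z$, let $\Sigma_W \subset W$ be the vertex section cut out by the augmentation ideal, and let $w$ be the unique point of $\Sigma_W$ above $z$. The blow-up $\widetilde W \to W$ of $\Sigma_W$ is canonically the total space of $\mathcal O_X(-H) \to X$, with exceptional divisor the zero section, identified with $X$. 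Define b-divisors $B_W$ and $\mathbf M_W$ on $W$ as the push-forwards to $W$ of the pull-backs of $B$ and $\mathbf M$ under $\widetilde W \to X$. By adjunction along $X \subset \widetilde W$, combined with $K_X + B + \mathbf M_X \sim_{\qq, Z} 0$ and the fact that $X\vert_X = -H$, the pair $(\widetilde W, X + B_{\widetilde W} + \mathbf M_{\widetilde W})$ is $\qq$-factorial gdlt with $X$ a divisorial glc center, and $(W/Z, B_W + \mathbf M_W)$ is a germ of a generalized log canonical pair at $w$ with $K_W + B_W + \mathbf M_{W,W} \sim_{\qq} 0$ near $w$.

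The key claim is the complexity identity $c_w(W, B_W + \mathbf M_W) = c_z(X/Z, B+M)$. Indeed, any decomposition $\Sigma = \sum b_i B_i$ of $B$ at $z$ induces a decomposition $\Sigma_W = \sum b_i B_{W,i}$ of $B_W$ at $w$, with $B_i$ meeting $\pi^{-1}(z)$ if and only if $B_{W, i}$ passes through $w$, so the norms match; this correspondence is a bijection. Combined with $\dim W = \dim X + 1$ and the cone formula $\rank {\rm Cl}(W_w) = \picnumbxz - 1$ (the class of $H$ becomes trivial at the vertex, whereas the remaining classes of $\relpicxz$ lift to $ {\rm Cl}(W_w)$), the identity follows. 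By Corollary~\ref{cor:compl.pt} and Lemma~\ref{lem:basic.ineq}, one has $0 \leq \orbcomp_w(W, B_W + \mathbf M_W) \leq c_w(W, B_W + \mathbf M_W) = 0$. Theorem~\ref{thm:local-case} and Corollary~\ref{cor:locally-M-descends} then yield that $(w \in W, \lfloor B_W \rfloor)$ is a formally toric singularity and that $\mathbf M_W$ is torsion over a neighborhood of $w$.

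To transfer the toric structure to $\pi$, I would use Corollary~\ref{cor:lcc-are-formally toric}: the divisor $X \subset \widetilde W$, being a divisorial glc place of $(\widetilde W, X + B_{\widetilde W} + \mathbf M_{\widetilde W})$, corresponds to a formally toric lc place at $w$. Under the formal isomorphism $\widehat W_w \simeq \widehat{W(\sigma)}_{w_0}$ produced by Theorem~\ref{thm:local-case}, the blow-up of the vertex section pulls back to an equivariant toric blow-up of $W(\sigma)$, whose exceptional divisor realizes a toric morphism $X(\Xi) \to Z(\sigma)$; compatibility with completion along $z$ identifies $X \times_Z \widehat Z_z$ with $X(\Xi) \times_{Z(\sigma)} \widehat{Z(\sigma)}_{w_0}$ and takes $\lfloor B \rfloor$ to the corresponding torus-invariant divisor. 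Restricting the $\qq$-triviality of $\mathbf M_W$ along $X \subset \widetilde W$ and using Corollary~\ref{cor:locally-M-descends} yields that $\mathbf M$ itself is torsion over a neighborhood of $z$. The hard part will be the complexity identity, and in particular the relative cone formula $\rank {\rm Cl}(W_w) = \picnumbxz - 1$ together with the control of the b-divisor $\mathbf M$ through the cone construction; possible torsion in ${\rm Cl}(W_w)$ and the failure of $W$ to be factorial will likely force an index-one-cover/equivariance argument in the spirit of Proposition~\ref{prop:local-toricity}, to ensure the formally toric structure on $w \in W$ restricts correctly to a formally toric structure for $\pi$ compatible with $\lfloor B \rfloor$.
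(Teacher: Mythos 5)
Your proposal follows essentially the same route as the paper: form the relative cone $C$ over $Z$ along a $\pi$-ample divisor, check that the induced generalized pair at the vertex point over $z$ has (orbifold) complexity zero using $\dim C=\dim X+1$ and the drop in the class group, apply Theorem~\ref{thm:local-case} and Corollary~\ref{cor:locally-M-descends} at the vertex, and recover the toric structure of $\pi$ by identifying the zero section of the blow-up of the vertex with a formally toric lc place via Corollary~\ref{cor:lcc-are-formally toric} and Lemma~\ref{formally toricity-vs-MMP}. The only caveat is that your opening reduction to a $\qq$-factorial gdlt model is both unnecessary (the cone construction works directly for the given glc pair, as in the paper) and, taken literally, would require an extra argument to descend formal toricness from the modification back to the original $X\to Z$.
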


\begin{proof}
Let $A$ be a divisor on $X$ which is relatively very ample over $Z$.
Let $C:=C_A(X/Z)$ be the relative cone of $X$ over $Z$, with respect
to the relative polarization given by $A$ -- that is, $C$ is the relative spectrum over $Z$ of the $\mathcal O_Z$-algebra $\bigoplus_{m \geq 0} \pi_\ast \mathcal O_X(mA)$.
By construction, $C$ has a natural $\cc^\ast$-action given by the grading on $\bigoplus_{m \geq 0} \pi_\ast \mathcal O_X(mA)$.
Let $c \colon C \to X$ be the associated morphism:
$c$ admits a section $\sigma \colon Z \to C$ which is $\cc^\ast$-invariant.
Let $C_\infty$ the relative spectrum over $X$ of the $\mathcal O_X$-algebra $\oplus_{m\geq 0}  \mathcal{O}_X(mA)$.
There exists a natural morphism $\gamma \colon C_\infty \to C$ which contracts the prime divisor $X_\infty \subset C_\infty$ isomorphic to $X$ down to $\sigma(Z)$.
We denote $z_C:=\sigma(z) \in C$ and $c_\infty \colon C_\infty \to X$ the structure morphism.
\\
We wish to define a generalized pair on $C$.
We proceed as follows:
letting $B= \sum_{i=1}^r b_i B_i$ be the decomposition of $B$ into its prime components, then we define the boundary $B_C:= \sum_{i=1}^r b_i B_{i, C}$ where $B_{i, C}:= c^{-1}(B_i)$.
For the nef part:
let $X'\rightarrow X$ be a model where $M$ descends and let $M'$ be the trace of the b-divisor on $X'$ so that $M'$ is nef over $Z$;
let $\pi'\colon X'\rightarrow Z$ the contraction obtained composing $\pi$ with $X' \to X$.
Let $A'$ be the pull-back of $A$ to $X'$ and let $C'_\infty$ be the relative spectrum over $X'$ of the $\mathcal O_{X'}$-algebra $\bigoplus_{m\geq 0} \mathcal{O}_{X'}(mA')$. 
Let $M_{C'_\infty}$ be the pull-back of $M'$ to $C'_\infty$ and let $M_{C_\infty}$ the pushforward of $M_{C'_\infty}$ to $C_\infty$.
\\
By construction, defining $B_{C_\infty}:=\sum_{i=1}^r b_i B_{i, C_\infty}$, $B_{i, C_\infty}:=c_\infty^{-1}(B_i)$, then $(C_\infty, X_\infty+ B_{C_\infty} + M_{C_\infty})$ is a generalized log canonical pair and
\[
K_{C_\infty}+X_\infty+ B_{C_\infty} + M_{C_\infty}\vert_{X_\infty} = 
c_\infty\vert_{X_\infty}^\ast(K_X+B+M)
\]
since $c_\infty\vert_{X_\infty} \colon X_\infty \to X$ is an isomorphism by construction.
Hence, 
\[
K_{C_\infty}+X_\infty+ B_{C_\infty} + M_{C_\infty}\vert_{X_\infty} 
\sim_{\qq, Z} 0,
\]
which in turn implies that $(C, B_C+M_C)$ is a generalized log canonical pair for which
the subvariety $\sigma(Z)$ is a log canonical center since the birational morphism $C_\infty \to C$ contracts the divisor $X_\infty$ to $\sigma(Z)$ and $K_{C_\infty}+X_\infty+ B_{C_\infty} + M_{C_\infty}\vert_{X_\infty}$ is torsion along all fibers of such contraction.
Moreover,
\begin{align}
&|B_C|=|B|, \qquad \dim C = \dim X +1, \quad \text{ and } 
\\
\label{eqn:relpic.c.infty}
&\dim_\qq\relpicxz -1 \geq \dim_\qq{\rm Cl}_\qq(C_{z_C}).
\end{align}
To prove that~\eqref{eqn:relpic.c.infty} holds, first let us notice that, since $C_\infty\rightarrow X$ is a $\mathbb{A}^1$-bundle, then
$\dim_\qq {\rm Cl}_\qq (C_\infty)= \dim_\qq {\rm Cl}_\qq(X)$;
moreover, the morphism $\gamma$ only contracts the prime divisor $X_\infty$, while the pullback of any Cartier divisor on $Z$ to $C$ is trivial in a neighborhood of $z_C$.
Thus, $\orbcomp_{z_C}(C,B_C+M_C)\leq 0$, and so by Corollary~\ref{cor:compl.pt} equality must hold.
By Theorem~\ref{thm:local-case}, we conclude that 
\begin{itemize}
    \item[(a)] 
$(C, \lfloor B_C\rfloor)$ is formally toric pair at $z_C$,  and 
    \item[(b)]   
$M_C$ descends over a neighborhood of $z_C$.
\end{itemize}
By Corollary~\ref{cor:lcc-are-formally toric}, $X_\infty$ is a formally toric lc place at $z_C$, thus, Lemma~\ref{formally toricity-vs-MMP} implies that $\gamma \colon C_\infty \to C$ is formally toric around $z_C$ with respect to the pair $(C_\infty, X_\infty + \lfloor B_{C_{\infty}} \rfloor)$.
Then, since we have the following commutative diagram
\begin{align}
\label{diag:comm.x.infty}
\xymatrix{
X_\infty \ar[r]^{c_\infty\vert_{X_\infty}} \ar[d]_{\gamma\vert_{X_\infty}}
&
X \ar[d]^\pi
\\
z_C \in \sigma(Z) \ar[r]_{\sigma^{-1}}
&
Z \ni z
}
\end{align}
and the morphism $\gamma\vert_{X_\infty}$ in the LHS column of~\eqref{diag:comm.x.infty} is formally toric by the above discusssion, then $\pi \colon X \to Z$ is formally toric around $z$;
furthermore, as the restriction of $\lfloor B_{C_{\infty}} \rfloor$ to $X_\infty$ is formally toric and it is mapped to $\lfloor B \rfloor$ via $c_\infty\vert_{X_\infty}$, we can conclude that $\pi$ is formally toric with respect to the pair $(X, \lfloor B \rfloor)$.
Moreover, conclusion (b) above and the commutative diagram in~\eqref{diag:comm.x.infty} imply that $M$ must descend over a neighborhood of $z$, since, by construction, 
\[
c_\infty\vert_{X_{\infty}}^\ast M = M_{C_{\infty}} \vert X_\infty.
\]
\end{proof}

\subsection{Conclusion}

\begin{proof}[Proof of Theorem~\ref{formally.toric}]
Let $(X/Z,B+M)$ be a generalized log canonical pair, such that 
$K_X+B+M\sim_{\qq,Z}0$.
Let $z\in Z$ be a closed point and $\Sigma$ be an orbifold decomposition of $B$.
By Theorem~\ref{thm:inequality-cgeq0}, 
$\orbcomp_z(X/Z,B+M;\Sigma)\geq 0$ 
and Lemma~\ref{lem:basic.ineq} implies that 
$\abscompz(X/Z,B+M)\geq \finecomp_z(X/Z,B+M)
\geq 
\orbcomp_z(X/Z,B+M;\Sigma)$.
\\
Now, we turn to prove the characterization of morphism
with complexity zero.
If $X\rightarrow Z$ is formally toric over $z\in Z$, 
then the toric boundary has complexity zero.
Hence, it suffices to prove that absolute complexity zero implies both, that $X\rightarrow Z$ is formally toric over $z\in Z$ and $M$ descends over a neighborhood of $z$.
Thus, we can assume that $c_z(X/Z,B+M)=0$.
If $X\rightarrow Z$ is the identity, 
then the statement of the theorem follows from Theorem~\ref{thm:local-case}.
If $X\rightarrow Z$ is a birational morphism,
the statement follows from Theorem~\ref{thm:formally toric-birational}.
Finally, if $X\rightarrow Z$ is a fibration, then  Theorem~\ref{thm:formally toric-fibration} concludes the proof.
\end{proof}

\begin{proof}[Proof of Theorem~\ref{weak thm}]
This follows at once from Theorem~\ref{formally.toric} simply by taking $M=-(K_X+B)$.
\end{proof}

\begin{proof}[Proof of Theorem~\ref{thm:intro.local.case}]
It suffices to notice that by Definition~\ref{def:orb.compl.loc}
$\dim X +
\rank {\rm Cl}(X_x) -
\sum_{i=1}^n b_i= c_{x}(X, B)$.
Thus, Corollary~\ref{cor:compl.pt} implies that $c_x(X, B) \geq 0$ and if equality holds, then the formal toricness of $(X, \lfloor B \rfloor)$ at $x$ follows from Theorem~\ref{thm:local-case}.
\end{proof}

\bibliographystyle{HH}
\bibliography{bib}

\end{document}